\newtheorem{theo}{Theorem}
\newtheorem*{theoA}{Theorem A}
\newtheorem*{theoB}{Theorem B}
\newtheorem*{theoC}{Theorem C}
\newtheorem{cor}{Corollary}
\newtheorem{propo}{Proposition}
\newtheorem{Lemma}{Lemma}
\theoremstyle{remark}
\newtheorem*{Remark}{Remark}
\newtheorem*{Remarks}{Remarks}
\newtheorem*{D\'efinition}{D\'efinition}
\renewcommand{\underline}[1]{\langle #1\rangle}
\numberwithin{equation}{section}
\author{E. Delaygue, T. Rivoal and J. Roques}
\title{On Dwork's $p$-adic formal congruences theorem and hypergeometric mirror maps}
\date{}
\subjclass[2010]{Primary 11S80; Secondary 14J32 33C70}
\keywords{Dwork's theory, Generalized hypergeometric functions, 
$p$-adic analysis, Integrality of mirror maps}
\thanks{The first and second authors are partially funded by 
the project Holonomix (PEPS CNRS INS2I 2012). The second and 
third authors are partially funded by the project Qdiff (ANR-2010-JCJC-010501).}
\begin{document}
\maketitle

\begin{abstract} Using Dwork's theory, we prove a broad generalisation of his  
famous $p$-adic formal congruences theorem. This enables us to prove certain $p$-adic 
congruences for the generalized hypergeometric series with rational parameters; in particular, they hold for 
{\em any} prime number $p$ and not only for almost all primes. Along the way, using Christol's functions, 
we provide an explicit formula for the ``Eisenstein constant'' of any globally bounded  hypergeometric 
series with rational parameters. As an application of these results, we obtain an arithmetic statement of a new type 
concerning the integrality of Taylor coefficients of the associated mirror maps. It  
essentially contains all the similar univariate integrality results in the literature.
\end{abstract}

\section{Introduction}

Mirror maps are power series  which occur in Mirror Symmetry as the inverse  for 
composition of power series $q(z)=\exp(\omega_2(z)/\omega_1(z))$, called  local $q$-coordinates, 
where $\omega_1$ and $\omega_2$ are particular solutions of the Picard-Fuchs equation associated 
with certain one-parameter families of Calabi-Yau varieties. They can be viewed as higher dimensional 
generalisations of the classical modular forms, and in several cases, it has been observed that such 
mirror maps and $q$-coordinates have integral Taylor coefficients at the origin, like 
the $q$-expansion of Eisenstein series for instance. The arithmetical study of mirror maps 
began with the famous example of a family of mirror for quintic threefolds in $\mathbb{P}^4$ 
given by Candelas et al. \cite{Candelas} and associated with the Picard-Fuchs equation
$$
\theta^4\omega-5z(5\theta+1)(5\theta+2)(5\theta+3)(5\theta+4)\omega=0,\quad\theta=z\frac{d}{dz}.
$$
This equation is (a rescaling of) a generalized hypergeometric differential equation with  two 
linearly independent local solutions at $z=0$ given by
$$
\omega_1(z)=\sum_{n=0}^{\infty}\frac{(5n)!}{(n!)^5}z^n\quad\textup{and}\quad \omega_2(z)=G(z)+\log(z)\omega_1(z),
$$ 
where 
$$
G(z)=\sum_{n=1}^\infty\frac{(5n)!}{(n!)^5}\big(5H_{5n}-5H_n\big)z^n\quad\textup{and}\quad H_n:=\sum_{k=1}^{n}\frac{1}{k}.
$$
The $q$-coordinate $\exp\big(\omega_2(z)/\omega_1(z)\big)$ occurs in enumerative geometry and 
in the Mirror Conjecture associated with quintics threefolds in $\mathbb{P}^4$ (see \cite{LianYau2}). 
The integrality of its Taylor coefficients at the origin has been proved by Lian and Yau in \cite{LianYau}. 

In a more general context, Batyrev and van Straten conjectured the integrality of the Taylor 
coefficients at the origin of a large class of $q$-coordinates \cite[Conjecture 6.3.4]{Batyrev} 
built on $A$-hypergeometric series. (See \cite{Stienstra} for an introduction to these series, 
which generalize the classical hypergeometric series to the multivariate case). Furthermore they 
provided a lot of examples of univariate $q$-coordinates whose Taylor coefficients were subsequently 
proved to be integers in many cases by Zudilin  \cite{Zudilin0} and Krattenthaler-Rivoal \cite{Tanguy1}.

Motivated by the search for differential operators $\mathcal{L}$ associated with particular 
families of Calabi-Yau varieties, Almkvist et al. \cite{Tables} and Bogner and Reiter \cite{Bogner} 
introduced the notion of ``Calabi-Yau operators''. Even if both notions slightly differ, both 

require that an irreducible differential operator $\mathcal{L}\in\mathbb{Q}(z)[d/dz]$ of Calabi-Yau type satisfies
\begin{itemize} 
\item[$(P_1)$] $\mathcal{L}$ has a solution $\omega_1(z)\in 1+z\mathbb{C}[[z]]$ at $z=0$ which is $N$-integral.
\item[$(P_2)$] $\mathcal{L}$ has a linearly independent solution $\omega_2=G(z)+\log(z)\omega_1(z)$ 
at $z=0$ with $G(z)\in z\mathbb{C}[[z]]$ and $\exp\big(\omega_2(z)/\omega_1(z)\big)$ is $N$-integral. 
\end{itemize}
We say that a power series $f(z)\in\mathbb{C}[[z]]$ is $N$-integral if there exists $c\in\mathbb{Q}$ 
such that $f(cz)\in\mathbb{Z}[[z]]$. The constant $c$ might be called the Eisenstein constant of $f$, 
in reference to Eisenstein's theorem that such a constant $c$ 
exists when $f$ is a holomorphic algebraic function over $\mathbb Q(z)$.

\medskip

One of the main results of this article is an effective criterion for an irreducible generalized 
hypergeometric differential operator $\mathcal{L}\in\mathbb{Q}(z)[d/dz]$ to satisfy properties $(P_1)$ and $(P_2)$. 

For all tuples $\boldsymbol{\alpha}:=(\alpha_1,\dots,\alpha_r)$ and $\boldsymbol{\beta}:=(\beta_1,\dots,\beta_s)$ 
of parameters in $\mathbb{Q}\setminus\mathbb{Z}_{\leq 0}$, we write $\mathcal{L}_{\boldsymbol{\alpha},\boldsymbol{\beta}}$ 
for the generalized hypergeometric differential operator associated 
with $(\boldsymbol{\alpha},\boldsymbol{\beta})$ and defined by
$$
\mathcal{L}_{\boldsymbol{\alpha},\boldsymbol{\beta}}:=\prod_{i=1}^s(\theta+\beta_i-1)
-z\prod_{i=1}^r(\theta+\alpha_i),\quad\theta=z\frac{d}{dz}.
$$
We recall that $\mathcal{L}_{\boldsymbol{\alpha},\boldsymbol{\beta}}$ is irreducible if and 
only if, for all $i\in\{1,\dots,r\}$ and all $j\in\{1,\dots,s\}$, we have 
$\alpha_i\not\equiv\beta_j\mod\mathbb{Z}$. The equation $\mathcal{L}_{\boldsymbol{\alpha},\boldsymbol{\beta}}\cdot\omega=0$ 
admits a formal solution $F_{\boldsymbol{\alpha},\boldsymbol{\beta}}(z)\in 1+z\mathbb{C}[[z]]$ if and only 
if there exists $i\in\{1,\dots,s\}$ such that $\beta_i=1$. In this case, 
$F_{\boldsymbol{\alpha},\boldsymbol{\beta}}$ is uniquely determined by $\boldsymbol{\alpha}$ and $\boldsymbol{\beta}$ and is 
\begin{equation}\label{DefHyp}
F_{\boldsymbol{\alpha},\boldsymbol{\beta}}(z):=\sum_{n=0}^\infty\frac{(\alpha_1)_n\cdots(\alpha_r)_n}{(\beta_1)_n\cdots(\beta_s)_n}z^n,
\end{equation}
where $(x)_n$ denotes Pochhammer  symbol $(x)_n=x(x+1)\cdots(x+n-1)$ if $n\geq 1$ and $(x)_0=1$ 
otherwise. $F_{\boldsymbol{\alpha},\boldsymbol{\beta}}$ is a generalized hypergeometric series and 
if one assumes that $\beta_s=1$, then our definition \eqref{DefHyp} agrees with the classical notation
$$
F_{\boldsymbol{\alpha},\boldsymbol{\beta}}(z)=  {_r}F_{s-1}\left[\begin{array}{cc}\alpha_1,\dots,\alpha_r\\\beta_1,\dots,\beta_{s-1}\end{array} ; z\right]:=\sum_{n=0}^\infty\frac{(\alpha_1)_n\cdots(\alpha_r)_n}{(\beta_1)_n\cdots(\beta_{s-1})_n}\frac{z^n}{n!}.
$$ 

An elementary computation of the $p$-adic valuation of Pochhammer symbols leads to the following result. 

\begin{propo}\label{Lemma Mp}
Let $\boldsymbol{\alpha}$ and $\boldsymbol{\beta}$ be tuples of parameters in $\mathbb{Q}\setminus\mathbb{Z}_{\leq 0}$. Then, $F_{\boldsymbol{\alpha},\boldsymbol{\beta}}$ is $N$-integral if and only if, for almost all primes $p$, we have $F_{\boldsymbol{\alpha},\boldsymbol{\beta}}(z)\in\mathbb{Z}_p[[z]]$.
\end{propo}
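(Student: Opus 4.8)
The plan is to prove the two implications separately; all the content is in the implication ``$F_{\boldsymbol{\alpha},\boldsymbol{\beta}}(z)\in\mathbb{Z}_p[[z]]$ for almost all $p$ $\Longrightarrow$ $F_{\boldsymbol{\alpha},\boldsymbol{\beta}}$ is $N$-integral''. I will write $A_n:=\frac{(\alpha_1)_n\cdots(\alpha_r)_n}{(\beta_1)_n\cdots(\beta_s)_n}$, so that $F_{\boldsymbol{\alpha},\boldsymbol{\beta}}(z)=\sum_{n\geq 0}A_nz^n$, and use repeatedly that, for a fixed prime $p$, one has $F_{\boldsymbol{\alpha},\boldsymbol{\beta}}(z)\in\mathbb{Z}_p[[z]]$ if and only if $v_p(A_n)\geq 0$ for all $n\geq 0$, where $v_p$ denotes the $p$-adic valuation.

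First I would dispose of the easy implication. If $F_{\boldsymbol{\alpha},\boldsymbol{\beta}}$ is $N$-integral, fix $c\in\mathbb{Q}\setminus\{0\}$, written in lowest terms as $c=u/v$, with $c^nA_n\in\mathbb{Z}$ for every $n$. For any prime $p\nmid u$ we have $v_p(c)=-v_p(v)\leq 0$, hence $0\leq v_p(c^nA_n)=nv_p(c)+v_p(A_n)$ forces $v_p(A_n)\geq nv_p(v)\geq 0$ for all $n$. Thus $F_{\boldsymbol{\alpha},\boldsymbol{\beta}}(z)\in\mathbb{Z}_p[[z]]$ for every prime $p$ outside the finite set of prime divisors of $u$.

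The key step for the other implication is an elementary \emph{linear} lower bound for $v_p(A_n)$ valid for \emph{every} prime $p$: there is a constant $C_p\geq 0$ with $v_p(A_n)\geq -C_pn$ for all $n\geq 0$. To get it, write each parameter in lowest terms, $\alpha_i=a_i/d_i$ and $\beta_j=b_j/d_j$ with $d_i,d_j>0$. Since $(\alpha_i)_n=d_i^{-n}\prod_{k=0}^{n-1}(a_i+kd_i)$ and the product is an integer, $v_p((\alpha_i)_n)\geq -nv_p(d_i)$. For the denominators I would bound $v_p((\beta_j)_n)$ from above: if $p\mid d_j$ then $p\nmid b_j$, so each factor $b_j+kd_j$ is a unit at $p$ and $v_p((\beta_j)_n)=-nv_p(d_j)\leq 0$; if $p\nmid d_j$ then $v_p((\beta_j)_n)=\sum_{k=0}^{n-1}v_p(b_j+kd_j)=\sum_{\ell\geq 1}\#\{0\leq k\leq n-1:\,p^\ell\mid b_j+kd_j\}$, and for each $\ell$ the relevant $k$'s form a single residue class modulo $p^\ell$ (so at most $n/p^\ell+1$ of them lie in range, and none at all once $p^\ell>\max_k|b_j+kd_j|$, a quantity that is $O(n)$ since no $b_j+kd_j$ vanishes because $\beta_j\notin\mathbb{Z}_{\leq 0}$); summing yields $v_p((\beta_j)_n)\leq \frac{n}{p-1}+\log_p n+O(1)$. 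Using $\log_p n\leq n$, these estimates combine into the claimed bound $v_p(A_n)\geq -C_pn$.

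Finally I would conclude. Assume $F_{\boldsymbol{\alpha},\boldsymbol{\beta}}(z)\in\mathbb{Z}_p[[z]]$ for all primes $p$ outside a finite set $S$. For each $p\in S$ pick an integer $N_p\geq C_p$ and set $c:=\prod_{p\in S}p^{N_p}\in\mathbb{Z}_{\geq 1}$. Then for every $n$ and every prime $q$: if $q\in S$ then $v_q(c^nA_n)=nN_q+v_q(A_n)\geq n(N_q-C_q)\geq 0$; if $q\notin S$ then $v_q(c)=0$ and $v_q(A_n)\geq 0$, so $v_q(c^nA_n)\geq 0$ again. Hence $c^nA_n\in\mathbb{Z}$ for all $n$, i.e.\ $F_{\boldsymbol{\alpha},\boldsymbol{\beta}}(cz)\in\mathbb{Z}[[z]]$, so $F_{\boldsymbol{\alpha},\boldsymbol{\beta}}$ is $N$-integral. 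The one place needing genuine care is the Pochhammer valuation estimate of the third paragraph: one must check that the $\log$-sized contribution from the exponents $\ell$ with $p^\ell\leq n$ really is absorbed by the linear term, and treat the ramified cases $p\mid d_i$ and $p\mid d_j$ on their own; the rest is routine bookkeeping with $p$-adic valuations.
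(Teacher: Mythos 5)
Your proof is correct, and it follows essentially the same strategy as the paper: for the easy direction, observe that clearing denominators with $c$ handles all primes not dividing the numerator of $c$; for the substantive direction, establish a linear lower bound $v_p(A_n)\geq -C_pn$ valid for every prime and then combine it with the hypothesis over the finite exceptional set. The only cosmetic difference is how the linear bound is extracted: the paper bounds $v_p\bigl(\prod_{k=0}^{n-1}(b_j+kd_j)\bigr)$ by comparing to $v_p\bigl(|b_j|!(|b_j|+d_jn)!\bigr)$ and then uses $v_p(N!)<N/(p-1)$, whereas you count multiples of $p^\ell$ directly among $n$ terms of an arithmetic progression; both give the same linear-in-$n$ estimate with negligible lower-order terms, so this is the same proof in substance.
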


\begin{Remark}
We say that an assertion $\mathcal{A}_p$ is true for almost all primes $p$ if there is $C\in\mathbb{N}$ such that $\mathcal{A}_p$ is true for all primes $p\geq C$.
\end{Remark}

Using Proposition \ref{Lemma Mp} in combination with a result of Christol (Proposition $1$ in \cite{Christol}), one obtains an effective criterion for $F_{\boldsymbol{\alpha},\boldsymbol{\beta}}$ to be $N$-integral, \textit{i.e.} for $\mathcal{L}_{\boldsymbol{\alpha},\boldsymbol{\beta}}$ to satisfy $(P_1)$. To  state this criterion, we introduce some notations.

For all $x\in\mathbb{Q}$, we write $\underline{x}$ for the unique element in $(0,1]$ such that $x-\underline{x}\in\mathbb{Z}$, \textit{i.e.} $\underline{x}=1-\{1-x\}$, where $\{\cdot\}$ is the fractional part function. In particular, we have $x-\underline{x}=-\lfloor 1-x\rfloor$, where $\lfloor\cdot\rfloor$ denotes the floor function. We write $\preceq$ for the total order on $\mathbb{R}$ defined by
$$
x\preceq y\Longleftrightarrow\Big( \underline{x}<\underline{y}\quad\textup{or}\quad\big(\underline{x}
=\underline{y}\quad\textup{and}\quad x\geq y\big)\Big).
$$
Given tuples $\boldsymbol{\alpha}=(\alpha_1,\dots,\alpha_r)$ and $\boldsymbol{\beta}=(\beta_1,\dots,\beta_s)$ of parameters in $\mathbb{Q}\setminus\mathbb{Z}_{\leq 0}$, we write $d_{\boldsymbol{\alpha},\boldsymbol{\beta}}$ for the least common multiple of the exact denominators of elements of $\boldsymbol{\alpha}$ and $\boldsymbol{\beta}$. For all $a\in\{1,\dots,d_{\boldsymbol{\alpha},\boldsymbol{\beta}}\}$ coprime to $d_{\boldsymbol{\alpha},\boldsymbol{\beta}}$ and all $x\in\mathbb{R}$, we set
$$
\xi_{\boldsymbol{\alpha},\boldsymbol{\beta}}(a,x):=\#\{1\leq i\leq r\,:\,a\alpha_i\preceq x\}-\#\{1\leq j\leq s\,:\,a\beta_j\preceq x\}.
$$
We now state Christol's criterion for the $N$-integrality of $F_{\boldsymbol{\alpha},\boldsymbol{\beta}}$. 

\begin{theoA}[Christol, \cite{Christol}]
Let $\boldsymbol{\alpha}:=(\alpha_1,\dots,\alpha_r)$ and $\boldsymbol{\beta}:=(\beta_1,\dots,\beta_s)$ be tuples of parameters in $\mathbb{Q}\setminus\mathbb{Z}_{\leq 0}$. Then, the following assertions are equivalent: 
\begin{itemize}
\item[$(i)$] $F_{\boldsymbol{\alpha},\boldsymbol{\beta}}$ is $N$-integral;
\item[$(ii)$] For all $a\in\{1,\dots,d_{\boldsymbol{\alpha},\boldsymbol{\beta}}\}$ coprime to $d_{\boldsymbol{\alpha},\boldsymbol{\beta}}$ and all $x\in\mathbb{R}$, we have $\xi_{\boldsymbol{\alpha},\boldsymbol{\beta}}(a,x)\geq~0$.
\end{itemize}
\end{theoA}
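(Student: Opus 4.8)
The plan is to deduce Theorem A from Proposition \ref{Lemma Mp} together with a $p$-adic analysis of the coefficients of $F_{\boldsymbol{\alpha},\boldsymbol{\beta}}$, following the strategy of Christol. By Proposition \ref{Lemma Mp}, $F_{\boldsymbol{\alpha},\boldsymbol{\beta}}$ is $N$-integral if and only if $F_{\boldsymbol{\alpha},\boldsymbol{\beta}}(z)\in\mathbb{Z}_p[[z]]$ for almost all primes $p$. So I would fix a prime $p$ not dividing $d_{\boldsymbol{\alpha},\boldsymbol{\beta}}$ and try to characterise, in terms of the $\xi_{\boldsymbol{\alpha},\boldsymbol{\beta}}(a,x)$, the condition that $q_n:=\prod_i(\alpha_i)_n/\prod_j(\beta_j)_n$ lies in $\mathbb{Z}_p$ for every $n\geq 0$. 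The first step is the standard formula for the $p$-adic valuation of a Pochhammer symbol: for $\gamma\in\mathbb{Z}_p$ one writes $v_p\big((\gamma)_n\big)$ as a sum over $k\geq 1$ of the number of integers in $\{0,1,\dots,n-1\}$ that are $\equiv -\gamma$ modulo a suitable power of $p$ (equivalently a Legendre-type formula), so that $v_p(q_n)=\sum_{k\geq 1}\big(\#\{0\le m<n: \dots\alpha_i\dots\}-\#\{0\le m<n:\dots\beta_j\dots\}\big)$.

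The key technical step is to reorganise this valuation. Writing $n$ in base $p$ and using the reduction of the $\alpha_i,\beta_j$ modulo $p^k$, one rewrites each inner count as a sum of indicator functions comparing digits of $n$ with digits of (representatives of) the parameters. This is exactly where Christol's functions enter: one introduces, for each residue $a$ coprime to $d_{\boldsymbol{\alpha},\boldsymbol{\beta}}$ (the residue playing the role of $p\bmod d_{\boldsymbol{\alpha},\boldsymbol{\beta}}$, or more precisely the ``twisting'' induced by the Frobenius action on the parameters), a step function on $\mathbb{R}/\mathbb{Z}$ whose jumps occur at the $\underline{a\alpha_i}$ and $\underline{a\beta_j}$; the quantity $\xi_{\boldsymbol{\alpha},\boldsymbol{\beta}}(a,x)$ is precisely the value of the associated counting function, the order $\preceq$ being designed so that ties among fractional parts are broken in the direction that makes the inequality sharp. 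One shows that $v_p(q_n)\geq 0$ for all $n$ is equivalent to the nonnegativity of all these step functions, i.e. to $\xi_{\boldsymbol{\alpha},\boldsymbol{\beta}}(a,x)\ge 0$ for all such $a$ and all $x\in\mathbb{R}$, uniformly over the primes $p$ in each residue class $a\bmod d_{\boldsymbol{\alpha},\boldsymbol{\beta}}$.

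For the implication $(ii)\Rightarrow(i)$ one runs this computation forward: assuming the step functions are nonnegative, Dirichlet's theorem guarantees that every residue class $a$ coprime to $d_{\boldsymbol{\alpha},\boldsymbol{\beta}}$ contains infinitely many primes, and for each such prime $p$ the reorganised formula gives $v_p(q_n)\ge 0$ for all $n$, hence $F_{\boldsymbol{\alpha},\boldsymbol{\beta}}\in\mathbb{Z}_p[[z]]$ for almost all $p$, so $F_{\boldsymbol{\alpha},\boldsymbol{\beta}}$ is $N$-integral by Proposition \ref{Lemma Mp}. For $(i)\Rightarrow(ii)$, if some $\xi_{\boldsymbol{\alpha},\boldsymbol{\beta}}(a,x)<0$, one picks an $x$ (a representative of a parameter fractional part) witnessing this, then for any large prime $p\equiv a\bmod d_{\boldsymbol{\alpha},\boldsymbol{\beta}}$ one exhibits an explicit $n$ (built from the base-$p$ digits dictated by $x$, e.g. a single well-chosen digit) for which $v_p(q_n)<0$, contradicting $N$-integrality; here one must also rule out that the finitely many ``bad'' primes dividing $d_{\boldsymbol{\alpha},\boldsymbol{\beta}}$ or denominators could rescue integrality, which is handled by the ``for almost all $p$'' in Proposition \ref{Lemma Mp} together with the fact that there are infinitely many admissible $p$. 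The main obstacle is the bookkeeping in the second step: getting the combinatorics of base-$p$ digits, the Frobenius twist $\gamma\mapsto a\gamma$, and the tie-breaking order $\preceq$ to line up exactly so that the inner counts collapse to the single invariant $\xi_{\boldsymbol{\alpha},\boldsymbol{\beta}}(a,x)$ independent of $p$ within a residue class; this is precisely the content of Christol's Proposition~$1$, which I would invoke rather than reprove in full detail.
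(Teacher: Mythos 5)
The paper does not reprove Theorem~A; it states it with a citation to Christol's Proposition~1 and only remarks that his proof works without some of his original restrictions on the parameters. Your sketch is essentially Christol's argument, and the paper's Section~\ref{section valuation} provides exactly the infrastructure you outline: the Legendre-type valuation formula and its repackaging via the step functions $\Delta_{\boldsymbol{\alpha},\boldsymbol{\beta}}^{p,\ell}$ is Proposition~\ref{valuation}, and the dictionary between those step functions and $\xi_{\boldsymbol{\alpha},\boldsymbol{\beta}}(a,\cdot)$ (with $a$ the inverse of $p^{\ell}$ modulo $d_{\boldsymbol{\alpha},\boldsymbol{\beta}}$, handling the tie-breaking order $\preceq$ and the $\lfloor 1-\alpha\rfloor$ shifts) is Lemma~\ref{valeurs Delta}. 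Your plan, combined with Dirichlet to sweep over residue classes, reconstructs the missing proof correctly; the one detail you gloss over is that $a$ is the inverse of a \emph{power} of $p$ rather than $p$ itself, so a single prime $p$ controls the whole cyclic subgroup $\langle p^{-1}\rangle$ of $(\mathbb{Z}/d_{\boldsymbol{\alpha},\boldsymbol{\beta}}\mathbb{Z})^\times$, but this only strengthens the argument and does not create a gap.
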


\begin{Remark}
Formally, Christol proved Theorem A (Proposition $1$ in \cite{Christol}) under the assumptions that $r=s$, that there is $j\in\{1,\dots,s\}$ such that $\beta_j\in\mathbb{N}$ and that all elements $\alpha\in\mathbb{N}$ of $\boldsymbol{\alpha}$ and $\boldsymbol{\beta}$ satisfies $\alpha\geq \beta_j$. However, his proof does not use these assumptions.
\end{Remark}

Theorem A provides a criterion for an irreducible operator $\mathcal{L}_{\boldsymbol{\alpha},\boldsymbol{\beta}}$ to satisfy $(P_1)$ but do not give any information on the rational numbers $C$ satisfying $F_{\boldsymbol{\alpha},\boldsymbol{\beta}}(Cz)\in\mathbb{Z}[[z]]$. If $F_{\boldsymbol{\alpha},\boldsymbol{\beta}}$ is $N$-integral then it is not hard to see that the set of all $C\in\mathbb{Q}$ satisfying $F_{\boldsymbol{\alpha},\boldsymbol{\beta}}(Cz)\in\mathbb{Z}[[z]]$ is $C_{\boldsymbol{\alpha},\boldsymbol{\beta}}\mathbb{Z}$ for some $C_{\boldsymbol{\alpha},\boldsymbol{\beta}}\in\mathbb{Q}\setminus\{0\}$. Our first result, Theorem \ref{theo Const} below, gives some arithmetical properties of $C_{\boldsymbol{\alpha},\boldsymbol{\beta}}$ and gives a simple formula for $C_{\boldsymbol{\alpha},\boldsymbol{\beta}}$ when $r=s$ and when all elements of $\boldsymbol{\alpha}$ and $\boldsymbol{\beta}$ lie in $(0,1]$. Before stating this result, we introduce some notations. 
\medskip

Let $\mathbb{Z}_p$ denotes the ring of $p$-adic integers. Then, for all primes $p$, we define
$$
\lambda_p(\boldsymbol{\alpha},\boldsymbol{\beta}):=\#\{1\leq i\leq r\,:\,\alpha_i\in\mathbb{Z}_p\}-\#\{1\leq j\leq s\,:\,\beta_j\in\mathbb{Z}_p\}.
$$
Note that if $\alpha\in\mathbb{Q}\setminus\{0\}$, then $\alpha\in\mathbb{Z}_p$ if and only if $p$ does not divide the exact denominator of $\alpha$. Furthermore, we write $\mathcal{P}_{\boldsymbol{\alpha},\boldsymbol{\beta}}$ for the set of all primes $p$ such that $p$ divides $d_{\boldsymbol{\alpha},\boldsymbol{\beta}}$ or $p\leq r-s+1$. In particular, if $r=s$, then $\mathcal{P}_{\boldsymbol{\alpha},\boldsymbol{\beta}}$ is the set of the prime divisors of $d_{\boldsymbol{\alpha},\boldsymbol{\beta}}$. Finally, for all rational numbers $a$, we write $d(a)$ for the exact denominator of $a$. Our result on $C_{\boldsymbol{\alpha},\boldsymbol{\beta}}$ is the following.

\begin{theo}\label{theo Const}
Let $\boldsymbol{\alpha}:=(\alpha_1,\dots,\alpha_r)$ and $\boldsymbol{\beta}:=(\beta_1,\dots,\beta_s)$ be tuples of parameters in $\mathbb{Q}\setminus\mathbb{Z}_{\leq 0}$ such that $F_{\boldsymbol{\alpha},\boldsymbol{\beta}}$ is $N$-integral. Then, there exists $C\in\mathbb{N}$ such that
\begin{equation}\label{facile}
C_{\boldsymbol{\alpha},\boldsymbol{\beta}}=C\frac{\prod_{i=1}^rd(\alpha_i)}{\prod_{j=1}^sd(\beta_j)}\underset{p\in\mathcal{P}_{\boldsymbol{\alpha},\boldsymbol{\beta}}}{\prod}p^{-\left\lfloor\frac{\lambda_p(\boldsymbol{\alpha},\boldsymbol{\beta})}{p-1}\right\rfloor}. 
\end{equation}
Furthermore, if $r=s$ and if all elements of $\boldsymbol{\alpha}$ and $\boldsymbol{\beta}$ lie in $(0,1]$, then we have $C=1$.
\end{theo}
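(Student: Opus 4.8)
The plan is to compute, prime by prime, the exact power of $p$ dividing $C_{\boldsymbol{\alpha},\boldsymbol{\beta}}$, and then to recognise the resulting product. Write $Q_n:=(\alpha_1)_n\cdots(\alpha_r)_n\big/\big((\beta_1)_n\cdots(\beta_s)_n\big)$ for the $n$-th Taylor coefficient of $F_{\boldsymbol{\alpha},\boldsymbol{\beta}}$. Since $F_{\boldsymbol{\alpha},\boldsymbol{\beta}}(Cz)\in\mathbb{Z}[[z]]$ is equivalent to $n\,v_p(C)+v_p(Q_n)\geq 0$ for all primes $p$ and all $n\geq 1$, and the set of admissible $C$ is $C_{\boldsymbol{\alpha},\boldsymbol{\beta}}\mathbb{Z}$, one gets
\begin{equation}\label{sketch-vp}
v_p(C_{\boldsymbol{\alpha},\boldsymbol{\beta}})=-\lfloor\delta_p\rfloor,\qquad \delta_p:=\inf_{n\geq 1}\frac{v_p(Q_n)}{n},
\end{equation}
the infimum being finite, and equal to $0$ for all but finitely many $p$: this last point follows from Proposition~\ref{Lemma Mp} applied to the $N$-integral series $F_{\boldsymbol{\alpha},\boldsymbol{\beta}}$ (which gives $\delta_p\geq 0$ for almost all $p$) together with $\delta_p\leq v_p(Q_1)=0$ for $p$ prime to the numerator and denominator of $Q_1$. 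Thus $C_{\boldsymbol{\alpha},\boldsymbol{\beta}}=\prod_p p^{-\lfloor\delta_p\rfloor}$ and everything reduces to estimating $\delta_p$. The next step is to split the parameters according to membership in $\mathbb{Z}_p$: if $\gamma\in\mathbb{Q}\setminus\mathbb{Z}_p$, each factor of $(\gamma)_n=\prod_{\ell=0}^{n-1}(\gamma+\ell)$ has numerator prime to $p$, so $v_p((\gamma)_n)=-n\,v_p(d(\gamma))$; if $\gamma\in\mathbb{Z}_p$, then $v_p((\gamma)_n)=\sum_{k\geq 1}\#\{0\leq\ell<n:\ p^k\mid\gamma+\ell\}$. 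Hence $v_p(Q_n)=-n\,e_p+B_n$, where $e_p:=v_p\!\big(\prod_i d(\alpha_i)/\prod_j d(\beta_j)\big)$ (which vanishes unless $p\mid d_{\boldsymbol{\alpha},\boldsymbol{\beta}}$) and $B_n$ involves only the parameters lying in $\mathbb{Z}_p$; this already accounts for the factor $\prod_i d(\alpha_i)/\prod_j d(\beta_j)$ in \eqref{facile}.

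For the first assertion I would bound $\delta_p$ from above along the test values $n=p^m$. For $\gamma\in\mathbb{Z}_p$, the level-$k$ count above equals $p^{m-k}$ for each $k\leq m$ and is nonzero for only boundedly many $k>m$ (by eventual periodicity of the $p$-adic expansion of $-\gamma$), so $v_p((\gamma)_{p^m})=\frac{p^m-1}{p-1}+O(1)$; since $\#\{i:\alpha_i\in\mathbb{Z}_p\}-\#\{j:\beta_j\in\mathbb{Z}_p\}=\lambda_p(\boldsymbol{\alpha},\boldsymbol{\beta})$, this gives $v_p(Q_{p^m})/p^m\to -e_p+\lambda_p(\boldsymbol{\alpha},\boldsymbol{\beta})/(p-1)$, hence $\delta_p\leq -e_p+\lambda_p(\boldsymbol{\alpha},\boldsymbol{\beta})/(p-1)$ and, by \eqref{sketch-vp},
$$v_p(C_{\boldsymbol{\alpha},\boldsymbol{\beta}})\ \geq\ e_p-\Big\lfloor\tfrac{\lambda_p(\boldsymbol{\alpha},\boldsymbol{\beta})}{p-1}\Big\rfloor .$$
Reading this off for $p\in\mathcal{P}_{\boldsymbol{\alpha},\boldsymbol{\beta}}$ and, for $p\notin\mathcal{P}_{\boldsymbol{\alpha},\boldsymbol{\beta}}$ (where $e_p=0$ and $\lambda_p(\boldsymbol{\alpha},\boldsymbol{\beta})=r-s<p-1$, so that $v_p(C_{\boldsymbol{\alpha},\boldsymbol{\beta}})\geq 0$), shows that the quotient $C:=C_{\boldsymbol{\alpha},\boldsymbol{\beta}}\big/\big(\tfrac{\prod_i d(\alpha_i)}{\prod_j d(\beta_j)}\prod_{p\in\mathcal{P}_{\boldsymbol{\alpha},\boldsymbol{\beta}}}p^{-\lfloor\lambda_p(\boldsymbol{\alpha},\boldsymbol{\beta})/(p-1)\rfloor}\big)$ has nonnegative $p$-adic valuation for every $p$ and is trivial at all but finitely many $p$; hence $C\in\mathbb{N}$, which is \eqref{facile}.

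The last assertion requires the matching \emph{lower} estimate $\delta_p\geq -e_p+\lfloor\lambda_p(\boldsymbol{\alpha},\boldsymbol{\beta})/(p-1)\rfloor$ for every $p$ (for $p\nmid d_{\boldsymbol{\alpha},\boldsymbol{\beta}}$ this reads simply $Q_n\in\mathbb{Z}_p$ for all $n$), which forces $v_p(C)=0$ for all $p$, i.e. $C=1$. This fails in general — for $\boldsymbol{\alpha}=(3,1)$, $\boldsymbol{\beta}=(2,1)$ one has $F_{\boldsymbol{\alpha},\boldsymbol{\beta}}(z)=\sum_{n\geq 0}\tfrac{n+2}{2}z^n$, $C_{\boldsymbol{\alpha},\boldsymbol{\beta}}=2$ and $\mathcal{P}_{\boldsymbol{\alpha},\boldsymbol{\beta}}=\varnothing$, so that $C=2$ — but it should hold once $r=s$ and all parameters lie in $(0,1]$. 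In that case, writing $\alpha_i=a_i/d$, $\beta_j=b_j/d$ with $d=d_{\boldsymbol{\alpha},\boldsymbol{\beta}}$ and $a_i,b_j\in\{1,\dots,d\}$, the powers of $d$ cancel (because $r=s$) and $Q_n=\prod_{i,\ell}(a_i+\ell d)\big/\prod_{j,\ell}(b_j+\ell d)$ becomes a ratio of products of integers lying in $\{1,\dots,nd\}$. Expanding $v_p(Q_n)$ level by level and feeding in the positivity of Christol's functions $\xi_{\boldsymbol{\alpha},\boldsymbol{\beta}}(a,\cdot)$ for \emph{all} multipliers $a$ coprime to $d$ (Theorem~A) — rather than merely an asymptotic — should yield the required $n$-uniform lower bound for $v_p(Q_n)$. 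Combined with the first assertion this gives $v_p(C_{\boldsymbol{\alpha},\boldsymbol{\beta}})=e_p-\lfloor\lambda_p(\boldsymbol{\alpha},\boldsymbol{\beta})/(p-1)\rfloor$ for every $p$, whence $C=1$.

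I expect the main obstacle to be precisely this last uniform lower bound on $v_p(Q_n)$: the upper estimate on $\delta_p$ costs only a test-value computation, whereas converting the combinatorial positivity encoded by Theorem~A into an inequality valid for \emph{every} $n$, not just in the limit, is the delicate point, and it is exactly here that the hypotheses $r=s$ and $\alpha_i,\beta_j\in(0,1]$ are genuinely used (the failure of these hypotheses being responsible for examples such as the one above, where the leftover integer $C$ is nontrivial).
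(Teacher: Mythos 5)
Your first assertion (the formula \eqref{facile} with some $C\in\mathbb{N}$) is correct and the argument is essentially the one in the paper: you define $\delta_p:=\inf_n v_p(Q_n)/n$, observe $v_p(C_{\boldsymbol{\alpha},\boldsymbol{\beta}})=-\lfloor\delta_p\rfloor$ (the paper phrases this as $v_p(C_{\boldsymbol{\alpha},\boldsymbol{\beta}})\geq -v_p(Q_n)/n$ for all $n$ and then lets $n\to\infty$), and compute the limit $v_p(Q_n)/n\to -e_p+\lambda_p/(p-1)$. You take $n=p^m$ where the paper works with general $n$ and the Landau-type functions $\Delta_{\boldsymbol{\alpha},\boldsymbol{\beta}}^{p,\ell}$ of Proposition~\ref{valuation}; this is a cosmetic difference. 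Your counterexample $\boldsymbol{\alpha}=(3,1)$, $\boldsymbol{\beta}=(2,1)$ with $C=2$ is a good illustration of why the hypothesis $\alpha_i,\beta_j\in(0,1]$ matters for the second part.

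The genuine gap is the \emph{Furthermore} statement ($C=1$ when $r=s$ and all parameters lie in $(0,1]$). You correctly identify that this requires the uniform lower bound $v_p(Q_n)\geq n\big(e_p-\lambda_p/(p-1)\big)$ for every $n$ and every $p$, but you do not prove it; you write that ``expanding level by level and feeding in the positivity of Christol's functions \ldots\ should yield'' the bound, and then note that this is exactly the delicate step. That is not a proof. The paper splits this into two cases. For $p\nmid d_{\boldsymbol{\alpha},\boldsymbol{\beta}}$ the bound is genuinely easy: Lemma~\ref{valeurs Delta} identifies the values of $\Delta_{\boldsymbol{\alpha},\boldsymbol{\beta}}^{p,\ell}$ with values of $\xi_{\boldsymbol{\alpha},\boldsymbol{\beta}}(a,\cdot)$, which Theorem~A guarantees are nonnegative, and one directly gets $v_p(Q_n)\geq 0$. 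For $p\mid d_{\boldsymbol{\alpha},\boldsymbol{\beta}}$, however, some of the $\alpha_i,\beta_j$ are \emph{not} $p$-adic integers, so ``expanding level by level'' does not reduce to Christol's positivity: the non-$p$-adic-integer parameters contribute a term $-ne_p$ whose cancellation against the fractional part $n\{\lambda_p/(p-1)\}$ is exactly what needs to be established. This is the content of Proposition~\ref{propo magie}, which expresses $v_p\big(C_0^n Q_n\big)$ as an average of $\Delta^{p_{a,\ell},1}$-functions over Chinese-remainder-chosen auxiliary primes $p_{a,\ell}$ not dividing $d_{\boldsymbol{\alpha},\boldsymbol{\beta}}$, plus the nonnegative term $n\{\lambda_p/(p-1)\}$. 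Only after this replacement can Theorem~A be applied. That averaging identity is a substantial computation (one page of digit-by-digit bookkeeping), not a consequence of positivity alone, and it is entirely missing from your sketch. Without it the case $p\mid d_{\boldsymbol{\alpha},\boldsymbol{\beta}}$ of the Furthermore part is open.
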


\begin{Remarks}
The following comments are detailed in Section \ref{section Comments 1}.

$\bullet$ If $\boldsymbol{\alpha}$ and $\boldsymbol{\beta}$ are tuples of same length of parameters in $(0,1]$ such that $F_{\boldsymbol{\alpha},\boldsymbol{\beta}}\in\mathbb{Q}[[z]]$ is algebraic over $\mathbb{Q}(z)$, then Theorem \ref{theo Const} gives a simple formula for the Eisenstein constant of $F_{\boldsymbol{\alpha},\boldsymbol{\beta}}$. 

$\bullet$ When all the elements of $\boldsymbol{\alpha}$ and $\boldsymbol{\beta}$ lie in $(0,1]$, Theorem \ref{theo Const} also gives a necessary condition on the numerators of the elements of $\boldsymbol{\alpha}$ and $\boldsymbol{\beta}$ for $F_{\boldsymbol{\alpha},\boldsymbol{\beta}}$ to be $N$-integral.

$\bullet$ Given tuples $\boldsymbol{\alpha}$ and $\boldsymbol{\beta}$, one can follow the proofs of Theorem \ref{theo Const} and Lemma \ref{valeurs Delta} to determine an explicit bound for the prime divisors of $C$.
\end{Remarks}

Let $\boldsymbol{\alpha}:=(\alpha_1,\dots,\alpha_r)$ and $\boldsymbol{\beta}:=(\beta_1,\dots,\beta_s)$ be tuples of parameters in $\mathbb{Q}\setminus\mathbb{Z}_{\leq 0}$ such that $\beta_s=1$ and $F_{\boldsymbol{\alpha},\boldsymbol{\beta}}$ is $N$-integral, so that $\mathcal{L}_{\boldsymbol{\alpha},\boldsymbol{\beta}}$ satisfies property $(P_1)$. Then, a simple computation (see Equation $(7)$ in \cite{Batyrev}) shows that $\mathcal{L}_{\boldsymbol{\alpha},\boldsymbol{\beta}}$ has a formal solution $G_{\boldsymbol{\alpha},\boldsymbol{\beta}}(z)+\log(z)F_{\boldsymbol{\alpha},\boldsymbol{\beta}}(z)$ with $G_{\boldsymbol{\alpha},\boldsymbol{\beta}}(z)\in z\mathbb{C}[[z]]$ if and only if there exists $i\in\{1,\dots,s-1\}$ such that $\beta_i=1$. In this case $G_{\boldsymbol{\alpha},\boldsymbol{\beta}}$ is uniquely determined by $\boldsymbol{\alpha}$ and $\boldsymbol{\beta}$ and it is explicitely given by
$$
G_{\boldsymbol{\alpha},\boldsymbol{\beta}}(z):=\sum_{n=1}^\infty\frac{(\alpha_1)_n\cdots(\alpha_r)_n}{(\beta_1)_n\cdots(\beta_s)_n}\left(\sum_{i=1}^r H_{\alpha_i}(n)-\sum_{j=1}^s H_{\beta_j}(n)\right),
$$
where, for all $n\in\mathbb{N}$ and all $x\in\mathbb{Q}\setminus\mathbb{Z}_{\leq 0}$, $H_x(n):=\sum_{k=0}^{n-1}\frac{1}{x+k}$.
\medskip

For all tuples $\boldsymbol{\alpha}$ and $\boldsymbol{\beta}$ of parameters in $\mathbb{Q}\setminus\mathbb{Z}_{\leq 0}$, we define
\begin{equation}\label{define q}
q_{\boldsymbol{\alpha},\boldsymbol{\beta}}(z):=\exp\left(\frac{G_{\boldsymbol{\alpha},\boldsymbol{\beta}}(z)+\log(z)F_{\boldsymbol{\alpha},\boldsymbol{\beta}}(z)}{F_{\boldsymbol{\alpha},\boldsymbol{\beta}}(z)}\right)=z\exp\left(\frac{G_{\boldsymbol{\alpha},\boldsymbol{\beta}}(z)}{F_{\boldsymbol{\alpha},\boldsymbol{\beta}}(z)}\right),
\end{equation}
so that $\mathcal{L}_{\boldsymbol{\alpha},\boldsymbol{\beta}}$ satisfies $(P_2)$ if and only if there are at least two elements equal to $1$ in $\boldsymbol{\beta}$ and $q_{\boldsymbol{\alpha},\boldsymbol{\beta}}$ is $N$-integral. The mirror map $z_{\boldsymbol{\alpha},\boldsymbol{\beta}}$ associated with $(\boldsymbol{\alpha},\boldsymbol{\beta})$ is the compositional inverse of $q_{\boldsymbol{\alpha},\boldsymbol{\beta}}$. 
For all $C\in\mathbb{Q}$, we have  $z_{\boldsymbol{\alpha},\boldsymbol{\beta}}(Cq)\in\mathbb{Z}[[q]]$ if and only if $q_{\boldsymbol{\alpha},\boldsymbol{\beta}}(Cz)\in\mathbb{Z}[[z]]$, thus our results and comments on $N$-integrality of $q$-coordinates also hold for the corresponding mirror maps.
\medskip

For a detailed description of the known results on $N$-integrality of $q_{\boldsymbol{\alpha},\boldsymbol{\beta}}$ and $z_{\boldsymbol{\alpha},\boldsymbol{\beta}}$, we refer the reader to Section \ref{section comparison} below. The tuples $\boldsymbol{\alpha}$ and $\boldsymbol{\beta}$ such that $F_{\boldsymbol{\alpha},\boldsymbol{\beta}}$ and $q_{\boldsymbol{\alpha},\boldsymbol{\beta}}$ are $N$-integral are entirely determined:

\medskip

$\bullet$ when $\boldsymbol{\alpha}$ and $\boldsymbol{\beta}$ are $R$-\textit{partitioned}.  Throughout this article, we say that $\boldsymbol{\alpha}$ is $R$-partitioned if $\boldsymbol{\alpha}$ is the concatenation of tuples of the form $(b/m)_{b\in\{1,\dots,m\}, \gcd(b,m)=1}$ for $m\in\mathbb{N}$, $m\geq 1$, up to permutation (\footnote{We say that, up to permutation, $(\alpha_1,\dots,\alpha_n)=(\alpha_1',\dots,\alpha_n')$ if there exists a permutation $\sigma$ of $\{1,\dots,n\}$ such that, for all $i\in\{1,\dots,n\}$, we have $\alpha_i=\alpha_{\sigma(i)}'$.}). The characterization for this class  of parameters is a consequence of Delaygue's Theorems $1$ and $3$ in \cite{Delaygue1};

$\bullet$ when all parameters of $\boldsymbol{\alpha}$ lie in $(0,1]$, $\boldsymbol{\beta}=(1,\dots,1)$ and $\boldsymbol{\alpha}$ and $\boldsymbol{\beta}$ have the same number of parameters $r\geq 2$. The characterization for this class of parameters is a consequence of Roques' articles \cite{Roques1} and \cite{Roques2}.

The starting point of the proofs of these characterizations is to reduce the problem to a  $p$-adic statement for any prime $p$, according to the following simple principle:
\begin{quote}
If $x\in\mathbb{Q}$, then $x\in\mathbb{Z}$ if and only if, for all prime $p$, $x$ belongs to the ring of $p$-adic integers $\mathbb{Z}_p$.
\end{quote}

Then, given a fixed prime $p$, one can apply the following lemma of Dieudonn\'e and Dwork \cite[Chap. IV, Sec. $2$, Lemma $3$]{Koblitz} and its corollary (see  \cite[Lemma~$5$]{Zudilin0})  to get rid of the exponential map in \eqref{define q}. 

\begin{Lemma}[Dieudonn\'e, Dwork]\label{Dieudonn\'e-Dwork}
Given a prime $p$ and $F(z)\in 1+z\mathbb{Q}_p[[z]]$, we have $F(z)\in 1+z\mathbb{Z}_p[[z]]$ if and only if $F(z^p)/F(z)^p\in 1+pz\mathbb{Z}_p[[z]]$.
\end{Lemma}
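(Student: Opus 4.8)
The plan is to prove both directions by comparing coefficients, the engine being the Frobenius-type congruence $G(z)^p\equiv G(z^p)\pmod p$, valid for every $G(z)\in 1+z\mathbb{Z}_p[[z]]$. This congruence follows from the multinomial theorem (every coefficient $\binom{p}{k_0,k_1,\dots}$ with at least two nonzero entries is divisible by $p$) together with Fermat's little theorem in $\mathbb{Z}_p$, which gives $c^p\equiv c\pmod p$ for $c\in\mathbb{Z}_p$; hence $G(z)^p\equiv\sum_j c_j^p z^{pj}\equiv\sum_j c_j z^{pj}=G(z^p)\pmod p$.

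For the direct implication, assume $F(z)\in 1+z\mathbb{Z}_p[[z]]$. Then $F(z)^{-1}$, and therefore $F(z)^{-p}$, also lie in $1+z\mathbb{Z}_p[[z]]$. By the Frobenius congruence all coefficients of $F(z^p)-F(z)^p$ are in $p\mathbb{Z}_p$, and since both series have constant term $1$ this difference lies in $pz\mathbb{Z}_p[[z]]$. Multiplying by the unit $F(z)^{-p}\in 1+z\mathbb{Z}_p[[z]]$ does not change the ideal $pz\mathbb{Z}_p[[z]]$, so $F(z^p)/F(z)^p-1\in pz\mathbb{Z}_p[[z]]$, which is the assertion.

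For the converse, write $F(z)=1+\sum_{n\geq 1}a_nz^n$ with $a_n\in\mathbb{Q}_p$ and put $G(z):=F(z^p)/F(z)^p\in 1+pz\mathbb{Z}_p[[z]]$, so that $F(z^p)=F(z)^pG(z)$. I prove by strong induction on $n$ that $a_n\in\mathbb{Z}_p$: fix $n\geq 1$, assume $a_1,\dots,a_{n-1}\in\mathbb{Z}_p$, and set $F_{n-1}(z):=1+\sum_{m=1}^{n-1}a_mz^m\in 1+z\mathbb{Z}_p[z]$, so $F(z)\equiv F_{n-1}(z)+a_nz^n\pmod{z^{n+1}}$. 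Extracting the coefficient of $z^n$ from $F(z^p)=F(z)^pG(z)$: expanding $F(z)^p=(F_{n-1}(z)+a_nz^n+\cdots)^p$ modulo $z^{n+1}$, the only term involving $a_n$ is the linear one $\binom{p}{1}a_nz^n$, so the right-hand side contributes $pa_n+[z^n]\big(F_{n-1}(z)^pG(z)\big)$, while $[z^n]F(z^p)=[z^n]F_{n-1}(z^p)$ (valid whether or not $p\mid n$, since $n/p\leq n-1$). Hence $pa_n=[z^n]F_{n-1}(z^p)-[z^n]\big(F_{n-1}(z)^pG(z)\big)$. Writing $G(z)=1+p\widetilde G(z)$ with $\widetilde G\in z\mathbb{Z}_p[[z]]$ gives $[z^n]\big(F_{n-1}(z)^pG(z)\big)=[z^n]F_{n-1}(z)^p+p\,[z^n]\big(F_{n-1}(z)^p\widetilde G(z)\big)$, whose second summand lies in $p\mathbb{Z}_p$. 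Finally, by the Frobenius congruence applied to $F_{n-1}$, the series $F_{n-1}(z^p)-F_{n-1}(z)^p$ has all coefficients in $p\mathbb{Z}_p$, so $[z^n]F_{n-1}(z^p)-[z^n]F_{n-1}(z)^p\in p\mathbb{Z}_p$. Combining, $pa_n\in p\mathbb{Z}_p$, i.e. $a_n\in\mathbb{Z}_p$, which completes the induction.

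The one genuinely delicate point is the bookkeeping in the converse: a naive comparison of the coefficients of $z^n$ only yields $pa_n\in\mathbb{Z}_p$, hence $v_p(a_n)\geq -1$, which is too weak. One must use both refinements at once — that $G-1$ carries a factor $p$, and that the discrepancy between $F_{n-1}(z^p)$ and $F_{n-1}(z)^p$ is itself divisible by $p$ — in order to upgrade $pa_n\in\mathbb{Z}_p$ to $pa_n\in p\mathbb{Z}_p$.
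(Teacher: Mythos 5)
Your proof is correct, and since the paper does not supply its own proof of this lemma (it cites Koblitz, [Chap.~IV, Sec.~2, Lemma~3]), your argument stands as a self-contained replacement; it is in fact the standard elementary argument for this result, and matches the approach in the cited reference. Both directions are clean: the forward direction correctly reduces to the Frobenius congruence $G(z)^p\equiv G(z^p)\pmod p$ together with the observation that multiplying by the unit $F(z)^{-p}\in 1+z\mathbb{Z}_p[[z]]$ preserves the ideal $pz\mathbb{Z}_p[[z]]$, and the converse correctly isolates $pa_n$ and extracts the two sources of an extra factor of $p$ (from $G-1\in pz\mathbb{Z}_p[[z]]$ and from the Frobenius congruence for $F_{n-1}$), which is precisely the refinement needed to upgrade $pa_n\in\mathbb{Z}_p$ to $a_n\in\mathbb{Z}_p$.
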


\begin{cor}\label{cor exp}
Given a prime $p$ and $f(z)\in z\mathbb{Q}[[z]]$, we have $\exp\big(f(z)\big)\in 1+z\mathbb{Z}_p[[z]]$ if and only if $f(z^p)-pf(z)\in pz\mathbb{Z}_p[[z]]$.
\end{cor}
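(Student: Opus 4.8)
The plan is to deduce Corollary \ref{cor exp} directly from Lemma \ref{Dieudonn\'e-Dwork} by applying the latter to $F(z):=\exp(f(z))$. First I would record that since $f(z)\in z\mathbb{Q}[[z]]\subset z\mathbb{Q}_p[[z]]$, the formal exponential $F(z)=\exp(f(z))$ is a well-defined element of $1+z\mathbb{Q}_p[[z]]$: no denominator issues with the $n!$ appearing in $\exp$ arise at the level of formal power series over $\mathbb{Q}_p$, only for membership in $\mathbb{Z}_p[[z]]$, which is exactly what we want to characterize. Thus $F$ satisfies the hypothesis of Lemma \ref{Dieudonn\'e-Dwork}, and $\exp(f(z))\in 1+z\mathbb{Z}_p[[z]]$ if and only if $F(z^p)/F(z)^p\in 1+pz\mathbb{Z}_p[[z]]$.

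Next I would compute $F(z^p)/F(z)^p$ using the homomorphism property of $\exp$ on $z\mathbb{Q}_p[[z]]$: we have $F(z^p)=\exp(f(z^p))$ and $F(z)^p=\exp(pf(z))$, so
$$
\frac{F(z^p)}{F(z)^p}=\exp\bigl(f(z^p)-pf(z)\bigr).
$$
Set $g(z):=f(z^p)-pf(z)\in z\mathbb{Q}_p[[z]]$. It then remains to show that $\exp(g(z))\in 1+pz\mathbb{Z}_p[[z]]$ if and only if $g(z)\in pz\mathbb{Z}_p[[z]]$, after which the corollary follows by combining this equivalence with the equivalence from Lemma \ref{Dieudonn\'e-Dwork} and the computation above (noting that $g$ has rational coefficients, since $f$ does, so $g(z)\in pz\mathbb{Z}_p[[z]]$ is equivalent to $g(z)\in pz\mathbb{Q}[[z]]\cap z\mathbb{Z}_p[[z]]$, i.e.\ to the stated condition $f(z^p)-pf(z)\in pz\mathbb{Z}_p[[z]]$).

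For that last equivalence I would argue termwise on the coefficients. Write $g(z)=\sum_{n\geq 1}c_nz^n$ with $c_n\in\mathbb{Q}_p$. The implication ``$g\in pz\mathbb{Z}_p[[z]]$ $\Rightarrow$ $\exp(g)\in 1+pz\mathbb{Z}_p[[z]]$'' is the standard convergence estimate for the $p$-adic exponential: if $v_p(c_n)\geq 1$ for all $n$, then each monomial $c_n z^n$ has valuation $\geq 1>1/(p-1)$, hence lies in the domain of convergence of $\exp$, and more precisely $\exp(c_nz^n)\in 1+pz^n\mathbb{Z}_p[[z^n]]$ (using $v_p(k!)\leq (k-1)/(p-1)$, so $v_p(c_n^k/k!)\geq k-(k-1)/(p-1)\geq 1$ for $k\geq 1$); multiplying these together over $n$ keeps us in $1+pz\mathbb{Z}_p[[z]]$. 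For the converse I would induct on $n$: suppose $\exp(g(z))=1+\sum_{m\geq 1}b_mz^m$ with all $b_m\in p\mathbb{Z}_p$, and suppose inductively that $c_1,\dots,c_{n-1}\in p\mathbb{Z}_p$; comparing coefficients of $z^n$ in $\exp(g)$, one gets $b_n=c_n+P_n(c_1,\dots,c_{n-1})$ where $P_n$ is a polynomial with $\mathbb{Z}_p$-coefficients (in fact a universal polynomial with $\mathbb{Q}$-coefficients all of whose denominators, coming from the exponential, contribute positive valuation once the arguments are in $p\mathbb{Z}_p$), hence $P_n(c_1,\dots,c_{n-1})\in p\mathbb{Z}_p$ and therefore $c_n=b_n-P_n(\dots)\in p\mathbb{Z}_p$. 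The main obstacle, and the only point requiring genuine care, is making this last induction clean: one needs the precise fact that when $c_1,\dots,c_{n-1}\in p\mathbb{Z}_p$ the ``lower-order'' contribution $P_n(c_1,\dots,c_{n-1})$ to the $z^n$-coefficient of $\exp(g)$ already lies in $p\mathbb{Z}_p$ despite the factorials in the denominators, which again comes down to the estimate $v_p(k!)\leq (k-1)/(p-1)$. Everything else is formal manipulation.
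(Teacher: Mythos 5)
Your proof is correct, and it is the standard derivation; the paper itself does not prove this corollary but simply cites \cite[Lemma~5]{Zudilin0}, and the argument you give is the expected one: apply Lemma~\ref{Dieudonn\'e-Dwork} to $F=\exp(f)$, rewrite $F(z^p)/F(z)^p=\exp\bigl(f(z^p)-pf(z)\bigr)$, and then show that for $g\in z\mathbb{Q}_p[[z]]$ one has $\exp(g)\in 1+pz\mathbb{Z}_p[[z]]$ iff $g\in pz\mathbb{Z}_p[[z]]$. Your valuation estimate $v_p(c^k/k!)\geq k-(k-1)/(p-1)\geq 1$ for $v_p(c)\geq 1$ is correct and covers $p=2$ (with equality when $k$ is a power of $2$), and the induction for the converse is sound since the ``lower-order'' contribution to the coefficient of $z^n$ is a finite sum of terms $c_{i_1}\cdots c_{i_k}/k!$ with $k\geq 2$ and all $i_j<n$. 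One small streamlining worth noting: the converse can be done without induction by applying the formal $p$-adic logarithm to $\exp(g)=1+h$ with $h\in pz\mathbb{Z}_p[[z]]$, writing $g=\sum_{k\geq 1}(-1)^{k-1}h^k/k$ and using the weaker (and simpler) estimate $v_p(k)\leq k-1$, which immediately gives $g\in pz\mathbb{Z}_p[[z]]$.
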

By Corollary \ref{cor exp}, $q_{\boldsymbol{\alpha},\boldsymbol{\beta}}$ is $N$-integral if and only if there exists $C\in\mathbb{Q}$ such that, for all primes $p$, we have
\begin{equation}\label{Cond reduced}
\frac{G_{\boldsymbol{\alpha},\boldsymbol{\beta}}(Cz^p)}{F_{\boldsymbol{\alpha},\boldsymbol{\beta}}(Cz^p)}-p\frac{G_{\boldsymbol{\alpha},\boldsymbol{\beta}}(Cz)}{F_{\boldsymbol{\alpha},\boldsymbol{\beta}}(Cz)}\in pz\mathbb{Z}_p[[z]].
\end{equation}

One of the main results of this article (Theorem \ref{theo expand} below) provides an analogous version of \eqref{Cond reduced} for a large class of tuples $\boldsymbol{\alpha}$ and $\boldsymbol{\beta}$ and whith $\mathbb{Z}_p$ replaced by certain algebras of $\mathbb{Z}_p$-valued functions. This result enables us to prove a complete characterization (Theorem \ref{Criterion} below) of tuples $\boldsymbol{\alpha}$ and $\boldsymbol{\beta}$ such that $F_{\boldsymbol{\alpha},\boldsymbol{\beta}}$ and $q_{\boldsymbol{\alpha},\boldsymbol{\beta}}$ are $N$-integral, without any restriction on the shape of $\boldsymbol{\alpha}$ nor $\boldsymbol{\beta}$.

\subsection{Additional notations}

To  state Theorem \ref{theo expand}, we first define some algebras of $\mathbb{Z}_p$-valued functions and a constant associated with $(\boldsymbol{\alpha},\boldsymbol{\beta})$.

$\bullet$ For all primes $p$ and all positive integers $n$, we write $\mathfrak{A}_{p,n}$, respectively $\mathfrak{A}_{p,n}^\ast$, for the $\mathbb{Z}_p$-algebra of the functions $f:(\mathbb{Z}_p^{\times})^n\rightarrow\mathbb{Z}_p$ such that, for all positive integers $m$, all $\mathbf{x}\in(\mathbb{Z}_p^{\times})^n$ and all $\mathbf{a}\in\mathbb{Z}_p^n$, we have 
$$
f(\mathbf{x}+\mathbf{a}p^m)\equiv f(\mathbf{x})\mod p^{m}\mathbb{Z}_p,
$$
respectively $f(\mathbf{x}+\mathbf{a}p^m)\equiv f(\mathbf{x})\mod p^{m-1}\mathbb{Z}_p$.

$\bullet$ If $D$ is a positive integer coprime to $p$, then, for all $\nu\in\mathbb{N}$, and all $b\in\{1,\dots,D\}$ coprime to $D$, we write $\Omega_{b}(p^\nu,D)$ for the set of all $t\in\{1,\dots,p^\nu D\}$ coprime to $p^\nu D$ satisfying $t\equiv b\mod D$. 

$\bullet$ We write $\mathcal{A}_{b}(p^\nu,D)$, respectively $\mathcal{A}_{b}(p^\nu,D)^\ast$, for the $\mathbb{Z}_p$-algebra of the functions $f:\Omega_{b}(p^\nu,D)\rightarrow\mathbb{Z}_p$ such that, for all positive integers $m$ and all $t_1,t_2\in\Omega_{b}(p^\nu,D)$ satisfying $t_1\equiv t_2\mod p^m$, we have $f(t_1)\equiv f(t_2)\mod p^m\mathbb{Z}_p$, respectively $f(t_1)\equiv f(t_2)\mod p^{m-1}\mathbb{Z}_p$. 

$\bullet$ For all $t\in\Omega_b(p^\nu,D)$ and all $r\in\mathbb{N}$, we write $t^{(r)}$ for the unique element of $\{1,\dots,p^\nu D\}$ satisfying
$t^{(r)}\equiv t\mod p^\nu$ and $p^rt^{(r)}\equiv t\mod D$.

$\bullet$ Furthermore, if $\boldsymbol{\beta}\notin\mathbb{Z}^s$, then we write $\mathfrak{m}_{\boldsymbol{\alpha},\boldsymbol{\beta}}$ for the number of elements of $\boldsymbol{\alpha}$ and $\boldsymbol{\beta}$ with exact denominator divisible by $4$. We write $d_{\boldsymbol{\alpha},\boldsymbol{\beta}}^\ast$ for the integer obtained by dividing $d_{\boldsymbol{\alpha},\boldsymbol{\beta}}$ by the product of its prime divisors. We set $C_{\boldsymbol{\alpha},\boldsymbol{\beta}}'=2C_{\langle\boldsymbol{\alpha}\rangle,\langle\boldsymbol{\beta}\rangle}$ and $d_{\boldsymbol{\alpha},\boldsymbol{\beta}}'=2d_{\boldsymbol{\alpha},\boldsymbol{\beta}}^\ast$ if $\boldsymbol{\beta}\notin\mathbb{Z}^s$ and if $\mathfrak{m}_{\boldsymbol{\alpha},\boldsymbol{\beta}}$ is odd, and we set $C_{\boldsymbol{\alpha},\boldsymbol{\beta}}'=C_{\langle\boldsymbol{\alpha}\rangle,\langle\boldsymbol{\beta}\rangle}$ and $d_{\boldsymbol{\alpha},\boldsymbol{\beta}}'=d_{\boldsymbol{\alpha},\boldsymbol{\beta}}^\ast$ otherwise.

\subsection{Statements of the main results}

By Theorem A, the $N$-integrality of $F_{\boldsymbol{\alpha},\boldsymbol{\beta}}$ depends on the graphs of Christol's functions $\xi_{\boldsymbol{\alpha},\boldsymbol{\beta}}(a,\cdot)$. The $N$-integrality of $q_{\boldsymbol{\alpha},\boldsymbol{\beta}}$ also strongly depends on the graphs of these functions. More precisely, let $m_{\boldsymbol{\alpha},\boldsymbol{\beta}}(a)$ denote the smallest element in the ordered set $\big(\{a\alpha_1,\dots,a\alpha_r,a\beta_1,\dots,a\beta_s\},\preceq\big)$. Let $H_{\boldsymbol{\alpha},\boldsymbol{\beta}}$ denotes the assertion
\begin{quote}
$H_{\boldsymbol{\alpha},\boldsymbol{\beta}}$: ``For all $a\in\{1,\dots,d_{\boldsymbol{\alpha},\boldsymbol{\beta}}\}$ coprime to $d_{\boldsymbol{\alpha},\boldsymbol{\beta}}$ and all $x\in\mathbb{R}$ satisfying $m_{\boldsymbol{\alpha},\boldsymbol{\beta}}(a)\preceq x\prec a$, we have $\xi_{\boldsymbol{\alpha},\boldsymbol{\beta}}(a,x)\geq 1$.''
\end{quote}

One of our main results is the following.

\begin{theo}\label{theo expand}
Let $\boldsymbol{\alpha}$ and $\boldsymbol{\beta}$ be tuples of parameters in $\mathbb{Q}\setminus\mathbb{Z}_{\leq 0}$ with the same number of elements such that $\langle\boldsymbol{\alpha}\rangle$ and $\langle\boldsymbol{\beta}\rangle$ are disjoint (this is equivalent to the irreducibility of $\mathcal{L}_{\boldsymbol{\alpha},\boldsymbol{\beta}}$) and such that $H_{\boldsymbol{\alpha},\boldsymbol{\beta}}$ holds.

Let $p$ be a fixed prime  and write $d_{\boldsymbol{\alpha},\boldsymbol{\beta}}=p^\nu D$ with $\nu,D\in\mathbb{N}$ and $D$ coprime to $p$. Let $b\in\{1,\dots,D\}$ be coprime to $D$. Then, there exists a sequence $(R_{k,b})_{k\geq 0}$ of elements in $\mathcal{A}_b(p^\nu,D)^\ast$ such that, for all $t\in\Omega_b(p^\nu,D)$, we have
$$
\frac{G_{\underline{t^{(1)}\boldsymbol{\alpha}},\underline{t^{(1)}\boldsymbol{\beta}}}}{F_{\underline{t^{(1)}\boldsymbol{\alpha}},\underline{t^{(1)}\boldsymbol{\beta}}}}(C'_{\boldsymbol{\alpha},\boldsymbol{\beta}}z^p)-p\frac{G_{\underline{t\boldsymbol{\alpha}},\underline{t\boldsymbol{\beta}}}}{F_{\underline{t\boldsymbol{\alpha}},\underline{t\boldsymbol{\beta}}}}(C'_{\boldsymbol{\alpha},\boldsymbol{\beta}}z)= p\sum_{k=0}^{\infty}R_{k,b}(t)z^k.
$$
Furthermore, if $p$ is a prime divisor of $d_{\boldsymbol{\alpha},\boldsymbol{\beta}}$, then, for all $k\in\mathbb{N}$,
\begin{itemize}
\item if $\boldsymbol{\beta}\in\mathbb{Z}^r$, then we have $R_{k,b}\in p^{-1-\lfloor\lambda_p/(p-1)\rfloor}\mathcal{A}_b(p^\nu,D)$;
\item if $\boldsymbol{\beta}\notin\mathbb{Z}^r$ and $p-1\nmid\lambda_p$, then we have $R_{k,b}\in\mathcal{A}_b(p^\nu,D)$;
\item if $\boldsymbol{\beta}\notin\mathbb{Z}^r$, $\mathfrak{m}_{\boldsymbol{\alpha},\boldsymbol{\beta}}$ is odd and $p=2$, then we have $R_{k,b}\in \mathcal{A}_b(p^\nu,D)$.
\end{itemize}
\end{theo}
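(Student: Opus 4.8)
The plan is to reduce everything to an explicit study of the series $G_{\boldsymbol{\alpha},\boldsymbol{\beta}}/F_{\boldsymbol{\alpha},\boldsymbol{\beta}}$ via Dwork's theory. First I would recall the classical formal congruences machinery: writing $A(n) = \prod_i(\alpha_i)_n/\prod_j(\beta_j)_n$ for the coefficients of $F_{\boldsymbol{\alpha},\boldsymbol{\beta}}$, Dwork's argument expresses the ratio $A(n)/A(\lfloor n/p\rfloor)$, and hence the quotient $F_{\underline{t^{(1)}\boldsymbol{\alpha}},\underline{t^{(1)}\boldsymbol{\beta}}}(z^p)/F_{\underline{t\boldsymbol{\alpha}},\underline{t\boldsymbol{\beta}}}(z)^p$ (after the rescaling by $C'_{\boldsymbol{\alpha},\boldsymbol{\beta}}$), in terms of $p$-adic $\Gamma$-values and Christol's functions $\xi$; the hypothesis $H_{\boldsymbol{\alpha},\boldsymbol{\beta}}$ is exactly what guarantees that this quotient lies in $1+pz\mathbb{Z}_p[[z]]$ up to the correcting power of $p$ recorded in $\lambda_p$ and $\mathfrak{m}_{\boldsymbol{\alpha},\boldsymbol{\beta}}$. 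I would package this as a lemma stating that $F_{\underline{t^{(1)}\boldsymbol{\alpha}},\underline{t^{(1)}\boldsymbol{\beta}}}(C'z^p) = u(t,z)\, F_{\underline{t\boldsymbol{\alpha}},\underline{t\boldsymbol{\beta}}}(C'z)^p$ with $u(t,z)\in 1+pz\mathcal{A}_b(p^\nu,D)[[z]]$ (in the non-integer-$\boldsymbol{\beta}$ regular case), the point being that the dependence on $t$ through the congruence class modulo $p^m$ is $\mathcal{A}_b$-controlled because the only $t$-dependence is through $\underline{t\alpha_i}$, $\underline{t\beta_j}$ and $p$-adic $\Gamma$-factors, all of which are locally constant to the right precision.

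Next I would differentiate logarithmically. Since $q_{\boldsymbol{\alpha},\boldsymbol{\beta}}(z) = z\exp(G/F)$, the quantity $G/F$ satisfies $\theta(G_{\boldsymbol{\alpha},\boldsymbol{\beta}}/F_{\boldsymbol{\alpha},\boldsymbol{\beta}}) = \theta\log(q_{\boldsymbol{\alpha},\boldsymbol{\beta}}/z)$, and one should recover $G/F$ from the datum of the $F$'s by the standard device: the relation $F(z^p) = u(z) F(z)^p$ gives, upon applying $\theta$ and dividing, $p\,\theta F(z^p)/F(z^p)\cdot(\text{something}) = \theta u/u + p\,\theta F(z)/F(z)$; combined with the analogous relation for the "$G+\log\cdot F$" solution — which is the derivative datum governing $G/F$ — one obtains precisely the claimed identity
$$
\frac{G_{\underline{t^{(1)}\boldsymbol{\alpha}},\underline{t^{(1)}\boldsymbol{\beta}}}}{F_{\underline{t^{(1)}\boldsymbol{\alpha}},\underline{t^{(1)}\boldsymbol{\beta}}}}(C'z^p) - p\,\frac{G_{\underline{t\boldsymbol{\alpha}},\underline{t\boldsymbol{\beta}}}}{F_{\underline{t\boldsymbol{\alpha}},\underline{t\boldsymbol{\beta}}}}(C'z) = \log u(t,z) + (\text{const})
$$
with the constant absorbed since both sides vanish at $z=0$. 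Because $u(t,z)\in 1+pz\mathcal{A}_b(p^\nu,D)[[z]]$ and $\log(1+pw) = \sum_{k\geq 1}(-1)^{k-1}p^kw^k/k \in pw\mathbb{Z}_p[[w]]$ whenever $w\in z\mathbb{Z}_p[[z]]$ (here one must check the $p$ in the denominator $k$ is beaten by $p^k$, i.e. $v_p(p^k/k)\geq 1$, which holds), the right-hand side is $p\sum_k R_{k,b}(t)z^k$ with $R_{k,b}\in\mathcal{A}_b(p^\nu,D)^\ast$; the slight loss from $\mathcal{A}_b$ to $\mathcal{A}_b^\ast$ comes from the $1/k$ factors and from the fact that composition/multiplication of $\mathcal{A}_b$-functions with the precision-losing substitution $z\mapsto z^p$ costs one power of $p$. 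The three refined cases at the end then follow by feeding in the three corresponding precisions for $u$: when $\boldsymbol{\beta}\in\mathbb{Z}^r$ one picks up the extra pole $p^{-1-\lfloor\lambda_p/(p-1)\rfloor}$ coming from the normalization constant $C_{\boldsymbol{\alpha},\boldsymbol{\beta}}$ of Theorem \ref{theo Const}; when $\boldsymbol{\beta}\notin\mathbb{Z}^r$ and either $p-1\nmid\lambda_p$ or ($p=2$ and $\mathfrak{m}$ odd) the factor $C'_{\boldsymbol{\alpha},\boldsymbol{\beta}}$ already carries the correct extra factor of $2$ so that $u(t,z)\in 1+pz\mathcal{A}_b(p^\nu,D)[[z]]$ with no pole, giving $R_{k,b}\in\mathcal{A}_b(p^\nu,D)$.

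The main obstacle, which is where essentially all the real work lies, is establishing the key lemma $F_{\underline{t^{(1)}\boldsymbol{\alpha}},\underline{t^{(1)}\boldsymbol{\beta}}}(C'z^p) = u(t,z)F_{\underline{t\boldsymbol{\alpha}},\underline{t\boldsymbol{\beta}}}(C'z)^p$ with $u$ in the right algebra and with the right $p$-adic precision. This requires (i) an explicit Dwork-type formal congruence for the ratios $A(n)$ of products of Pochhammer symbols, which is the generalized $p$-adic formal congruences theorem the authors announce in the abstract; (ii) a careful valuation bookkeeping, using Christol's $\xi_{\boldsymbol{\alpha},\boldsymbol{\beta}}(a,\cdot)$ and the hypothesis $H_{\boldsymbol{\alpha},\boldsymbol{\beta}}$, to show that the rescaling by $C'_{\boldsymbol{\alpha},\boldsymbol{\beta}}$ (built from the denominators $d(\alpha_i),d(\beta_j)$, the Gauss-multiplication factor in $p$-adic $\Gamma$, and the delicate factor $2$ when $4\mid$ some denominator — this is the role of $\mathfrak{m}_{\boldsymbol{\alpha},\boldsymbol{\beta}}$ and explains the $p=2$ anomaly via the failure of the $p$-adic $\Gamma$ reflection/multiplication formulae at $2$) clears exactly the $p$-power needed; and (iii) verifying the locally-constant-in-$t$ property to precision $p^{m-1}$ rather than $p^m$, i.e. that the coefficients of $u(t,z)$ land in $\mathcal{A}_b(p^\nu,D)^\ast$ — this star (loss of one $p$) ultimately feeds the star in the conclusion and must be tracked honestly through the Dwork congruences, the Gauss sums $t\mapsto t^{(r)}$, and the $p$-adic $\Gamma$-function's own continuity modulus. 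I would structure this as a sequence of increasingly explicit lemmas on $p$-adic valuations of Pochhammer quotients, reserving the hypergeometric $\Gamma$-function identities for the very last step.
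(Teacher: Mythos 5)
Your proposal goes wrong at its central step. You claim that
$$
\frac{G_{\underline{t^{(1)}\boldsymbol{\alpha}},\underline{t^{(1)}\boldsymbol{\beta}}}}{F_{\underline{t^{(1)}\boldsymbol{\alpha}},\underline{t^{(1)}\boldsymbol{\beta}}}}(C'_{\boldsymbol{\alpha},\boldsymbol{\beta}}z^p)-p\,\frac{G_{\underline{t\boldsymbol{\alpha}},\underline{t\boldsymbol{\beta}}}}{F_{\underline{t\boldsymbol{\alpha}},\underline{t\boldsymbol{\beta}}}}(C'_{\boldsymbol{\alpha},\boldsymbol{\beta}}z)=\log u(t,z)+\textup{const},
$$
where $u$ is the unit in the Dieudonn\'e--Dwork relation $F_{\underline{t^{(1)}\boldsymbol{\alpha}},\underline{t^{(1)}\boldsymbol{\beta}}}(C'_{\boldsymbol{\alpha},\boldsymbol{\beta}}z^p)=u(t,z)\,F_{\underline{t\boldsymbol{\alpha}},\underline{t\boldsymbol{\beta}}}(C'_{\boldsymbol{\alpha},\boldsymbol{\beta}}z)^p$, and that this follows by logarithmic differentiation. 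It does not. Applying $\theta$ to the $F$-relation and dividing gives $p\,(\theta F/F)(z^p)-p\,(\theta F/F)(z)=(\theta u/u)(z)$, a congruence for $\theta\log F$, not for $\theta(G/F)$ --- these are entirely different power series, and $G/F$ cannot be recovered from $F$ alone. The ``analogous relation for the $G+\log(z)F$ solution'' that is supposed to supply the missing bridge is never written down, and it cannot exist in the form you gesture at: $\omega_2=G+\log(z)F$ is not a power series, and there is no Frobenius identity $\omega_2(z^p)=v(z)\,\omega_2(z)^p$. The $F$-unit $u$ is controlled by (normalized) $\Gamma$-values; the series $G$ involves the harmonic-sum data $\sum_i H_{\alpha_i}(n)-\sum_j H_{\beta_j}(n)$, which corresponds to \emph{logarithmic derivatives} of $\Gamma$-factors and is a genuinely independent datum. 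Your proposed lemma on $u$ is true (it is in effect the Dieudonn\'e--Dwork reformulation of $N$-integrality of $F$ under $H_{\boldsymbol{\alpha},\boldsymbol{\beta}}$, upgraded to $\mathcal{A}_b$-values), but it simply has no grip on $G$.

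The proof has to engage with $G$ directly. What the paper does instead is clear denominators (the $F$'s are invertible in $\mathcal{K}_b[[z]]$ by the $\mathcal{Q}_{r,\cdot}$-decomposition) and compute the coefficient of $z^{a+Kp}$ of $G_{\underline{t^{(1)}\boldsymbol{\alpha}},\underline{t^{(1)}\boldsymbol{\beta}}}(C'_{\boldsymbol{\alpha},\boldsymbol{\beta}}z^p)F_{\underline{t\boldsymbol{\alpha}},\underline{t\boldsymbol{\beta}}}(C'_{\boldsymbol{\alpha},\boldsymbol{\beta}}z)-pG_{\underline{t\boldsymbol{\alpha}},\underline{t\boldsymbol{\beta}}}(C'_{\boldsymbol{\alpha},\boldsymbol{\beta}}z)F_{\underline{t^{(1)}\boldsymbol{\alpha}},\underline{t^{(1)}\boldsymbol{\beta}}}(C'_{\boldsymbol{\alpha},\boldsymbol{\beta}}z^p)$; one then uses Dwork's harmonic-sum congruence $pH_{\underline{t\alpha}}(a+jp)\equiv H_{\underline{t^{(1)}\alpha}}(j)+\rho(a,\underline{t\alpha})/(\mathfrak{D}_p(\underline{t\alpha})+j)\bmod p\mathbb{Z}_p$, an extra valuation estimate to kill the residue term, Dwork's combinatorial telescoping identity to reorganize the resulting double sum into $p^s$-blocks, and finally the generalized formal congruences theorem (Theorem \ref{congruences formelles}) applied to the $\mathcal{A}_b$-valued sequences $\mathcal{Q}_{r,\cdot}(n)=(C'_{\boldsymbol{\alpha},\boldsymbol{\beta}})^n(\underline{t^{(r)}\boldsymbol{\alpha}})_n/(\underline{t^{(r)}\boldsymbol{\beta}})_n$ and to a carefully chosen sequence of sets $\mathcal{N}_r$ encoding the jump loci $\tau(r,\ell)$ of the step functions $\Delta$. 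The refined conclusions --- $\mathcal{A}_b$ versus $\mathcal{A}_b^\ast$, the exponent $-1-\lfloor\lambda_p/(p-1)\rfloor$ when $\boldsymbol{\beta}\in\mathbb{Z}^r$, the $\mathfrak{m}_{\boldsymbol{\alpha},\boldsymbol{\beta}}$-parity quirk at $p=2$ --- all fall out of this coefficientwise analysis (together with the Euler-quotient continuity estimate). Your proposal has no mechanism for feeding the harmonic sums into the argument, so none of that machinery can be reached from it.
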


In Theorem \ref{theo expand} and throughout this article, when $\boldsymbol{\alpha}=(\alpha_1,\dots,\alpha_r)$ and $f$ is a map defined on $\{\alpha_1,\dots,\alpha_r\}$, we write $f(\boldsymbol{\alpha})$ for $\big(f(\alpha_1),\dots,f(\alpha_r)\big)$. 

\begin{Remarks}

$\bullet$ If $\boldsymbol{\alpha}$ and $\boldsymbol{\beta}$ satisfy hypothesis of Theorem \ref{theo expand}, then using Theorem A, we obtain that $F_{\boldsymbol{\alpha},\boldsymbol{\beta}}$ is $N$-integral.

$\bullet$  If $\boldsymbol{\beta}\in\mathbb{Z}^r$, then $\lambda_p\leq -1$ and $-1-\lfloor\lambda_p/(p-1)\rfloor\geq 0$ so that $p^{-1-\lfloor\lambda_p/(p-1)\rfloor}\mathcal{A}_b(p^\nu,D)
\subset\mathcal{A}_b(p^\nu,D)\subset\mathcal{A}_b(p^\nu,D)^\ast$.

\end{Remarks}

The $N$-integrality of $q_{\boldsymbol{\alpha},\boldsymbol{\beta}}$ is closely related to the  $N$-integrality of a product $\exp\big(S_{\boldsymbol{\alpha},\boldsymbol{\beta}}(z)\big)$ of $q$-coordinates associated with $(\boldsymbol{\alpha},\boldsymbol{\beta})$, that we now define. We set 
$$
S_{\boldsymbol{\alpha},\boldsymbol{\beta}}(z):=\underset{\gcd(a,d)=1}{\sum_{a=1}^d}\frac{G_{\langle a\boldsymbol{\alpha}\rangle,\langle a\boldsymbol{\beta}\rangle}(z)}{F_{\langle a\boldsymbol{\alpha}\rangle,\langle a\boldsymbol{\beta}\rangle}(z)},
$$
with $d=d_{\boldsymbol{\alpha},\boldsymbol{\beta}}$, so that
$$
\exp\big(S_{\boldsymbol{\alpha},\boldsymbol{\beta}}(z)\big)=\frac{1}{z^{\varphi(d)}}\underset{\gcd(a,d)=1}{\prod_{a=1}^d}q_{\langle a\boldsymbol{\alpha}\rangle,\langle a\boldsymbol{\beta}\rangle}(z),
$$
where $\varphi$ denotes Euler's totient function. Our criterion for the $N$-integrality of $q_{\boldsymbol{\alpha},\boldsymbol{\beta}}(z)$ and $\exp\big(S_{\boldsymbol{\alpha},\boldsymbol{\beta}}(z)\big)$ is the following.

\begin{theo}\label{Criterion}
Let $\boldsymbol{\alpha}:=(\alpha_1,\dots,\alpha_r)$ and $\boldsymbol{\beta}:=(\beta_1,\dots,\beta_s)$ be tuples of parameters in $\mathbb{Q}\setminus\mathbb{Z}_{\leq 0}$ such that $\langle\boldsymbol{\alpha}\rangle$ and $\langle\boldsymbol{\beta}\rangle$ are disjoint (this is equivalent to the irreducibility of $\mathcal{L}_{\boldsymbol{\alpha},\boldsymbol{\beta}}$) and such that $F_{\boldsymbol{\alpha},\boldsymbol{\beta}}$ is $N$-integral. Then,
\begin{enumerate}
\item For all $a\in\{1,\dots,d_{\boldsymbol{\alpha},\boldsymbol{\beta}}\}$ coprime to $d_{\boldsymbol{\alpha},\boldsymbol{\beta}}$, all Taylor coefficients at the origin of $q_{\langle a\boldsymbol{\alpha}\rangle,\langle a\boldsymbol{\beta}\rangle}(z)$ are positive, but its constant term which is $0$;

\item The following assertions are equivalent.
\begin{itemize}
\item[$(i)$] $q_{\boldsymbol{\alpha},\boldsymbol{\beta}}(z)$ is $N$-integral;

\item[$(ii)$] $q_{\boldsymbol{\alpha},\boldsymbol{\beta}}(C_{\boldsymbol{\alpha},\boldsymbol{\beta}}'z)\in\mathbb{Z}[[z]]$;

\item[$(iii)$] Assertion $H_{\boldsymbol{\alpha},\boldsymbol{\beta}}$ holds, we have $r=s$ and, for all $a\in\{1,\dots,d_{\boldsymbol{\alpha},\boldsymbol{\beta}}\}$ coprime to $d_{\boldsymbol{\alpha},\boldsymbol{\beta}}$, we have $q_{\boldsymbol{\alpha},\boldsymbol{\beta}}(z)=q_{\langle a\boldsymbol{\alpha}\rangle,\langle a\boldsymbol{\beta}\rangle}(z)$.
\end{itemize}
\medskip
\noindent Furthermore, if $(i)$ holds, then we have either $\boldsymbol{\alpha}=(1/2)$ and $\boldsymbol{\beta}=(1)$, or $s\geq 2$ and there are at least two $1$'s in $\langle\boldsymbol{\beta}\rangle$.

\item If $r=s$ and if $H_{\boldsymbol{\alpha},\boldsymbol{\beta}}$ holds, then $\exp\big(S_{\boldsymbol{\alpha},\boldsymbol{\beta}}(z)\big)$ is $N$-integral and we have
$$
\exp\left(\frac{S_{\boldsymbol{\alpha},\boldsymbol{\beta}}(C_{\boldsymbol{\alpha},\boldsymbol{\beta}}'z)}{\mathfrak{n}_{\boldsymbol{\alpha},\boldsymbol{\beta}}}\right)\in\mathbb{Z}[[z]],
$$
where
$$
\mathfrak{n}_{\boldsymbol{\alpha},\boldsymbol{\beta}}:=d_{\boldsymbol{\alpha},\boldsymbol{\beta}}\prod_{p\mid d_{\boldsymbol{\alpha},\boldsymbol{\beta}}}p^{-2-\left\lfloor\frac{\lambda_p}{p-1}\right\rfloor}\textup{ if $\boldsymbol{\beta}\in\mathbb{Z}^s$,}\quad\textup{and}\quad\mathfrak{n}_{\boldsymbol{\alpha},\boldsymbol{\beta}}:=d_{\boldsymbol{\alpha},\boldsymbol{\beta}}'\underset{p-1\mid\lambda_p}{\prod_{p\mid d_{\boldsymbol{\alpha},\boldsymbol{\beta}}'}}p^{-1}\textup{ otherwise.}
$$
\end{enumerate}
\end{theo}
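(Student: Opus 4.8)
The plan is to treat part~(1) and the trivial implication $(ii)\Rightarrow(i)$ first, then $(iii)\Rightarrow(ii)$, then the converse $(i)\Rightarrow(iii)$ (the heart of the matter), and finally part~(3); Corollary~\ref{cor exp} and Theorem~\ref{theo expand} are the workhorses. For part~(1): since $q_{\langle a\boldsymbol{\alpha}\rangle,\langle a\boldsymbol{\beta}\rangle}(z)=z\exp\big(G_{\langle a\boldsymbol{\alpha}\rangle,\langle a\boldsymbol{\beta}\rangle}/F_{\langle a\boldsymbol{\alpha}\rangle,\langle a\boldsymbol{\beta}\rangle}\big)$, it suffices to prove that $G_{\langle a\boldsymbol{\alpha}\rangle,\langle a\boldsymbol{\beta}\rangle}/F_{\langle a\boldsymbol{\alpha}\rangle,\langle a\boldsymbol{\beta}\rangle}$ has non-negative Taylor coefficients and strictly positive linear coefficient, for then $\exp$ of it has strictly positive coefficients in every degree $\ge 1$. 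By Theorem~A the $N$-integrality of $F_{\boldsymbol{\alpha},\boldsymbol{\beta}}$ is equivalent to $\xi_{\boldsymbol{\alpha},\boldsymbol{\beta}}(a,\cdot)\ge 0$; this says exactly that, sorting $\{a\alpha_i\}$ and $\{a\beta_j\}$ for $\preceq$, the $\ell$-th smallest $a\beta$ is $\succeq$ the $\ell$-th smallest $a\alpha$, hence $\langle\ell\text{-th }a\alpha\rangle\le\langle\ell\text{-th }a\beta\rangle$; combined with the disjointness of $\langle a\boldsymbol{\alpha}\rangle$ and $\langle a\boldsymbol{\beta}\rangle$ (valid for all $a$ coprime to $d_{\boldsymbol{\alpha},\boldsymbol{\beta}}$ by the standing irreducibility hypothesis) this gives $\sum_i 1/(\langle a\alpha_i\rangle+k)>\sum_j 1/(\langle a\beta_j\rangle+k)$ for every $k\ge 0$, so every Taylor coefficient of $G_{\langle a\boldsymbol{\alpha}\rangle,\langle a\boldsymbol{\beta}\rangle}$ is $>0$; the non-negativity of the coefficients of the quotient $G/F$ is the known archimedean companion (positivity of the Taylor coefficients of hypergeometric $q$-coordinates) of these statements. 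The implication $(ii)\Rightarrow(i)$ is immediate.

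For $(iii)\Rightarrow(ii)$, assume $H_{\boldsymbol{\alpha},\boldsymbol{\beta}}$, $r=s$, and $q_{\boldsymbol{\alpha},\boldsymbol{\beta}}=q_{\langle a\boldsymbol{\alpha}\rangle,\langle a\boldsymbol{\beta}\rangle}$ for all $a$ coprime to $d:=d_{\boldsymbol{\alpha},\boldsymbol{\beta}}$; the standing hypotheses together with $H_{\boldsymbol{\alpha},\boldsymbol{\beta}}$ and $r=s$ are exactly the hypotheses of Theorem~\ref{theo expand}. Fix a prime $p$, write $d=p^\nu D$, and take the admissible pair $b=1$, $t=1$. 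Both $t$ and $t^{(1)}$ are coprime to $d$, so equality of $q$-coordinates (hence, taking logarithms, of the $G/F$-quotients) gives $G_{\langle t^{(1)}\boldsymbol{\alpha}\rangle,\langle t^{(1)}\boldsymbol{\beta}\rangle}/F_{\langle t^{(1)}\boldsymbol{\alpha}\rangle,\langle t^{(1)}\boldsymbol{\beta}\rangle}=G_{\langle t\boldsymbol{\alpha}\rangle,\langle t\boldsymbol{\beta}\rangle}/F_{\langle t\boldsymbol{\alpha}\rangle,\langle t\boldsymbol{\beta}\rangle}=G_{\boldsymbol{\alpha},\boldsymbol{\beta}}/F_{\boldsymbol{\alpha},\boldsymbol{\beta}}$, so the left-hand side of the identity of Theorem~\ref{theo expand} equals $f(z^p)-pf(z)$ with $f(z)=G_{\boldsymbol{\alpha},\boldsymbol{\beta}}(C'_{\boldsymbol{\alpha},\boldsymbol{\beta}}z)/F_{\boldsymbol{\alpha},\boldsymbol{\beta}}(C'_{\boldsymbol{\alpha},\boldsymbol{\beta}}z)\in z\mathbb{Q}[[z]]$, while its right-hand side $p\sum_{k\ge 0}R_{k,b}(t)z^k$ lies in $pz\mathbb{Z}_p[[z]]$ because each $R_{k,b}$ is $\mathbb{Z}_p$-valued and $f$ has zero constant term. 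By Corollary~\ref{cor exp}, $\exp f\in 1+z\mathbb{Z}_p[[z]]$, i.e.\ $q_{\boldsymbol{\alpha},\boldsymbol{\beta}}(C'_{\boldsymbol{\alpha},\boldsymbol{\beta}}z)=C'_{\boldsymbol{\alpha},\boldsymbol{\beta}}z\exp f$. One checks $C'_{\boldsymbol{\alpha},\boldsymbol{\beta}}\in\mathbb{Z}$, i.e.\ $v_p(C'_{\boldsymbol{\alpha},\boldsymbol{\beta}})\ge 0$ for every $p$: this follows from Theorem~\ref{theo Const} together with the constraints that $H_{\boldsymbol{\alpha},\boldsymbol{\beta}}$ imposes, via Christol's functions, on the denominators of $\boldsymbol{\alpha}$ and $\boldsymbol{\beta}$. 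Hence $q_{\boldsymbol{\alpha},\boldsymbol{\beta}}(C'_{\boldsymbol{\alpha},\boldsymbol{\beta}}z)\in\mathbb{Z}_p[[z]]$ for all primes $p$, so $q_{\boldsymbol{\alpha},\boldsymbol{\beta}}(C'_{\boldsymbol{\alpha},\boldsymbol{\beta}}z)\in\mathbb{Q}[[z]]\cap\bigcap_p\mathbb{Z}_p[[z]]=\mathbb{Z}[[z]]$, which is~(ii).

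The converse $(i)\Rightarrow(iii)$ is where I expect the main obstacle. Suppose $q_{\boldsymbol{\alpha},\boldsymbol{\beta}}$ is $N$-integral. First $r\ge s$: by Theorem~A, $\xi_{\boldsymbol{\alpha},\boldsymbol{\beta}}(a,M)=r-s\ge 0$ where $M$ is the $\preceq$-largest of the $a\alpha_i,a\beta_j$. To get $r\le s$ one argues that if $r>s$ then the coefficients of $G_{\boldsymbol{\alpha},\boldsymbol{\beta}}/F_{\boldsymbol{\alpha},\boldsymbol{\beta}}$, hence of $q_{\boldsymbol{\alpha},\boldsymbol{\beta}}$, grow super-exponentially and acquire denominators divisible by unboundedly many primes (the extra small primes in $\mathcal{P}_{\boldsymbol{\alpha},\boldsymbol{\beta}}$), which is incompatible with $N$-integrality. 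Granting $r=s$: apply Corollary~\ref{cor exp} to $q_{\boldsymbol{\alpha},\boldsymbol{\beta}}(Cz)$ for a valid $C$ and an arbitrary prime $p\nmid dC$; this yields, for every such $p$, a $p$-adic congruence on $G_{\boldsymbol{\alpha},\boldsymbol{\beta}}/F_{\boldsymbol{\alpha},\boldsymbol{\beta}}$ that must be compatible with the Frobenius-type congruences for the hypergeometric coefficients $\prod_i(\alpha_i)_n/\prod_j(\beta_j)_n$ coming from Dwork's theory. Matching the two and letting $p$ vary (by Dirichlet the primes $p\nmid d$ realise every class in $(\mathbb{Z}/d\mathbb{Z})^\times$) forces, combinatorially, both the strengthened inequalities $\xi_{\boldsymbol{\alpha},\boldsymbol{\beta}}(a,x)\ge 1$ defining $H_{\boldsymbol{\alpha},\boldsymbol{\beta}}$ (otherwise the error terms in the congruences would be non-integral at infinitely many primes) and the invariance $q_{\boldsymbol{\alpha},\boldsymbol{\beta}}=q_{\langle a\boldsymbol{\alpha}\rangle,\langle a\boldsymbol{\beta}\rangle}$ (the Frobenius twist $\boldsymbol{\alpha}\mapsto\langle p\boldsymbol{\alpha}\rangle$ permutes the data, so $N$-integrality of the single series forces $a$-invariance). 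Finally, with $H_{\boldsymbol{\alpha},\boldsymbol{\beta}}$ and the symmetry in hand, a short discussion of the positions of the $1$'s in $\langle\boldsymbol{\beta}\rangle$---using that the logarithmic solution $G_{\boldsymbol{\alpha},\boldsymbol{\beta}}+\log(z)F_{\boldsymbol{\alpha},\boldsymbol{\beta}}$ must have the prescribed shape---produces the dichotomy $\boldsymbol{\alpha}=(1/2),\ \boldsymbol{\beta}=(1)$ versus ``$s\ge 2$ and at least two $1$'s in $\langle\boldsymbol{\beta}\rangle$''.

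For part~(3), assume $r=s$ and $H_{\boldsymbol{\alpha},\boldsymbol{\beta}}$. Summing the identity of Theorem~\ref{theo expand} over all pairs $(b,t)$ with $b$ coprime to $D$ and $t\in\Omega_b(p^\nu,D)$---equivalently over all $t$ coprime to $d$---and using that $t\mapsto t^{(1)}$ is a bijection of $(\mathbb{Z}/d\mathbb{Z})^\times$ and that $\langle s\boldsymbol{\alpha}\rangle$ depends only on $s\bmod d$, both the $z^p$-terms and the $z$-terms reassemble into $S_{\boldsymbol{\alpha},\boldsymbol{\beta}}$, giving $S_{\boldsymbol{\alpha},\boldsymbol{\beta}}(C'_{\boldsymbol{\alpha},\boldsymbol{\beta}}z^p)-pS_{\boldsymbol{\alpha},\boldsymbol{\beta}}(C'_{\boldsymbol{\alpha},\boldsymbol{\beta}}z)=p\sum_t\sum_k R_{k,b(t)}(t)z^k$ (where $b(t)$ is the residue of $t$ modulo $D$). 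One then bounds $v_p\big(\sum_t R_{k,b(t)}(t)\big)$ below by $v_p(\mathfrak{n}_{\boldsymbol{\alpha},\boldsymbol{\beta}})$: for $p\nmid d$ this valuation is $0$ and each summand lies in $\mathbb{Z}_p$; for $p\mid d$ one combines the refined membership of $R_{k,b}$ in $p^{-c}\mathcal{A}_b(p^\nu,D)$ for the appropriate $c$ (the ``Furthermore'' part of Theorem~\ref{theo expand}) with a summation lemma for the algebras $\mathcal{A}_b(p^\nu,D)$ that bounds $v_p$ of $\sum_{t\in\Omega_b(p^\nu,D)}g(t)$, the two contributions matching the defining formula for $\mathfrak{n}_{\boldsymbol{\alpha},\boldsymbol{\beta}}$. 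Dividing by $\mathfrak{n}_{\boldsymbol{\alpha},\boldsymbol{\beta}}$ and applying Corollary~\ref{cor exp} gives $\exp\big(S_{\boldsymbol{\alpha},\boldsymbol{\beta}}(C'_{\boldsymbol{\alpha},\boldsymbol{\beta}}z)/\mathfrak{n}_{\boldsymbol{\alpha},\boldsymbol{\beta}}\big)\in\mathbb{Z}_p[[z]]$ for all $p$, hence $\in\mathbb{Z}[[z]]$; the $N$-integrality of $\exp(S_{\boldsymbol{\alpha},\boldsymbol{\beta}})$ itself then follows by raising to the power $\mathfrak{n}_{\boldsymbol{\alpha},\boldsymbol{\beta}}$ (a positive integer when $\boldsymbol{\beta}\in\mathbb{Z}^s$) and, in the remaining case, a $p$-th power argument. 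Among all these steps the genuinely delicate one is the extraction of $H_{\boldsymbol{\alpha},\boldsymbol{\beta}}$ and of the symmetry from $(i)$; the rest is a careful unwinding of Theorems~\ref{theo expand} and~\ref{theo Const} through Corollary~\ref{cor exp}.
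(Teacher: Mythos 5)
Your architecture matches the paper's: Theorem~\ref{theo expand} plus Corollary~\ref{cor exp} drive parts~(3) and $(iii)\Rightarrow(ii)$, and your handling of those two is essentially correct (the key summation bound in~(3) is Lemma~\ref{Lemma alg\`ebres 2}(7), which does what your ``summation lemma for the algebras $\mathcal{A}_b(p^\nu,D)$'' is invoking). But there are genuine gaps in the two places you yourself flag as delicate.

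First, part~(1). You reduce to the claim that $G_{\langle a\boldsymbol{\alpha}\rangle,\langle a\boldsymbol{\beta}\rangle}/F_{\langle a\boldsymbol{\alpha}\rangle,\langle a\boldsymbol{\beta}\rangle}$ has nonnegative Taylor coefficients, and you actually prove that $G_{\langle a\boldsymbol{\alpha}\rangle,\langle a\boldsymbol{\beta}\rangle}$ has positive coefficients. But the passage from positivity of $G$ to nonnegativity of $G/F$ is not automatic: dividing by $F$ can introduce sign changes unless one controls the combinatorics of $1/F$. What the paper proves, and what you would also need, is a log-convexity statement $\mathcal{Q}(n+1)\mathcal{Q}(n-1)>\mathcal{Q}(n)^2$ for the Pochhammer ratios (Lemma~\ref{Lemma pos 3}), which via Kaluza's theorem (Lemma~\ref{Lemma pos 2}) gives that $1-1/F$ has nonnegative coefficients, and then Lemma~\ref{Lemma pos 1} converts this into nonnegativity of $G/F$. ``The known archimedean companion'' is exactly what is being proved here; it cannot be cited.

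Second, in $(i)\Rightarrow(iii)$, your argument for $r=s$ does not work. You propose that $r>s$ forces super-exponential growth of the coefficients of $q_{\boldsymbol{\alpha},\boldsymbol{\beta}}$ and ``unboundedly many primes in the denominators,'' contradicting $N$-integrality. But $N$-integrality constrains denominators, not growth, and the prime set $\mathcal{P}_{\boldsymbol{\alpha},\boldsymbol{\beta}}$ is finite even when $r>s$; nothing in your sketch produces a denominator at infinitely many primes. The paper's argument is quite different and substantially more delicate: starting from the congruence~\eqref{ref series} (valid for almost all $p$, obtained from Dieudonn\'e--Dwork applied to the $N$-integral $q_{\boldsymbol{\alpha},\boldsymbol{\beta}}$), it examines the coefficient of $z^{np}$, proves by induction on $n$ that $\sum_i H_{\alpha_i}(n)-\sum_j H_{\beta_j}(n)=0$ for all $n$, and derives a contradiction with the disjointness of $\boldsymbol{\alpha}$ and $\boldsymbol{\beta}$ via the rational function $\sum 1/(\alpha_i+X)-\sum 1/(\beta_j+X)$. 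Your remaining steps for $H_{\boldsymbol{\alpha},\boldsymbol{\beta}}$, the $a$-invariance $q_{\boldsymbol{\alpha},\boldsymbol{\beta}}=q_{\langle a\boldsymbol{\alpha}\rangle,\langle a\boldsymbol{\beta}\rangle}$, and the final dichotomy are all in the right direction (the $a$-invariance does come out of Dwork's~\cite[Theorem~4.1]{Dwork} via a Dirichlet argument as you suggest, cf.\ Lemma~\ref{Lemma Dwork}), but as written they are outlines of intent rather than arguments: the paper's proof of $H_{\boldsymbol{\alpha},\boldsymbol{\beta}}$ (Section~\ref{section proof H}) requires a careful choice of index $v_p$, the computation $\Delta^{p,1}_{\boldsymbol{\alpha},\boldsymbol{\beta}}(v_p/p)=0$, and the key nonvanishing $\sum_i m_i/\langle a\gamma_i\rangle\neq 0$ from Lemma~\ref{Lemma sauts}, none of which is implicit in ``matching the two and letting $p$ vary forces the strengthened inequalities.''
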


As a consequence, we obtain the following relation between $N$-integrality of $q_{\boldsymbol{\alpha},\boldsymbol{\beta}}(z)$ and $\exp\big(S_{\boldsymbol{\alpha},\boldsymbol{\beta}}(z)\big)$.

\begin{cor}
Let $\boldsymbol{\alpha}:=(\alpha_1,\dots,\alpha_r)$ and $\boldsymbol{\beta}:=(\beta_1,\dots,\beta_s)$ be tuples of parameters in $\mathbb{Q}\setminus\mathbb{Z}_{\leq 0}$ such that $\langle\boldsymbol{\alpha}\rangle$ and $\langle\boldsymbol{\beta}\rangle$ are disjoint (this is equivalent to the irreducibility of $\mathcal{L}_{\boldsymbol{\alpha},\boldsymbol{\beta}}$) and such that $F_{\boldsymbol{\alpha},\boldsymbol{\beta}}$ is $N$-integral. Then the following assertions are equivalent: 
\begin{enumerate}
\item $q_{\boldsymbol{\alpha},\boldsymbol{\beta}}(z)$ is $N$-integral;
\item $\exp\big(S_{\boldsymbol{\alpha},\boldsymbol{\beta}}(z)\big)$ is $N$-integral and $\exp\big(S_{\boldsymbol{\alpha},\boldsymbol{\beta}}(z)\big)=\big(q_{\boldsymbol{\alpha},\boldsymbol{\beta}}(z)/z\big)^{\varphi(d_{\boldsymbol{\alpha},\boldsymbol{\beta}})}$.
\end{enumerate}
\end{cor}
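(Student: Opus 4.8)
The plan is to derive the corollary directly from Theorem~\ref{Criterion}, the only non-formal ingredient being an elementary descent property of $N$-integrality under extraction of roots. Write $d:=d_{\boldsymbol{\alpha},\boldsymbol{\beta}}$ throughout.

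Suppose first that $q_{\boldsymbol{\alpha},\boldsymbol{\beta}}(z)$ is $N$-integral, that is, assertion $(i)$ of Theorem~\ref{Criterion}$(2)$ holds; then so does $(iii)$, so $H_{\boldsymbol{\alpha},\boldsymbol{\beta}}$ holds, $r=s$, and $q_{\boldsymbol{\alpha},\boldsymbol{\beta}}(z)=q_{\langle a\boldsymbol{\alpha}\rangle,\langle a\boldsymbol{\beta}\rangle}(z)$ for every $a\in\{1,\dots,d\}$ coprime to $d$. Multiplying these $\varphi(d)$ identities and dividing by $z^{\varphi(d)}$, the formula for $\exp\big(S_{\boldsymbol{\alpha},\boldsymbol{\beta}}(z)\big)$ recalled before Theorem~\ref{Criterion} gives
$$
\exp\big(S_{\boldsymbol{\alpha},\boldsymbol{\beta}}(z)\big)=\frac{1}{z^{\varphi(d)}}\underset{\gcd(a,d)=1}{\prod_{a=1}^{d}}q_{\langle a\boldsymbol{\alpha}\rangle,\langle a\boldsymbol{\beta}\rangle}(z)=\left(\frac{q_{\boldsymbol{\alpha},\boldsymbol{\beta}}(z)}{z}\right)^{\varphi(d)},
$$
which is the displayed identity of $(2)$. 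Since moreover $r=s$ and $H_{\boldsymbol{\alpha},\boldsymbol{\beta}}$ holds, Theorem~\ref{Criterion}$(3)$ furthermore gives that $\exp\big(S_{\boldsymbol{\alpha},\boldsymbol{\beta}}(z)\big)$ is $N$-integral. Hence $(1)\Rightarrow(2)$.

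Conversely, assume $(2)$. Combining the displayed identity with the $N$-integrality of $\exp\big(S_{\boldsymbol{\alpha},\boldsymbol{\beta}}(z)\big)$ shows that $\big(q_{\boldsymbol{\alpha},\boldsymbol{\beta}}(z)/z\big)^{\varphi(d)}$ is $N$-integral, while $q_{\boldsymbol{\alpha},\boldsymbol{\beta}}(z)/z\in 1+z\mathbb{Q}[[z]]$ by~\eqref{define q}. I would then invoke the elementary fact that, for $h(z)\in 1+z\mathbb{Q}[[z]]$ and $n\geq 1$, $N$-integrality of $h(z)^n$ implies $N$-integrality of $h(z)$; applied with $h=q_{\boldsymbol{\alpha},\boldsymbol{\beta}}/z$ and $n=\varphi(d)$ it yields that $q_{\boldsymbol{\alpha},\boldsymbol{\beta}}(z)/z$ is $N$-integral. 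Finally, if $h(cz)\in\mathbb{Z}[[z]]$ with $c=u/v$ in lowest terms, then $h(uz)\in\mathbb{Z}[[z]]$ and $q_{\boldsymbol{\alpha},\boldsymbol{\beta}}(uz)=uz\,h(uz)\in\mathbb{Z}[[z]]$, so $q_{\boldsymbol{\alpha},\boldsymbol{\beta}}(z)$ is $N$-integral; this gives $(2)\Rightarrow(1)$.

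The one step with genuine content is the descent lemma, which I expect to be the main obstacle. Let $h(cz)^n\in\mathbb{Z}[[z]]$ and fix a prime $p$. If $p\nmid n$ then $1/n\in\mathbb{Z}_p$, hence every $\binom{1/n}{k}$ lies in $\mathbb{Z}_p$ (being the value at $1/n$ of a polynomial that is integer-valued on $\mathbb{Z}$), so the binomial expansion of $h(cz)=\big(h(cz)^n\big)^{1/n}$ has $\mathbb{Z}_p$-coefficients. If $p\mid n$, write $n=p^{a}m$ with $p\nmid m$; then $h(cz)^{p^{a}}=\big(h(cz)^n\big)^{1/m}\in 1+z\mathbb{Z}_p[[z]]$ (same argument, with $m$ in place of $n$), and from $v_p\!\big(\binom{1/p^{a}}{k}\big)=-ak-v_p(k!)\geq-k\big(a+\frac{1}{p-1}\big)$ one gets that the $z^k$-coefficient of $h(cz)=\big(h(cz)^{p^{a}}\big)^{1/p^{a}}$ lies in $p^{-k(a+1)}\mathbb{Z}_p$, so rescaling the variable by $p^{a+1}$ restores $p$-integrality. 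Since only finitely many primes divide $n$, multiplying $c$ by the corresponding product of prime powers produces a constant $c'$ with $h(c'z)\in\mathbb{Z}[[z]]$, which proves the lemma. Everything else is a direct reading of Theorem~\ref{Criterion}.
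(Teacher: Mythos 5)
Your argument is correct, and it follows the route the paper intends: both implications are read off Theorem~\ref{Criterion}. The implication $(1)\Rightarrow(2)$ is exactly the multiplication of the $\varphi(d_{\boldsymbol{\alpha},\boldsymbol{\beta}})$ identities $q_{\boldsymbol{\alpha},\boldsymbol{\beta}}=q_{\langle a\boldsymbol{\alpha}\rangle,\langle a\boldsymbol{\beta}\rangle}$ given by $(iii)$, combined with part~$(3)$ of Theorem~\ref{Criterion} for $N$-integrality of $\exp\big(S_{\boldsymbol{\alpha},\boldsymbol{\beta}}(z)\big)$, as you say.

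The only step with genuine mathematical content is the descent lemma in $(2)\Rightarrow(1)$: if $h\in 1+z\mathbb{Q}[[z]]$ and $h^{n}$ is $N$-integral, then $h$ is $N$-integral. Here your proof differs from the paper's. The paper records the equivalent statement as the Proposition at the start of Section~\ref{section last} (for $f\in 1+z\mathbb{Q}[[z]]$ $N$-integral and $a\geq 1$, the power series $f^{1/a}$ is $N$-integral), and proves it by writing $f=1+zg$, expanding
\[
f(z)^{1/a}=\sum_{n\geq 0}(-1)^{n}\frac{(-1/a)_{n}}{n!}\,z^{n}g(z)^{n},
\]
and invoking Theorem~A for the hypergeometric series $F_{(-1/a),(1)}$ to show that the coefficients $(-1/a)_{n}/n!$ have bounded denominators after a rescaling. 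Your version sidesteps Theorem~A entirely and argues prime by prime with binomial coefficients: for $p\nmid n$ you use that $\binom{1/n}{k}$, being the value of an integer-valued polynomial at the $p$-adic integer $1/n$, lies in $\mathbb{Z}_{p}$; for $p\mid n$ with $p^{a}\| n$ you compute $v_{p}\binom{1/p^{a}}{k}=-ak-v_{p}(k!)\geq -k(a+1)$, deduce that the $z^{k}$-coefficient of $h(cz)$ lies in $p^{-k(a+1)}\mathbb{Z}_{p}$, and absorb the defect by replacing $c$ with $cp^{a+1}$; since only finitely many primes divide $n$ the rescalings can be combined. Both proofs are valid; the paper's is a one-liner given its own Theorem~A, while yours is self-contained and makes the needed rescaling constant explicit. (The paper actually deploys its Proposition to get $(iii)\Rightarrow(i)$ inside the proof of Theorem~\ref{Criterion}$(2)$ rather than to prove the corollary itself — which is left as an immediate consequence — but the step involved is precisely your $(2)\Rightarrow(1)$.)
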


Under the assumptions of Theorem \ref{Criterion} for the tuples $\boldsymbol{\alpha}$ and $\boldsymbol{\beta}$ and if $q_{\boldsymbol{\alpha},\boldsymbol{\beta}}(z)$ is $N$-integral,  then Assertion $(3)$ of Theorem \ref{Criterion} leads to
\begin{equation}\label{RootCons}
\left(\frac{1}{C_{\boldsymbol{\alpha},\boldsymbol{\beta}}'z}q_{\boldsymbol{\alpha},\boldsymbol{\beta}}(C_{\boldsymbol{\alpha},\boldsymbol{\beta}}'z)\right)^{\varphi(d_{\boldsymbol{\alpha},\boldsymbol{\beta}})/\mathfrak{n}_{\boldsymbol{\alpha},\boldsymbol{\beta}}}\in\mathbb{Z}[[z]].
\end{equation}
It follows that in some cases, we obtain the integrality of the Taylor coefficients of a non-trivial root of the $q$-coordinate. But when all the elements of $\boldsymbol{\beta}$ are integers, Theorem \ref{theo expand} in combination with Corollary \ref{cor exp} and Assertion $(2)$ of Theorem  \ref{Criterion} provides a better result than \eqref{RootCons}. Indeed, we obtain the following result.

\begin{cor}\label{cor Root}
Let $\boldsymbol{\alpha}$, respectively $\boldsymbol{\beta}$, be a tuple of parameters in $\mathbb{Q}\setminus\mathbb{Z}_{\leq 0}$, respectively of positive integers, such that $\langle\boldsymbol{\alpha}\rangle$ and $\langle\boldsymbol{\beta}\rangle$ are disjoint. If $F_{\boldsymbol{\alpha},\boldsymbol{\beta}}(z)$ and $q_{\boldsymbol{\alpha},\boldsymbol{\beta}}(z)$ are $N$-integral, then we have
$$
\left(\frac{1}{C_{\boldsymbol{\alpha},\boldsymbol{\beta}}'z}q_{\boldsymbol{\alpha},\boldsymbol{\beta}}(C_{\boldsymbol{\alpha},\boldsymbol{\beta}}'z)\right)^{1/\mathfrak{n}_{\boldsymbol{\alpha},\boldsymbol{\beta}}'}\in\mathbb{Z}[[z]],
$$
with
$$
\mathfrak{n}_{\boldsymbol{\alpha},\boldsymbol{\beta}}'=\prod_{p\mid d_{\boldsymbol{\alpha},\boldsymbol{\beta}}}p^{-1-\lfloor\frac{\lambda_p}{p-1}\rfloor}.
$$
\end{cor}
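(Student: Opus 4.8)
The plan is to reduce the statement, one prime at a time, to the Dieudonn\'e--Dwork criterion (Corollary~\ref{cor exp}), feeding it the $p$-adic expansion of Theorem~\ref{theo expand}; this is the improvement over \eqref{RootCons} alluded to just before the statement. Write $d:=d_{\boldsymbol{\alpha},\boldsymbol{\beta}}$, $C':=C_{\boldsymbol{\alpha},\boldsymbol{\beta}}'$, $\mathfrak{n}':=\mathfrak{n}_{\boldsymbol{\alpha},\boldsymbol{\beta}}'$ and set
$$
g(z):=\frac{1}{C'z}\,q_{\boldsymbol{\alpha},\boldsymbol{\beta}}(C'z)=\exp\!\left(\frac{G_{\boldsymbol{\alpha},\boldsymbol{\beta}}(C'z)}{F_{\boldsymbol{\alpha},\boldsymbol{\beta}}(C'z)}\right)\in 1+z\mathbb{Q}[[z]],
$$
so that $g^{1/\mathfrak{n}'}=\exp\!\big(\tfrac{1}{\mathfrak{n}'}\tfrac{G_{\boldsymbol{\alpha},\boldsymbol{\beta}}(C'z)}{F_{\boldsymbol{\alpha},\boldsymbol{\beta}}(C'z)}\big)$ is a well-defined element of $1+z\mathbb{Q}[[z]]$. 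Since a rational number lies in $\mathbb{Z}$ as soon as it lies in $\mathbb{Z}_p$ for every prime $p$, it suffices to prove $g^{1/\mathfrak{n}'}\in 1+z\mathbb{Z}_p[[z]]$ for each fixed prime $p$.

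First I would unpack the hypotheses. As $\langle\boldsymbol{\alpha}\rangle$ and $\langle\boldsymbol{\beta}\rangle$ are disjoint and $F_{\boldsymbol{\alpha},\boldsymbol{\beta}}$ is $N$-integral, Theorem~\ref{Criterion} applies, and since $q_{\boldsymbol{\alpha},\boldsymbol{\beta}}$ is $N$-integral, Assertion~$(iii)$ of Theorem~\ref{Criterion}$(2)$ holds: thus $r=s$, the assertion $H_{\boldsymbol{\alpha},\boldsymbol{\beta}}$ is true, and $q_{\boldsymbol{\alpha},\boldsymbol{\beta}}=q_{\langle a\boldsymbol{\alpha}\rangle,\langle a\boldsymbol{\beta}\rangle}$ for every $a\in\{1,\dots,d\}$ coprime to $d$. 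Because every entry of $\boldsymbol{\beta}$ is a positive integer, $\langle a\boldsymbol{\beta}\rangle=(1,\dots,1)$ for all such $a$, and taking logarithms in \eqref{define q} gives
$$
\frac{G_{\langle a\boldsymbol{\alpha}\rangle,\langle a\boldsymbol{\beta}\rangle}}{F_{\langle a\boldsymbol{\alpha}\rangle,\langle a\boldsymbol{\beta}\rangle}}=\frac{G_{\boldsymbol{\alpha},\boldsymbol{\beta}}}{F_{\boldsymbol{\alpha},\boldsymbol{\beta}}}\qquad\text{for every $a$ coprime to $d$;}
$$
this identity will let the dependence on $t$ in Theorem~\ref{theo expand} disappear.

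Now I would fix a prime $p$ and write $d=p^\nu D$ with $D$ coprime to $p$. The tuples $\langle\boldsymbol{\alpha}\rangle,\langle\boldsymbol{\beta}\rangle$ are disjoint, of the same length, and $H_{\boldsymbol{\alpha},\boldsymbol{\beta}}$ holds, so Theorem~\ref{theo expand} applies. Choosing $b\in\{1,\dots,D\}$ coprime to $D$ and any $t\in\Omega_b(p^\nu,D)$ (a nonempty set), one checks that $t$ and $t^{(1)}$ are both coprime to $d$ (immediate from $t^{(1)}\equiv t\bmod p^\nu$ and $p\,t^{(1)}\equiv t\bmod D$). By the identity above applied with $a=t$ and with $a=t^{(1)}$, both hypergeometric quotients occurring in Theorem~\ref{theo expand} equal $G_{\boldsymbol{\alpha},\boldsymbol{\beta}}/F_{\boldsymbol{\alpha},\boldsymbol{\beta}}$, hence
$$
\frac{G_{\boldsymbol{\alpha},\boldsymbol{\beta}}(C'z^p)}{F_{\boldsymbol{\alpha},\boldsymbol{\beta}}(C'z^p)}-p\,\frac{G_{\boldsymbol{\alpha},\boldsymbol{\beta}}(C'z)}{F_{\boldsymbol{\alpha},\boldsymbol{\beta}}(C'z)}=p\sum_{k\geq 0}R_{k,b}(t)z^k
$$
for the sequence $(R_{k,b})_k$ of Theorem~\ref{theo expand}. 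The left-hand side has zero constant term, so $R_{0,b}(t)=0$. For $k\geq 1$ I would use the ``Furthermore'' clause of Theorem~\ref{theo expand}: since $\boldsymbol{\beta}\in\mathbb{Z}^r$, if $p\mid d$ then $R_{k,b}\in p^{-1-\lfloor\lambda_p/(p-1)\rfloor}\mathcal{A}_b(p^\nu,D)$, so $R_{k,b}(t)\in p^{-1-\lfloor\lambda_p/(p-1)\rfloor}\mathbb{Z}_p$, whereas if $p\nmid d$ then $\nu=0$ and $R_{k,b}(t)\in\mathbb{Z}_p$. As the exponent of $p$ in $\mathfrak{n}'$ equals $-1-\lfloor\lambda_p/(p-1)\rfloor$ when $p\mid d$ and $0$ otherwise, in both cases $R_{k,b}(t)/\mathfrak{n}'\in\mathbb{Z}_p$. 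Dividing the displayed identity by $\mathfrak{n}'$ shows that $f(z):=\tfrac{1}{\mathfrak{n}'}\tfrac{G_{\boldsymbol{\alpha},\boldsymbol{\beta}}(C'z)}{F_{\boldsymbol{\alpha},\boldsymbol{\beta}}(C'z)}\in z\mathbb{Q}[[z]]$ satisfies $f(z^p)-pf(z)\in pz\mathbb{Z}_p[[z]]$, so by Corollary~\ref{cor exp} we get $g^{1/\mathfrak{n}'}=\exp(f(z))\in 1+z\mathbb{Z}_p[[z]]$. Letting $p$ vary over all primes gives $g^{1/\mathfrak{n}'}\in 1+z\mathbb{Z}[[z]]$, which is the assertion.

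Most of the work is done by Theorems~\ref{theo expand} and \ref{Criterion}; the delicate points are the bookkeeping check that $t$ and $t^{(1)}$ remain coprime to $d_{\boldsymbol{\alpha},\boldsymbol{\beta}}$ (so that Theorem~\ref{Criterion}$(2)(iii)$ may be invoked to collapse all Galois conjugates of the $q$-coordinate) and the exact matching of the valuation bound $-1-\lfloor\lambda_p/(p-1)\rfloor$ furnished by Theorem~\ref{theo expand} with the exponent of $p$ in $\mathfrak{n}_{\boldsymbol{\alpha},\boldsymbol{\beta}}'$.
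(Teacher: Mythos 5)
Your proof is correct and carries out exactly the argument the paper sketches in the sentence just before the corollary (``Theorem \ref{theo expand} in combination with Corollary \ref{cor exp} and Assertion $(2)$ of Theorem \ref{Criterion} provides a better result than \eqref{RootCons}''). You reduce to a $p$-adic statement via Corollary \ref{cor exp}, invoke Assertion $(iii)$ of Theorem \ref{Criterion}$(2)$ to collapse the Galois twists so that the $t$-dependence in Theorem \ref{theo expand} disappears, and then match the valuation bound $-1-\lfloor\lambda_p/(p-1)\rfloor$ from the ``Furthermore'' clause (case $\boldsymbol{\beta}\in\mathbb{Z}^r$) against the exponent of $p$ in $\mathfrak{n}_{\boldsymbol{\alpha},\boldsymbol{\beta}}'$; this is precisely the intended route.
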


Corollary \ref{cor Root} is stronger than \eqref{RootCons} because $\mathfrak{n}_{\boldsymbol{\alpha},\boldsymbol{\beta}}/\mathfrak{n}_{\boldsymbol{\alpha},\boldsymbol{\beta}}'=d_{\boldsymbol{\alpha},\boldsymbol{\beta}}^\ast$ divides $\varphi(d_{\boldsymbol{\alpha},\boldsymbol{\beta}})$. Let us now make some remarks on Theorems \ref{theo expand} and \ref{Criterion} and their corollaries.

$\bullet$  Note that, by \cite[Lemma $5$]{Heninger}, if $f(z)\in\mathbb{Z}[[z]]$ and if $V$ is the greatest positive integer satisfying $f(z)^{1/V}\in\mathbb{Z}[[z]]$, then the positive integers $U$ satisfying $f(z)^{1/U}\in\mathbb{Z}[[z]]$ are exactely the positive divisors of $V$. Furthermore, by \cite[Introduction]{LianYauRoots}, for all positive integers $v$ and all $C\in\mathbb{Q}$, we have $\big((Cq)^{-1}z_{\boldsymbol{\alpha},\boldsymbol{\beta}}(Cq)\big)^{1/v}\in\mathbb{Z}[[z]]$ if and only if $\big((Cz)^{-1}q_{\boldsymbol{\alpha},\boldsymbol{\beta}}(Cz)\big)^{1/v}\in\mathbb{Z}[[z]]$. We deduce that Corollary \ref{cor Root} also gives the integrality of the Taylor coefficients at the origin of roots of mirror maps.

$\bullet$  In Section \ref{section proof pos}, we prove Proposition \ref{propo positive} which generalizes Assertion $(1)$ of Theorem  \ref{Criterion}. Furthermore, if $q_{\boldsymbol{\alpha},\boldsymbol{\beta}}(z)$ is $N$-integral, then, according to Assertions $(1)$ and $(2)$ of Theorem~\ref{Criterion}, all Taylor coefficients at $z=0$ of $q_{\boldsymbol{\alpha},\boldsymbol{\beta}}(C_{\boldsymbol{\alpha},\boldsymbol{\beta}}'z)$ are positive integers, but its constant term $=0$. This leads to a  natural question: do these coefficients count any object related to the geometric origin of $q_{\boldsymbol{\alpha},\boldsymbol{\beta}}(z)$?

$\bullet$  In all cases, $\mathfrak{n}_{\boldsymbol{\alpha},\boldsymbol{\beta}}$ and $\mathfrak{n}_{\boldsymbol{\alpha},\boldsymbol{\beta}}'$ are positive integers.

$\bullet$  Suppose that $\mathcal{L}_{\boldsymbol{\alpha},\boldsymbol{\beta}}$ is an irreducible operator satisfying $(P_1)$. We can formally consider $q_{\boldsymbol{\alpha},\boldsymbol{\beta}}$ without assuming that there are at least two $1$'s in $\boldsymbol{\beta}$. But if $q_{\boldsymbol{\alpha},\boldsymbol{\beta}}$ is $N$-integral, then $q_{\boldsymbol{\alpha},\boldsymbol{\beta}}=q_{\langle\boldsymbol{\alpha}\rangle,\langle\boldsymbol{\beta}\rangle}$ and if furthermore $s\geq 2$, then there are at least two $1$'s in $\langle\boldsymbol{\beta}\rangle$ so that $q_{\boldsymbol{\alpha},\boldsymbol{\beta}}$ is the exponential of a ratio of power series canceled by $\mathcal{L}_{\langle\boldsymbol{\alpha}\rangle,\langle\boldsymbol{\beta}\rangle}$. The operator $\mathcal{L}_{\boldsymbol{\alpha},\boldsymbol{\beta}}$ may not satisfy $(P_2)$, but (see section \ref{section reduction1}) $\mathcal{L}_{\langle\boldsymbol{\alpha}\rangle,\langle\boldsymbol{\beta}\rangle}$ is an irreducible operator satisfying $(P_1)$ and $(P_2)$. Furthermore, if $q_{\boldsymbol{\alpha},\boldsymbol{\beta}}$ is $N$-integral, then we have $r=s$ so that $\mathcal{L}_{\boldsymbol{\alpha},\boldsymbol{\beta}}$ and $\mathcal{L}_{\langle\boldsymbol{\alpha}\rangle,\langle\boldsymbol{\beta}\rangle}$ are Fuchsian.

$\bullet$  As explained in more details in Section \ref{section comparison}, Theorem \ref{Criterion} generalizes previous results on the integrality of the Taylor coefficients at the origin of $q$-coordinates associated with generalized hypergeometric functions. 

$\bullet$  Let us explain the reason why Theorem \ref{Criterion} provides an effective criterion for the $N$-integrality of $q$-coordinates $q_{\boldsymbol{\alpha},\boldsymbol{\beta}}$. Given tuples $\boldsymbol{\alpha}$ and $\boldsymbol{\beta}$, Assertion $H_{\boldsymbol{\alpha},\boldsymbol{\beta}}$ can easily be checked and the identity $q_{\boldsymbol{\alpha},\boldsymbol{\beta}}(z)=q_{\langle a\boldsymbol{\alpha}\rangle,\langle a\boldsymbol{\beta}\rangle}(z)$ is equivalent to $F_{\boldsymbol{\alpha},\boldsymbol{\beta}}(z)G_{\langle a\boldsymbol{\alpha}\rangle,\langle a\boldsymbol{\beta}\rangle}(z)=F_{\langle a\boldsymbol{\alpha}\rangle,\langle a\boldsymbol{\beta}\rangle}(z)G_{\boldsymbol{\alpha},\boldsymbol{\beta}}(z)$. Let us assume that there are at least two $1$'s in $\boldsymbol{\beta}$ so that $G_{\boldsymbol{\alpha},\boldsymbol{\beta}}(z)+\log(z)F_{\boldsymbol{\alpha},\boldsymbol{\beta}}(z)$ is canceled by $\mathcal{L}_{\boldsymbol{\alpha},\boldsymbol{\beta}}$. 

On the one hand, if $r\neq s$, then $q_{\boldsymbol{\alpha},\boldsymbol{\beta}}$ is not $N$-integral. On the other hand, if $r=s$, then $F_{\boldsymbol{\alpha},\boldsymbol{\beta}}(z)G_{\langle a\boldsymbol{\alpha}\rangle,\langle a\boldsymbol{\beta}\rangle}(z)$ and $F_{\langle a\boldsymbol{\alpha}\rangle,\langle a\boldsymbol{\beta}\rangle}(z)G_{\boldsymbol{\alpha},\boldsymbol{\beta}}(z)$ are analytic functions at $z=0$ canceled by the tensor product $\mathcal{L}'$ of the differential operators $\mathcal{L}_{\boldsymbol{\alpha},\boldsymbol{\beta}}$ and $\mathcal{L}_{\langle a\boldsymbol{\alpha}\rangle,\langle a\boldsymbol{\beta}\rangle}$. Since the order of $\mathcal{L}'$ is less than or equal to $r^2$ then we have $q_{\boldsymbol{\alpha},\boldsymbol{\beta}}=q_{\langle a\boldsymbol{\alpha}\rangle,\langle a\boldsymbol{\beta}\rangle}$ if and only if the first $r^2$ Taylor coefficients at the origin of $F_{\boldsymbol{\alpha},\boldsymbol{\beta}}(z)G_{\langle a\boldsymbol{\alpha}\rangle,\langle a\boldsymbol{\beta}\rangle}(z)$ and $F_{\langle a\boldsymbol{\alpha}\rangle,\langle a\boldsymbol{\beta}\rangle}(z)G_{\boldsymbol{\alpha},\boldsymbol{\beta}}(z)$ are equal, which can be checked in a finite number of elementary algebraic operations.

$\bullet$  If $q_{\boldsymbol{\alpha},\boldsymbol{\beta}}(z)$ is $N$-integral, then the power series $q_{\boldsymbol{\alpha},\boldsymbol{\beta}}(C_{\boldsymbol{\alpha},\boldsymbol{\beta}}'z)/(C_{\boldsymbol{\alpha},\boldsymbol{\beta}}'z)$ to the power $\varphi(d_{\boldsymbol{\alpha},\boldsymbol{\beta}})/\mathfrak{n}_{\boldsymbol{\alpha},\boldsymbol{\beta}}$ lies in $\mathbb{Z}[[z]]$ so that, in some cases, a non-trivial root of $q_{\boldsymbol{\alpha},\boldsymbol{\beta}}(z)$ is $N$-integral. This suggests that one might be able to improve Assertion $(3)$ of Theorem  \ref{Criterion} by replacing $\mathfrak{n}_{\boldsymbol{\alpha},\boldsymbol{\beta}}$ by $\varphi(d_{\boldsymbol{\alpha},\boldsymbol{\beta}})$ or $d_{\boldsymbol{\alpha},\boldsymbol{\beta}}$ (\footnote{Note that $\exp\big(S_{\boldsymbol{\alpha},\boldsymbol{\beta}}(z)/\varphi(d_{\boldsymbol{\alpha},\boldsymbol{\beta}})\big)$ is the geometric mean of the $\frac{1}{z}q_{\langle a\boldsymbol{\alpha}\rangle,\langle a\boldsymbol{\beta}\rangle}(z)$.}). But this statement is not always true. Indeed, a counterexample is given by $\boldsymbol{\alpha}=(1/7,1/4,3/7,6/7)$ and $\boldsymbol{\beta}=(1,1,1,1)$, where we have $d_{\boldsymbol{\alpha},\boldsymbol{\beta}}=28$, $C_{\boldsymbol{\alpha},\boldsymbol{\beta}}'=C_{\boldsymbol{\alpha},\boldsymbol{\beta}}=2^37^2$, $\varphi(28)=12$, $\mathfrak{n}_{\boldsymbol{\alpha},\boldsymbol{\beta}}=2$,
$$
\exp\left(\frac{S_{\boldsymbol{\alpha},\boldsymbol{\beta}}(2^37^2z)}{12}\right)\in 1+4802z+\frac{81541341}{2}z^2+\frac{1328534273395}{3}z^3+z^4\mathbb{Q}[[z]]
$$
and
$$
\exp\left(\frac{S_{\boldsymbol{\alpha},\boldsymbol{\beta}}(2^37^2z)}{28}\right)\in 1+2058z+\frac{29299137}{2}z^2+z^3\mathbb{Q}[[z]].
$$

Before ending this introduction, we would like to mention that this article also contains two useful results, that play a central role in the proof of Theorem \ref{Criterion}, but we  need too many definitions to state them here.  
The first one is Proposition \ref{propo magie} stated in Section \ref{section analogues} which gives an useful formula for the $p$-adic valuation of the Taylor coefficients at the origin of $F_{\boldsymbol{\alpha},\boldsymbol{\beta}}(C_{\boldsymbol{\alpha},\boldsymbol{\beta}}z)$ when $p$ is a prime divisor of $d_{\boldsymbol{\alpha},\boldsymbol{\beta}}$.  
The second one is Theorem \ref{congruences formelles} stated in Section \ref{section cong form} which generalizes Dwork's theorem on formal congruences \cite[Theorem $1.1$]{Dwork} also used by Dwork in \cite{cycles} to obtain the  analytic continuation of certain $p$-adic functions.

While working on this article, we found an error in a lemma in Lang's book \cite[Lemma~1.1, Section 1, Chapter 14]{Lang} about the arithmetic properties of Mojita's $p$-adic Gamma function. This lemma has been used in several articles on the integrality of the Taylor coefficients of mirror maps including papers of the authors. Even if we do not use this lemma in this article, we give in Section \ref{Cor Lang} a corrected version and we explain why the initial error does not change the validity of our previous results.

\subsection{Structure of the paper}

In Section \ref{section Comments}, we make comments on results stated in introduction and we compare this results with previous ones on $N$-integrality of mirror maps associated with generalized hypergeometric functions. Furthermore, we give a corrected version of a lemma of Lang on Mojita's $p$-adic Gamma function at the end of this section.

Section \ref{section valuation} is devoted to a detailed study of the $p$-adic valuation of Pochhammer  symbol. In particular, we prove Proposition \ref{Lemma Mp} and we define and study step functions $\Delta_{\boldsymbol{\alpha},\boldsymbol{\beta}}$ associated with tuples $\boldsymbol{\alpha}$ and $\boldsymbol{\beta}$ which play a central role in proofs of Theorems \ref{theo Const}-\ref{Criterion}. 

We prove Theorem \ref{theo Const} in Section \ref{section demo 1}. 

Section \ref{section cong form} is devoted to the statement and the proof of Theorem \ref{congruences formelles} on formal congruences between formal power series. We also compare Theorem \ref{congruences formelles} with previous generalizations of Dwork's theorem on formal congruences \cite[Theorem $1.1$]{Dwork}. Theorem \ref{congruences formelles} is the most important tool in the proof of Theorem \ref{theo expand}. 

We prove Theorem \ref{theo expand} in Section \ref{section proof theo expand}, which is by far the longest and the most technical part of this article.

Sections \ref{section proof pos}, \ref{section reformulation} and \ref{section last} are respectively dedicated to the proofs of Assertions $(1)$, $(3)$ and $(2)$ of Theorem \ref{Criterion}.

\section{Comments on the main results and comparison with previous ones}\label{section Comments}

This section is devoted to a detailed study of certain consequences of Theorems \ref{theo Const}-\ref{Criterion}. In particular, we compare these theorems with previous results on the integrality of the Taylor coefficients at the origin of generalized hypergeometric series and their associated (roots of) mirror maps. This section also contains some results that we use throughout this article.

\subsection{Comments on the main results}\label{section Comments 1}

We provide precisions on Theorems A, \ref{theo Const} and \ref{Criterion}.

\subsubsection{An example of application of Theorem \ref{theo Const}}

We illustrate Theorem A and Theorem \ref{theo Const} with an example. Let $\boldsymbol{\alpha}:=(1/6,1/2,2/3)$ and $\boldsymbol{\beta}:=(1/3,1,1)$ so that we have $d_{\boldsymbol{\alpha},\boldsymbol{\beta}}=6$. According to Theorem~A, $F_{\boldsymbol{\alpha},\boldsymbol{\beta}}$ is $N$-integral if and only if, for all $a\in\{1,5\}$ and all $x\in\mathbb{R}$, we have $\xi_{\boldsymbol{\alpha},\boldsymbol{\beta}}(a,x)\geq 0$.

We have $1/6\prec 1/3\prec 1/2\prec 2/3\prec 1$ thus, for all $x\in\mathbb{R}$, we get $\xi_{\boldsymbol{\alpha},\boldsymbol{\beta}}(1,x)\geq 0$. Furthermore, we have $1/3+3=10/3\prec 5/2\prec 5/3\prec 5/6\prec 5$ and thus, for all $x\in\mathbb{R}$, we get $\xi_{\boldsymbol{\alpha},\boldsymbol{\beta}}(5,x)\geq 0$. This shows that $F_{\boldsymbol{\alpha},\boldsymbol{\beta}}$ is $N$-integral.

Moreover, we have $r=s$, all elements of $\boldsymbol{\alpha}$ and $\boldsymbol{\beta}$ lie in $(0,1]$, $\lambda_2(\boldsymbol{\alpha},\boldsymbol{\beta})=1-3=-2$ and $\lambda_3(\boldsymbol{\alpha},\boldsymbol{\beta})=1-2=-1$ thus, according to Theorem \ref{theo Const}, we get
$$
C_{\boldsymbol{\alpha},\boldsymbol{\beta}}=\frac{6\cdot 2\cdot 3}{3}2^{-\lfloor -2\rfloor}3^{-\lfloor -1/2\rfloor}=2^43^2.
$$

\subsubsection{$N$-integrality of $F_{\langle\boldsymbol{\alpha}\rangle,\langle\boldsymbol{\beta}\rangle}$}\label{section reduction1}

We show that if $F_{\boldsymbol{\alpha},\boldsymbol{\beta}}$ is $N$-integral then $F_{\langle\boldsymbol{\alpha}\rangle,\langle\boldsymbol{\beta}\rangle}$ is also $N$-integral. The converse is false in general, a counterexample being given by $\boldsymbol{\alpha}=(1/2,1/2)$ and $\boldsymbol{\beta}=(3/2,1)$ since we have
$3/2\prec 1/2\prec 1$ and $\langle\boldsymbol{\alpha}\rangle=(1/2,1/2)$, $\langle\boldsymbol{\beta}\rangle=(1/2,1)$. But, if $\langle\boldsymbol{\alpha}\rangle$ and $\langle\boldsymbol{\beta}\rangle$ are disjoint, then, for all $a\in\{1,\dots,d_{\boldsymbol{\alpha},\boldsymbol{\beta}}\}$ coprime to $d_{\boldsymbol{\alpha},\boldsymbol{\beta}}$, $\langle a\boldsymbol{\alpha}\rangle$ and $\langle a\boldsymbol{\beta}\rangle$ are disjoint. Hence, applying Theorem A, we obtain that ($F_{\langle\boldsymbol{\alpha}\rangle,\langle\boldsymbol{\beta}\rangle}$ is $N$-integral)$\Rightarrow$($F_{\boldsymbol{\alpha},\boldsymbol{\beta}}$ is $N$-integral). More precisely, we shall prove the following proposition that we use several times in this article.

\begin{propo}\label{propo reduction}
Let $\boldsymbol{\alpha}$ and $\boldsymbol{\beta}$ be tuples of parameters in $\mathbb{Q}\setminus\mathbb{Z}_{\leq 0}$ and $a\in\{1,\dots,d_{\boldsymbol{\alpha},\boldsymbol{\beta}}\}$ coprime to $d_{\boldsymbol{\alpha},\boldsymbol{\beta}}$. Then we have $d_{\langle a\boldsymbol{\alpha}\rangle,\langle a\boldsymbol{\beta}\rangle}=d_{\boldsymbol{\alpha},\boldsymbol{\beta}}$. Let $c\in\{1,\dots,d_{\boldsymbol{\alpha},\boldsymbol{\beta}}\}$ coprime to $d_{\boldsymbol{\alpha},\boldsymbol{\beta}}$ and $x\in\mathbb{R}$ be fixed and let $b\in\{1,\dots,d_{\boldsymbol{\alpha},\boldsymbol{\beta}}\}$ be such that $b\equiv ca\mod d_{\boldsymbol{\alpha},\boldsymbol{\beta}}$. Then we have
$$
\xi_{\langle a\boldsymbol{\alpha}\rangle,\langle a\boldsymbol{\beta}\rangle}(c,x)=\begin{cases}
\xi_{\boldsymbol{\alpha},\boldsymbol{\beta}}(b,\langle x\rangle-)\textup{ if $x>c$};\\
r-s\textup{ if $x\leq c$ and $x\in\mathbb{Z}$};\\
\xi_{\boldsymbol{\alpha},\boldsymbol{\beta}}(b,\langle x\rangle-)\textup{ or }\xi_{\boldsymbol{\alpha},\boldsymbol{\beta}}(b,\langle x\rangle+)\textup{ otherwise},
\end{cases}.
$$
where $r$, respectively $s$, is the number of elements of $\boldsymbol{\alpha}$, respectively of $\boldsymbol{\beta}$.
\end{propo}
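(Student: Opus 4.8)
The plan is to analyze carefully the effect of the substitution $\boldsymbol{\alpha}\mapsto\langle a\boldsymbol{\alpha}\rangle$ on the order $\preceq$ and on the counting functions $\xi$. First I would establish the equality $d_{\langle a\boldsymbol{\alpha}\rangle,\langle a\boldsymbol{\beta}\rangle}=d_{\boldsymbol{\alpha},\boldsymbol{\beta}}$: since $a$ is coprime to $d:=d_{\boldsymbol{\alpha},\boldsymbol{\beta}}$, multiplication by $a$ permutes the residues modulo $d$ and hence, for each parameter $x$ with $d(x)\mid d$, the exact denominator of $\langle ax\rangle$ still divides $d$; conversely $\langle a^{-1}\langle ax\rangle\rangle$ recovers $\langle x\rangle$ up to an integer, so the two least common multiples coincide. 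The key reformulation I would record is that for $x\in\mathbb{Q}$ we have $\langle ax\rangle$ equal to the unique representative in $(0,1]$ of $ax$ modulo $\mathbb{Z}$, and that the value $\xi_{\langle a\boldsymbol{\alpha}\rangle,\langle a\boldsymbol{\beta}\rangle}(c,x)$ only depends on the positions of the points $ca\alpha_i$, $ca\beta_j$ relative to $x$ in the order $\preceq$, after reducing the first coordinate via $\langle\cdot\rangle$.

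Next I would set $b\in\{1,\dots,d\}$ with $b\equiv ca\pmod d$ and compare, for each parameter $\gamma$ (an $\alpha_i$ or a $\beta_j$), the truth value of ``$c\langle a\gamma\rangle\preceq x$'' with that of ``$b\gamma\preceq x'$'' for the appropriate one-sided limit $x'\in\{\langle x\rangle-,\langle x\rangle+\}$. The point is that $c\langle a\gamma\rangle$ and $b\gamma=ca\gamma$ differ by an integer multiple of $c$ (because $\langle a\gamma\rangle-a\gamma\in\mathbb{Z}$ forces $c\langle a\gamma\rangle-ca\gamma\in c\mathbb{Z}$), and similarly $b\gamma$ and $ca\gamma$ differ by a multiple of $d$, hence by a multiple of $c$ as well once one also uses $d(\gamma)\mid d$; so the two quantities have the same image $\langle\cdot\rangle$ in $(0,1]$ but may differ in their actual real value. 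Since $\preceq$ compares first the $\langle\cdot\rangle$-values and, in case of a tie, reverses the usual order on $\mathbb{R}$, I would split into cases according to whether $\langle x\rangle$ lies strictly between two consecutive $\langle c\langle a\gamma\rangle\rangle$-values (so the one-sided limit is irrelevant and the count is unambiguous, giving the first line $x>c$ once one checks that $x>c$ is exactly the range where $\langle x\rangle$ cannot coincide with $\langle c\gamma'\rangle=\langle c\rangle=1$-type values in the wrong way), whether $\langle x\rangle$ equals one of those values (forcing a one-sided choice, the third line), and whether $x$ is itself an integer, in which case every $c\langle a\gamma\rangle\preceq x$ holds and the count is simply $r-s$ (the second line).

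The main obstacle I expect is the bookkeeping of the one-sided limits and the precise delineation of the three cases — in particular proving that the dichotomy ``$x>c$'' versus ``$x\le c$'' in the statement corresponds exactly to whether the value of $\xi_{\langle a\boldsymbol{\alpha}\rangle,\langle a\boldsymbol{\beta}\rangle}(c,\cdot)$ at $x$ is pinned down or still has the freedom encoded by the ``$\langle x\rangle-$ or $\langle x\rangle+$'' clause. This requires tracking how the threshold at which a point $c\langle a\gamma\rangle$ enters the count shifts when one passes from the variable $x$ (living in the $\langle a\boldsymbol{\alpha}\rangle,\langle a\boldsymbol{\beta}\rangle$ world) to the variable $\langle x\rangle$ (living in the $\boldsymbol{\alpha},\boldsymbol{\beta}$ world), and using that $c$ is itself the representative in $\{1,\dots,d\}$ so that $\langle c\rangle=1$ is the top of the order. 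Once these elementary but delicate comparisons are in place, the three stated formulas follow by simply reading off the counts on each side; the equality $d_{\langle a\boldsymbol{\alpha}\rangle,\langle a\boldsymbol{\beta}\rangle}=d_{\boldsymbol{\alpha},\boldsymbol{\beta}}$ proved at the outset guarantees that $b$ is well-defined and that the ranges of summation match.
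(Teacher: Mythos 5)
Your high-level intuition is right — compare $\langle\cdot\rangle$-values via $\langle c\langle a\gamma\rangle\rangle = \langle b\gamma\rangle$ and then resolve the ties one way or the other — but the case split you propose is not the one in the statement and does not actually partition the cases correctly. You identify ``first line'' with ``$\langle x\rangle$ lies strictly between parameter $\langle\cdot\rangle$-values'' and ``second line'' with ``$x\in\mathbb{Z}$''; neither identification is correct. One can easily have $x\le c$, $x\notin\mathbb{Z}$, and $\langle x\rangle$ strictly between parameter values (which falls in the \emph{third} line), and one can have $x\in\mathbb{Z}$ with $x>c$ (which falls in the \emph{first} line, not the second, because the tie at $\langle x\rangle=1$ is \emph{lost}, not won). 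The missing elementary observation that drives the real trichotomy is that $\langle a\gamma\rangle\in(0,1]$ forces $c\langle a\gamma\rangle\le c$ for every parameter $\gamma$. Once you have that: (i) if $x>c$, the tie-break ``$c\langle a\gamma\rangle\ge x$'' always fails, which is why the left limit $\xi_{\boldsymbol{\alpha},\boldsymbol{\beta}}(b,\langle x\rangle-)$ appears unconditionally; (ii) if $x\le c$ and $x\in\mathbb{Z}$, then $\langle x\rangle=1$ and the only possible ties occur at $\langle a\gamma\rangle=1$, where $c\langle a\gamma\rangle=c\ge x$, so the tie-break always succeeds and every parameter is counted, giving $r-s$; (iii) if $x\le c$ and $x\notin\mathbb{Z}$, the outcome of the tie-break can go either way, but it is the \emph{same} for every parameter tied with $\langle x\rangle$ — this uniformity is the point of the paper's remark that $\langle b\alpha\rangle=\langle b\beta\rangle \iff \langle\alpha\rangle=\langle\beta\rangle \iff \langle a\alpha\rangle=\langle a\beta\rangle$, and it is what guarantees the answer is exactly $\xi_{\boldsymbol{\alpha},\boldsymbol{\beta}}(b,\langle x\rangle-)$ or $\xi_{\boldsymbol{\alpha},\boldsymbol{\beta}}(b,\langle x\rangle+)$ rather than some intermediate value. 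Your plan does not state (i) or (ii), does not prove the uniformity (iii), and the incorrect identification of cases means the argument as written would not close.

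One smaller issue: the parenthetical claim that $b\gamma$ and $ca\gamma$ ``differ by a multiple of $d$, hence by a multiple of $c$'' is not right — $b-ca$ is a multiple of $d_{\boldsymbol{\alpha},\boldsymbol{\beta}}$, so $(b-ca)\gamma$ is an integer (using $d(\gamma)\mid d_{\boldsymbol{\alpha},\boldsymbol{\beta}}$), but it has no reason to be a multiple of $c$. Fortunately the argument only needs that the difference is an integer, so that the two quantities share the same $\langle\cdot\rangle$-class; state exactly that and drop the spurious divisibility by $c$.
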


\begin{Remark}
For all $a\in\{1,\dots,d_{\boldsymbol{\alpha},\boldsymbol{\beta}}\}$ coprime to $d_{\boldsymbol{\alpha},\boldsymbol{\beta}}$, $r-s$ is the limit of $\xi_{\boldsymbol{\alpha},\boldsymbol{\beta}}(a,n)$ when $n\in\mathbb{Z}$ tends to $-\infty$.
\end{Remark}

In Proposition \ref{propo reduction} and throughout this article, if $f$ is a function defined on $\mathcal{D}\subset\mathbb{R}$ and $x\in\mathcal{D}$, then we adopt the notations
$$
f(x+):=\underset{y\in\mathcal{D},y>x}{\underset{y\rightarrow x}{\lim}}f(y)\quad\textup{and}\quad f(x-):=\underset{y\in\mathcal{D},y<x}{\underset{y\rightarrow x}{\lim}}f(y).
$$

\begin{proof}
For all elements $\alpha$ and $\beta$ of $\boldsymbol{\alpha}$ or $\boldsymbol{\beta}$, we have $\big\langle c\langle a\alpha\rangle\big\rangle=\langle ca\alpha\rangle=\langle b\alpha\rangle$ and $\langle b\alpha\rangle=\langle b\beta\rangle$ if and only if $\langle\alpha\rangle=\langle\beta\rangle$. If $\langle b\alpha\rangle=\langle x\rangle$, then we have $c\langle a\alpha\rangle\preceq x\Leftrightarrow c\langle a\alpha\rangle\geq x$. It follows that if $x>c$, then we have 
\begin{align*}
\xi_{\langle a\boldsymbol{\alpha}\rangle,\langle a\boldsymbol{\beta}\rangle}(c,x)&=\#\big\{1\leq i\leq r\,:\,\langle b\alpha_i\rangle<\langle x\rangle\big\}-\#\big\{1\leq j\leq s\,:\,\langle b\beta_j\rangle<\langle x\rangle\big\}\\
&=\xi_{\boldsymbol{\alpha},\boldsymbol{\beta}}(b,\langle x\rangle-).
\end{align*} 

If $x\in\mathbb{Z}$ and $x\leq c$, then we have $\langle x\rangle=1$ and $\xi_{\langle a\boldsymbol{\alpha}\rangle,\langle a\boldsymbol{\beta}\rangle}(c,x)=r-s$. Now we assume that $x\leq c$ and $x\notin\mathbb{Z}$. If $\alpha$ and $\beta$ are elements of $\boldsymbol{\alpha}$ or $\boldsymbol{\beta}$ satisfying $\langle x\rangle=\langle b\alpha\rangle=\langle b\beta\rangle$, then $\langle\alpha\rangle=\langle\beta\rangle$ so $\langle a\alpha\rangle=\langle a\beta\rangle$ and we obtain that $c\langle a\alpha\rangle\preceq x\Leftrightarrow c\langle a\beta\rangle\preceq x$. Thus we have
\begin{align*}
\xi_{\langle a\boldsymbol{\alpha}\rangle,\langle a\boldsymbol{\beta}\rangle}(c,x)&=\begin{cases}
\#\big\{1\leq i\leq r\,:\,\langle b\alpha_i\rangle<\langle x\rangle\big\}-\#\big\{1\leq j\leq s\,:\,\langle b\beta_j\rangle<\langle x\rangle\big\}\\ \textup{or}\\ \#\big\{1\leq i\leq r\,:\,\langle b\alpha_i\rangle\leq\langle x\rangle\big\}-\#\big\{1\leq j\leq s\,:\,\langle b\beta_j\rangle\leq\langle x\rangle\big\}\end{cases}
\\
&=\begin{cases}\xi_{\boldsymbol{\alpha},\boldsymbol{\beta}}(b,\langle x\rangle -)\\ \textup{or}\\ \xi_{\boldsymbol{\alpha},\boldsymbol{\beta}}(b,\langle x\rangle +)\end{cases}
\end{align*} 
because $\langle x\rangle<1$.
\end{proof}

By Proposition \ref{propo reduction} with $a=1$ together with Theorem A, we obtain that, if $F_{\boldsymbol{\alpha},\boldsymbol{\beta}}$ is $N$-integral, then $F_{\langle\boldsymbol{\alpha}\rangle,\langle\boldsymbol{\beta}\rangle}$ is also $N$-integral. Similarly, if $H_{\boldsymbol{\alpha},\boldsymbol{\beta}}$ holds then $H_{\langle\boldsymbol{\alpha}\rangle,\langle\boldsymbol{\beta}\rangle}$ also holds. More precisely, we have the following result, used several times in the proof of Theorem \ref{Criterion}.

\begin{Lemma}\label{H transfert}
Let $\boldsymbol{\alpha}$ and $\boldsymbol{\beta}$ be two disjoint tuples of parameters in $\mathbb{Q}\setminus\mathbb{Z}_{\leq 0}$ with the same number of elements and such that $H_{\boldsymbol{\alpha},\boldsymbol{\beta}}$ holds. Then, for all $a\in\{1,\dots,d_{\boldsymbol{\alpha},\boldsymbol{\beta}}\}$ coprime to $d_{\boldsymbol{\alpha},\boldsymbol{\beta}}$, Assertion $H_{\langle a\boldsymbol{\alpha}\rangle,\langle a\boldsymbol{\beta}\rangle}$ holds.
\end{Lemma}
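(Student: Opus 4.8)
The plan is to deduce Lemma \ref{H transfert} from Proposition \ref{propo reduction} and the definition of $H$, using the elementary observation that the coprime residues $a$ act by permutation on the relevant index sets. First I would fix $a$ coprime to $d_{\boldsymbol{\alpha},\boldsymbol{\beta}}$ and let $\boldsymbol{\alpha}':=\langle a\boldsymbol{\alpha}\rangle$, $\boldsymbol{\beta}':=\langle a\boldsymbol{\beta}\rangle$. By Proposition \ref{propo reduction} we have $d_{\boldsymbol{\alpha}',\boldsymbol{\beta}'}=d_{\boldsymbol{\alpha},\boldsymbol{\beta}}=:d$, and since $\boldsymbol{\alpha}$ and $\boldsymbol{\beta}$ are disjoint with the same number of elements, so are $\boldsymbol{\alpha}'$ and $\boldsymbol{\beta}'$; in particular $\mathcal{L}_{\boldsymbol{\alpha}',\boldsymbol{\beta}'}$ is irreducible and $r=s$. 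I now need to show that for every $c$ coprime to $d$ and every $x$ with $m_{\boldsymbol{\alpha}',\boldsymbol{\beta}'}(c)\preceq x\prec c$ one has $\xi_{\boldsymbol{\alpha}',\boldsymbol{\beta}'}(c,x)\geq 1$.

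Next I would invoke Proposition \ref{propo reduction}: with $b\in\{1,\dots,d\}$ chosen so that $b\equiv ca\bmod d$, the value $\xi_{\boldsymbol{\alpha}',\boldsymbol{\beta}'}(c,x)$ equals either $\xi_{\boldsymbol{\alpha},\boldsymbol{\beta}}(b,\langle x\rangle-)$ or $\xi_{\boldsymbol{\alpha},\boldsymbol{\beta}}(b,\langle x\rangle+)$ whenever $x$ lies in the range of interest (the case $x\in\mathbb{Z}$, $x\le c$ giving $r-s=0$, does not occur here since the constraint $x\prec c$ forces $\langle x\rangle<\langle c\rangle=1$ or $x>c$ — I should check this small point carefully, translating $x\prec c$ via the definition of $\preceq$). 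So it suffices to show that $\xi_{\boldsymbol{\alpha},\boldsymbol{\beta}}(b,y)\geq 1$ for $y\in\{\langle x\rangle-,\langle x\rangle+\}$, i.e. that these one-sided limits fall into the region where $H_{\boldsymbol{\alpha},\boldsymbol{\beta}}$ guarantees a value $\geq 1$. The main work is therefore to verify that the hypothesis $m_{\boldsymbol{\alpha}',\boldsymbol{\beta}'}(c)\preceq x\prec c$ on $x$ translates, under $x\mapsto\langle x\rangle$ and $c\mapsto b$, into $m_{\boldsymbol{\alpha},\boldsymbol{\beta}}(b)\preceq y\prec b$ for the relevant $y$.

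For this translation I would use that multiplication by the coprime residue $a$ (followed by $\langle\cdot\rangle$) is a bijection on the multiset $\{\langle\gamma\rangle:\gamma\in\boldsymbol{\alpha}\cup\boldsymbol{\beta}\}$ and, more importantly, that it interacts predictably with $\preceq$; concretely, $m_{\boldsymbol{\alpha}',\boldsymbol{\beta}'}(c)$ is $\langle c\langle a\gamma_0\rangle\rangle$-related to $m_{\boldsymbol{\alpha},\boldsymbol{\beta}}(b)=\langle b\gamma_0\rangle$ for the minimizing $\gamma_0$, since $\langle c\langle a\gamma\rangle\rangle=\langle b\gamma\rangle$ for every parameter $\gamma$. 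Thus the smallest element for $(\boldsymbol{\alpha}',\boldsymbol{\beta}')$ at $c$ and the smallest element for $(\boldsymbol{\alpha},\boldsymbol{\beta})$ at $b$ have the same underlying value in $(0,1]$, and $m_{\boldsymbol{\alpha}',\boldsymbol{\beta}'}(c)\preceq x$ gives $\langle m_{\boldsymbol{\alpha},\boldsymbol{\beta}}(b)\rangle\le\langle x\rangle$, hence $m_{\boldsymbol{\alpha},\boldsymbol{\beta}}(b)\preceq\langle x\rangle\pm$ after disentangling the tie-breaking cases (and noting $\langle x\rangle\prec 1\le b$, so the upper constraint $\prec b$ is automatic). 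Feeding $y=\langle x\rangle\pm$ into $H_{\boldsymbol{\alpha},\boldsymbol{\beta}}$ yields $\xi_{\boldsymbol{\alpha},\boldsymbol{\beta}}(b,y)\geq 1$, and Proposition \ref{propo reduction} then gives $\xi_{\boldsymbol{\alpha}',\boldsymbol{\beta}'}(c,x)\geq 1$, which is exactly $H_{\langle a\boldsymbol{\alpha}\rangle,\langle a\boldsymbol{\beta}\rangle}$.

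I expect the main obstacle to be the bookkeeping around the total order $\preceq$ and its one-sided limits: one must be careful that the ``or'' in Proposition \ref{propo reduction} (value at $\langle x\rangle-$ \emph{or} $\langle x\rangle+$) is harmless, because $H_{\boldsymbol{\alpha},\boldsymbol{\beta}}$ is a statement about all $x$ in an interval and hence controls both one-sided limits simultaneously, and that the endpoints $m_{\boldsymbol{\alpha}',\boldsymbol{\beta}'}(c)$ and $c$ transform to the correct endpoints $m_{\boldsymbol{\alpha},\boldsymbol{\beta}}(b)$ and $b$ (rather than something shifted by an integer). Everything else is a routine unwinding of definitions, already packaged in Proposition \ref{propo reduction}.
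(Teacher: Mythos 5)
Your overall strategy is exactly the paper's: push everything through Proposition \ref{propo reduction}, match up the $m_{\cdot,\cdot}$ endpoints via $\langle c\langle a\gamma\rangle\rangle=\langle b\gamma\rangle$, and then invoke $H_{\boldsymbol{\alpha},\boldsymbol{\beta}}$. The reduction of the endpoint problem, the observation that the integer case $x\in\mathbb{Z}$, $x\le c$ is excluded by $x\prec c$, and the transport of the lower endpoint via the fractional parts are all correct and match the paper.

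However, there is a genuine gap at the exact place you flag as the ``main obstacle.'' Your claim that the ``or'' in Proposition \ref{propo reduction} is harmless because \emph{``$H_{\boldsymbol{\alpha},\boldsymbol{\beta}}$ is a statement about all $x$ in an interval and hence controls both one-sided limits simultaneously''} is false in the critical boundary case $\langle x\rangle=\langle m_{\boldsymbol{\alpha},\boldsymbol{\beta}}(b)\rangle$. In that case, any $y$ with $\langle y\rangle$ slightly smaller than $\langle x\rangle$ has $\langle y\rangle<\langle b\gamma\rangle$ for every parameter $\gamma$, so $m_{\boldsymbol{\alpha},\boldsymbol{\beta}}(b)\not\preceq y$; such $y$ are \emph{outside} the region where $H_{\boldsymbol{\alpha},\boldsymbol{\beta}}$ says anything, and indeed $\xi_{\boldsymbol{\alpha},\boldsymbol{\beta}}(b,\langle x\rangle-)=0$ there (nothing is $\preceq y$). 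So if Proposition \ref{propo reduction} happened to give the left limit, the argument would break. The paper's proof therefore does extra work precisely here: it goes back into the \emph{proof} of Proposition \ref{propo reduction} to show that when $\langle x\rangle=\langle b\alpha\rangle$ for the minimizing element $\alpha$ and $m_{\langle a\boldsymbol{\alpha}\rangle,\langle a\boldsymbol{\beta}\rangle}(c)\preceq x$, the ambiguity resolves to the \emph{right} limit — that is,
$$
\xi_{\langle a\boldsymbol{\alpha}\rangle,\langle a\boldsymbol{\beta}\rangle}(c,x)=\#\{i:\langle b\alpha_i\rangle\le\langle x\rangle\}-\#\{j:\langle b\beta_j\rangle\le\langle x\rangle\}=\xi_{\boldsymbol{\alpha},\boldsymbol{\beta}}(b,\langle x\rangle+).
$$
The key input, hidden in Proposition \ref{propo reduction}'s proof, is that for all $\gamma$ with $\langle b\gamma\rangle=\langle x\rangle$ the real numbers $c\langle a\gamma\rangle$ coincide, so the condition $m_{\langle a\boldsymbol{\alpha}\rangle,\langle a\boldsymbol{\beta}\rangle}(c)\preceq x$ forces $c\langle a\gamma\rangle\ge x$ for all of them, which is exactly what makes the ``or'' land on the plus side. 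Without this step your argument does not close, because you are asserting control of $\xi_{\boldsymbol{\alpha},\boldsymbol{\beta}}(b,\langle x\rangle-)$ in a case where it is $0$, not $\ge1$.
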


\begin{proof}
Let $c\in\{1,\dots,d_{\boldsymbol{\alpha},\boldsymbol{\beta}}\}$ be coprime to $d_{\boldsymbol{\alpha},\boldsymbol{\beta}}$ and $x\in\mathbb{R}$ be such that $m_{\langle a\boldsymbol{\alpha}\rangle,\langle a\boldsymbol{\beta}\rangle}(c)\preceq x\prec c$. We shall prove that $\xi_{\langle a\boldsymbol{\alpha}\rangle,\langle a\boldsymbol{\beta}\rangle}(c,x)\geq 1$ by applying Proposition~\ref{propo reduction}. 

Let $b\in\{1,\dots,d_{\boldsymbol{\alpha},\boldsymbol{\beta}}\}$ be such that $b\equiv ac\mod d_{\boldsymbol{\alpha},\boldsymbol{\beta}}$. First, note that there exists an element $\alpha$ of $\boldsymbol{\alpha}$ or $\boldsymbol{\beta}$ such that $c\langle a\alpha\rangle\preceq x$, that is $\langle x\rangle>\langle b\alpha\rangle$ or $\big(\langle x\rangle=\langle b\alpha\rangle$ and $c\langle a\alpha\rangle\geq x\big)$. We distinguish three cases.
\medskip

$\bullet$ If $x>c$ then we have $\langle x\rangle >\langle b\alpha\rangle$ and $\xi_{\langle a\boldsymbol{\alpha}\rangle,\langle a\boldsymbol{\beta}\rangle}(c,x)=\xi_{\boldsymbol{\alpha},\boldsymbol{\beta}}(b,\langle x\rangle-)$. Thus there exists $y\in\mathbb{R}$, $m_{\boldsymbol{\alpha},\boldsymbol{\beta}}(b)\preceq y\prec b$ such that $\xi_{\langle a\boldsymbol{\alpha}\rangle,\langle a\boldsymbol{\beta}\rangle}(c,x)=\xi_{\boldsymbol{\alpha},\boldsymbol{\beta}}(b,y)\geq 1$. 
\medskip

$\bullet$ If $x\leq c$ and $x\notin\mathbb{Z}$, then we have $\langle x\rangle<1$ and $\xi_{\langle a\boldsymbol{\alpha}\rangle,\langle a\boldsymbol{\beta}\rangle}(c,x)=\xi_{\boldsymbol{\alpha},\boldsymbol{\beta}}(b,\langle x\rangle-)$ or $\xi_{\boldsymbol{\alpha},\boldsymbol{\beta}}(b,\langle x\rangle+)$. Since $\langle x\rangle\geq\langle b\alpha\rangle$, there exists $y\in\mathbb{R}$, $m_{\boldsymbol{\alpha},\boldsymbol{\beta}}(b)\preceq y\prec b$ such that $\xi_{\boldsymbol{\alpha},\boldsymbol{\beta}}(b,\langle x\rangle+)=\xi_{\boldsymbol{\alpha},\boldsymbol{\beta}}(b,y)\geq 1$. Furthermore, if $\langle x\rangle>\langle b\alpha\rangle$ then we have $\xi_{\boldsymbol{\alpha},\boldsymbol{\beta}}(b,\langle x\rangle-)\geq~1$ as in the case $x>c$. Now we assume that, for all elements $\beta$ of $\boldsymbol{\alpha}$ or $\boldsymbol{\beta}$, we have $\langle x\rangle\leq\langle b\beta\rangle$. Hence we have $\langle x\rangle=\langle b\alpha\rangle$ and, as explained in the proof of Proposition \ref{propo reduction}, we have
\begin{align*}
\xi_{\langle a\boldsymbol{\alpha}\rangle,\langle a\boldsymbol{\beta}\rangle}(c,x)&=\#\big\{1\leq i\leq r\,:\,\langle b\alpha_i\rangle\leq\langle x\rangle\big\}-\#\big\{1\leq j\leq s\,:\,\langle b\beta_j\rangle\leq\langle x\rangle\big\}\\
&=\xi_{\boldsymbol{\alpha},\boldsymbol{\beta}}(b,\langle x\rangle+)\geq 1.
\end{align*}
\medskip

$\bullet$ It remains to consider the case $x\leq c$ and $x\in\mathbb{Z}$. But in this case we do not have $x\prec c$ thus $H_{\langle a\boldsymbol{\alpha}\rangle,\langle a\boldsymbol{\beta}\rangle}$ is proved.
\end{proof}

\subsubsection{Numerators of the elements of $\boldsymbol{\alpha}$ and $\boldsymbol{\beta}$}\label{section numerators}

Let $\boldsymbol{\alpha}=(\alpha_1,\dots,\alpha_r)$ and $\boldsymbol{\beta}=(\beta_1,\dots,\beta_r)$ be tuples of parameters in $\mathbb{Q}\setminus\mathbb{Z}_{\leq 0}$. Then Theorem \ref{theo Const} gives a necessary condition on the numerators of elements of $\boldsymbol{\alpha}$ and $\boldsymbol{\beta}$ for $F_{\boldsymbol{\alpha},\boldsymbol{\beta}}$ to be $N$-integral. Indeed, let us assume that $F_{\boldsymbol{\alpha},\boldsymbol{\beta}}$ is $N$-integral. Then, according to Section \ref{section reduction1}, $F_{\langle\boldsymbol{\alpha}\rangle,\langle\boldsymbol{\beta}\rangle}$ is also $N$-integral. We write $n_i$, respectively $n_j'$, for the exact numerator of $\langle\alpha_i\rangle$, respectively of $\langle\beta_j\rangle$. Then by Theorem~\ref{theo Const}, the first-order Taylor coefficient at the origin of $F_{\langle\boldsymbol{\alpha}\rangle,\langle\boldsymbol{\beta}\rangle}\big(C_{\langle\boldsymbol{\alpha}\rangle,\langle\boldsymbol{\beta}\rangle}z\big)$ is (\footnote{Note that, for all primes $p$, we have $\lambda_p(\boldsymbol{\alpha},\boldsymbol{\beta})=\lambda_p(\langle\boldsymbol{\alpha}\rangle,\langle\boldsymbol{\beta}\rangle)$.})
$$
\frac{\prod_{i=1}^rn_i}{\prod_{j=1}^rn_j'}\underset{p\mid d_{\boldsymbol{\alpha},\boldsymbol{\beta}}}{\prod}p^{-\left\lfloor\frac{\lambda_p(\boldsymbol{\alpha},\boldsymbol{\beta})}{p-1}\right\rfloor}\in\mathbb{Z},
$$
so that, for all primes $p$, we have 
$$
v_p\left(\frac{\prod_{i=1}^rn_i}{\prod_{j=1}^rn_j'}\right)\geq \left\lfloor\frac{\lambda_p(\boldsymbol{\alpha},\boldsymbol{\beta})}{p-1}\right\rfloor.
$$
For instance, the last inequality is not satisfied with $p=2$, $\boldsymbol{\alpha}=(1/5,1/3,3/5)$ and $\boldsymbol{\beta}=(1/2,1,1)$, or with $p=3$, $\boldsymbol{\alpha}=(1/7,2/7,4/7,5/7)$ and $\boldsymbol{\beta}=(3/4,1,1,1)$. Thus in both cases the associated generalized hypergeometric series $F_{\boldsymbol{\alpha},\boldsymbol{\beta}}$ is not $N$-integral.

\subsubsection{The Eisenstein constant of algebraic generalized hypergeometric series}

Let $\boldsymbol{\alpha}=(\alpha_1,\dots,\alpha_r)$ and $\boldsymbol{\beta}=(\beta_1,\dots,\beta_r)$ be tuples of parameters in $\mathbb{Q}\setminus\mathbb{Z}_{\leq 0}$. If $F_{\boldsymbol{\alpha},\boldsymbol{\beta}}(z)$ is algebraic over $\mathbb{Q}(z)$ then $F_{\boldsymbol{\alpha},\boldsymbol{\beta}}$ is $N$-integral (Eisenstein's theorem) and one can apply Theorem \ref{theo Const} to get arithmetical properties of the Eisenstein constant of $F_{\boldsymbol{\alpha},\boldsymbol{\beta}}$. For the sake of completeness, let us remind the reader of a result of Beukers and Heckman \cite[Theorem 1.5]{Beukers} proved in \cite{Beukers Heckman} on algebraic hypergeometric functions: 
\begin{quote}
``Assume that $\beta_r=1$ and that $\mathcal{L}_{\boldsymbol{\alpha},\boldsymbol{\beta}}$ is irreducible. Then the set of solutions of the hypergeometric equation associated with $\mathcal{L}_{\boldsymbol{\alpha},\boldsymbol{\beta}}$ consists of algebraic functions (over $\mathbb{C}(z)$) if and only if the sets $\{a\alpha_i\,:\,1\leq 1\leq r\}$ and $\{a\beta_i\,:\,1\leq i\leq r\}$ interlace modulo $1$ for every integer $a$ with $1\leq a\leq d_{\boldsymbol{\alpha},\boldsymbol{\beta}}$ and $\gcd(a,d_{\boldsymbol{\alpha},\boldsymbol{\beta}})=1$.''
\end{quote}
The sets $\{\alpha_i\,:\,1\leq i\leq r\}$ and $\{\beta_i\,:\,1\leq i\leq r\}$ interlace modulo $1$ if the points of the sets $\{e^{2\pi i\alpha_j}\,:\,1\leq j\leq r\}$ and $\{e^{2\pi i\beta_j}\,:\,1\leq j\leq r\}$ occur alternatively when running along the unit circle.

Beukers-Heckman criterion can be reformulated in terms of Christol's functions as follows. 
\begin{quote}
``Assume that $\beta_r=1$ and that $\mathcal{L}_{\boldsymbol{\alpha},\boldsymbol{\beta}}$ is irreducible. Then the solution set of the hypergeometric equation associated with $\mathcal{L}_{\boldsymbol{\alpha},\boldsymbol{\beta}}$ consists of algebraic functions (over $\mathbb{C}(z)$) if and only if, for every integer $a$ with $1\leq a\leq d_{\boldsymbol{\alpha},\boldsymbol{\beta}}$ and $\gcd(a,d_{\boldsymbol{\alpha},\boldsymbol{\beta}})=1$, we have $\xi_{\boldsymbol{\alpha},\boldsymbol{\beta}}(a,\mathbb{R})=\{0,1\}$.''
\end{quote}

\subsection{Comparison with previous results}\label{section comparison}

\subsubsection{Theorem \ref{theo expand} and previous results}\label{section comparison 1}

The first result on $p$-adic integrality of $q_{\boldsymbol{\alpha},\boldsymbol{\beta}}$ is due to Dwork \cite[Theorem $4.1$]{Dwork}. This result enables us to prove that, for particular tuples $\boldsymbol{\alpha}$ and $\boldsymbol{\beta}$, we have $q_{\boldsymbol{\alpha},\boldsymbol{\beta}}(z)\in\mathbb{Z}_p[[z]]$ for almost all primes $p$. It follows without much trouble that   $q_{\boldsymbol{\alpha},\boldsymbol{\beta}}$ is $N$-integral. Thus we know that there exists $C\in\mathbb{N}$, $C\geq 1$, such that $q_{\boldsymbol{\alpha},\boldsymbol{\beta}}(Cz)\in\mathbb{Z}[[z]]$ but the only information on $C$ given by Dwork's result is that we can choose $C$ with prime divisors in an explicit finite set associated with $(\boldsymbol{\alpha},\boldsymbol{\beta})$. Hence, improvements of Dwork's method consist in finding explicit formulas for $C$ and we discuss such previous improvements in the next section. But Theorem \ref{theo expand} is more general and, in order to compare this theorem with Dwork's result \cite[Theorem~$4.1$]{Dwork}, we introduce some notations that we use throughout this article. Until the end of this section, we restrict ourself to the case where $\boldsymbol{\alpha}$ and $\boldsymbol{\beta}$ have the same numbers of elements. 
\medskip

$\bullet$ For all primes $p$ and all $p$-adic integers $\alpha$ in $\mathbb{Q}$, we write $\mathfrak{D}_p(\alpha)$ for the unique $p$-adic integer in $\mathbb{Q}$ satisfying $p\mathfrak{D}_p(\alpha)-\alpha\in\{0,\dots,p-1\}$. The operator $\alpha\mapsto\mathfrak{D}_p(\alpha)$ has been used by Dwork in \cite{Dwork} and denoted by $\alpha\mapsto\alpha'$ (\footnote{See Section \ref{section valuation} for a detailed study of Dwork's map $\mathfrak{D}_p$.}). 

$\bullet$ For all primes $p$, all $x\in\mathbb{Q}\cap\mathbb{Z}_p$ and all $a\in[0,p)$ 
we define
$$
\rho_p(a,x):=\begin{cases}
\textup{$0$ if $a\leq p\mathfrak{D}_p(x)-x$;}\\
\textup{$1$ if $a>p\mathfrak{D}_p(x)-x$.}
\end{cases}.
$$
$\bullet$ We write $\boldsymbol{\alpha}=(\alpha_1,\dots,\alpha_r)$ and $\boldsymbol{\beta}=(\beta_1,\dots,\beta_r)$. Let $r'$ be the number of elements $\beta_i$ of $\boldsymbol{\beta}$ such that $\beta_i\neq 1$. We rearrange the subscripts so that $\beta_i\neq 1$ for $i\leq r'$. For all $a\in[0,p)$ and all $k\in\mathbb{N}$, we set
$$
N^k_{p,\boldsymbol{\alpha}}(a)=\sum_{i=1}^r\rho_p\big(a,\mathfrak{D}_p^k(\alpha_i)\big)\quad\textup{and}\quad N^k_{p,\boldsymbol{\beta}}(a)=\sum_{i=1}^{r'}\rho_p\big(a,\mathfrak{D}_p^k(\beta_i)\big).
$$
$\bullet$ For a given prime $p$ not dividing $d_{\boldsymbol{\alpha},\boldsymbol{\beta}}$, we define two assertions: 
\begin{itemize}
\item[$(v)_p$] for all $i\in\{1,\dots,r'\}$ and all $k\in\mathbb{N}$, we have $\mathfrak{D}_p^k(\beta_i)\in\mathbb{Z}_p^\times$;
\item[$(vi)_p$] for all $a\in[0,p)$ and all $k\in\mathbb{N}$, we have either $N^k_{p,\boldsymbol{\alpha}}(a)=N^k_{p,\boldsymbol{\beta}}(a+)=0$ or $N^k_{p,\boldsymbol{\alpha}}(a)-N^k_{p,\boldsymbol{\beta}}(a+)\geq 1$.
\end{itemize}
\medskip
Dwork's result \cite[Theorem $4.1$]{Dwork} restricted to the case where $\boldsymbol{\alpha}$ and $\boldsymbol{\beta}$ have the same number of elements is the following.
\begin{theoB}[Dwork]
Let $\boldsymbol{\alpha}$ and $\boldsymbol{\beta}$ be two tuples of parameters in $\mathbb{Q}\setminus\mathbb{Z}_{\leq 0}$ with the same number of  elements. Let $p$ be a prime not dividing $d_{\boldsymbol{\alpha},\boldsymbol{\beta}}$ such that $\boldsymbol{\alpha}$ and $\boldsymbol{\beta}$ satisfy $(v)_p$ and $(vi)_p$. Then we have
$$
\frac{G_{\mathfrak{D}_p(\boldsymbol{\alpha}),\mathfrak{D}_p(\boldsymbol{\beta})}}{F_{\mathfrak{D}_p(\boldsymbol{\alpha}),\mathfrak{D}_p(\boldsymbol{\beta})}}(z^p)-p\frac{G_{\boldsymbol{\alpha},\boldsymbol{\beta}}(z)}{F_{\boldsymbol{\alpha},\boldsymbol{\beta}}}(z)\in pz\mathbb{Z}_p[[z]].
$$
\end{theoB}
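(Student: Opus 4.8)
The plan is to reduce the congruence between the two ratios $G/F$ to a family of congruences among the Taylor coefficients of $F_{\boldsymbol{\alpha},\boldsymbol{\beta}}$ and $G_{\boldsymbol{\alpha},\boldsymbol{\beta}}$, and then to establish those via Dwork's combinatorial control of $p$-adic valuations of Pochhammer symbols. First I would clear denominators: writing $\tilde F(z) = F_{\mathfrak{D}_p(\boldsymbol{\alpha}),\mathfrak{D}_p(\boldsymbol{\beta})}(z^p)$ and $\tilde G(z) = G_{\mathfrak{D}_p(\boldsymbol{\alpha}),\mathfrak{D}_p(\boldsymbol{\beta})}(z^p)$, the asserted membership
$$
\frac{\tilde G(z)}{\tilde F(z)} - p\,\frac{G_{\boldsymbol{\alpha},\boldsymbol{\beta}}(z)}{F_{\boldsymbol{\alpha},\boldsymbol{\beta}}(z)} \in pz\mathbb{Z}_p[[z]]
$$
is equivalent, once one knows $\tilde F, F_{\boldsymbol{\alpha},\boldsymbol{\beta}} \in 1 + z\mathbb{Z}_p[[z]]$ and are invertible over $\mathbb{Z}_p[[z]]$, to the single congruence
$$
\tilde G(z)\,F_{\boldsymbol{\alpha},\boldsymbol{\beta}}(z) - p\,G_{\boldsymbol{\alpha},\boldsymbol{\beta}}(z)\,\tilde F(z) \equiv 0 \pmod{pz\,\mathbb{Z}_p[[z]]}\cdot\bigl(\tilde F(z) F_{\boldsymbol{\alpha},\boldsymbol{\beta}}(z)\bigr),
$$
i.e. to a polynomial-coefficient congruence modulo $p$ (plus the vanishing of the constant term, which is automatic since $G$'s have no constant term). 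So the problem becomes: prove that the coefficientwise relation
$$
[z^n]\bigl(\tilde G\, F_{\boldsymbol{\alpha},\boldsymbol{\beta}}\bigr) \equiv p\,[z^n]\bigl(G_{\boldsymbol{\alpha},\boldsymbol{\beta}}\,\tilde F\bigr) \pmod{p\,\mathbb{Z}_p}
$$
holds for every $n\geq 1$, after also checking the $N$-integrality facts ($\tilde F, F \in \mathbb{Z}_p[[z]]$), which follow from Theorem A together with assertions $(v)_p$ and $(vi)_p$ translated into the nonnegativity of the relevant Christol functions.

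Next I would set up the Dwork-style decomposition of the coefficients. Write $A_n = \frac{(\alpha_1)_n\cdots(\alpha_r)_n}{(\beta_1)_n\cdots(\beta_r)_n}$ and $A_n^{(k)}$ for the analogous quantity with $\boldsymbol{\alpha},\boldsymbol{\beta}$ replaced by $\mathfrak{D}_p^k(\boldsymbol{\alpha}), \mathfrak{D}_p^k(\boldsymbol{\beta})$. The arithmetic heart of Dwork's method is the relation, for $n = mp + j$ with $0\le j < p$, of the form $A_n \equiv A_m^{(1)} \cdot A_j \pmod{p}$ up to an explicit power of $p$ whose exponent is controlled by the functions $N^k_{p,\boldsymbol{\alpha}}, N^k_{p,\boldsymbol{\beta}}$; assertion $(vi)_p$ is exactly the hypothesis that guarantees this exponent is nonnegative, so that $A_n/A_m^{(1)} \in \mathbb{Z}_p$ with the expected congruence. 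I would then write the harmonic-type sums $\sum_i H_{\alpha_i}(n) - \sum_j H_{\beta_j}(n)$ appearing in $G_{\boldsymbol{\alpha},\boldsymbol{\beta}}$ in terms of logarithmic derivatives of Pochhammer symbols, $H_x(n) = \sum_{k=0}^{n-1} 1/(x+k)$, and track how this sum behaves under $n \mapsto (n-j)/p$ together with the replacement $\boldsymbol{\alpha} \mapsto \mathfrak{D}_p(\boldsymbol{\alpha})$. The key identity to extract is a $p$-adic one relating $\sum_i H_{\alpha_i}(n)$ to $p^{-1}\sum_i H_{\mathfrak{D}_p(\alpha_i)}(m)$ modulo $p$-adic integers, where the factor $p^{-1}$ is precisely what produces the asymmetry $p\cdot G_{\boldsymbol{\alpha},\boldsymbol{\beta}}$ versus $G_{\mathfrak{D}_p(\boldsymbol{\alpha}),\mathfrak{D}_p(\boldsymbol{\beta})}(z^p)$ in the statement; the terms with $\beta_i = 1$ (the index range $i \le r'$ in the definition of $N^k_{p,\boldsymbol{\beta}}$) must be handled separately because $H_1(n) = H_n$ has a pole-free but distinguished behaviour, and this is where $(v)_p$ enters, ensuring no $\mathfrak{D}_p$-iterate of a $\beta_i \ne 1$ accidentally becomes a $p$-adic non-unit.

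Finally I would assemble the pieces: expand $\tilde G \cdot F$ and $G \cdot \tilde F$ as double sums over $(m,j)$, substitute the congruences $A_n \equiv A_m^{(1)} A_j$ and the harmonic-sum identity, and check that the two sides agree modulo $p$ coefficient by coefficient, the mismatch term in the harmonic sums being absorbed by the extra factor $p$ on the left of the displayed congruence in the theorem. I expect the main obstacle to be the bookkeeping of the $p$-adic valuations in the non-generic positions — precisely, controlling the error in $A_n \equiv A_m^{(1)} A_j \pmod p$ when some $\mathfrak{D}_p^k(\alpha_i)$ or $\mathfrak{D}_p^k(\beta_i)$ lies near the "boundary" $a = p\mathfrak{D}_p(x) - x$ where $\rho_p$ jumps, and showing that hypothesis $(vi)_p$ (nonnegativity, with the dichotomy "$=0$ or $\ge 1$") is strong enough to kill exactly these error terms. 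A secondary technical point is justifying the passage from the coefficientwise congruences back to the membership in $pz\mathbb{Z}_p[[z]]$ for the ratio, which requires knowing $F_{\boldsymbol{\alpha},\boldsymbol{\beta}}$ and $F_{\mathfrak{D}_p(\boldsymbol{\alpha}),\mathfrak{D}_p(\boldsymbol{\beta})}$ are units in $1 + z\mathbb{Z}_p[[z]]$ — again a consequence of $(v)_p$, $(vi)_p$ via Theorem A — so that division is legitimate $p$-adically.
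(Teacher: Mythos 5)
The paper does not prove Theorem~B: it is cited verbatim from Dwork \cite[Theorem~4.1]{Dwork} as a point of comparison for Theorem~\ref{theo expand}, and no proof appears anywhere in the text. So there is nothing in the paper to compare your argument against line by line; what follows is a comparison with Dwork's original approach (and with the paper's proof of the generalization Theorem~\ref{theo expand}, which is where the analogous work actually gets done).

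Your sketch correctly identifies the broad shape of the argument: clear denominators using that $F_{\boldsymbol{\alpha},\boldsymbol{\beta}}$ and $F_{\mathfrak{D}_p(\boldsymbol{\alpha}),\mathfrak{D}_p(\boldsymbol{\beta})}$ are units of $1+z\mathbb{Z}_p[[z]]$, reduce to coefficientwise congruences modulo~$p$, write the $z^n$ coefficient of $\tilde G F - p G \tilde F$ as a double sum over the base-$p$ decomposition $n = j + mp$, and control the harmonic sums $\sum_i H_{\alpha_i}(n) - \sum_j H_{\beta_j}(n)$ via the $p$-adic identity relating $pH_{\alpha}(a+jp)$ to $H_{\mathfrak{D}_p(\alpha)}(j)$ plus a $\rho$-type correction (this is Dwork's Lemma~4.1, and is exactly equation~\eqref{result Harmo} in the paper's proof of Theorem~\ref{theo expand}). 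This is the right plan.

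However, there is a genuine gap, and it is not merely ``bookkeeping.'' The congruence $A_n \equiv A_m^{(1)} A_j \pmod{p}$ that you invoke is far too weak to make the assembly step go through. After substituting the harmonic-sum identity, the double sum you need to kill is of the form
$$
\sum_{j=0}^{K} H_{\mathfrak{D}_p(\alpha)}(j)\Bigl(A_{a+jp}\,A^{(1)}_{K-j} - A^{(1)}_{j}\,A_{a+(K-j)p}\Bigr),
$$
and showing this lies in $p\mathbb{Z}_p$ requires much finer control: one must prove that the partial sums
$$
S(a,K,s,p,m) := \sum_{j=mp^s}^{(m+1)p^s-1}\Bigl(A_{a+(K-j)p}A^{(1)}_j - A^{(1)}_{K-j}A_{a+jp}\Bigr)
$$
are divisible by $p^{s+1}$ (times an additional weight depending on $m$), and feed that divisibility into the growing $p$-adic denominators of $H_{\mathfrak{D}_p(\alpha)}(mp^s) - H_{\mathfrak{D}_p(\alpha)}(\lfloor m/p\rfloor p^{s+1})$. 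This cascading divisibility is precisely Dwork's ``formal congruences'' Theorem~1.1 of \cite{Dwork} (generalized in this paper as Theorem~\ref{congruences formelles}), and it is a substantial combinatorial induction in its own right, not a corollary of a single coefficient congruence modulo~$p$. Without that theorem --- or a substitute proved to the same depth --- your plan cannot be completed: the ``main obstacle'' you identify at the end is, in fact, the entire content of Dwork's proof, and your sketch does not indicate how to surmount it.

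Two smaller remarks. First, your reduction to a polynomial congruence implicitly requires $\tilde F, F \in \mathbb{Z}_p[[z]]$ with unit constant term; you say this follows from $(v)_p$, $(vi)_p$ via Theorem~A, which is true, but note that $(vi)_p$ is a local condition at a fixed prime $p$, whereas Theorem~A is a statement about $\xi_{\boldsymbol{\alpha},\boldsymbol{\beta}}(a,\cdot)$ for every $a$ coprime to $d_{\boldsymbol{\alpha},\boldsymbol{\beta}}$ --- the correct route is Proposition~\ref{valuation} combined with the valuation formula~\eqref{Christol val}, which gives $p$-integrality directly from $(vi)_p$ without invoking all of Theorem~A. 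Second, hypothesis $(v)_p$ is not merely a safeguard against accidental non-units; it guarantees the harmonic correction terms $\rho_p(a,\mathfrak{D}_p^k(\beta_i))/(\mathfrak{D}_p^{k+1}(\beta_i)+q)$ remain $p$-integral, which otherwise destroys the estimate even when the Pochhammer piece behaves.
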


Now let us assume that $\boldsymbol{\alpha}$ and $\boldsymbol{\beta}$ are disjoint with elements in $(0,1]$ and that $H_{\boldsymbol{\alpha},\boldsymbol{\beta}}$ holds. For all primes $p$ not dividing $d_{\boldsymbol{\alpha},\boldsymbol{\beta}}$, we have $\mathfrak{D}_p(\boldsymbol{\alpha})=\langle\omega\boldsymbol{\alpha}\rangle$ and $\mathfrak{D}_p(\boldsymbol{\beta})=\langle\omega\boldsymbol{\beta}\rangle$ where $\omega\in\mathbb{Z}$ satisfies $\omega p\equiv 1\mod d_{\boldsymbol{\alpha},\boldsymbol{\beta}}$ (see Section \ref{section Dwork map} below).  Then, by Theorem \ref{theo expand} for a fixed prime $p$ and $b=t=1$, we obtain that 
\begin{equation}\label{refer 0}
\frac{G_{\mathfrak{D}_p(\boldsymbol{\alpha}),\mathfrak{D}_p(\boldsymbol{\beta})}}{F_{\mathfrak{D}_p(\boldsymbol{\alpha}),\mathfrak{D}_p(\boldsymbol{\beta})}}(C_{\boldsymbol{\alpha},\boldsymbol{\beta}}'z^p)-p\frac{G_{\boldsymbol{\alpha},\boldsymbol{\beta}}}{F_{\boldsymbol{\alpha},\boldsymbol{\beta}}}(C_{\boldsymbol{\alpha},\boldsymbol{\beta}}'z)\in pz\mathbb{Z}_p[[z]].
\end{equation}

Thus, contrary to Theorem B, there is no restriction on the primes $p$ because of the constant $C_{\boldsymbol{\alpha},\boldsymbol{\beta}}'$. Furthermore, in the proof of Lemma \ref{Lemma Dwork} in Section \ref{section Lemma Dwork}, we show that if $H_{\boldsymbol{\alpha},\boldsymbol{\beta}}$ holds then $\boldsymbol{\alpha}$ and $\boldsymbol{\beta}$ satisfy Assertions $(v)_p$ and $(vi)_p$ for almost all primes $p$. By  Theorems \ref{Criterion} and B, the converse holds when, for all $a\in\{1,\dots,d_{\boldsymbol{\alpha},\boldsymbol{\beta}}\}$ coprime to $d_{\boldsymbol{\alpha},\boldsymbol{\beta}}$, we have 
$$
\frac{G_{\langle a\boldsymbol{\alpha}\rangle,\langle a\boldsymbol{\beta}\rangle}}{F_{\langle a\boldsymbol{\alpha}\rangle,\langle a\boldsymbol{\beta}\rangle}}(z)=\frac{G_{\boldsymbol{\alpha},\boldsymbol{\beta}}}{F_{\boldsymbol{\alpha},\boldsymbol{\beta}}}(z).
$$
Indeed, in this case, Theorem B in combination with Corollary \ref{cor exp} implies that, for almost all primes $p$, we have $q_{\boldsymbol{\alpha},\boldsymbol{\beta}}(z)\in\mathbb{Z}_p[[z]]$. Then it is a simple exercise to show that $q_{\boldsymbol{\alpha},\boldsymbol{\beta}}$ is $N$-integral and, by Theorem \ref{Criterion}, we obtain that $H_{\boldsymbol{\alpha},\boldsymbol{\beta}}$ holds.
\medskip

The main improvement in Theorem  \ref{theo expand} is the use of algebras of $\mathbb{Z}_p$-valued functions instead of $\mathbb{Z}_p$. This is precisely this generalization which enables us to prove the integrality of the Taylor coefficients of certain roots of $S_{\boldsymbol{\alpha},\boldsymbol{\beta}}(C_{\boldsymbol{\alpha},\boldsymbol{\beta}}'z)$.

\subsubsection{Theorem \ref{Criterion} and previous results}\label{section comparison 2}

The constants $C\in\mathbb{Q}$ such that an $N$-integral canonical coordinate $q_{\boldsymbol{\alpha},\boldsymbol{\beta}}$ satisfies $q_{\boldsymbol{\alpha},\boldsymbol{\beta}}(Cz)\in\mathbb{Z}[[z]]$ was  first studied when there exist some disjoint tuples of positive integers $\mathbf{e}=(e_1,\dots,e_u)$, $\mathbf{f}=(f_1,\dots,f_v)$ and a constant $C_0\in\mathbb{Q}$ such that
\begin{equation}\label{HyperFact}
F_{\boldsymbol{\alpha},\boldsymbol{\beta}}(C_0z)=\sum_{n=0}^{\infty}\frac{(e_1n)!\cdots,(e_un)!}{(f_1n)!\cdots(f_vn)!}z^n\in\mathbb{Z}[[z]].
\end{equation}
The results obtained by Lian and Yau \cite{LianYau}, Zudilin \cite{Zudilin0}, Krattenthaler-Rivoal \cite{Tanguy1} and Delaygue \cite{Delaygue1} led to an effective criterion \cite[Theorem $1$]{Delaygue1} based on simple analytical properties of Landau's function
$$
\boldsymbol{\Delta}_{\mathbf{e},\mathbf{f}}(x):=\sum_{i=1}^u\lfloor e_ix\rfloor-\sum_{j=1}^v\lfloor f_jx\rfloor.
$$ 
By combining and reformulating this criterion and \cite[Theorem $3$]{Delaygue1}, we obtain the following result.
\begin{theoC}[Delaygue]
If \eqref{HyperFact} holds, then the following assertions are equivalent:
\begin{enumerate}
\item $q_{\boldsymbol{\alpha},\boldsymbol{\beta}}(z)$ is $N$-integral;
\item $q_{\boldsymbol{\alpha},\boldsymbol{\beta}}(C_0z)\in\mathbb{Z}[[z]]$;
\item we have $\sum_{i=1}^ue_i=\sum_{j=1}^vf_j$ and, for all $x\in[1/M_{\mathbf{e},\mathbf{f}},1[$, we have $\boldsymbol{\Delta}_{\mathbf{e},\mathbf{f}}(x)\geq 1$, where $M_{\mathbf{e},\mathbf{f}}$ is the largest element of $\mathbf{e}$ and $\mathbf{f}$.
\end{enumerate}
\end{theoC}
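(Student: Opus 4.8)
The plan is to derive Theorem C from Theorem \ref{Criterion}, together with Theorem \ref{theo Const}, via the classical dictionary between quotients of factorials and hypergeometric parameters. The implication $(2)\Rightarrow(1)$ is immediate from the definition of $N$-integrality, so only $(1)\Leftrightarrow(3)$ and the sharp form $(1)\Rightarrow(2)$ carry content. First I would record the dictionary: matching the ratios of consecutive Taylor coefficients on the two sides of \eqref{HyperFact} shows that $\boldsymbol{\alpha}$ and $\boldsymbol{\beta}$ are, up to permutation and up to cancellation of common parameters, the concatenations of the blocks $(1/e_i,2/e_i,\dots,e_i/e_i)$ and $(1/f_j,\dots,f_j/f_j)$ respectively, that $C_0=\prod_i e_i^{e_i}/\prod_j f_j^{f_j}$, that every parameter lies in $(0,1]$, and that $d_{\boldsymbol{\alpha},\boldsymbol{\beta}}$ is the least common multiple of the $e_i$ and $f_j$. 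Let $\boldsymbol{\alpha}^\sharp,\boldsymbol{\beta}^\sharp$ be obtained by deleting from $\boldsymbol{\alpha}$ and $\boldsymbol{\beta}$ their common elements, counted with multiplicity; these are disjoint, the deleted parameters contribute identically to $F$ and to $G$ (the summand of $G$ is symmetric in the $\alpha_i$'s and in the $\beta_j$'s), so $F_{\boldsymbol{\alpha},\boldsymbol{\beta}}=F_{\boldsymbol{\alpha}^\sharp,\boldsymbol{\beta}^\sharp}$, $q_{\boldsymbol{\alpha},\boldsymbol{\beta}}=q_{\boldsymbol{\alpha}^\sharp,\boldsymbol{\beta}^\sharp}$ and $d_{\boldsymbol{\alpha}^\sharp,\boldsymbol{\beta}^\sharp}=d_{\boldsymbol{\alpha},\boldsymbol{\beta}}$. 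Since $F_{\boldsymbol{\alpha},\boldsymbol{\beta}}$ is $N$-integral, Theorem \ref{Criterion} applies to $(\boldsymbol{\alpha}^\sharp,\boldsymbol{\beta}^\sharp)$; if $\sum_i e_i\neq\sum_j f_j$ then $\boldsymbol{\alpha}^\sharp$ and $\boldsymbol{\beta}^\sharp$ have different numbers of elements, so all three assertions of Theorem C fail (for $(1)$ by Theorem \ref{Criterion}), and we may henceforth assume $\sum_i e_i=\sum_j f_j$.

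Next I would use the special shape of the blocks. For $a$ coprime to $d:=d_{\boldsymbol{\alpha},\boldsymbol{\beta}}$ and $e\mid d$, the map $x\mapsto\langle ax\rangle$ permutes $\{1/e,\dots,e/e\}$, since multiplication by $a$ permutes the residues modulo $e$; hence $\boldsymbol{\alpha}$, $\boldsymbol{\beta}$, their multiset intersection, and therefore $\boldsymbol{\alpha}^\sharp$ and $\boldsymbol{\beta}^\sharp$, are each invariant up to permutation under $x\mapsto\langle ax\rangle$. It follows that $q_{\langle a\boldsymbol{\alpha}^\sharp\rangle,\langle a\boldsymbol{\beta}^\sharp\rangle}=q_{\boldsymbol{\alpha}^\sharp,\boldsymbol{\beta}^\sharp}$ for all such $a$, and that $H_{\boldsymbol{\alpha}^\sharp,\boldsymbol{\beta}^\sharp}$ is equivalent to its single instance $a=1$. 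To identify that instance, note that for $x\in(0,1)$ one has $\lfloor ex\rfloor=\#\{1\le k\le e:\ k/e\le x\}$, whence, summing over blocks and cancelling the common parameters (and using that $\preceq$ coincides with $\le$ on $(0,1)$), $\boldsymbol{\Delta}_{\mathbf{e},\mathbf{f}}(x)=\xi_{\boldsymbol{\alpha}^\sharp,\boldsymbol{\beta}^\sharp}(1,x)$ there; moreover, since $\mathbf{e}$ and $\mathbf{f}$ are disjoint, $1/M_{\mathbf{e},\mathbf{f}}$ is not cancelled and equals $m_{\boldsymbol{\alpha}^\sharp,\boldsymbol{\beta}^\sharp}(1)$, so the condition $m_{\boldsymbol{\alpha}^\sharp,\boldsymbol{\beta}^\sharp}(1)\preceq x\prec 1$ amounts to $x\in[1/M_{\mathbf{e},\mathbf{f}},1)$. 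Hence $H_{\boldsymbol{\alpha}^\sharp,\boldsymbol{\beta}^\sharp}$ holds if and only if $\boldsymbol{\Delta}_{\mathbf{e},\mathbf{f}}(x)\ge 1$ on $[1/M_{\mathbf{e},\mathbf{f}},1)$, and together with the equivalence $(i)\Leftrightarrow(iii)$ of Theorem \ref{Criterion} this gives $(1)\Leftrightarrow(3)$.

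For $(1)\Rightarrow(2)$, Theorem \ref{Criterion} gives $q_{\boldsymbol{\alpha},\boldsymbol{\beta}}(C_{\boldsymbol{\alpha}^\sharp,\boldsymbol{\beta}^\sharp}'z)\in\mathbb{Z}[[z]]$, and since substituting $z\mapsto mz$ with $m\in\mathbb{Z}$ preserves $\mathbb{Z}[[z]]$, it suffices to prove that $C_{\boldsymbol{\alpha}^\sharp,\boldsymbol{\beta}^\sharp}'$ divides $C_0$. I would first show $C_{\boldsymbol{\alpha}^\sharp,\boldsymbol{\beta}^\sharp}=C_0$: the divisibility $C_{\boldsymbol{\alpha}^\sharp,\boldsymbol{\beta}^\sharp}\mid C_0$ is clear because $F_{\boldsymbol{\alpha}^\sharp,\boldsymbol{\beta}^\sharp}(C_0z)\in\mathbb{Z}[[z]]$, while the reverse follows since $v_p$ of the factorial quotient $(e_1n)!\cdots(e_un)!/((f_1n)!\cdots(f_vn)!)$ grows only like $O(\log n)$ (a bounded sum of values of a Landau-type step function, by Legendre's formula), so $F_{\boldsymbol{\alpha}^\sharp,\boldsymbol{\beta}^\sharp}(Cz)\in\mathbb{Z}_p[[z]]$ forces $v_p(C)\ge v_p(C_0)$ for every prime $p$. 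As the parameters of $\boldsymbol{\alpha}^\sharp,\boldsymbol{\beta}^\sharp$ already lie in $(0,1]$, $C_{\boldsymbol{\alpha}^\sharp,\boldsymbol{\beta}^\sharp}'$ equals $C_{\boldsymbol{\alpha}^\sharp,\boldsymbol{\beta}^\sharp}=C_0$ except possibly when $\boldsymbol{\beta}^\sharp\notin\mathbb{Z}^s$ and $\mathfrak{m}_{\boldsymbol{\alpha}^\sharp,\boldsymbol{\beta}^\sharp}$ is odd; I would rule out this exception by a $2$-adic, combinatorial analysis of the blocks $\{1/e,\dots,e/e\}$ modulo $4$, showing that, under $(3)$, the pair $\boldsymbol{\alpha}^\sharp,\boldsymbol{\beta}^\sharp$ does not fall into the exceptional case, so that $C_{\boldsymbol{\alpha}^\sharp,\boldsymbol{\beta}^\sharp}'=C_0$ and $(1)\Rightarrow(2)$ holds.

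The routine-but-fiddly parts are the bookkeeping of which parameters survive the cancellation and the matching of the jump conventions of $\lfloor\cdot\rfloor$ with the order $\preceq$ near the endpoints of $[1/M_{\mathbf{e},\mathbf{f}},1)$. The real obstacle is the last step, namely pinning down the precise constant: establishing $C_0=C_{\boldsymbol{\alpha}^\sharp,\boldsymbol{\beta}^\sharp}$ through the $O(\log n)$ growth of the $p$-adic valuations of the factorial quotients and, above all, disposing of the parity exception in the definition of $C_{\boldsymbol{\alpha}^\sharp,\boldsymbol{\beta}^\sharp}'$ by a $2$-adic study of the blocks — which is precisely where the technical heart of Delaygue's original argument sits.
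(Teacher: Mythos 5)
Your overall strategy is the same as the paper's: reduce Theorem C to Theorem \ref{Criterion}, identify $H_{\boldsymbol{\alpha},\boldsymbol{\beta}}$ with the Landau-type condition on $\boldsymbol{\Delta}_{\mathbf{e},\mathbf{f}}$ via \eqref{BoldDelta}, and for $(1)\Rightarrow(2)$ pin down $C_{\boldsymbol{\alpha},\boldsymbol{\beta}}'=C_0$. The cancellation to $\boldsymbol{\alpha}^\sharp,\boldsymbol{\beta}^\sharp$ that you carry out explicitly is the same reduction the paper makes implicitly by working from the start with disjoint $R$-partitioned tuples, and your verification of the block invariance under $x\mapsto\langle ax\rangle$ is the fact the paper uses to conclude $\langle a\boldsymbol{\alpha}\rangle=\boldsymbol{\alpha}$, $\langle a\boldsymbol{\beta}\rangle=\boldsymbol{\beta}$ and hence that $H_{\boldsymbol{\alpha},\boldsymbol{\beta}}$ reduces to the instance $a=1$.

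There is one genuine gap, and you mislocate the difficulty. You defer the step ``$C_{\boldsymbol{\alpha}^\sharp,\boldsymbol{\beta}^\sharp}'=C_{\boldsymbol{\alpha}^\sharp,\boldsymbol{\beta}^\sharp}$'' to ``a $2$-adic, combinatorial analysis of the blocks $\{1/e,\dots,e/e\}$ modulo $4$, showing that, under $(3)$, the pair does not fall into the exceptional case,'' and you call this the technical heart of the argument. In fact this is a one-line observation that does not use $(3)$ at all: from the invariance under $x\mapsto\langle ax\rangle$ that you have already established, $\boldsymbol{\alpha}^\sharp$ and $\boldsymbol{\beta}^\sharp$ are $R$-partitioned, i.e.\ concatenations of full blocks $(b/N)_{\gcd(b,N)=1}$. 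For any such block, the number of entries with exact denominator divisible by $4$ is either $0$ (if $4\nmid N$) or $\varphi(N)$ (if $4\mid N$), and $\varphi(N)$ is even for $N\ge 3$. Hence $\mathfrak{m}_{\boldsymbol{\alpha}^\sharp,\boldsymbol{\beta}^\sharp}$ is always even, the exceptional branch in the definition of $C'$ never occurs, and $C_{\boldsymbol{\alpha}^\sharp,\boldsymbol{\beta}^\sharp}'=C_{\boldsymbol{\alpha}^\sharp,\boldsymbol{\beta}^\sharp}$ unconditionally. Working with the blocks $(1/e,\dots,e/e)$ instead of the $R$-partitioned blocks obscures this: the count $\sum_{d\mid e,\,4\mid d}\varphi(d)$ is still even, but the argument is no longer visibly immediate, which is presumably why you flagged it as hard.

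A secondary point: for $C_{\boldsymbol{\alpha}^\sharp,\boldsymbol{\beta}^\sharp}=C_0$ you propose to re-derive the $O(\log n)$ growth estimate on $v_p$ of the factorial ratio. That works, but it reproduces the proof of Theorem \ref{theo Const}; the paper instead just applies Theorem \ref{theo Const} with $C=1$ (parameters in $(0,1]$, $r=s$) and checks directly that the formula \eqref{facile} evaluates to $C_0$ because each $\lambda_p$ is a $\mathbb{Z}$-linear combination of $\varphi(N_i)$, so $p-1\mid\lambda_p$ and $\lfloor\lambda_p/(p-1)\rfloor=\lambda_p/(p-1)$. Since the paper already contains Theorem \ref{theo Const}, invoking it is the cleaner move.
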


According to \cite[Proposition $2$]{Delaygue1}, one can write (a rescaling of) $F_{\boldsymbol{\alpha},\boldsymbol{\beta}}$ as the generating function of a sequence of factorial ratios if and only if $\boldsymbol{\alpha}$ and $\boldsymbol{\beta}$ are $R$-partitioned. In this case, Landau's criterion \cite{Landau} asserts that the additional condition of integrality in \eqref{HyperFact} is equivalent to the nonnegativity of $\boldsymbol{\Delta}_{\mathbf{e},\mathbf{f}}$ on $[0,1]$, which can be checked easily because, by \cite[Proposition $3$]{Delaygue1}, for all $x\in[0,1]$, we have
\begin{equation}\label{BoldDelta}
\boldsymbol{\Delta}_{\mathbf{e},\mathbf{f}}(x)=\#\{i\,:\,x\geq\alpha_i\}-\#\{j\,:\,x\geq\beta_j\}.
\end{equation}

Furthermore, by \cite[Proposition $2$]{Delaygue1}, if $\boldsymbol{\alpha}=(\alpha_1,\dots,\alpha_r)$, respectively $\boldsymbol{\beta}=(\beta_1,\dots,\beta_s)$, is the concatenation of tuples $(b/N_i)_{b\in\{1,\dots,N_i\},\gcd(b,N_i)=1}$, $1\leq i\leq r'$, respectively of tuples $(b/N_j')_{b\in\{1,\dots,N_j'\},\gcd(b,N_j')=1}$, $1\leq j\leq s'$, then we have
\begin{equation}\label{formule C_0}
C_0=\frac{\prod_{i=1}^{r'}N_i^{\varphi(N_i)}\prod_{p\mid N_i}p^{\frac{\varphi(N_i)}{p-1}}}{\prod_{j=1}^{s'}N_j'^{\varphi(N_j')}\prod_{p\mid N_j'}p^{\frac{\varphi(N_j')}{p-1}}}\quad\textup{and}\quad\sum_{i=1}^ue_i-\sum_{j=1}^vf_j=r-s.
\end{equation}

Let us show that Theorem \ref{Criterion} implies Theorem C. Let $\boldsymbol{\alpha}$ and $\boldsymbol{\beta}$ be disjoint tuples of parameters in $\mathbb{Q}\setminus\mathbb{Z}_{\leq 0}$ such that \eqref{HyperFact} holds. Then $\boldsymbol{\alpha}$ and $\boldsymbol{\beta}$ are $R$-partitioned and their elements lie in $(0,1]$ so that $\langle\boldsymbol{\alpha}\rangle$ and $\langle\boldsymbol{\beta}\rangle$ are disjoint and $F_{\boldsymbol{\alpha},\boldsymbol{\beta}}$ is $N$-integral. First we prove that if $r=s$, then we have $C_{\boldsymbol{\alpha},\boldsymbol{\beta}}'=C_{\boldsymbol{\alpha},\boldsymbol{\beta}}=C_0$. We write $\lambda_p$ for $\lambda_p(\boldsymbol{\alpha},\boldsymbol{\beta})$. Since $\boldsymbol{\alpha}$ and $\boldsymbol{\beta}$ are $R$-partitioned, the number of elements of $\boldsymbol{\alpha}$ and $\boldsymbol{\beta}$ with exact denominator divisible by $4$ is a sum of multiple of integers of the form $\varphi(2^k)$ with $k\in\mathbb{N}$, $k\geq 2$, so this number is even. Thus, we have $C_{\boldsymbol{\alpha},\boldsymbol{\beta}}'=C_{\boldsymbol{\alpha},\boldsymbol{\beta}}$. Furthermore, for all primes $p$, we have 
$$
\lambda_p=r-\underset{p\mid N_i}{\sum_{i=1}^{r'}}\varphi(N_i)-s+\underset{p\mid N_j'}{\sum_{j=1}^{s'}}\varphi(N_j')=-\underset{p\mid N_i}{\sum_{i=1}^{r'}}\varphi(N_i)+\underset{p\mid N_j'}{\sum_{j=1}^{s'}}\varphi(N_j').
$$
If $p$ divides $N_i$ then $p-1$ divides $\varphi(N_i)$ so that $-\big\lfloor\lambda_p/(p-1)\big\rfloor=-\lambda_p/(p-1)$ and $C_{\boldsymbol{\alpha},\boldsymbol{\beta}}=C_0$ as expected. Now we assume that \eqref{HyperFact} and Theorem \ref{Criterion} hold and we prove that Assertions $(1)$, $(2)$ and $(3)$ of Theorem  C are equivalent.
\medskip

$\bullet$ $(1)\Rightarrow(2)$: If $q_{\boldsymbol{\alpha},\boldsymbol{\beta}}(z)$ is $N$-integral, then we obtain that $q_{\boldsymbol{\alpha},\boldsymbol{\beta}}(C_{\boldsymbol{\alpha},\boldsymbol{\beta}}'z)\in\mathbb{Z}[[z]]$ and $r=s$ so that $C_{\boldsymbol{\alpha},\boldsymbol{\beta}}'=C_0$ and Assertion $(2)$ of Theorem  C holds.
\medskip

$\bullet$ $(2)\Rightarrow(3)$: If $q_{\boldsymbol{\alpha},\boldsymbol{\beta}}(C_0z)\in\mathbb{Z}[[z]]$ then $q_{\boldsymbol{\alpha},\boldsymbol{\beta}}(z)$ is $N$-integral and, according to Theorem~\ref{Criterion}, we have $r=s$ and $H_{\boldsymbol{\alpha},\boldsymbol{\beta}}$ is true. We deduce that we have $\sum_{i=1}^ue_i=\sum_{j=1}^vf_j$.  Now, since $\boldsymbol{\alpha}$ and $\boldsymbol{\beta}$ are disjoint tuples with elements in $(0,1]$, equation \eqref{BoldDelta} ensures that the assertions ``for all $x\in[1/M_{\mathbf{e},\mathbf{f}},1[$, we have $\boldsymbol{\Delta}_{\mathbf{e},\mathbf{f}}(x)\geq 1$'' and ``for all $x\in\mathbb{R}$, $m_{\boldsymbol{\alpha},\boldsymbol{\beta}}(1)\preceq x\prec 1$, we have $\xi_{\boldsymbol{\alpha},\boldsymbol{\beta}}(1,x)\geq 1$'' are equivalent. Thus Assertion $(3)$ of Theorem  C holds.
\medskip

$\bullet$ $(3)\Rightarrow(1)$: We assume that $\sum_{i=1}^ue_i=\sum_{j=1}^vf_j$, that is $r=s$, and that, for all $x\in[1/M_{\mathbf{e},\mathbf{f}},1[$, we have $\boldsymbol{\Delta}_{\mathbf{e},\mathbf{f}}(x)\geq 1$. Since $\boldsymbol{\alpha}$ and $\boldsymbol{\beta}$ are $R$-partitioned, for all $a\in\{1,\dots,d_{\boldsymbol{\alpha},\boldsymbol{\beta}}\}$ coprime to $d_{\boldsymbol{\alpha},\boldsymbol{\beta}}$ we have $\langle a\boldsymbol{\alpha}\rangle=\boldsymbol{\alpha}$ and $\langle a\boldsymbol{\beta}\rangle=\boldsymbol{\beta}$, and these tuples are disjoint. We deduce that for all $a\in\{1,\dots,d_{\boldsymbol{\alpha},\boldsymbol{\beta}}\}$ coprime to $d_{\boldsymbol{\alpha},\boldsymbol{\beta}}$ and all $x\in\mathbb{R}$, $m_{\boldsymbol{\alpha},\boldsymbol{\beta}}(a)\preceq x\prec a$, Equation~\eqref{BoldDelta} gives us that $\xi_{\boldsymbol{\alpha},\boldsymbol{\beta}}(a,x)\geq 1$, so that $H_{\boldsymbol{\alpha},\boldsymbol{\beta}}$ holds. Thus Assertion $(iii)$ of Theorem \ref{Criterion} holds and $q_{\boldsymbol{\alpha},\boldsymbol{\beta}}(z)$ is $N$-integral as expected. This finishes the proof that Theorem \ref{Criterion} implies Theorem C.
\medskip

Furthermore, when \eqref{HyperFact} holds, Delaygue \cite[Theorem $8$]{Delaygue0} generalized some of the results of Krattenthaler-Rivoal \cite{TanguyPos} and proved that all Taylor coefficients at the origin of $q_{\boldsymbol{\alpha},\boldsymbol{\beta}}(C_0z)$ are positive, but its constant term which is $0$. Assertion $(1)$ of Theorem  \ref{Criterion} generalizes this result since it does not use the assumption that $\boldsymbol{\alpha}$ and $\boldsymbol{\beta}$ are $R$-partitioned.
\medskip

Later, Roques studied (see \cite{Roques1} and \cite{Roques2}) the integrality of the Taylor coefficients of canonical coordinates $q_{\boldsymbol{\alpha},\boldsymbol{\beta}}$ without assuming that \eqref{HyperFact} holds, in the case $\boldsymbol{\alpha}$ and $\boldsymbol{\beta}$ have the same number of elements $r\geq 2$, all the elements of $\boldsymbol{\beta}$ are equal to $1$ and all the elements of $\boldsymbol{\alpha}$ lie in $(0,1]\cap\mathbb{Q}$. In this case, we have $r=s$ and it is easy to prove that $H_{\boldsymbol{\alpha},\boldsymbol{\beta}}$ holds but $\boldsymbol{\alpha}$ is not necessarily $R$-partitioned. Roques proved that $q_{\boldsymbol{\alpha},\boldsymbol{\beta}}(z)$ is $N$-integral if and only if, for all $a\in\{1,\dots,d_{\boldsymbol{\alpha},\boldsymbol{\beta}}\}$ coprime to $d_{\boldsymbol{\alpha},\boldsymbol{\beta}}$, we have $q_{\langle a\boldsymbol{\alpha}\rangle,\langle a\boldsymbol{\beta}\rangle}(z)=q_{\boldsymbol{\alpha},\boldsymbol{\beta}}(z)$ in accordance with Theorem \ref{Criterion}. Furthermore, when $r=2$, he found the exact finite set (\footnote{This sets contains $28$ elements amongst which $4$ are $R$-partitioned.}) of tuples $\boldsymbol{\alpha}$ such that $q_{\boldsymbol{\alpha},\boldsymbol{\beta}}(z)$ is $N$-integral (see \cite[Theorem~$3$]{Roques1}) and, when $r\geq 3$, he proved (see \cite{Roques2}) that $q_{\boldsymbol{\alpha},\boldsymbol{\beta}}(z)$ is $N$-integral if and only if $\boldsymbol{\alpha}$ is $R$-partitioned (the ``if part''  is proved by Krattenthaler-Rivoal in \cite{Tanguy1}). Note that if $\boldsymbol{\beta}=(1,\dots,1)$, then it is easy to prove that $F_{\boldsymbol{\alpha},\boldsymbol{\beta}}(z)$ is $N$-integral.

In the case $r=2$, Roques gave constants $C_1$ such that $q_{\boldsymbol{\alpha},\boldsymbol{\beta}}(C_1z)\in\mathbb{Z}[[z]]$ which equal $C_{\boldsymbol{\alpha},\boldsymbol{\beta}}'$ unless for $\boldsymbol{\alpha}=(1/2,1/4)$ or $(1/2,3/4)$ where $C_1=256$ and Theorem \ref{Criterion} improves this constant since $C_{\boldsymbol{\alpha},\boldsymbol{\beta}}'=32$.
\medskip

The integrality of Taylor coefficients of roots of $z^{-1}q_{\boldsymbol{\alpha},\boldsymbol{\beta}}(z)$ has been studied in case \eqref{HyperFact} holds by Lian-Yau \cite{LianYauRoots}, Krattenthaler-Rivoal \cite{Tanguy2}, and by Delaygue \cite{Delaygue2}. For a detailed survey of these results, we refer the reader to \cite[Section~$1.2$]{Delaygue2}. 

$\bullet$ In \cite{LianYauRoots}, Lian-Yau studied the case $\mathbf{e}=(p)$ and $\mathbf{f}=(1,\dots,1)$ with $p$ $1$'s in $\mathbf{f}$ and where $p$ is a prime. In this case, we have $\boldsymbol{\beta}=(1,\dots,1)$ and $\mathfrak{n}_{\boldsymbol{\alpha},\boldsymbol{\beta}}'=1$, thus we do not obtain a root with Corollary \ref{cor Root}.

$\bullet$ In \cite{Tanguy2}, Krattenthaler-Rivoal studied the case $\mathbf{e}=(N,\dots,N)$ with $k$ $N$'s in $\mathbf{e}$ and $\mathbf{f}=(1,\dots,1)$ with $kN$ $1$'s in $\mathbf{f}$. In this case, we also have $\boldsymbol{\beta}=(1,\dots,1)$. For all prime divisors $p$ of $N$, we write $N=p^{\eta_p}N_p$ with $\eta_p,N_p\in\mathbb{N}$ and $N_p$ not divisible by $p$. A simple computation of the associated tuples $\boldsymbol{\alpha}$ and $\boldsymbol{\beta}$ shows that $d_{\boldsymbol{\alpha},\boldsymbol{\beta}}=N$ and $\lambda_p=k(N_p-N)$. Thus, for all prime divisors $p$ of $N$, $p-1$ divides $\lambda_p$ and we have 
$$
\mathfrak{n}_{\boldsymbol{\alpha},\boldsymbol{\beta}}'=\prod_{p\mid N}p^{-1+k\frac{N-N_p}{p-1}},
$$ 
but it seems that the roots found by Krattenthaler-Rivoal are always better than $\mathfrak{n}_{\boldsymbol{\alpha},\boldsymbol{\beta}}'$ in these cases.

$\bullet$ However, in a lot of cases, our root $\mathfrak{n}_{\boldsymbol{\alpha},\boldsymbol{\beta}}'$ improves the one found by Delaygue in \cite{Delaygue2}. For example, if $\mathbf{e}=(4,2)$ and $\mathbf{f}=(1,1,1,1,1,1)$, then \cite[Corollary $1.1$]{Delaygue2} gives us the root $4$ while $\boldsymbol{\beta}=(1,\dots,1)$ and $\mathfrak{n}_{\boldsymbol{\alpha},\boldsymbol{\beta}}'=32$.

\subsection{Open questions}

We formulate some open questions about $N$-integrality of mirror maps and $q$-coordinates.

$\bullet$ Does the equivalence in Assertion $(2)$ of Theorem  \ref{Criterion} still hold if we do not assume that $F_{\boldsymbol{\alpha},\boldsymbol{\beta}}(z)$ is $N$-integral?

$\bullet$ One of the conditions for $q_{\boldsymbol{\alpha},\boldsymbol{\beta}}(z)$ to be $N$-integral is that, for all $a\in\{1,\dots,d_{\boldsymbol{\alpha},\boldsymbol{\beta}}\}$ coprime to $d_{\boldsymbol{\alpha},\boldsymbol{\beta}}$, we have $q_{\boldsymbol{\alpha},\boldsymbol{\beta}}(z)=q_{\langle a\boldsymbol{\alpha}\rangle,\langle a\boldsymbol{\beta}\rangle}(z)$. According to \cite{Roques1} and \cite{Roques2}, we know that, when $\boldsymbol{\beta}=(1,\dots,1)$ and all elements of $\boldsymbol{\alpha}$ belong to $(0,1]$, this condition implies a stronger characterization related to the exact forms of $\boldsymbol{\alpha}$ and $\boldsymbol{\beta}$. Is it possible to deduce a similar characterization in the general case?

\subsection{A corrected version of a lemma of Lang}\label{Cor Lang}

As mentioned in the Introduction, while working on this article, we noticed an error in a lemma stated by Lang \cite[Lemma~$1.1$, Section $1$, Chapter $14$]{Lang} about arithmetic properties of Mojita's $p$-adic Gamma function. This lemma has been used in several articles on the integrality of the Taylor coefficients of mirror maps including papers of the authors. First we give a corrected version of Lang's lemma, then we explain why this error does not change the validity of our previous results.

Let $p$ be a fixed prime. For all $n\in\mathbb{N}$, we define the $p$-adic Gamma function $\Gamma_p$ by
$$
\Gamma_p(n):=(-1)^n\underset{\mathrm{gcd}(k,p)=1}{\prod_{k=1}^{n-1}}k.
$$
In particular, $\Gamma_p(0)=1$, $\Gamma_p(1)=-1$ and  $\Gamma_p$ can be extended to $\mathbb Z_p$.

\begin{propo}\label{propo Lang +}
For all $k,m,s\in\mathbb{N}$, we have 
$$
\Gamma_p(k+mp^s)\equiv \begin{cases}
\textup{$\Gamma_p(k)\mod p^s$ if $p^s\neq 4$;}\\
\textup{$(-1)^m\Gamma_p(k)\mod p^s$ if $p^s=4$.}
\end{cases}.
$$
\end{propo}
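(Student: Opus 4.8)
The plan is to reduce to the case $m=1$ and then invoke Wilson's theorem for prime-power moduli. The case $s=0$ (i.e. $p^s=1$) is vacuous, so assume $s\geq 1$. For the reduction in $m$: suppose one knows that, for every $\ell\in\mathbb{N}$,
$$
\Gamma_p(\ell+p^s)\equiv\varepsilon\,\Gamma_p(\ell)\bmod p^s,\qquad\text{where }\varepsilon=-1\text{ if }p^s=4\text{ and }\varepsilon=1\text{ otherwise.}
$$
Then applying this successively with $\ell=k,\,k+p^s,\dots,k+(m-1)p^s$ yields $\Gamma_p(k+mp^s)\equiv\varepsilon^m\Gamma_p(k)\bmod p^s$, which is exactly the claimed statement. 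So it suffices to treat $m=1$.

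For $m=1$, I would compute the ratio directly from the definition of $\Gamma_p$:
$$
\frac{\Gamma_p(k+p^s)}{\Gamma_p(k)}=(-1)^{p^s}\underset{\gcd(j,p)=1}{\prod_{j=k}^{k+p^s-1}}j .
$$
This is meaningful modulo $p^s$ because each factor on the right, and hence $\Gamma_p(k)$ itself, is a unit of $\mathbb{Z}/p^s\mathbb{Z}$. The interval $\{k,k+1,\dots,k+p^s-1\}$ is a complete residue system modulo $p^s$, so the indices $j$ coprime to $p$ run exactly once through a set of representatives of $(\mathbb{Z}/p^s\mathbb{Z})^\times$; hence the product above is $\equiv\prod_{a\in(\mathbb{Z}/p^s\mathbb{Z})^\times}a\bmod p^s$.

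It then remains to evaluate the product of all units of $\mathbb{Z}/p^s\mathbb{Z}$. By the prime-power version of Wilson's theorem this equals $-1$ when $p$ is odd or $p^s=4$ (the group is cyclic with $-1$ as its unique involution), equals $1$ trivially when $p^s=2$, and equals $1$ when $p=2$ and $s\geq 3$: in that last case $(\mathbb{Z}/2^s\mathbb{Z})^\times\cong\mathbb{Z}/2\times\mathbb{Z}/2^{s-2}$ has the three involutions $-1,\ 2^{s-1}-1,\ 2^{s-1}+1$, whose product is $1-2^{2s-2}\equiv 1\bmod 2^s$. Since $(-1)^{p^s}$ is $-1$ for $p$ odd and $1$ for $p=2$ (with $s\geq 1$), multiplying the two contributions shows that $\Gamma_p(k+p^s)/\Gamma_p(k)\equiv 1\bmod p^s$ in every case except $p^s=4$, where it is $\equiv-1\bmod p^s$. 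This is precisely the $m=1$ statement. The only step that is more than sign-bookkeeping is the computation of $\prod_{a\in(\mathbb{Z}/2^s\mathbb{Z})^\times}a$ for $s\geq 3$, i.e. the even-modulus case of Wilson's theorem, which is where the failure of the naive congruence at $p^s=4$ ultimately comes from.
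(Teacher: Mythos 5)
Your proof is correct. It follows essentially the same route as the paper: compute the ratio $\Gamma_p(k+mp^s)/\Gamma_p(k)$ as a product over an interval, observe that the indices coprime to $p$ run through $(\mathbb{Z}/p^s\mathbb{Z})^\times$, and then evaluate the Wilson-type product $\prod_{a\in(\mathbb{Z}/p^s\mathbb{Z})^\times}a\bmod p^s$ case by case. The two minor organizational differences are harmless and, if anything, slightly cleaner than the paper's version: you reduce to $m=1$ by iteration rather than carrying the sign $(-1)^{mp^s}$ and the exponent $m$ through the computation, and for $p=2$, $s\geq 3$ you evaluate the unit product by enumerating the three involutions $-1,\,2^{s-1}-1,\,2^{s-1}+1$ and multiplying them to get $1-2^{2s-2}\equiv 1\bmod 2^s$, whereas the paper uses the additive form of the isomorphism $(\mathbb{Z}/2^s\mathbb{Z})^\times\cong\mathbb{Z}/2\times\mathbb{Z}/2^{s-2}$ and sums the components; both amount to the same pairing-off observation.
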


The case $p^s\neq 4$ in Proposition~\ref{propo Lang +} is proved by Morita in  \cite{Morita}. We provide a complete proof of the proposition.

\begin{proof}
If $s=0$ or if  $m=0$ this is trivial. We assume in the sequel that $s\geq 1$ and $m\geq 1$. Then 
\begin{align}
\frac{\Gamma_p(k+mp^s)}{\Gamma_p(k)}&=(-1)^{mp^s}\underset{\mathrm{gcd}(i,p)=1}{\prod_{i=k}^{k+mp^s-1}}i=(-1)^{mp^s}\underset{\mathrm{gcd}(k+i,p)=1}{\prod_{i=0}^{p^s-1}}\prod_{j=0}^{m-1}(k+i+jp^s)\notag\\
&\equiv(-1)^{mp^s}\underset{\mathrm{gcd}(k+i,p)=1}{\prod_{i=0}^{p^s-1}}(k+i)^m\mod p^s\notag\\
&\equiv(-1)^{mp^s}\underset{\mathrm{gcd}(j,p)=1}{\prod_{j=0}^{p^s-1}}j^m\mod p^s\label{laref},
\end{align}
because, for all $j\in\{0,\dots,p^s-1\}$, there exists a unique $i\in\{0,\dots,p^s-1\}$ such that $k+i\equiv j\mod p^s$.
\medskip

We first assume that $p\geq 3$. In this case, the group $(\mathbb{Z}/p^s\mathbb{Z})^\times$ is cyclic and contains just one element of order $2$. Collecting each element of $(\mathbb{Z}/p^s\mathbb{Z})^\times$ of order $\ge 3$ with its inverse, we obtain 
$$
\underset{\mathrm{gcd}(j,p)=1}{\prod_{j=0}^{p^s-1}}j\equiv -1\mod p^s.
$$
Together, with \eqref{laref}, we get
$$
\frac{\Gamma_p(k+mp^s)}{\Gamma_p(k)}\equiv 1\mod p^s,
$$
because $p$ is odd.
\medskip

Let us now assume that $p=2$. If $s=1$, then
$$
\underset{\mathrm{gcd}(j,p)=1}{\prod_{j=0}^{p^s-1}}j=1
$$
and by \eqref{laref} this yields $\Gamma_p(k+mp^s)\equiv\Gamma_p(k)\mod p^s$. If  $s=2$, then
$$
\underset{\mathrm{gcd}(j,p)=1}{\prod_{j=0}^{p^s-1}}j=3\equiv -1\mod p^s,
$$
and by \eqref{laref}, this yields $\Gamma_p(k+mp^s)\equiv (-1)^m\Gamma_p(k)\mod p^s$. It remains to deal with the case $s\geq 3$. The group $(\mathbb{Z}/2^s\mathbb{Z})^\times$ is isomorphic to  $\mathbb{Z}/2^{s-2}\mathbb{Z}\times\mathbb{Z}/2\mathbb{Z}$. Moreover, 
$$
\sum_{k=0}^{2^{s-2}-1}\sum_{j=0}^1(k,j)=\left(2\sum_{k=0}^{2^{s-2}-1}k,2^{s-2}\right)=\left(2^{s-2}(2^{s-2}-1),2^{s-2}\right)\in 2^{s-2}\mathbb{Z}\times 2\mathbb{Z},
$$
because $s\geq 3$. Hence, 
$$
\underset{\mathrm{gcd}(j,p)=1}{\prod_{j=0}^{p^s-1}}j\equiv 1 \mod p^s
$$
and by \eqref{laref}, this yields
$$
\frac{\Gamma_p(k+mp^s)}{\Gamma_p(k)}\equiv 1\mod p^s,
$$
which completes the proof of the proposition.
\end{proof}

The error in Lang's version is that he wrote  $\Gamma_p(k+mp^s)\equiv\Gamma_p(k)\mod p^s$ if $p^s=4$, forgetting the factor $(-1)^m$. He gives the proof only for $p\ge 3$ and he claims that the proof goes through similarly when $p=2$, overlooking the subtility. Delaygue and Krattenthaler-Rivoal used Lang's version in  \cite[Lemma $11$]{Delaygue1}, \cite[Lemma $8$]{Delaygue3} and \cite{Tanguy1}. Fortunately, the resulting mistakes in these papers are purely local and can be fixed. Indeed, the factor $(-1)^\ell$ (that should have been added when $p^s=4$) would have occurred for an {\em even} value of $\ell$ and thus would have immediately disappeared without changing the rest of the proof.

\section{The $p$-adic valuation of Pochhammer symbols}\label{section valuation} 

We introduce certain step functions, defined  
over  $\mathbb{R}$, that enable us to compute the $p$-adic valuation of Pochhammer symbols. We will then provide a connection between the values of these functions and the functions $\xi_{\boldsymbol{\alpha},\boldsymbol{\beta}}(a,\cdot)$. This construction is  inspired by various works of Christol \cite{Christol}, Dwork \cite{Dwork} and Katz \cite{Katz}. We first  prove Proposition \ref{Lemma Mp}.

\subsection{Proof of Proposition \ref{Lemma Mp}}\label{section demo propo}

Let  $\boldsymbol{\alpha}$ and $\boldsymbol{\beta}$ be two sequences taking their values in $\mathbb{Q}\setminus\mathbb{Z}_{\leq 0}$. If there exists  $C\in\mathbb{Q}^{\ast}$ such that $F_{\boldsymbol{\alpha},\boldsymbol{\beta}}(C z)\in\mathbb{Z}[[z]]$, then for all primes $p$ such that $v_p(C)\leq 0$, we have $F_{\boldsymbol{\alpha},\boldsymbol{\beta}}(z)\in\mathbb{Z}_p[[z]]$. Hence, there exists only a finite number of primes $p$ such that $F_{\boldsymbol{\alpha},  \boldsymbol{\beta}}(z)\notin\mathbb{Z}_p[[z]]$.
\medskip

Conversely, let us assume there exists only a finite number of primes $p$ such that $F_{\boldsymbol{\alpha},\boldsymbol{\beta}}(z)\notin\mathbb{Z}_p[[z]]$. To prove  Proposition \ref{Lemma Mp}, it is enough to prove that, for all prime $p$, there exists $m\in\mathbb{Z}_{\leq 0}$ such that for all $n\in\mathbb{N}$, we have 
\begin{equation}\label{inter suff}
v_p\left(\frac{(\alpha_1)_n\cdots(\alpha_r)_n}{(\beta_1)_n\cdots(\beta_s)_n}\right)\geq mn.
\end{equation}
Let $x\in\mathbb{Q}$, $x=a/b$ with $a,b\in\mathbb{Z}$, $b\geq 1$, and $a$ and $b$ coprime. If $b$ is not divisible by $p$, then for all $n\in\mathbb{N}$, we have  $v_p\big((x)_n\big)\geq 0$. On the other hand, if $p$ divides $b$, then $v_p\big((x)_n\big)=v_p(x)n$. 

Let us now assume that $x\notin\mathbb{Z}_{\leq 0}$. Then, for all $n\in\mathbb{N}$, $n\geq 1$, 
\begin{align*}
v_p\left(\frac{1}{(x)_n}\right)=v_p\left(\frac{b^n}{a(a+b)\cdots\big(a+b(n-1)\big)}\right)&\geq v_p\left(\frac{b^n}{|a|!(|a|+bn)!}\right)\\
&\geq\left(v_p(b)-\frac{b}{p-1}\right)n-2\frac{|a|}{p-1},
\end{align*}
because
$$
v_p\big((|a|+bn)!\big)=\sum_{\ell=1}^{\infty}\left\lfloor\frac{|a|+bn}{p^{\ell}}\right\rfloor<\sum_{\ell=1}^{\infty}\frac{|a|+bn}{p^{\ell}}=\frac{|a|}{p-1}+\frac{b}{p-1}n.
$$
Hence,  \eqref{inter suff} holds and Proposition \ref{Lemma Mp} is proved.
\hfill$\square$

\subsection{Dwork's map $\mathfrak{D}_p$}\label{section Dwork map}

Given a prime  $p$ and some $\alpha \in \mathbb Z_p\cap \mathbb{Q}$, we  recall that $\mathfrak{D}_p(\alpha)$ denotes the unique element in $\mathbb Z_p\cap \mathbb{Q}$ such that
$$
p\mathfrak{D}_p(\alpha)-\alpha\in\{0,\dots,p-1\}.
$$
The map $\alpha\mapsto\mathfrak{D}_p(\alpha)$ was used by Dwork in \cite{Dwork} (denoted there as $\alpha\mapsto\alpha'$). We observe that the unique element $k\in\{0,\dots,p-1\}$ such that $k+\alpha\in p\mathbb{Z}_p$ is  $k=p\mathfrak{D}_p(\alpha)-\alpha$. More precisely, the $p$-adic expansion of $-\alpha$ in $\mathbb{Z}_p$ is
$$
-\alpha=\sum_{\ell=0}^{\infty}\big(p\mathfrak{D}_p^{\ell+1}(\alpha)-\mathfrak{D}_p^{\ell}(\alpha)\big)p^{\ell},
$$
where $\mathfrak{D}_p^{\ell}$ is the $\ell$-th iteration of  $\mathfrak{D}_p$. In  particular, for all $\ell\in\mathbb{N}$, $\ell\geq 1$, $\mathfrak{D}_p^{\ell}(\alpha)$ is the unique element in $\mathbb{Z}_p\cap \mathbb Q$ such that $p^{\ell}\mathfrak{D}_p^{\ell}(\alpha)-\alpha\in\{0,\dots,p^{\ell}-1\}$.
\medskip

For all primes  $p$, we have $\mathfrak{D}_p(1)=1$. Let us now assume that  $\alpha$ is in $\mathbb Z_p \cap \mathbb{Q}\cap(0,1)$. Set  $N\in\mathbb{N}$, $N\geq 2$ and $r\in\{1,\dots,N-1\}$, $\gcd(r,N)=1$, such that $\alpha=r/N$. Let $s_N$  be the unique right inverse of the canonical morphism $\pi_N:\mathbb{Z}\rightarrow\mathbb{Z}/N\mathbb{Z}$ with values in $\{0,\dots,N-1\}$. Then  (see \cite{Roques1} for details)
$$
\mathfrak{D}_p(\alpha)=\frac{s_N\big(\pi_N(p)^{-1}\pi_N(r)\big)}{N}.
$$
Hence, for all $\ell\in\mathbb{N}$, $\ell\geq 1$, we obtain 
\begin{equation}\label{Prime 0 1}
\mathfrak{D}_p^{\ell}(\alpha)=\frac{s_N\big(\pi_N(p)^{-\ell}\pi_N(r)\big)}{N}.
\end{equation}
In particular, if $\alpha\in(0,1)$, then $\mathfrak{D}_p(\alpha)$ depends only on the congruence class  of $p$ modulo $N$. If $a\in\mathbb{Z}$ satisfies $ap\equiv 1\mod N$, then $\mathfrak{D}_p^\ell(\alpha)=\{a^\ell\alpha\}=\langle a^\ell\alpha\rangle$ because $a$ is coprime to $N$, hence $a^\ell\alpha\notin\mathbb{Z}$. This formula  is still valid when $\alpha=1$ and $a$ is any integer.

\begin{Lemma}\label{Lemma infty}
Let $\alpha\in\mathbb{Q}\setminus\mathbb{Z}_{\geq 0}$. Then for any prime $p$ such that $\alpha\in\mathbb{Z}_p$ and all $\ell\in\mathbb{N}$, $\ell\geq 1$, such that $p^{\ell}\geq d(\alpha)\big(|\lfloor 1-\alpha\rfloor|+\underline{\alpha}\big)$, we have $\mathfrak{D}_p^{\ell}(\alpha)=\mathfrak{D}_p^{\ell}(\underline{\alpha})=\langle\omega\alpha\rangle$, where $\omega\in\mathbb{Z}$ satisfies $\omega p^\ell\equiv 1\mod d(\alpha)$.
\end{Lemma}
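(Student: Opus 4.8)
The statement to prove is Lemma \ref{Lemma infty}: for $\alpha\in\mathbb{Q}\setminus\mathbb{Z}_{\geq 0}$ and a prime $p$ with $\alpha\in\mathbb{Z}_p$, whenever $p^\ell\geq d(\alpha)\big(|\lfloor 1-\alpha\rfloor|+\underline{\alpha}\big)$ we have $\mathfrak{D}_p^\ell(\alpha)=\mathfrak{D}_p^\ell(\underline{\alpha})=\langle\omega\alpha\rangle$ with $\omega p^\ell\equiv 1\bmod d(\alpha)$. The plan is to exploit the defining characterization recalled just above: $\mathfrak{D}_p^\ell(x)$ is the \emph{unique} $p$-adic integer in $\mathbb{Q}$ with $p^\ell\mathfrak{D}_p^\ell(x)-x\in\{0,\dots,p^\ell-1\}$. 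So it suffices to exhibit one rational $p$-adic integer $y$ with $p^\ell y-\alpha\in\{0,\dots,p^\ell-1\}$ and separately check it also works for $\underline{\alpha}$; uniqueness then forces $y=\mathfrak{D}_p^\ell(\alpha)=\mathfrak{D}_p^\ell(\underline{\alpha})$.

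First I would set $d=d(\alpha)$, note $p\nmid d$ (since $\alpha\in\mathbb{Z}_p$), and take $\omega\in\mathbb{Z}$ with $\omega p^\ell\equiv 1\bmod d$; then $\langle\omega\alpha\rangle$ is a well-defined rational $p$-adic integer (its denominator divides $d$, coprime to $p$) lying in $(0,1]$. The candidate is $y=\langle\omega\alpha\rangle$. I must check $k:=p^\ell y-\alpha$ lies in $\{0,\dots,p^\ell-1\}$. It is an integer: $p^\ell\omega\alpha\equiv\alpha\bmod d$ by the choice of $\omega$, and $p^\ell(\langle\omega\alpha\rangle-\omega\alpha)\in\mathbb{Z}$ since $\langle\omega\alpha\rangle-\omega\alpha\in\mathbb{Z}$, so $k=p^\ell\langle\omega\alpha\rangle-\alpha\in\mathbb{Z}$. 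Now for the size estimate: write $\alpha=\underline{\alpha}+(\alpha-\underline{\alpha})=\underline{\alpha}-\lfloor 1-\alpha\rfloor$. Then $k=p^\ell\langle\omega\alpha\rangle-\underline{\alpha}+\lfloor 1-\alpha\rfloor$. Since $\langle\omega\alpha\rangle\in(0,1]$ we get $k\leq p^\ell-\underline{\alpha}+\lfloor 1-\alpha\rfloor$, and since $\underline{\alpha}\in(0,1]$ and $\lfloor 1-\alpha\rfloor$ could be negative (when $\alpha>1$) or nonnegative (when $\alpha<1$, where it is $0$ as $\underline{\alpha}=1-\{1-\alpha\}$ forces... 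I'd double check signs) we need $p^\ell$ large enough that this stays $\leq p^\ell-1$ and the lower bound stays $\geq 0$; precisely $\langle\omega\alpha\rangle\geq 1/d$ gives $k\geq p^\ell/d-\underline{\alpha}+\lfloor 1-\alpha\rfloor\geq 0$ exactly when $p^\ell/d\geq\underline{\alpha}-\lfloor 1-\alpha\rfloor=\underline{\alpha}+|\lfloor 1-\alpha\rfloor|$ in the relevant sign regime — this is where the hypothesized bound $p^\ell\geq d(|\lfloor 1-\alpha\rfloor|+\underline{\alpha})$ gets used, after checking the two cases $\lfloor 1-\alpha\rfloor\geq 0$ and $\lfloor 1-\alpha\rfloor<0$ separately so the absolute value is handled correctly.

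Having established $y=\langle\omega\alpha\rangle=\mathfrak{D}_p^\ell(\alpha)$, I would run the identical argument with $\underline{\alpha}$ in place of $\alpha$: note $d(\underline{\alpha})=d(\alpha)=d$ (they differ by an integer), $\underline{\alpha}\in\mathbb{Z}_p$, $\langle\omega\underline{\alpha}\rangle=\langle\omega\alpha\rangle$ (since $\underline{\alpha}\equiv\alpha\bmod 1$ and $\omega\in\mathbb{Z}$, so $\omega\underline{\alpha}\equiv\omega\alpha\bmod 1$, hence same $\langle\cdot\rangle$), and $|\lfloor 1-\underline{\alpha}\rfloor|=0$ since $\underline{\alpha}\in(0,1]$, so the required bound $p^\ell\geq d(0+\underline{\alpha})=d\,\underline{\alpha}$ is implied by the hypothesis; thus $\mathfrak{D}_p^\ell(\underline{\alpha})=\langle\omega\underline{\alpha}\rangle=\langle\omega\alpha\rangle$ as well. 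The main obstacle — really the only delicate point — is the bookkeeping of signs in the inequality $0\leq p^\ell\langle\omega\alpha\rangle-\alpha\leq p^\ell-1$, since $\lfloor 1-\alpha\rfloor$ changes sign depending on whether $\alpha<1$ or $\alpha>1$, and one must verify the single stated bound $p^\ell\geq d(\alpha)(|\lfloor 1-\alpha\rfloor|+\underline{\alpha})$ covers both the upper and lower constraints uniformly; everything else is a direct unwinding of the definition of $\mathfrak{D}_p^\ell$ together with uniqueness.
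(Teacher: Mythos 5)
Your plan takes a genuinely different route from the paper's. The paper's proof does not verify the defining congruence for $\langle\omega\alpha\rangle$ directly; it proves $\mathfrak{D}_p^\ell(\alpha)=\mathfrak{D}_p^\ell(\langle\alpha\rangle)$ by a shift argument, taking $r:=\mathfrak{D}_p^\ell(\langle\alpha\rangle)+\lfloor(\alpha-\langle\alpha\rangle)/p^\ell\rfloor+a$ for a suitable $a\in\{0,1\}$, noting that $p^\ell r-\alpha\in\{0,\dots,p^\ell-1\}$ holds by construction, and then using the size hypothesis on $p^\ell$ to force $\lfloor(\alpha-\langle\alpha\rangle)/p^\ell\rfloor\in\{-1,0\}$ and to show that $a$ cancels this floor, so that $r=\mathfrak{D}_p^\ell(\langle\alpha\rangle)$. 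The identification of this common value with $\langle\omega\alpha\rangle$ is then imported from formula \eqref{Prime 0 1}, already established for arguments in $(0,1]$. You instead exhibit $\langle\omega\alpha\rangle$ directly as the unique $y\in\mathbb{Z}_p\cap\mathbb{Q}$ with $p^\ell y-\alpha\in\{0,\dots,p^\ell-1\}$, and again for $\langle\alpha\rangle$, invoking uniqueness; that is a cleaner, more self-contained decomposition, and in the end both routes consume the same numerical ingredients, namely the two-sided bounds $1/d(\alpha)\leq\langle\omega\alpha\rangle\leq 1-1/d(\alpha)$ when $\alpha\notin\mathbb{Z}$.

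That said, your upper bound has a gap you flagged but did not close. You estimate $k:=p^\ell\langle\omega\alpha\rangle-\alpha=p^\ell\langle\omega\alpha\rangle-\langle\alpha\rangle+\lfloor 1-\alpha\rfloor$ from above by $p^\ell-\langle\alpha\rangle+\lfloor 1-\alpha\rfloor$ using only $\langle\omega\alpha\rangle\leq 1$, and assert that ``$p^\ell$ large enough'' makes $k\leq p^\ell-1$. This fails when $\alpha$ is a negative non-integer, a case the lemma allows (the hypothesis should read $\alpha\in\mathbb{Q}\setminus\mathbb{Z}_{\leq 0}$, not $\mathbb{Z}_{\geq 0}$, consistent with the paper's own proof and the rest of the article; the lemma is genuinely false for $\alpha\in\mathbb{Z}_{\leq 0}$): there $\lfloor 1-\alpha\rfloor\geq 1>\langle\alpha\rangle$, so your crude bound exceeds $p^\ell$ by the fixed positive quantity $\lfloor 1-\alpha\rfloor-\langle\alpha\rangle$, and no enlargement of $p^\ell$ helps. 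Your interim remark that $\lfloor 1-\alpha\rfloor=0$ ``when $\alpha<1$'' is likewise true only for $0<\alpha\leq 1$. The repair, symmetric to the lower-bound estimate $\langle\omega\alpha\rangle\geq 1/d(\alpha)$ you do use, is to invoke $\langle\omega\alpha\rangle\leq 1-1/d(\alpha)$ for $\alpha\notin\mathbb{Z}$ (if $\alpha\in\mathbb{Z}_{>0}$ then $\langle\omega\alpha\rangle=1$, but then $\lfloor 1-\alpha\rfloor\leq 0$ and the crude bound already gives $k<p^\ell$). This yields $k\leq p^\ell-p^\ell/d(\alpha)-\langle\alpha\rangle+\lfloor 1-\alpha\rfloor$, and the hypothesis $p^\ell/d(\alpha)\geq|\lfloor 1-\alpha\rfloor|+\langle\alpha\rangle$ then gives $k\leq p^\ell-2\langle\alpha\rangle<p^\ell$ in the case $\lfloor 1-\alpha\rfloor\geq 0$, closing the gap.
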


\begin{proof} Let $\alpha\in\mathbb{Q}\setminus\mathbb{Z}_{\leq 0}$ and  $p$ be such that $\alpha\in\mathbb{Z}_p$ and $\ell\in\mathbb{N}$, $\ell\geq 1$ be such that $p^{\ell}\geq d(\alpha)(|\lfloor 1-\alpha\rfloor|+\underline{\alpha})$. By definition, $\mathfrak{D}_p^{\ell}(\alpha)$ is the unique rational number in  $\mathbb{Z}_p$ such that $p^{\ell}\mathfrak{D}_p^{\ell}(\alpha)-\alpha\in\{0,\dots,p^{\ell}-1\}$. We set $\alpha=\underline{\alpha}+k$, $k\in\mathbb{Z}$ and $r:=\mathfrak{D}_p^{\ell}(\underline{\alpha})+\lfloor k/p^{\ell}\rfloor+a$, with $a=0$ if  $k-p^{\ell}\lfloor k/p^{\ell}\rfloor\leq p^{\ell}\mathfrak{D}_p^{\ell}(\underline{\alpha})-\underline{\alpha}$ and $a=1$ otherwise. We obtain
$$
p^{\ell}r-\alpha=p^{\ell}\mathfrak{D}_p^{\ell}(\underline{\alpha})-\underline{\alpha}+p^{\ell}\left\lfloor\frac{k}{p^{\ell}}\right\rfloor-k+p^{\ell}a\in\{0,\dots,p^{\ell}-1\},
$$
because $p^{\ell}\mathfrak{D}_p^\ell(\underline{\alpha})-\underline{\alpha}$ and $k-p^{\ell}\lfloor k/p^{\ell}\rfloor$ are in $\{0,\dots,p^{\ell}-1\}$. Since $r\in\mathbb{Z}_p$, we get $\mathfrak{D}_p^{\ell}(\alpha)=r$. We have $d(\alpha)(|k|+\underline{\alpha})>|k|$ thus $\lfloor k/p^{\ell}\rfloor\in\{-1,0\}$.
\medskip

If $\lfloor k/p^{\ell}\rfloor=0$, then since  $\mathfrak{D}_p^{\ell}(\underline{\alpha})\geq 1/d(\alpha)$, we get $p^{\ell}\mathfrak{D}_p^{\ell}(\underline{\alpha})-\underline{\alpha}\geq |k|$ and thus $a=0$. In this case, we have $\mathfrak{D}_p^{\ell}(\alpha)=\mathfrak{D}_p^{\ell}(\underline{\alpha})$.
\medskip

Let us now assume that $\lfloor k/p^{\ell}\rfloor=-1$, \textit{i.e.} $k\leq -1$. We have $\underline{\alpha}<1$ because $\alpha\notin\mathbb{Z}_{\leq 0}$, hence $d(\alpha)\geq 2$. We have
\begin{align*}
p^{\ell}\mathfrak{D}_p^{\ell}(\underline{\alpha})-\underline{\alpha}-(k+p^{\ell})&\leq p^{\ell}\left(\frac{d(\alpha)-1}{d(\alpha)}-1\right)-\underline{\alpha}-k\leq -\frac{p^{\ell}}{d(\alpha)}-\underline{\alpha}-k\\
&\leq -|k|-2\underline{\alpha}-k\leq -2\underline{\alpha}<0,
\end{align*}
thus $a=1$ and $\mathfrak{D}_p^{\ell}(\alpha)=\mathfrak{D}_p^{\ell}(\underline{\alpha})$. 
\end{proof}

\subsection{Analogues of Landau functions}\label{section analogues}

We now define the step functions that will enable us to compute the $p$-adic valuation of the Taylor coefficients at $z=0$ of $F_{\boldsymbol{\alpha},\boldsymbol{\beta}}(z)$. For all primes $p$, all $\alpha\in\mathbb{Q}\cap\mathbb{Z}_p$ and all $\ell\in\mathbb{N}$, $\ell\geq 1$, we denote by $\delta_{p,\ell}(\alpha,\cdot)$ the step function defined, for all $x\in\mathbb{R}$, by
$$
\left(\delta_{p,\ell}(\alpha,x)=k\Longleftrightarrow x-\mathfrak{D}_p^{\ell}(\alpha)-\frac{\lfloor 1-\alpha\rfloor}{p^{\ell}}\in[k-1,k)\right),\,k\in\mathbb{Z}.
$$
In particular, if $\alpha\in(0,1]$, then for all $k\in\mathbb{Z}$, we have 
$$
\delta_{p,\ell}(\alpha,x)=k\Longleftrightarrow x-\mathfrak{D}_p^{\ell}(\alpha)\in[k-1,k).
$$
\medskip

Let $\boldsymbol{\alpha}:=(\alpha_1,\dots,\alpha_r)$ and $\boldsymbol{\beta}:=(\beta_1,\dots,\beta_s)$ be two sequences taking their values in $\mathbb{Q}\setminus\mathbb{Z}_{\leq 0}$. For any $p$ that does not divide  $d_{\boldsymbol{\alpha},\boldsymbol{\beta}}$, and all $\ell\in\mathbb{N}$, $\ell\geq 1$, we denote by $\Delta_{\boldsymbol{\alpha},\boldsymbol{\beta}}^{p,\ell}$ the step function defined, for all $x\in\mathbb{R}$, by 
$$
\Delta_{\boldsymbol{\alpha},\boldsymbol{\beta}}^{p,\ell}(x):=\sum_{i=1}^r\delta_{p,\ell}(\alpha_i,x)-\sum_{j=1}^s\delta_{p,\ell}(\beta_j,x).
$$
The motivation behind the functions $\Delta_{\boldsymbol{\alpha},\boldsymbol{\beta}}^{p,\ell}$ is given by the following result. 

\begin{propo}\label{valuation}
Let $\boldsymbol{\alpha}:=(\alpha_1,\dots,\alpha_r)$ and $\boldsymbol{\beta}:=(\beta_1,\dots,\beta_s)$ be two sequences taking their values in $\mathbb{Q}\setminus\mathbb{Z}_{\leq 0}$. Let $p$ be such that  $\boldsymbol{\alpha}$ and $\boldsymbol{\beta}$ are in $\mathbb{Z}_p$. Then, for all $n\in\mathbb{N}$, we have
$$
v_p\left(\frac{(\alpha_1)_n\cdots(\alpha_r)_n}{(\beta_1)_n\cdots(\beta_s)_n}\right)=\sum_{\ell=1}^{\infty}
\Delta_{\boldsymbol{\alpha},\boldsymbol{\beta}}^{p,\ell}\left(\frac{n}{p^{\ell}}\right)=\sum_{\ell=1}^{\infty}\Delta_{\boldsymbol{\alpha},\boldsymbol{\beta}}^{p,\ell}\left(\left\{\frac{n}{p^{\ell}}\right\}\right)+(r-s)v_p(n!).
$$
\end{propo}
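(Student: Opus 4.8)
The plan is to compute the $p$-adic valuation of the Pochhammer ratio term by term, i.e.\ to evaluate $v_p\big((\alpha_1)_n\cdots(\alpha_r)_n/(\beta_1)_n\cdots(\beta_s)_n\big)$ by first understanding $v_p\big((\alpha)_n\big)$ for a single parameter $\alpha\in\mathbb{Q}\cap\mathbb{Z}_p$. The key elementary fact is that, for $\alpha\in\mathbb{Z}_p$, one has $v_p\big((\alpha)_n\big)=\#\{0\le k\le n-1 : \alpha+k\in p\mathbb{Z}_p\}+\#\{0\le k\le n-1: \alpha+k\in p^2\mathbb{Z}_p\}+\cdots$, i.e.\ $v_p\big((\alpha)_n\big)=\sum_{\ell\ge1}\#\{0\le k\le n-1 : \alpha+k\equiv0\bmod p^\ell\}$. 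So the first step is: fix $\ell\ge1$ and count the number of $k\in\{0,\dots,n-1\}$ with $\alpha+k\equiv 0\bmod p^\ell$. By the discussion of Dwork's map $\mathfrak{D}_p$ in Section \ref{section Dwork map}, the residues $k\in\{0,\dots,p^\ell-1\}$ with $\alpha+k\in p^\ell\mathbb{Z}_p$ are exactly $k=p^\ell\mathfrak{D}_p^\ell(\alpha)-\alpha$; so I would count how many integers in $\{0,\dots,n-1\}$ are congruent to that value modulo $p^\ell$. Writing $\alpha=\underline{\alpha}+\lfloor\alpha\rfloor=\underline{\alpha}-\lfloor 1-\alpha\rfloor$, a short computation with floor functions should give that this count equals $\big\lfloor (n + \lfloor 1-\alpha\rfloor)/p^\ell - \mathfrak{D}_p^\ell(\underline{\alpha})\big\rfloor + \text{(boundary correction)}$, and after matching definitions this is precisely $\delta_{p,\ell}(\alpha, n/p^\ell)$. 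This identification of the per-level count with the step function $\delta_{p,\ell}(\alpha,\cdot)$ is the heart of the matter; the rest is bookkeeping.

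Granting that, summing over $\ell\ge1$ gives $v_p\big((\alpha)_n\big)=\sum_{\ell\ge1}\delta_{p,\ell}(\alpha,n/p^\ell)$, and summing over the $\alpha_i$ and subtracting the $\beta_j$ contributions yields
$$
v_p\left(\frac{(\alpha_1)_n\cdots(\alpha_r)_n}{(\beta_1)_n\cdots(\beta_s)_n}\right)=\sum_{\ell=1}^\infty\Delta_{\boldsymbol{\alpha},\boldsymbol{\beta}}^{p,\ell}\left(\frac{n}{p^\ell}\right),
$$
which is the first claimed equality. Note the sum over $\ell$ is finite: for $p^\ell$ large relative to the denominators and $n$, Lemma \ref{Lemma infty} and the definition of $\delta_{p,\ell}$ force $\delta_{p,\ell}(\alpha,n/p^\ell)$ to stabilize (in fact the relevant terms vanish once $p^\ell > n + |\lfloor 1-\alpha\rfloor|$ and the $\mathfrak{D}_p^\ell$ values have settled), so there is no convergence issue — I would spell out this truncation explicitly to justify manipulating the series.

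For the second equality, I would split $n/p^\ell = \{n/p^\ell\} + \lfloor n/p^\ell\rfloor$ and exploit the structure of $\delta_{p,\ell}$: writing $x = \{n/p^\ell\} + \lfloor n/p^\ell\rfloor$ in the defining relation $x - \mathfrak{D}_p^\ell(\alpha) - \lfloor 1-\alpha\rfloor/p^\ell \in [k-1,k)$, the integer shift $\lfloor n/p^\ell\rfloor$ passes through, giving $\delta_{p,\ell}(\alpha, n/p^\ell) = \delta_{p,\ell}(\alpha, \{n/p^\ell\}) + \lfloor n/p^\ell\rfloor$. Summing this identity over $i$ and $j$ with signs, the shift terms contribute $(r-s)\sum_{\ell\ge1}\lfloor n/p^\ell\rfloor = (r-s)\,v_p(n!)$ by Legendre's formula, while the fractional-argument terms assemble into $\sum_{\ell\ge1}\Delta_{\boldsymbol{\alpha},\boldsymbol{\beta}}^{p,\ell}(\{n/p^\ell\})$. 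I expect the main obstacle to be the careful handling of the boundary cases in Step 1 — whether the inequality in "$\alpha+k\equiv0$" lands on a half-open endpoint of the interval $[k-1,k)$ — which is exactly the reason the $\preceq$-order and the half-open conventions in the definition of $\delta_{p,\ell}$ were set up the way they were; getting the endpoint conventions to line up with the count $\#\{0\le k\le n-1:\dots\}$ (closed at $0$, open at $n$) is the delicate point, but it is a finite case check rather than a conceptual difficulty.
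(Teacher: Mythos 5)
Your proposal is correct and follows essentially the same route as the paper: the paper invokes Christol's closed form $v_p\big((\alpha)_n\big)=\sum_{\ell\ge 1}\big\lfloor\big(n+p^{\ell}-1-T_p(-\alpha,\ell)\big)/p^{\ell}\big\rfloor$, which is precisely the per-level count of $k\in\{0,\dots,n-1\}$ with $\alpha+k\equiv 0\bmod p^{\ell}$ that you describe, identifies it with $\sum_{\ell}\delta_{p,\ell}(\alpha,n/p^{\ell})$ by the same floor/ceiling bookkeeping, and gets the second equality from the integer-shift identity $\delta_{p,\ell}(\alpha,n/p^{\ell})=\delta_{p,\ell}(\alpha,\{n/p^{\ell}\})+\lfloor n/p^{\ell}\rfloor$ together with Legendre's formula, exactly as you propose. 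The only quibble is that your intermediate expression carries $\mathfrak{D}_p^{\ell}(\underline{\alpha})$ and a flipped sign on $\lfloor 1-\alpha\rfloor$ where the definition of $\delta_{p,\ell}$ calls for $\mathfrak{D}_p^{\ell}(\alpha)$ and $-\lfloor 1-\alpha\rfloor/p^{\ell}$ (these agree only once $p^{\ell}$ is large, by Lemma~\ref{Lemma infty}), but since you explicitly defer to ``matching definitions'' and flag the endpoint conventions as the delicate step, this is a transcription slip in the sketch rather than a gap in the argument.
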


\begin{Remark}
This proposition is a reformulation of results in Section III of \cite{Christol}, proved  by Christol in order to compute the $p$-adic valuation of the Pochhammer symbol $(x)_n$ for  $x\in\mathbb{Z}_p$.
\end{Remark}

\begin{proof}
For any $p$, any $n:=\sum_{k=0}^{\infty}n_kp^k\in\mathbb{Z}_p$ with $n_k\in\{0,\dots,p-1\}$, and any $\ell\in\mathbb{N}$, $\ell\geq 1$, we set $T_p(n,\ell):=\sum_{k=0}^{\ell-1}n_kp^k$. For all $\ell\in\mathbb{N}$, $\ell\geq 1$, we have 
$$
T_p(-\alpha,\ell)=p^{\ell}\mathfrak{D}_p^{\ell}(\alpha)-\alpha.
$$
We fix a $p$-adic integer $\alpha\in\mathbb{Q}\setminus\mathbb{Z}_{\leq 0}$. For all $k\in\mathbb{Z}$ and all $\ell\in\mathbb{N}$, $\ell\geq 1$, we have
\begin{align}
\delta_{p,\ell}\left(\alpha,\frac{n}{p^{\ell}}\right)=k
&\Longleftrightarrow\mathfrak{D}_p^{\ell}(\alpha)+\frac{\lfloor 1-\alpha\rfloor}{p^{\ell}}+k-1\leq\frac{n}{p^{\ell}}<\mathfrak{D}_p^{\ell}(\alpha)+\frac{\lfloor 1-\alpha\rfloor}{p^{\ell}}+k\notag\\
&\Longleftrightarrow p^{\ell}\mathfrak{D}_p^{\ell}(\alpha)+\lfloor 1-\alpha\rfloor+(k-1)p^{\ell}\leq n<p^{\ell}\mathfrak{D}_p^{\ell}(\alpha)+\lfloor 1-\alpha\rfloor+kp^{\ell}\notag\\
&\Longleftrightarrow p^{\ell}\mathfrak{D}_p^{\ell}(\alpha)-\alpha+(k-1)p^{\ell}<n\leq p^{\ell}\mathfrak{D}_p^{\ell}(\alpha)-\alpha+kp^{\ell}\label{equ expli}\\
&\Longleftrightarrow \left\lceil\frac{n-T_p(-\alpha,\ell)}{p^{\ell}}\right\rceil=k,\notag
\end{align}
where, for all $x\in\mathbb{R}$,  $\lceil x\rceil$ is the smallest integer larger than $x$. We have used in \eqref{equ expli} the fact that $-\alpha=-\underline{\alpha}+\lfloor 1-\alpha\rfloor$, $-1\leq-\underline{\alpha}<0$ and $p^{\ell}\mathfrak{D}_p^{\ell}(\alpha)-\alpha\in\mathbb{N}$. We then obtain 
\begin{equation}\label{formu sup}
\delta_{p,\ell}\left(\alpha,\frac{n}{p^{\ell}}\right)=\left\lceil\frac{n-T_p(-\alpha,\ell)}{p^{\ell}}\right\rceil.
\end{equation}

\medskip
 
Christol proved in \cite{Christol} that for all  $\alpha\in\mathbb{Z}_p\setminus\mathbb{Z}_{\leq 0}$ and all $n\in\mathbb{N}$, we have  
\begin{equation}\label{Christol val}
v_p\big((\alpha)_n\big)=\sum_{\ell=1}^{\infty}\left\lfloor\frac{n+p^{\ell}-1-T_p(-\alpha,\ell)}{p^{\ell}}\right\rfloor.
\end{equation}
For all $\ell\in\mathbb{N}$, $\ell\geq 1$, we have 
$$
\frac{n+p^{\ell}-1-T_p(-\alpha,\ell)}{p^{\ell}}\in\frac{1}{p^{\ell}}\mathbb{Z},
$$
so that if $k\in\mathbb{Z}$ is such that
$$
k\leq\frac{n+p^{\ell}-1-T_p(-\alpha,\ell)}{p^{\ell}}<k+1,
$$
then 
$$
k-1<\frac{n-T_p(-\alpha,\ell)}{p^{\ell}}\leq k.
$$
Hence, we get 
$$
\left\lfloor\frac{n+p^{\ell}-1-T_p(-\alpha,\ell)}{p^{\ell}}\right\rfloor=\left\lceil\frac{n-T_p(-\alpha,\ell)}{p^{\ell}}\right\rceil.
$$
By \eqref{formu sup} and \eqref{Christol val}, it follows that
$$
v_p\big((\alpha)_n\big)=\sum_{\ell=1}^{\infty}\delta_{p,\ell}\left(\alpha,\frac{n}{p^{\ell}}\right)=\sum_{\ell=1}^{\infty}\delta_{p,\ell}\left(\alpha,\left\{\frac{n}{p^{\ell}}\right\}\right)+v_p(n!),
$$
because $\delta_{p,\ell}(\alpha,n/p^{\ell})=\delta_{p,\ell}(\alpha,\{n/p^{\ell}\})+\lfloor n/p^{\ell}\rfloor$ and  $v_p(n!)=\sum_{\ell=1}^{\infty}\lfloor n/p^{\ell}\rfloor$.
\end{proof}

The following lemma provides an upper for the abscissae of the jumps of the functions  $\Delta_{\boldsymbol{\alpha},\boldsymbol{\beta}}^{p,\ell}$.

\begin{Lemma}\label{Lemma structure}
Let $\alpha\in\mathbb{Q}\setminus\mathbb{Z}_{\leq 0}$. There exists a constant $M(\alpha)>0$ such that, for all $p$ such that $\alpha\in\mathbb{Z}_p$, and all $\ell\in\mathbb{N}$, $\ell\geq 1$, we have
$$
\frac{1}{M(\alpha)}\leq\mathfrak{D}_p^{\ell}(\alpha)+\frac{\lfloor 1-\alpha\rfloor}{p^{\ell}}\leq 1.
$$
\end{Lemma}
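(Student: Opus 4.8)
The plan is to bound the two quantities $\mathfrak{D}_p^\ell(\alpha)$ and $\lfloor 1-\alpha\rfloor/p^\ell$ separately, reducing everything to the case of $\underline{\alpha}\in(0,1]$ via the relation $\alpha=\underline{\alpha}+\lfloor 1-\alpha\rfloor$ (so that $-\lfloor 1-\alpha\rfloor=\alpha-\underline\alpha\in\mathbb Z$). First I would recall, from the discussion preceding Lemma \ref{Lemma infty} and from \eqref{equ expli}, that $p^\ell\mathfrak{D}_p^\ell(\alpha)-\alpha=T_p(-\alpha,\ell)\in\{0,\dots,p^\ell-1\}$, hence
$$
\mathfrak{D}_p^\ell(\alpha)+\frac{\lfloor 1-\alpha\rfloor}{p^\ell}=\frac{T_p(-\alpha,\ell)+\alpha+\lfloor 1-\alpha\rfloor}{p^\ell}=\frac{T_p(-\alpha,\ell)+\alpha-(\alpha-\underline\alpha)}{p^\ell}=\frac{T_p(-\alpha,\ell)+\underline\alpha}{p^\ell}.
$$
This already gives the upper bound: since $T_p(-\alpha,\ell)\le p^\ell-1$ and $\underline\alpha\le 1$, the quantity is at most $(p^\ell-1+1)/p^\ell=1$.

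For the lower bound the key point is that $T_p(-\alpha,\ell)+\underline\alpha>0$, and one needs this to be bounded below by a positive constant \emph{independent of $p$ and $\ell$}. If $\underline\alpha=1$ (i.e. $\alpha\in\mathbb Z_{\ge 1}$, which is allowed since only $\mathbb{Z}_{\le 0}$ is excluded), then the numerator is $\ge 1$ and we are done with $M(\alpha)=1$. Otherwise $\underline\alpha\in(0,1)$, so $\alpha$ has exact denominator $d(\alpha)\ge 2$. Here I would use that $\mathfrak{D}_p^\ell$ maps rationals in $\mathbb Z_p$ with denominator dividing $d(\alpha)$ to rationals with denominator dividing $d(\alpha)$ (this is visible from \eqref{Prime 0 1}, or directly: $p^\ell\mathfrak D_p^\ell(\alpha)\equiv\alpha$ and $\mathfrak D_p^\ell(\alpha)\in\mathbb Z_p$ force $d(\mathfrak D_p^\ell(\alpha))\mid d(\alpha)$ since $p\nmid d(\alpha)$). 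Consequently $\mathfrak{D}_p^\ell(\alpha)$, being a positive element of $\tfrac{1}{d(\alpha)}\mathbb Z$, satisfies $\mathfrak{D}_p^\ell(\alpha)\ge 1/d(\alpha)$; equivalently $T_p(-\alpha,\ell)+\alpha=p^\ell\mathfrak D_p^\ell(\alpha)\ge p^\ell/d(\alpha)$, whence
$$
\mathfrak{D}_p^\ell(\alpha)+\frac{\lfloor 1-\alpha\rfloor}{p^\ell}=\frac{p^\ell\mathfrak D_p^\ell(\alpha)-\alpha+\underline\alpha}{p^\ell}\ \ge\ \frac{p^\ell/d(\alpha)-\alpha+\underline\alpha}{p^\ell}.
$$
Wait — $-\alpha+\underline\alpha=-\lfloor 1-\alpha\rfloor$ can be a large positive number when $\alpha$ is very negative, which only helps; when $\alpha>0$ it is $\le 0$ but bounded by a constant depending only on $\alpha$. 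So I would instead write the numerator as $T_p(-\alpha,\ell)+\underline\alpha\ge \underline\alpha\ge 1/d(\alpha)$ directly from $T_p(-\alpha,\ell)\ge 0$, giving the clean bound $\mathfrak{D}_p^\ell(\alpha)+\lfloor 1-\alpha\rfloor/p^\ell\ge \underline\alpha/p^\ell$. This is not yet a constant, so the real argument must be: either $T_p(-\alpha,\ell)\ge 1$, in which case the numerator is $\ge 1$; or $T_p(-\alpha,\ell)=0$, in which case $\mathfrak D_p^\ell(\alpha)=\alpha/p^\ell\in\mathbb Z_p$ forces (again by the denominator argument, $p\nmid d(\alpha)$) that $d(\alpha)\mid 1$, i.e. $\alpha\in\mathbb Z$, so $\alpha\ge 1$ and $\underline\alpha=1$, numerator $\ge 1$. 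In all cases $T_p(-\alpha,\ell)+\underline\alpha\ge\min(1,\underline\alpha)=\underline\alpha$, and we may take $M(\alpha)=1/\underline\alpha=d(\alpha)/n(\alpha)$ where $n(\alpha)$ is the numerator — in particular $M(\alpha)$ depends only on $\alpha$.

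The main obstacle is the bookkeeping around negative $\alpha$: one must be careful that $\lfloor 1-\alpha\rfloor$ is then a large negative integer of size $\asymp|\alpha|$, so the cancellation $\alpha+\lfloor 1-\alpha\rfloor=-\underline\alpha$ that collapses the expression to $(T_p(-\alpha,\ell)+\underline\alpha)/p^\ell$ is exactly what makes the bound uniform in $p$ and $\ell$; without performing this cancellation first the estimate looks $p$-dependent. Once that simplification is in place, both inequalities are immediate from $0\le T_p(-\alpha,\ell)\le p^\ell-1$ and $0<\underline\alpha\le 1$, and the constant is explicit.
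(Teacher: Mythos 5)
Your simplification
$$
\mathfrak{D}_p^{\ell}(\alpha)+\frac{\lfloor 1-\alpha\rfloor}{p^{\ell}}
=\frac{T_p(-\alpha,\ell)+\underline{\alpha}}{p^{\ell}}
$$
is correct and immediately gives both the upper bound and positivity, which is exactly how the paper opens its proof. But your lower-bound argument breaks down at the final step: from $T_p(-\alpha,\ell)+\underline{\alpha}\ge\underline{\alpha}$ you conclude $M(\alpha)=1/\underline{\alpha}$ works, but this bounds only the \emph{numerator}; the quantity you must bound is $(T_p(-\alpha,\ell)+\underline{\alpha})/p^{\ell}$, which is still $\ge\underline{\alpha}/p^{\ell}$, and this tends to $0$ as $p^{\ell}\to\infty$. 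Even your case $T_p(-\alpha,\ell)\ge 1$ (numerator $\ge 1$) only produces the bound $1/p^{\ell}$, which is not uniform. To get a bound independent of $p$ and $\ell$, you need $T_p(-\alpha,\ell)$ to be comparable to $p^{\ell}$, not merely nonzero.

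There are also two subsidiary problems. First, the claim that $T_p(-\alpha,\ell)=0$ forces $\alpha\in\mathbb{Z}$ is false: for $\alpha=5/3$ and $p=5$, $\ell=1$, one checks $\mathfrak{D}_5(5/3)=1/3$, so $T_5(-5/3,1)=5\cdot\tfrac13-\tfrac53=0$ while $d(\alpha)=3$. Second, the assertion that $\mathfrak{D}_p^{\ell}(\alpha)>0$ is false in general when $\alpha$ is negative and $p^{\ell}$ is small (e.g.\ $\alpha=-3/2$, $p=3$, $\ell=1$ gives $\mathfrak{D}_3(-3/2)=-1/2$); positivity of $\mathfrak{D}_p^{\ell}(\alpha)$ requires $p^{\ell}$ to be large, which is exactly the content of Lemma~\ref{Lemma infty}. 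The missing idea is precisely that lemma: for $p^{\ell}\ge d(\alpha)\big(|\lfloor 1-\alpha\rfloor|+\underline{\alpha}\big)$ one has $\mathfrak{D}_p^{\ell}(\alpha)=\mathfrak{D}_p^{\ell}(\underline{\alpha})\ge 1/d(\alpha)$, so
$$
\mathfrak{D}_p^{\ell}(\alpha)+\frac{\lfloor 1-\alpha\rfloor}{p^{\ell}}\ge\frac{1}{d(\alpha)}-\frac{|\lfloor 1-\alpha\rfloor|}{p^{\ell}}\ge\frac{1}{d(\alpha)}\cdot\frac{\underline{\alpha}}{|\lfloor 1-\alpha\rfloor|+\underline{\alpha}},
$$
and the remaining finitely many pairs $(p,\ell)$ with small $p^{\ell}$ are handled by the strict positivity you already established, taking the minimum over that finite set.
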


\begin{Remark}
In particular, if $\boldsymbol{\alpha}$ and $\boldsymbol{\beta}$ are two sequences taking their values in $\mathbb{Q}\setminus\mathbb{Z}_{\leq 0}$, there exists a  constant $M(\boldsymbol{\alpha},\boldsymbol{\beta})>0$ such that for  all $p$ that does not divide $d_{\boldsymbol{\alpha},\boldsymbol{\beta}}$, all $\ell\in\mathbb{N}$, $\ell\geq 1$, and all $x\in[0,1/M(\boldsymbol{\alpha},\boldsymbol{\beta}))$, we have  $\Delta_{\boldsymbol{\alpha},\boldsymbol{\beta}}^{p,\ell}(x)=0$.
\end{Remark}

\begin{proof}
Set $a:=p^{\ell}\mathfrak{D}_p^{\ell}(\alpha)-\alpha\in\{0,\dots,p^{\ell}-1\}$. We have 
$$
\mathfrak{D}_p^{\ell}(\alpha)+\frac{\lfloor 1-\alpha\rfloor}{p^{\ell}}=\frac{a}{p^{\ell}}+\frac{\underline{\alpha}}{p^{\ell}}\in(0,1],
$$
because  $0<\underline{\alpha}\leq 1$. By Lemma \ref{Lemma infty}, if $p^{\ell}\geq d(\alpha)\big(|\lfloor 1-\alpha\rfloor |+\underline{\alpha}\big)$, then  $\mathfrak{D}_p^{\ell}(\alpha)=\mathfrak{D}_p^{\ell}(\underline{\alpha})\geq 1/d(\underline{\alpha})$ and hence 
$$
\mathfrak{D}_p^{\ell}(\alpha)+\frac{\lfloor 1-\alpha\rfloor}{p^{\ell}}\geq\frac{1}{d(\alpha)}\left(\frac{\underline{\alpha}}{|\lfloor 1-\alpha\rfloor |+\underline{\alpha}}\right).
$$
This completes the proof of Lemma \ref{Lemma structure} because there exists only a finite number of couples $(p,\ell)$ such that $p^{\ell}<d(\alpha)\big(|\lfloor 1-\alpha\rfloor |+\underline{\alpha}\big)$.
\end{proof}

Finally, our next lemma enables us to connect the fonctions $\Delta_{\boldsymbol{\alpha},\boldsymbol{\beta}}^{p,\ell}$ to the values of the functions $\xi_{\boldsymbol{\alpha},\boldsymbol{\beta}}(a,\cdot)$. This is useful to decide if $F_{\boldsymbol{\alpha},\boldsymbol{\beta}}$ is $N$-integral.

\begin{Lemma}\label{valeurs Delta}
Let $\boldsymbol{\alpha}$ and $\boldsymbol{\beta}$ be two sequences taking their values in  $\mathbb{Q}\setminus\mathbb{Z}_{\leq 0}$. There exists a constant  $\mathcal{N}_{\boldsymbol{\alpha},\boldsymbol{\beta}}$ such that for all elements  $\alpha$ and $\beta$ of the sequence $\boldsymbol{\alpha}$ or $\boldsymbol{\beta}$, for all $p$ that does not divide $d_{\boldsymbol{\alpha},\boldsymbol{\beta}}$ and all  $\ell\in\mathbb{N}$, $\ell\geq 1$ such that $p^{\ell}\geq\mathcal{N}_{\boldsymbol{\alpha},\boldsymbol{\beta}}$, we have 
$$
a\alpha\preceq a\beta\Longleftrightarrow\mathfrak{D}_p^\ell(\alpha)+\frac{\lfloor 1-\alpha\rfloor}{p^\ell}\leq\mathfrak{D}_p^\ell(\beta)+\frac{\lfloor 1-\beta\rfloor}{p^\ell},
$$
where $a\in\{1,\dots,d_{\boldsymbol{\alpha},\boldsymbol{\beta}}\}$ satisfies $p^{\ell}a\equiv 1\mod d_{\boldsymbol{\alpha},\boldsymbol{\beta}}$. Moreover, if the sequence $\boldsymbol{\alpha}$ and $\boldsymbol{\beta}$ take their values in $(0,1]$, then we can take $\mathcal{N}_{\boldsymbol{\alpha},\boldsymbol{\beta}}=1$.
\end{Lemma}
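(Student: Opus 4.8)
The plan is to compare directly the two quantities
$$
A:=\mathfrak{D}_p^{\ell}(\alpha)+\frac{\lfloor 1-\alpha\rfloor}{p^{\ell}}\qquad\text{and}\qquad B:=\mathfrak{D}_p^{\ell}(\beta)+\frac{\lfloor 1-\beta\rfloor}{p^{\ell}}.
$$
Fix $\alpha,\beta$ among the elements of $\boldsymbol{\alpha}$ or $\boldsymbol{\beta}$ and set $d:=d_{\boldsymbol{\alpha},\boldsymbol{\beta}}$. Since $p\nmid d$ we have $p\nmid d(\alpha)$ and $p\nmid d(\beta)$, so $\alpha,\beta\in\mathbb{Z}_p$ and $\mathfrak{D}_p$ is defined on them; moreover the integer $a$ with $p^{\ell}a\equiv 1\bmod d$ also satisfies $p^{\ell}a\equiv 1\bmod d(\alpha)$ and $p^{\ell}a\equiv 1\bmod d(\beta)$. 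Hence, by Lemma~\ref{Lemma infty}, as soon as $p^{\ell}\geq d(\alpha)(|\lfloor 1-\alpha\rfloor|+\langle\alpha\rangle)$ and $p^{\ell}\geq d(\beta)(|\lfloor 1-\beta\rfloor|+\langle\beta\rangle)$, we get $\mathfrak{D}_p^{\ell}(\alpha)=\langle a\alpha\rangle$ and $\mathfrak{D}_p^{\ell}(\beta)=\langle a\beta\rangle$. Using $\lfloor 1-\alpha\rfloor=\langle\alpha\rangle-\alpha$, this reduces the claim to comparing $A=\langle a\alpha\rangle+(\langle\alpha\rangle-\alpha)/p^{\ell}$ with $B=\langle a\beta\rangle+(\langle\beta\rangle-\beta)/p^{\ell}$. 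First I would carry out this reduction carefully.

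Then I would split into two cases. If $\langle a\alpha\rangle\neq\langle a\beta\rangle$, then both numbers have denominator dividing $d$, so $|\langle a\alpha\rangle-\langle a\beta\rangle|\geq 1/d$; choosing $p^{\ell}$ large enough that $(|\lfloor 1-\alpha\rfloor|+|\lfloor 1-\beta\rfloor|)/p^{\ell}<1/d$, the sign of $A-B$ equals that of $\langle a\alpha\rangle-\langle a\beta\rangle$, whence
$$
A\leq B\iff A<B\iff\langle a\alpha\rangle<\langle a\beta\rangle\iff a\alpha\preceq a\beta,
$$
the last equivalence holding because $a\alpha\neq a\beta$ here and by the definition of $\preceq$. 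If instead $\langle a\alpha\rangle=\langle a\beta\rangle$, then $a(\alpha-\beta)\in\mathbb{Z}$, and since $\gcd(a,d)=1$ while $\alpha-\beta$ has denominator dividing $d$, in fact $\alpha-\beta\in\mathbb{Z}$ and $\langle\alpha\rangle=\langle\beta\rangle$. A one-line computation then gives $A-B=(\lfloor 1-\alpha\rfloor-\lfloor 1-\beta\rfloor)/p^{\ell}=(\beta-\alpha)/p^{\ell}$, so
$$
A\leq B\iff\alpha\geq\beta\iff a\alpha\geq a\beta\iff a\alpha\preceq a\beta,
$$
using $a>0$, $\alpha-\beta\in\mathbb{Z}$, and $\langle a\alpha\rangle=\langle a\beta\rangle$. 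One takes for $\mathcal{N}_{\boldsymbol{\alpha},\boldsymbol{\beta}}$ the maximum of the finitely many thresholds appearing above, the pairs $(\alpha,\beta)$ being finite in number.

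For the final assertion, if all parameters lie in $(0,1]$ then $\langle\alpha\rangle=\alpha$ and $\lfloor 1-\alpha\rfloor=0$ for every parameter, and by formula \eqref{Prime 0 1} in Section~\ref{section Dwork map} one has $\mathfrak{D}_p^{\ell}(\alpha)=\langle a\alpha\rangle$ for \emph{all} $\ell\geq 1$ (with the same $a$). Thus $A=\langle a\alpha\rangle$, $B=\langle a\beta\rangle$, the error terms vanish identically, and the case analysis above applies verbatim with no lower bound on $p^{\ell}$ (when $\langle a\alpha\rangle=\langle a\beta\rangle$ one now has $\alpha=\beta$, so $A=B$ and both sides of the equivalence hold); hence $\mathcal{N}_{\boldsymbol{\alpha},\boldsymbol{\beta}}=1$ works.

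I do not expect a serious obstacle: the arithmetic substance is already packaged in Lemma~\ref{Lemma infty} and in the description of $\mathfrak{D}_p$ in Section~\ref{section Dwork map}, both available here. The points that require genuine care are the \emph{uniformity} of the constant — bounding the error terms $\lfloor 1-\alpha\rfloor/p^{\ell}$ uniformly against the minimal gap $1/d_{\boldsymbol{\alpha},\boldsymbol{\beta}}$ separating distinct values $\langle a\alpha\rangle$ — and tracking the order $\preceq$ correctly in the tie case $\langle a\alpha\rangle=\langle a\beta\rangle$, where the comparison is decided by the floor terms $\lfloor 1-\alpha\rfloor$ rather than by the fractional parts.
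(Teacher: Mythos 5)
Your proof is correct and follows essentially the same route as the paper: reduce via Lemma~\ref{Lemma infty} to $\mathfrak{D}_p^{\ell}(\alpha)=\langle a\alpha\rangle$, then compare $\langle a\alpha\rangle$ and $\langle a\beta\rangle$ using the minimal gap $1/d_{\boldsymbol{\alpha},\boldsymbol{\beta}}$ between distinct values to absorb the $O(1/p^{\ell})$ floor terms, and handle the tie case $\langle a\alpha\rangle=\langle a\beta\rangle$ (whence $\underline{\alpha}=\underline{\beta}$) by comparing the integers $\lfloor 1-\alpha\rfloor$ and $\lfloor 1-\beta\rfloor$. The paper phrases this as a single chain of equivalences rather than an explicit two-case split, but the underlying argument and the choice of $\mathcal{N}_{\boldsymbol{\alpha},\boldsymbol{\beta}}$ as a maximum of the Lemma~\ref{Lemma infty} thresholds and the gap-bound are the same.
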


\begin{proof}
Let $p$ be such that the sequences  $\boldsymbol{\alpha}$ and $\boldsymbol{\beta}$ take their values in $\mathbb{Z}_p$. By Lemma \ref{Lemma infty}, there exists a  constant $\mathcal{N}_1$ such that, for all  $\ell\in\mathbb{N}$, $\ell\geq 1$ such that $p^{\ell}\geq\mathcal{N}_1$, and all elements $\alpha$ of $\boldsymbol{\alpha}$ or  $\boldsymbol{\beta}$, we have $\mathfrak{D}_p^{\ell}(\alpha)=\mathfrak{D}_p^{\ell}(\underline{\alpha})$. Moreover, if $\boldsymbol{\alpha}$ and $\boldsymbol{\beta}$ take their values in $(0,1]$, we can take $\mathcal{N}_1=1$ because $\alpha=\underline{\alpha}$. We set 
$$
\mathcal{N}_2:=\max\big\{d_{\boldsymbol{\alpha},\boldsymbol{\beta}}|\lfloor 1-\alpha_i\rfloor-\lfloor 1-\beta_j\rfloor|\,:\,1\leq i\leq r,\, 1\leq j\leq s\big\}+1
$$ 
and $\mathcal{N}_{\boldsymbol{\alpha},\boldsymbol{\beta}}:=\max(\mathcal{N}_1,\mathcal{N}_2)$. In particular, if $\boldsymbol{\alpha}$ and $\boldsymbol{\beta}$ take their values in $(0,1]$, then $\mathcal{N}_{\boldsymbol{\alpha},\boldsymbol{\beta}}=1$. Let $\ell\in\mathbb{N}$, $\ell\geq 1$ be such that $p^{\ell}\geq\mathcal{N}_{\boldsymbol{\alpha},\boldsymbol{\beta}}$ and $a\in\{1,\dots,d_{\boldsymbol{\alpha},\boldsymbol{\beta}}\}$ coprime to $d_{\boldsymbol{\alpha},\boldsymbol{\beta}}$ such that $p^{\ell}a\equiv 1\mod d_{\boldsymbol{\alpha},\boldsymbol{\beta}}$.

Let $\alpha$ and $\beta$ be elements of $\boldsymbol{\alpha}$ or  $\boldsymbol{\beta}$.
We set $k_1:=\lfloor 1-\alpha\rfloor$ and $k_2:=\lfloor 1-\beta\rfloor$. By \eqref{Prime 0 1}, we have  $a\underline{\alpha}-\mathfrak{D}_p^{\ell}(\underline{\alpha})\in\mathbb{Z}$. Hence, 
$$
a\alpha=a\underline{\alpha}-ak_1=\mathfrak{D}_p^{\ell}(\underline{\alpha})+a\underline{\alpha}-\mathfrak{D}_p^{\ell}(\underline{\alpha})-ak_1,
$$
with $\mathfrak{D}_p^{\ell}(\underline{\alpha})\in(0,1]$ and $a\underline{\alpha}-\mathfrak{D}_p^{\ell}(\underline{\alpha})-ak_1\in\mathbb{Z}$. Moreover, if  $\mathfrak{D}_p^{\ell}(\underline{\alpha})=\mathfrak{D}_p^{\ell}(\underline{\beta})$, then still by \eqref{Prime 0 1}, we have  $\underline{\alpha}=\underline{\beta}$. By definition of the total order $\prec$, we obtain 
\begin{align}
a\alpha\preceq a\beta&\Longleftrightarrow\mathfrak{D}_p^{\ell}(\underline{\alpha})<\mathfrak{D}_p^{\ell}(\underline{\beta})\quad\textup{or}\quad\left(\mathfrak{D}_p^{\ell}(\underline{\alpha})=\mathfrak{D}_p^{\ell}(\underline{\beta})\quad\textup{and}\quad a\alpha\geq a\beta\right)\notag\\
&\Longleftrightarrow\mathfrak{D}_p^{\ell}(\underline{\alpha})<\mathfrak{D}_p^{\ell}(\underline{\beta})\quad\textup{or}\quad\left(\mathfrak{D}_p^{\ell}(\underline{\alpha})=\mathfrak{D}_p^{\ell}(\underline{\beta})\quad\textup{and}\quad k_2\geq k_1\right)\notag\\
&\Longleftrightarrow\mathfrak{D}_p^{\ell}(\underline{\alpha})-\mathfrak{D}_p^{\ell}(\underline{\beta})\leq\frac{ k_2- k_1}{p^{\ell}}\label{explain 1}\\
&\Longleftrightarrow\mathfrak{D}_p^{\ell}(\underline{\alpha})+\frac{ k_1}{p^{\ell}}\leq\mathfrak{D}_p^{\ell}(\underline{\beta})+\frac{ k_2}{p^{\ell}}\notag\\
&\Longleftrightarrow\mathfrak{D}_p^{\ell}(\alpha)+\frac{ k_1}{p^{\ell}}\leq\mathfrak{D}_p^{\ell}(\beta)+\frac{ k_2}{p^{\ell}}\label{Bilan},
\end{align}
where in \eqref{explain 1} we have used the fact that if $\mathfrak{D}_p^{\ell}(\underline{\alpha})\neq\mathfrak{D}_p^{\ell}(\underline{\beta})$, then 
$|\mathfrak{D}_p^{\ell}(\underline{\alpha})-\mathfrak{D}_p^{\ell}(\underline{\beta})|\geq 1/d_{\boldsymbol{\alpha},\boldsymbol{\beta}}$. The equivalence \eqref{Bilan} finishes the proof of Lemma \ref{valeurs Delta}.
\end{proof}

Proposition \ref{valuation} shows that the functions $\Delta_{\boldsymbol{\alpha},\boldsymbol{\beta}}^{p,\ell}$ allow to compute the $p$-adic valuation of $(\boldsymbol{\alpha})_n/(\boldsymbol{\beta})_n$ when $p$ does not  divide  $d_{\boldsymbol{\alpha},\boldsymbol{\beta}}$. If $\boldsymbol{\alpha}$ and  $\boldsymbol{\beta}$ have the same number of parameters and if these parameters are in $(0,1]$, the constant $C_{\boldsymbol{\alpha},\boldsymbol{\beta}}$ enables us to get a very convenient formula for the computation of the $p$-adic valuation of $C_{\boldsymbol{\alpha},\boldsymbol{\beta}}^n(\boldsymbol{\alpha})_n/(\boldsymbol{\beta})_n$ when $p$ divides  $d_{\boldsymbol{\alpha},\boldsymbol{\beta}}$. This formula, stated in the next proposition, is key to the proof of Theorem \ref{theo Const} and is also used many times in the proof of Theorem \ref{theo expand}.

\begin{propo}\label{propo magie} 
Let $\boldsymbol{\alpha}$ and $\boldsymbol{\beta}$ be two tuples of $r$ parameters in $\mathbb{Q}\cap(0,1]$ such that  $F_{\boldsymbol{\alpha},\boldsymbol{\beta}}$ is $N$-integral. Let $p$ be a prime divisor of  $d_{\boldsymbol{\alpha},\boldsymbol{\beta}}$. We set $d_{\boldsymbol{\alpha},\boldsymbol{\beta}}=p^fD$, $f\geq 1$, with  $D\in\mathbb{N}$, $D$ not divisible by $p$. For all $a\in\{1,\dots,p^f\}$ not divisible by $p$, and all  $\ell\in\mathbb{N}$, $\ell\geq 1$, we choose a prime   $p_{a,\ell}$ such that
\begin{equation}\label{ref cong}
p_{a,\ell}\equiv p^\ell\mod D\quad\textup{and}\quad p_{a,\ell}\equiv a\mod p^f.
\end{equation}
Then,  for all $n\in\mathbb{N}$, we have 
\begin{equation}\label{magie}
v_p\left(C_0^n\frac{(\alpha_1)_n\cdots(\alpha_r)_n}{(\beta_1)_n\cdots(\beta_s)_n}\right)=\frac{1}{\varphi\big(p^f\big)}\underset{\gcd(a,p)=1}{\sum_{a=1}^{p^f}}\sum_{\ell=1}^{\infty}\Delta_{\boldsymbol{\alpha},\boldsymbol{\beta}}^{p_{a,\ell},1}\left(\left\{\frac{n}{p^{\ell}}\right\}\right)+n\left\{\frac{\lambda_p(\boldsymbol{\alpha},\boldsymbol{\beta})}{p-1}\right\},
\end{equation}
where
$$
C_0=\frac{\prod_{i=1}^rd(\alpha_i)}{\prod_{j=1}^rd(\beta_j)}\prod_{p\mid d_{\boldsymbol{\alpha},\boldsymbol{\beta}}}p^{-\left\lfloor\frac{\lambda_p(\boldsymbol{\alpha},\boldsymbol{\beta})}{p-1}\right\rfloor}.
$$
\end{propo}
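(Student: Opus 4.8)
The plan is to compute the $p$-adic valuation on the left of \eqref{magie} directly, and then to match it with the right-hand side by an averaging identity over the auxiliary primes $p_{a,\ell}$ of \eqref{ref cong}. Write $\lambda_p:=\lambda_p(\boldsymbol{\alpha},\boldsymbol{\beta})$ and split $\boldsymbol{\alpha}=\boldsymbol{\alpha}_{\circ}\sqcup\boldsymbol{\alpha}_{\bullet}$, $\boldsymbol{\beta}=\boldsymbol{\beta}_{\circ}\sqcup\boldsymbol{\beta}_{\bullet}$, where the subscript $\circ$ collects the parameters lying in $\mathbb{Z}_p$ and $\bullet$ the ones whose denominator is divisible by $p$; thus $|\boldsymbol{\alpha}_{\circ}|-|\boldsymbol{\beta}_{\circ}|=\lambda_p$ and, since both tuples have $r$ elements, $|\boldsymbol{\alpha}_{\bullet}|-|\boldsymbol{\beta}_{\bullet}|=-\lambda_p$. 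For $\gamma\in(0,1]$ with $p\mid d(\gamma)$ every factor $\gamma+k$ of $(\gamma)_n$ has valuation $-v_p(d(\gamma))$, so $v_p\big((\gamma)_n\big)=n\,v_p(\gamma)$; hence the contribution of $\boldsymbol{\alpha}_{\bullet},\boldsymbol{\beta}_{\bullet}$ to $v_p$ of the Pochhammer ratio is $-n\,v_p\!\big(\prod_i d(\alpha_i)/\prod_j d(\beta_j)\big)$, which cancels the factor $\prod_i d(\alpha_i)/\prod_j d(\beta_j)$ of $C_0$, leaving $v_p(C_0^n)$ plus this contribution equal to $-n\lfloor\lambda_p/(p-1)\rfloor$.

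For the $\circ$-part, I apply Proposition \ref{valuation} to $\boldsymbol{\alpha}_{\circ},\boldsymbol{\beta}_{\circ}$ (which lie in $\mathbb{Z}_p$, with denominators dividing $D$): this gives $\sum_{\ell\geq1}\Delta_{\boldsymbol{\alpha}_{\circ},\boldsymbol{\beta}_{\circ}}^{p,\ell}(\{n/p^{\ell}\})+\lambda_p\,v_p(n!)$. Writing Legendre's formula as $v_p(n!)=\frac{n}{p-1}-\sum_{\ell\geq1}\{n/p^{\ell}\}$ and gathering everything, the left side of \eqref{magie} becomes $\sum_{\ell\geq1}\Delta_{\boldsymbol{\alpha}_{\circ},\boldsymbol{\beta}_{\circ}}^{p,\ell}(\{n/p^{\ell}\})-\lambda_p\sum_{\ell\geq1}\{n/p^{\ell}\}+n\{\lambda_p/(p-1)\}$. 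Since the last term already occurs in \eqref{magie}, the proof is reduced to showing
$$\sum_{\ell\geq1}\Delta_{\boldsymbol{\alpha}_{\circ},\boldsymbol{\beta}_{\circ}}^{p,\ell}\!\Big(\Big\{\tfrac{n}{p^{\ell}}\Big\}\Big)-\lambda_p\sum_{\ell\geq1}\Big\{\tfrac{n}{p^{\ell}}\Big\}=\frac{1}{\varphi(p^f)}\underset{\gcd(a,p)=1}{\sum_{a=1}^{p^f}}\sum_{\ell\geq1}\Delta_{\boldsymbol{\alpha},\boldsymbol{\beta}}^{p_{a,\ell},1}\!\Big(\Big\{\tfrac{n}{p^{\ell}}\Big\}\Big).$$

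To prove this, I split $\Delta_{\boldsymbol{\alpha},\boldsymbol{\beta}}^{p_{a,\ell},1}$ along the same decomposition. For $\gamma$ with $d(\gamma)\mid D$, formula \eqref{Prime 0 1} (together with $p_{a,\ell}\equiv p^{\ell}\bmod D$) gives $\mathfrak{D}_{p_{a,\ell}}(\gamma)=\mathfrak{D}_p^{\ell}(\gamma)$, so $\delta_{p_{a,\ell},1}(\gamma,\cdot)=\delta_{p,\ell}(\gamma,\cdot)$ is independent of $a$; thus the $\circ$-part of the right-hand sum averages to $\sum_{\ell\geq1}\Delta_{\boldsymbol{\alpha}_{\circ},\boldsymbol{\beta}_{\circ}}^{p,\ell}(\{n/p^{\ell}\})$ and cancels the matching term on the left. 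Using $|\boldsymbol{\alpha}_{\bullet}|-|\boldsymbol{\beta}_{\bullet}|=-\lambda_p$, everything reduces to the key averaging statement: for every $\gamma\in\mathbb{Q}\cap(0,1]$ with $p\mid d(\gamma)$,
$$\frac{1}{\varphi(p^f)}\underset{\gcd(a,p)=1}{\sum_{a=1}^{p^f}}\sum_{\ell\geq1}\delta_{p_{a,\ell},1}\!\Big(\gamma,\Big\{\tfrac{n}{p^{\ell}}\Big\}\Big)=\sum_{\ell\geq1}\Big\{\tfrac{n}{p^{\ell}}\Big\},$$
all $\ell$-sums being finite by the Remark following Lemma \ref{Lemma structure}.

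I expect this last statement to be the main obstacle. Writing $d(\gamma)=p^g m$ with $g\geq1$, $p\nmid m$ and $\gamma=r/(p^gm)$, one checks that $\mathfrak{D}_{p_{a,\ell}}(\gamma)=c(a,\ell)/d(\gamma)$, where, as $a$ runs over the units modulo $p^f$, the integer $c(a,\ell)$ runs (each value $\varphi(p^f)/\varphi(p^g)$ times) over the $\varphi(p^g)$ elements of $\{1,\dots,d(\gamma)\}$ that are prime to $p$ and congruent to $p^{-\ell}r$ modulo $m$; since $\gamma\in(0,1)$, $\delta_{p_{a,\ell},1}(\gamma,x)$ is the indicator of $c(a,\ell)\le\lfloor x\,d(\gamma)\rfloor$ for $x\in[0,1)$. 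Hence the inner $a$-average is, for each $\ell$, the number of such $c$ not exceeding $\lfloor\{n/p^{\ell}\}d(\gamma)\rfloor$, divided by $\varphi(p^g)$, and the claim becomes an elementary counting identity that I would prove by expanding $n$ in base $p$: the case $m=1$, where the count is simply $\lfloor\{n/p^{\ell}\}p^{g}\rfloor-\lfloor\{n/p^{\ell}\}p^{g-1}\rfloor$ and the $\ell$-sum telescopes against $\sum_{\ell}\{n/p^{\ell}\}=s_p(n)/(p-1)$, already exhibits the mechanism, while the general case only requires controlling the residue of $\lfloor\{n/p^{\ell}\}d(\gamma)\rfloor$ modulo $pm$.
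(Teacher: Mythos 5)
Your reduction is correct and is exactly the paper's first reduction: splitting $\boldsymbol{\alpha},\boldsymbol{\beta}$ into $p$-integral and non-$p$-integral parts, noting $v_p\big((\gamma)_n\big)=n\,v_p(\gamma)$ when $p\mid d(\gamma)$, applying Proposition~\ref{valuation} to the $p$-integral part, and rewriting $v_p(n!)$ via $\sum_\ell\{n/p^\ell\}=\mathfrak{s}_p(n)/(p-1)$ recovers precisely the paper's identity \eqref{Terme brut}. The observation that $\delta_{p_{a,\ell},1}(\gamma,\cdot)=\delta_{p,\ell}(\gamma,\cdot)$ for $\gamma$ with $d(\gamma)\mid D$ is also the paper's \eqref{ref omega}. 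Thus you have correctly reduced the whole proposition to the single averaging identity
$$\underset{\gcd(a,p)=1}{\sum_{a=1}^{p^f}}\sum_{\ell\geq 1}\delta_{p_{a,\ell},1}\Big(\gamma,\Big\{\tfrac{n}{p^\ell}\Big\}\Big)=\varphi(p^f)\,\frac{\mathfrak{s}_p(n)}{p-1}\quad\textup{for every $\gamma\in(0,1]$ with $p\mid d(\gamma)$},$$
which is exactly the paper's formula \eqref{omega fin} for $\sum_\ell\omega_\ell(\alpha,n)$. Your identification of $c(a,\ell)$ as running, with multiplicity $p^{f-g}$, over the units modulo $p^g$ in the fixed residue class $\varpi_m(p^{-\ell}r)$ modulo $m$ is also correct, and your verification of the case $m=1$ (including the computation $\sum_\ell\big(\lfloor\{n/p^\ell\}p^g\rfloor-\lfloor\{n/p^\ell\}p^{g-1}\rfloor\big)=p^{g-1}\mathfrak{s}_p(n)$) is sound.

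The gap is the general case $m>1$, which is the real content of the proposition and is dispatched in a single sentence (``controlling the residue of $\lfloor\{n/p^\ell\}d(\gamma)\rfloor$ modulo $pm$''). This understates the difficulty: the congruence target $\varpi_m(p^{-\ell}r)$ moves with $\ell$ in a way coupled to the cutoff $\lfloor\{n/p^\ell\}d(\gamma)\rfloor$, and the count of admissible $c$ in $\{1,\dots,\lfloor\{n/p^\ell\}d(\gamma)\rfloor\}$ does not split cleanly into a main term plus an error bounded uniformly in $\ell$; there are $\ell$-dependent boundary terms governed by the base-$p$ digits $a_k$ of $-r/m$ and by whether a carry occurs when comparing the low digits of $n$ against a shift of those $a_k$. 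The paper makes this explicit: $\omega_\ell(\alpha,n)$ is computed digit by digit (equations around \eqref{equiv245}--\eqref{equiv246}), the boundary effect is encoded in an indicator $\Lambda_\ell(\alpha,n)$, and the identity $\sum_\ell\omega_\ell(\alpha,n)=\varphi(p^f)\mathfrak{s}_p(n)/(p-1)$ holds only because (i) the $\Lambda_\ell$ terms telescope, with $\Lambda_e(\alpha,n)=0$ and $\Lambda_{m+1}(\alpha,n)=0$ for large $m$ (the latter using that $-r/m\notin\mathbb{N}$, so that some digit $a_{K'}\neq 0$ eventually), and (ii) the coefficient of each digit $n_k$ works out to exactly $p^{e-1}$ after a nontrivial bookkeeping that distinguishes $k<e-1$, $k=e-1$ and $k\geq e$. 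Your $m=1$ calculation happens to avoid all of this because the congruence condition modulo $m$ is vacuous there; the mechanism does not carry over without the explicit digit and carry analysis. To close the gap you would essentially have to reproduce the paper's computation of $\omega_\ell$ and the coefficient count $h_k=p^{e-1}$.
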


\begin{proof}
We denote by $\widetilde{\boldsymbol{\alpha}}$, respectively  $\widetilde{\boldsymbol{\beta}}$, the (possibly empty) sequence  of elements of   $\boldsymbol{\alpha}$, respectively of $\boldsymbol{\beta}$, whose denominator is not  divisible by $p$. We also set $\lambda_p:=\lambda_p(\boldsymbol{\alpha},\boldsymbol{\beta})$. For all $n\in\mathbb{N}$, we have 
\begin{equation}\label{Terme brut}
v_p\left(C_0^n\frac{(\alpha_1)_n\cdots(\alpha_r)_n}{(\beta_1)_n\cdots(\beta_s)_n}\right)=\sum_{\ell=1}^{\infty}\Delta_{\widetilde{\boldsymbol{\alpha}},\widetilde{\boldsymbol{\beta}}}^{p,\ell}\left(\left\{\frac{n}{p^{\ell}}\right\}\right)+\lambda_pv_p(n!)-n\left\lfloor\frac{\lambda_p}{p-1}\right\rfloor.
\end{equation}
\medskip

Let  $\alpha$ be  an element of $\boldsymbol{\alpha}$ or $\boldsymbol{\beta}$. Let $N$ be the denominator of $\alpha$. If $p$ does not divide $N$, then $N$ divides $D$ and, for all $a\in\{1,\dots,p^f\}$, $\gcd(a,p)=1$, and all $\ell\in\mathbb{N}$, $\ell\geq 1$, we have $p_{a,\ell}\equiv p^{\ell}\mod N$. Hence, $\mathfrak{D}_p^{\ell}(\alpha)=\mathfrak{D}_{p_{a,\ell}}(\alpha)$ because $\alpha\in(0,1]$. 

On the other hand, if $p$ divides $N$, then for all $n,\ell\in\mathbb{N}$, $\ell\geq 1$, we define $\omega_{\ell}(\alpha,n)$ as the number of elements $a\in\{1,\dots,p^f\}$, $\gcd(a,p)=1$, such that $\{n/p^{\ell}\}\geq\mathfrak{D}_{p_{a,\ell}}(\alpha)$.
Thus for all $n,\ell\in\mathbb{N}$, $\ell\geq 1$, we get
\begin{equation}\label{ref omega}
\underset{\gcd(a,p)=1}{\sum_{a=1}^{p^f}}\Delta_{\boldsymbol{\alpha},\boldsymbol{\beta}}^{p_{a,\ell},1}\left(\left\{\frac{n}{p^{\ell}}\right\}\right)=\varphi\big(p^f\big)\Delta_{\widetilde{\boldsymbol{\alpha}},\widetilde{\boldsymbol{\beta}}}^{p,\ell}\left(\left\{\frac{n}{p^{\ell}}\right\}\right)+\underset{\alpha_i\notin\mathbb{Z}_p}{\sum_{i=1}^r}\omega_{\ell}(\alpha_i,n)-\underset{\beta_j\notin\mathbb{Z}_p}{\sum_{j=1}^r}\omega_{\ell}(\beta_j,n).
\end{equation}
\medskip

Let  $\alpha$ be an element of $\boldsymbol{\alpha}$ or $\boldsymbol{\beta}$ such that $p$ divides $d(\alpha)$. We now compute $\sum_{\ell=1}^{\infty}\omega_{\ell}(\alpha,n)$. Let $\alpha=r/(p^eN)$ where $1\leq e\leq f$, $N$ divides $D$, $1\leq r\leq p^eN$ and $r$ is coprime to $p^eN$. Given $\ell\in\mathbb{N}$, $\ell\geq 1$, there exists $r_{a,\ell}\in\{1,\dots,p^eN\}$ coprime to  $p^eN$ such that $\mathfrak{D}_{p_{a,\ell}}(\alpha)=r_{a,\ell}/(p^eN)$ and $p_{a,\ell}r_{a,\ell}-r\equiv 0\mod p^eN$. In particular, by \eqref{ref cong}, we have 
$$
p^{\ell}r_{a,\ell}-r\equiv 0\mod N\quad\textup{and}\quad ar_{a,\ell}-r\equiv 0\mod p^e,
$$
\textit{i. e.} 
$$
r_{a,\ell}\equiv s_N\left(\frac{\pi_N(r)}{\pi_N(p^{\ell+e})}\right)p^e+s_{p^e}\left(\frac{\pi_{p^e}(r)}{\pi_{p^e}(aN)}\right)N\mod p^eN.
$$
In the rest of the proof, if $a/b$ is a rational number written in irreducible form and the integer $c\ge 1$ is coprime to $b$, we set 
$$
\varpi_c\left(\frac{a}{b}\right):=s_c\left(\frac{\pi_c(a)}{\pi_c(b)}\right).
$$
Then, 
\begin{equation}\label{part 0}
\frac{r_{a,\ell}}{p^eN}\equiv\frac{\varpi_N(r/p^{\ell+e})}{N}+\frac{\varpi_{p^e}\big(r/(aN)\big)}{p^e}\mod 1.
\end{equation}

For all $\ell\in\mathbb{N}$, we have $p^{\ell+1}\varpi_N(r/p^{\ell+1})-p^{\ell}\varpi_N(r/p^{\ell})\equiv 0\mod N$, hence, since $p$ and $N$ are coprime, we obtain $p\varpi_N(r/p^{\ell+1})-\varpi_N(r/p^{\ell})\equiv 0\mod N$, \textit{i. e.} 
$$
\mathfrak{D}_p\left(\frac{\varpi_N(r/p^{\ell})}{N}\right)=\frac{\varpi_N(r/p^{\ell+1})}{N},
$$
yielding
$$
\frac{\varpi_N(r/p^{\ell+1})}{N}=\mathfrak{D}_p^{\ell+1}\left(\frac{r}{N}\right).
$$
Let $-r/N=\sum_{k=0}^{\infty}a_kp^k$ be the $p$-adic expansion of $-r/N$. For all  $\ell\in\mathbb{N}$, we have 
$$
p^{\ell+1}\mathfrak{D}_p^{\ell+1}\left(\frac{r}{N}\right)-\frac{r}{N}=\sum_{k=0}^{\ell}a_kp^k
$$
and thus 
\begin{equation}\label{part 1}
\frac{\varpi_N(r/p^{\ell+e})}{N}=\frac{r}{p^{\ell+e}N}+\frac{\sum_{k=0}^{\ell+e-1}a_kp^k}{p^{\ell+e}}=\frac{r}{p^{\ell+e}N}+\frac{\sum_{k=0}^{\ell-1}a_kp^k}{p^{\ell+e}}+\frac{\sum_{k=0}^{e-1}a_{\ell+k}p^{k}}{p^e}.
\end{equation}
Moreover,  $p\varpi_N(r/p)\equiv r\mod N$ but $p\varpi_N(r/p)\neq r$ because $r$ is  not divisible by $p$. Hence,  $p\varpi_N(r/p)-r\geq N$ and $a_0\geq 1$.

The elements of the multiset ({\em i.e.}, a set where repetition of elements is permitted) 
$$
\left\{\left\{\varpi_{p^e}\left(\frac{r}{aN}\right)\,:\,1\leq a\leq p^f,\,\gcd(a,p)=1\right\}\right\}
$$
are those $b\in\{1,\dots,p^e\}$ not divisible by $p$, where each $b$ is repeated exactly  $p^{f-e}$ times.  We fix $\ell\in\mathbb{N}$, $\ell\geq 1$. We have 
$$
0<\frac{r}{p^{\ell+e}N}+\frac{\sum_{k=0}^{\ell-1}a_kp^k}{p^{\ell+e}}\leq \frac{1}{p^{\ell+e}}+\frac{p^{\ell}-1}{p^{\ell+e}}\leq\frac{1}{p^e}\quad\textup{and}\quad\frac{r_{a,\ell}}{p^eN}\in(0,1].
$$
By \eqref{part 0} et \eqref{part 1}, the multiset
$$
\Phi_{\ell}(\alpha):=\left\{\left\{\frac{r_{a,\ell}}{p^eN}\,:\,1\leq a\leq p^f,\,\gcd(a,p)=1\right\}\right\}
$$
has the elements
$$
\eta_{\ell,b}:=\frac{r}{p^{\ell+e}N}+\frac{\sum_{k=0}^{\ell-1}a_kp^k}{p^{\ell+e}}+\frac{b}{p^e},
$$
where $b=\sum_{k=0}^{e-1}b_kp^k$, $b_k\in\{0,\dots,p-1\}$, $b_0\neq a_{\ell}$ and  each $\eta_{\ell,b}$ is repeated exactly $p^{f-e}$ times. In the sequel, we fix  $n=\sum_{k=0}^{\infty}n_kp^k$ with $n_k\in\{0,\dots,p-1\}$ and, for all $k\geq K$, $n_k=0$, where $K\in\mathbb{N}$. For all $\ell\in\mathbb{N}$, we let  $\Lambda_{\ell}(\alpha,n)=1$ if
$$
\sum_{k=0}^{\ell-e-1}n_kp^k>\sum_{k=e}^{\ell-1}a_kp^{k-e},
$$
and $\Lambda_{\ell}(\alpha,n)=0$ otherwise. Let us compute the number  $\omega_{\ell}(\alpha,n)$ of elements in $\Phi_{\ell}(\alpha)$ which are $\le \{n/p^{\ell}\}$.
\medskip

If $\ell\leq e-1$, then
\begin{align*}
\left\{\frac{n}{p^{\ell}}\right\}\geq\eta_{\ell,b}&\Longleftrightarrow\frac{\sum_{k=0}^{\ell-1}n_kp^k}{p^{\ell}}\geq\frac{r}{p^{\ell+e}N}+\frac{\sum_{k=0}^{\ell-1}a_kp^k}{p^{\ell+e}}+\frac{\sum_{k=0}^{e-1}b_kp^k}{p^e}\\
&\Longleftrightarrow\sum_{k=0}^{\ell-1}n_kp^{k}\geq\frac{r}{p^{e}N}+\frac{\sum_{k=0}^{\ell-1}a_kp^k}{p^{e}}+\frac{\sum_{k=0}^{e-1}b_kp^{k+\ell}}{p^e}\\
&\Longleftrightarrow\sum_{k=0}^{\ell-1}n_kp^k>\sum_{k=0}^{\ell-1}b_{e-\ell+k}p^{k},
\end{align*}
because
$$
0<\frac{r}{p^eN}+\frac{\sum_{k=0}^{\ell-1}a_kp^k}{p^e}+\frac{\sum_{k=0}^{e-\ell-1}b_kp^{k+\ell}}{p^e}\leq\frac{1}{p^e}+\frac{p^{\ell}-1}{p^e}+\frac{p^{\ell}(p^{e-\ell}-1)}{p^e}\leq 1.
$$
Thus  
$$
\omega_{\ell}(\alpha,n)=\left((p-1)p^{e-\ell-1}\sum_{k=0}^{\ell-1}n_kp^k\right)p^{f-e}.
$$

If $\ell\geq e$, then 
\begin{align}
\left\{\frac{n}{p^{\ell}}\right\}\geq\eta_{\ell,b}&\Longleftrightarrow\frac{\sum_{k=0}^{\ell-1}n_kp^k}{p^{\ell}}\geq\frac{r}{p^{\ell+e}N}+\frac{\sum_{k=0}^{\ell-1}a_kp^k}{p^{\ell+e}}+\frac{\sum_{k=0}^{e-1}b_kp^k}{p^e}\notag\\
&\Longleftrightarrow\sum_{k=0}^{\ell-1}n_kp^{k}\geq\frac{r}{p^{e}N}+\frac{\sum_{k=0}^{\ell-1}a_kp^k}{p^{e}}+\sum_{k=0}^{e-1}b_kp^{k+\ell-e}\notag\\
&\Longleftrightarrow\sum_{k=0}^{\ell-1}n_kp^k>\sum_{k=e}^{\ell-1}a_kp^{k-e}+\sum_{k=0}^{e-1}b_kp^{k+\ell-e}\label{equiv245},
\end{align}
because
$$
0<\frac{r}{p^eN}+\frac{\sum_{k=0}^{e-1}a_kp^k}{p^e}\leq \frac{1}{p^e}+\frac{p^e-1}{p^e}\leq 1.
$$

If we have
$$
\sum_{k=\ell-e+1}^{\ell-1}n_kp^k>\sum_{k=1}^{e-1}b_kp^{k+\ell-e},
$$
then \eqref{equiv245} holds and we obtain 
$$
(p-1)\frac{\sum_{k=\ell-e+1}^{\ell-1}n_kp^k}{p^{\ell-e+1}}
$$ 
numbers $b$ satisfying the above inequality. Let us now assume that  
$$
\sum_{k=\ell-e+1}^{\ell-1}n_{k}p^{k}=\sum_{k=1}^{e-1}b_kp^{k+\ell-e}.
$$
Then \eqref{equiv245} is the same thing as 
\begin{equation}\label{equiv246}
\sum_{k=0}^{\ell-e}n_kp^k>\sum_{k=e}^{\ell-1}a_kp^{k-e}+b_0p^{\ell-e}.
\end{equation}
\medskip

If $n_{\ell-e}\geq a_{\ell}+1$, then there are $n_{\ell-e}-1$ elements $b_0\in\{0,\dots,p-1\}\setminus\{a_{\ell}\}$ such that $n_{\ell-e}>b_0$, and, for $b_0=n_{\ell-e}$, we have \eqref{equiv246} if and only if $\Lambda_{\ell}(\alpha,n)=1$. Moreover, when $n_{\ell-e}\geq a_{\ell}+1$, we have $\Lambda_{\ell+1}(\alpha,n)=1$. Hence, if $n_{\ell-e}\geq a_{\ell}+1$, we have $n_{\ell-e}+\Lambda_{\ell}(\alpha,n)-\Lambda_{\ell+1}(\alpha,n)$ numbers $b_0$ such that \eqref{equiv246} holds.
\medskip

If $n_{\ell-e}=a_{\ell}$, then there are $n_{\ell-e}$ numbers $b_0$ such that  \eqref{equiv246} holds. Furthermore, we have $\Lambda_{\ell}(\alpha,n)=\Lambda_{\ell+1}(\alpha,n)$ and in this case we also have  $n_{\ell-e}+\Lambda_{\ell}(\alpha,n)-\Lambda_{\ell+1}(\alpha,n)$ numbers $b_0$ such that \eqref{equiv246} holds.
\medskip

If $n_{\ell-e}\leq a_{\ell}-1$, then there are $n_{\ell-e}$ numbers $b_0$ such that  $b_0<n_{\ell-e}$, and for $b_0=n_{\ell-e}$, we have \eqref{equiv246} if and only if $\Lambda_{\ell}(\alpha,n)=1$. Moreover, if $n_{\ell-e}\leq a_{\ell}-1$, then  $\Lambda_{\ell+1}(\alpha,n)=0$ and again there are $n_{\ell-e}+\Lambda_{\ell}(\alpha,n)-\Lambda_{\ell+1}(\alpha,n)$ numbers $b_0$ satisfying \eqref{equiv246}.
\medskip

It follows that if $\ell\geq e$, then,
$$
\omega_{\ell}(\alpha,n)=\left(n_{\ell-e}+\Lambda_{\ell}(\alpha,n)-\Lambda_{\ell+1}(\alpha,n)+(p-1)\sum_{k=\ell-e+1}^{\ell-1}n_{k}p^{k-\ell+e-1}\right)p^{f-e}.
$$
Hence, for all  $m\in\mathbb{N}$, $m\geq K+e$, we get 
\begin{multline}\label{formu fin}
p^{e-f}\sum_{\ell=1}^{m}\omega_{\ell}(\alpha,n)=(p-1)\sum_{\ell=1}^{e-1}p^{e-\ell-1}\sum_{k=0}^{\ell-1}n_kp^k\\
+\sum_{\ell=e}^m\big(n_{\ell-e}+\Lambda_{\ell}(\alpha,n)-\Lambda_{\ell+1}(\alpha,n)\big)
+(p-1)\sum_{\ell=e}^m\sum_{k=\ell-e+1}^{\ell-1}n_{k}p^{k-\ell+e-1}.
\end{multline}

Let us  compute the coefficients $h_k$ of $n_k$, $0\leq k\leq K$, on the right hand side of \eqref{formu fin}, so that 
\begin{equation}\label{formu fin 2}
p^{e-f}\sum_{\ell=1}^m\omega_{\ell}(\alpha,n)=\Lambda_e(\alpha,n)-\Lambda_{m+1}(\alpha,n)+\sum_{k=0}^Kh_kn_k.
\end{equation}
If $e=1$, then for all $k\in\{0,\dots,K\}$, we have $h_k=1=p^{e-1}$. Let us assume that  $e\geq 2$. We have 
$$
h_0=(p-1)\sum_{\ell=1}^{e-1}p^{e-\ell-1}+1=p^{e-1}.
$$
If $1\leq k\leq e-2$, then 
$$
h_k=(p-1)\sum_{\ell=k+1}^{e-1}p^{k-\ell+e-1}+1+(p-1)\sum_{\ell=e}^{k+e-1}p^{k-\ell+e-1}=p^{e-1}-p^k+1+p^k-1=p^{e-1}.
$$
Finally, if $k\geq e-1$, then 
$$
h_k=1+(p-1)\sum_{\ell=k+1}^{k+e-1}p^{k-\ell+e-1}=1+p^{e-1}-1=p^{e-1}.
$$
Hence, we obtain 
$$
p^{e-f}\sum_{\ell=1}^m\omega_{\ell}(\alpha,n)=\Lambda_e(\alpha,n)-\Lambda_{m+1}(\alpha,n)+p^{e-1}\mathfrak{s}_p(n),
$$
where $\mathfrak{s}_p(n):=\sum_{k=0}^{\infty}n_k=\sum_{k=0}^Kn_k$.
\medskip

Moreover, we have  $\Lambda_e(\alpha,n)=0$ and there exists $K'\geq K+e$ such that, for all $m\geq K'$, we have $\Lambda_{m+1}(\alpha,n)=0$. Indeed,  $\sum_{k=0}^{\infty}a_kp^k$ is the $p$-adic expansion of  $-r/N\notin\mathbb{N}$. Thus, there exists $K'\geq K+e$ such that $a_{K'}\neq 0$ and hence, for all $m\geq K'$, we have $\Lambda_{m+1}(\alpha,n)=0$. Consequently, for all large enough $\ell$, we have  $\omega_{\ell}(\alpha,n)=0$ and 
\begin{equation}\label{omega fin}
\sum_{\ell=1}^{\infty}\omega_{\ell}(\alpha,n)=\varphi\big(p^f\big)\frac{\mathfrak{s}_p(n)}{p-1}.
\end{equation}
\medskip

By \eqref{ref omega} and  \eqref{omega fin}, we obtain,  for all $n\in\mathbb{N}$,
$$
\sum_{\ell=1}^{\infty}\underset{\gcd(a,p)=1}{\sum_{a=1}^{p^f}}\Delta_{\boldsymbol{\alpha},\boldsymbol{\beta}}^{p_{a,\ell},1}\left(\left\{\frac{n}{p^{\ell}}\right\}\right)=\varphi\big(p^f\big)\sum_{\ell=1}^\infty\Delta_{\widetilde{\boldsymbol{\alpha}},\widetilde{\boldsymbol{\beta}}}^{p,\ell}\left(\left\{\frac{n}{p^{\ell}}\right\}\right)+(r-s-\lambda_p)\varphi\big(p^f\big)\frac{\mathfrak{s}_p(n)}{p-1}.
$$
Together with \eqref{Terme brut}, this implies that 
\begin{multline}\label{ola 2}
v_p\left(C_0^n\frac{(\alpha_1)_n\cdots(\alpha_r)_n}{(\beta_1)_n\cdots(\beta_s)_n}\right)=\frac{1}{\varphi\big(p^f\big)}\sum_{\ell=1}^{\infty}\underset{\gcd(a,p)=1}{\sum_{a=1}^{p^f}}\Delta_{\boldsymbol{\alpha},\boldsymbol{\beta}}^{p_{a,\ell},1}\left(\left\{\frac{n}{p^{\ell}}\right\}\right)\\
+\lambda_p\left(\frac{\mathfrak{s}_p(n)}{p-1}+v_p(n!)\right)-n\left\lfloor\frac{\lambda_p}{p-1}\right\rfloor.
\end{multline}
But  for all $n\in\mathbb{N}$, we have  
$$
v_p(n!)=\frac{n-\mathfrak{s}_p(n)}{p-1},
$$
so that for all $n\in\mathbb{N}$,  
\begin{equation}\label{last}
\lambda_p\left(\frac{\mathfrak{s}_p(n)}{p-1}+v_p(n!)\right)-n\left\lfloor\frac{\lambda_p}{p-1}\right\rfloor=n\left\{\frac{\lambda_p}{p-1}\right\}.
\end{equation}
Hence, using \eqref{last} in \eqref{ola 2}, we get equation \eqref{magie}, which completes the proof of Proposition \ref{propo magie}.
\end{proof}

\section{Proof of Theorem \ref{theo Const}}\label{section demo 1}

Let $\boldsymbol{\alpha}$ and $\boldsymbol{\beta}$ be two sequences taking their values in $\mathbb{Q}\setminus\mathbb{Z}_{\leq 0}$. Let us assume that  $F_{\boldsymbol{\alpha},\boldsymbol{\beta}}$ is $N$-integral. We first prove \eqref{facile}.
\medskip

We fix a prime $p$. We denote by $\widetilde{\boldsymbol{\alpha}}$, respectively $\widetilde{\boldsymbol{\beta}}$, the  (possibly empty) sequence $(\widetilde{\alpha}_1,\dots,\widetilde{\alpha}_u)$, respectively $(\widetilde{\beta}_1,\dots,\widetilde{\beta}_v)$, made from the  elements of $\boldsymbol{\alpha}$, respectively of $\boldsymbol{\beta}$, and whose  denominator is not divisible  by $p$. In particular, we have $\lambda_p(\boldsymbol{\alpha},\boldsymbol{\beta})=u-v$. By Proposition \ref{valuation}, for all $n\in\mathbb{N}$, we thus have
\begin{align}
v_p\left(\frac{(\alpha_1)_n\cdots(\alpha_r)_n}{(\beta_1)_n\cdots(\beta_s)_n}\right)&=-nv_p\left(\frac{\prod_{i=1}^rd(\alpha_i)}{\prod_{j=1}^sd(\beta_j)}\right)+v_p\left(\frac{(\widetilde{\alpha}_1)_n\cdots(\widetilde{\alpha}_u)_n}{(\widetilde{\beta_1})_n\cdots(\widetilde{\beta_v})_n}\right)\notag\\
&=-nv_p\left(\frac{\prod_{i=1}^rd(\alpha_i)}{\prod_{j=1}^sd(\beta_j)}\right)+\sum_{\ell=1}^{\infty}\Delta_{\widetilde{\boldsymbol{\alpha}},\widetilde{\boldsymbol{\beta}}}^{p,\ell}\left(\left\{\frac{n}{p^{\ell}}\right\}\right)+\lambda_p(\boldsymbol{\alpha},\boldsymbol{\beta})v_p(n!).\label{formu 1}
\end{align}

By Lemma \ref{Lemma structure}, there exists a constant $M>0$ such that, for any prime $p$ that does not divide $d_{\widetilde{\boldsymbol{\alpha}},\widetilde{\boldsymbol{\beta}}}$, for any $\ell\in\mathbb{N}$, $\ell\geq 1$, and any $x\in[0,1/M)$, we have $\Delta_{\widetilde{\boldsymbol{\alpha}},\widetilde{\boldsymbol{\beta}}}^{p,\ell}(x)=0$. Hence, for all $n\in\mathbb{N}$, we have
$$ -v\big\lfloor\log_p(nM)\big\rfloor\leq\sum_{\ell=1}^{\infty}
\Delta_{\widetilde{\boldsymbol{\alpha}},\widetilde{\boldsymbol{\beta}}}^{p,\ell}\left(\left\{\frac{n}{p^{\ell}}\right\}\right)\leq u\big\lfloor\log_p(nM)\big\rfloor,
$$
so that
\begin{equation}\label{inter m1}
\frac{1}{n}\sum_{\ell=1}^{\infty}\Delta_{\widetilde{\boldsymbol{\alpha}},\widetilde{\boldsymbol{\beta}}}^{p,\ell}\left(\left\{\frac{n}{p^{\ell}}\right\}\right)\underset{n\rightarrow+\infty}{\longrightarrow}0.
\end{equation}
Moreover, for all $n\in\mathbb{N}$, we have $v_p(n!)=\sum_{\ell=1}^{\infty}\lfloor n/p^{\ell}\rfloor$, hence 
$$
\sum_{\ell=1}^{\lfloor\log_p(n)\rfloor}\frac{n}{p^{\ell}}-\big\lfloor\log_p(n)\big\rfloor\leq v_p(n!)\leq \sum_{\ell=1}^{\lfloor\log_p(n)\rfloor}\frac{n}{p^{\ell}}
$$
and 
\begin{equation}\label{inter m2}
\frac{1}{n}v_p(n!)\underset{n\rightarrow+\infty}{\longrightarrow}\frac{1}{p-1}.
\end{equation}
We now use \eqref{inter m1} and \eqref{inter m2} in \eqref{formu 1}, and we obtain
$$
\frac{1}{n}v_p\left(\frac{(\alpha_1)_n\cdots(\alpha_r)_n}{(\beta_1)_n\cdots(\beta_s)_n}\right)\underset{n\rightarrow+\infty}{\longrightarrow}-v_p\left(\frac{\prod_{i=1}^rd(\alpha_i)}{\prod_{j=1}^sd(\beta_j)}\right)+\frac{\lambda_p(\boldsymbol{\alpha},\boldsymbol{\beta})}{p-1}.
$$
But for all $n\in\mathbb{N}$,
$$
C_{\boldsymbol{\alpha},\boldsymbol{\beta}}^n\frac{(\alpha_1)_n\cdots(\alpha_r)_n}{(\beta_1)_n\cdots(\beta_s)_n}\in\mathbb{Z}_p.
$$
It follows that for all $n\in\mathbb{N}$, $n\geq 1$, 
$$
v_p\big(C_{\boldsymbol{\alpha},\boldsymbol{\beta}}\big)\geq-\frac{1}{n}v_p\left(\frac{(\alpha_1)_n\cdots(\alpha_r)_n}{(\beta_1)_n\cdots(\beta_s)_n}\right)\underset{n\rightarrow+\infty}{\longrightarrow}v_p\left(\frac{\prod_{i=1}^rd(\alpha_i)}{\prod_{j=1}^sd(\beta_j)}\right)-\frac{\lambda_p(\boldsymbol{\alpha},\boldsymbol{\beta})}{p-1}
$$
and thus
$$
v_p\big(C_{\boldsymbol{\alpha},\boldsymbol{\beta}}\big)\geq v_p\left(\frac{\prod_{i=1}^rd(\alpha_i)}{\prod_{j=1}^sd(\beta_j)}\right)-\left\lfloor\frac{\lambda_p(\boldsymbol{\alpha},\boldsymbol{\beta})}{p-1}\right\rfloor,
$$
because $v_p\big(C_{\boldsymbol{\alpha},\boldsymbol{\beta}}\big)\in\mathbb{Z}$. Furthermore, if $p$ does not divide $d_{\boldsymbol{\alpha},\boldsymbol{\beta}}$ and if  $p\geq r-s+2$, then $\lambda_p(\boldsymbol{\alpha},\boldsymbol{\beta})=r-s$ and $\lfloor\lambda_p(\boldsymbol{\alpha},\boldsymbol{\beta})/(p-1)\rfloor=0$. This proves the existence of $C\in\mathbb{N}^{\ast}$ such that
\begin{equation}
C_{\boldsymbol{\alpha},\boldsymbol{\beta}}=C\frac{\prod_{i=1}^rd(\alpha_i)}{\prod_{j=1}^sd(\beta_j)}\underset{p\in\mathcal{P}_{\boldsymbol{\alpha},\boldsymbol{\beta}}}{\prod}p^{-\left\lfloor\frac{\lambda_p(\boldsymbol{\alpha},\boldsymbol{\beta})}{p-1}\right\rfloor}.
\end{equation}
We now define
$$
C_0:=\frac{\prod_{i=1}^rd(\alpha_i)}{\prod_{j=1}^sd(\beta_j)}\underset{p\mid d_{\boldsymbol{\alpha},\boldsymbol{\beta}}}{\prod}p^{-\left\lfloor\frac{\lambda_p(\boldsymbol{\alpha},\boldsymbol{\beta})}{p-1}\right\rfloor}.
$$
In  the sequel, we assume that both sequences  $\boldsymbol{\alpha}$ and  $\boldsymbol{\beta}$ take their values in $(0,1]$ and that $r=s$. We show that in this case $C=1$ and for this it is enough to prove that $F_{\boldsymbol{\alpha},\boldsymbol{\beta}}(C_0z)\in\mathbb{Z}[[z]]$.
\medskip 

Consider a prime $p$ that does not divide $d_{\boldsymbol{\alpha},\boldsymbol{\beta}}$, so that $\lambda_p(\boldsymbol{\alpha},\boldsymbol{\beta})=r-s=0$. Together with \eqref{formu 1}, this yields
$$
v_p\left(C_0^n\frac{(\alpha_1)_n\cdots(\alpha_r)_n}{(\beta_1)_n\cdots(\beta_s)_n}\right)=\sum_{\ell=1}^{\infty}\Delta_{\boldsymbol{\alpha},\boldsymbol{\beta}}^{p,\ell}\left(\left\{\frac{n}{p^{\ell}}\right\}\right).
$$
By Lemma \ref{valeurs Delta} and Theorem A, for all 
$\ell\in\mathbb{N}$, $\ell\geq 1$, we have
$$
\Delta_{\boldsymbol{\alpha},\boldsymbol{\beta}}^{p,\ell}([0,1])=\xi_{\boldsymbol{\alpha},\boldsymbol{\beta}}(a,\mathbb{R})\subset\mathbb{N},
$$
where $a\in\{1,\dots,d_{\boldsymbol{\alpha},\boldsymbol{\beta}}\}$ satisfies $p^{\ell}a\equiv 1\mod d_{\boldsymbol{\alpha},\boldsymbol{\beta}}$. Hence, we obtain  that  $F_{\boldsymbol{\alpha},\boldsymbol{\beta}}(C_0z)\in\mathbb{Z}_p[[z]]$. It remains  to show that for any prime $p$ that divides $d_{\boldsymbol{\alpha},\boldsymbol{\beta}}$, we also have that $F_{\boldsymbol{\alpha},\boldsymbol{\beta}}(C_0z)\in\mathbb{Z}_p[[z]]$.
\medskip 

Consider a prime $p$ that divides $d_{\boldsymbol{\alpha},\boldsymbol{\beta}}$. With the notations of Proposition~\ref{propo magie}, for all $n\in\mathbb{N}$, we have 
$$
v_p\left(C_0^n\frac{(\alpha_1)_n\cdots(\alpha_r)_n}{(\beta_1)_n\cdots(\beta_s)_n}\right)=\frac{1}{\varphi\big(p^f\big)}\underset{\gcd(a,p)=1}{\sum_{a=1}^{p^f}}\sum_{\ell=1}^{\infty}\Delta_{\boldsymbol{\alpha},\boldsymbol{\beta}}^{p_{a,\ell},1}\left(\left\{\frac{n}{p^{\ell}}\right\}\right)+n\left\{\frac{\lambda_p(\boldsymbol{\alpha},\boldsymbol{\beta})}{p-1}\right\}.
$$ 
Since none of the primes $p_{a,\ell}$ divides $d_{\boldsymbol{\alpha},\boldsymbol{\beta}}$, we have $\Delta_{\boldsymbol{\alpha},\boldsymbol{\beta}}^{p_{a,\ell},1}([0,1])\subset\mathbb{N}$ so that  $F_{\boldsymbol{\alpha},\boldsymbol{\beta}}(C_0z)\in\mathbb{Z}_p[[z]]$. This completes the proof of Theorem~\ref{theo Const}.
\hfill $\square$

\section{Formal congruences}\label{section cong form}

To prove Theorem \ref{theo expand}, we need a ``formal congruences'' result, stated in  Theorem \ref{congruences formelles} below that we prove in this section.

We fix a prime $p$ and denote by $\Omega$ the completion of the algebraic closure of $\mathbb{Q}_p$, and by $\mathcal{O}$ the ring of integers of $\Omega$.
\medskip

To state the main result of this section, we introduce some notations. If $\mathcal{N}:=(\mathcal{N}_r)_{r\geq 0}$ is a sequence of subsets of $\bigcup_{t\geq 1}\big(\{0,\dots,p^t-1\}\times\{t\}\big)$, then for all $r\in\mathbb{Z}$, $r\geq -1$ and all  $s\in\mathbb{N}$, we denote by $\Psi_{\mathcal{N}}(r,s)$ the set of the $u\in\{0,\dots,p^s-1\}$ such that, for all $(n,t)\in\mathcal{N}_{r+s-t+1}$, with $t\leq s$, and all  $j\in\{0,\dots,p^{s-t}-1\}$, we have  $u\neq j+p^{s-t}n$. In particular, for all  $r\geq -1$, we have $\Psi_{\mathcal{N}}(r,0)=\{0\}$.
\medskip

For completeness, let us recall some basic notions. Let $\mathcal{A}$ be a commutative algebra (with a unit) over a commutative ring (with a unit) $\mathcal{Z}$. An element $a\in\mathcal{A}$ is \textit{regular} if, for all  $b\in\mathcal{A}$, we have $(ab=0\Rightarrow b=0)$. We define $\mathcal{S}$ as the  set of the regular elements of $\mathcal{A}$. Hence $\mathcal{S}$ is a multiplicative set  of $\mathcal{A}$ and the ring  $\mathcal{S}^{-1}\mathcal{A}$ with the map 
$$
\begin{array}{ccc}
\mathcal{Z}\times\mathcal{S}^{-1}\mathcal{A} & \longrightarrow & \mathcal{S}^{-1}\mathcal{A}\\
(\lambda\,,\,a/s)                       & \mapsto         & (\lambda\cdot a)/s
\end{array}
$$
is a $\mathcal{Z}$-algebra. Moreover, the morphism of algebra  $a\in\mathcal{A}\mapsto a/1\in\mathcal{S}^{-1}\mathcal{A}$ is injective and enables us to identify $\mathcal{A}$ with a sub-algebra of $\mathcal{S}^{-1}\mathcal{A}$. This is what we do in the statement of Theorem~\ref{congruences formelles}.

\begin{theo}\label{congruences formelles}
Let $\mathcal{Z}$ denote a sub-ring of $\mathcal{O}$ and $\mathcal{A}$ a  $\mathcal{Z}$-algebra (commutative with a unit) such that 
$2$ is a regular element of $\mathcal{A}$. We consider a sequence of maps $(\mathbf{A}_r)_{r\geq 0}$ from $\mathbb{N}$ into $\mathcal{S}$, and a sequence  of maps $(\mathbf{g}_r)_{r\geq 0}$ from $\mathbb{N}$ into $\mathcal{Z}\setminus\{0\}$. We assume there exists a sequence $\mathcal{N}:=(\mathcal{N}_r)_{r\geq 0}$ of subsets of $\bigcup_{t\geq 1}\big(\{0,\dots,p^t-1\}\times\{t\}\big)$ such that, for all $r\geq 0$, we have the following properties:  
\begin{itemize} 
\item[$(i)$] $\mathbf{A}_r(0)$ is invertible in $\mathcal{A}$;
\item[$(ii)$] for all $m\in\mathbb{N}$, we have  $\mathbf{A}_r(m)\in\mathbf{g}_r(m)\mathcal{A}$;
\item[$(iii)$] for all $s,m\in\mathbb{N}$, we have:
\begin{itemize}
\item[$(a)$] for all $u\in\Psi_{\mathcal{N}}(r,s)$ and all $v\in\{0,\dots,p-1\}$, we have 
$$
\frac{\mathbf{A}_r(v+up+mp^{s+1})}{\mathbf{A}_r(v+up)}-\frac{\mathbf{A}_{r+1}(u+mp^s)}{\mathbf{A}_{r+1}(u)}\in p^{s+1}\frac{\mathbf{g}_{r+s+1}(m)}{\mathbf{A}_r(v+up)}\mathcal{A} ;
$$
\begin{itemize}
\item[$(a_1$)] moreover, if $v+up\in\Psi_{\mathcal{N}}(r-1,s+1)$, then 
$$
\mathbf{g}_r(v+up)\left(\frac{\mathbf{A}_r(v+up+mp^{s+1})}{\mathbf{A}_r(v+up)}-\frac{\mathbf{A}_{r+1}(u+mp^s)}{\mathbf{A}_{r+1}(u)}\right)\in p^{s+1}\mathbf{g}_{r+s+1}(m)\mathcal{A};
$$
\item[$(a_2)$] however, if $v+up\notin\Psi_{\mathcal{N}}(r-1,s+1)$, then
$$
\mathbf{g}_r(v+up)\frac{\mathbf{A}_{r+1}(u+mp^s)}{\mathbf{A}_{r+1}(u)}\in p^{s+1}\mathbf{g}_{r+s+1}(m)\mathcal{A};
$$
\end{itemize}
\item[$(b)$] for all $(n,t)\in\mathcal{N}_r$, we have $\mathbf{g}_r(n+mp^t)\in p^t\mathbf{g}_{r+t}(m)\mathcal{Z}$.
\end{itemize}
\end{itemize}
Then, for all  $a\in\{0,\dots,p-1\}$ and all $m,s,r,K\in\mathbb{N}$, we have  
\begin{multline}\label{TheBut}
S_r(a,K,s,p,m):=\\
\sum_{j=mp^s}^{(m+1)p^s-1}\Big(\mathbf{A}_r\big(a+(K-j)p\big)\mathbf{A}_{r+1}(j)-\mathbf{A}_{r+1}(K-j)\mathbf{A}_r(a+jp)\Big)\in p^{s+1}\mathbf{g}_{r+s+1}(m)\mathcal{A},
\end{multline} 
where $\mathbf{A}_r(n)=0$ if $n<0$.
\end{theo}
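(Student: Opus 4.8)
The plan is to prove \eqref{TheBut} by induction on $s$, with a secondary induction on $m$ (or on $K$) to handle the combinatorial reshuffling of indices. Throughout, write $S_r(a,K,s,p,m)$ for the displayed sum and recall that $\mathbf A_r(n)=0$ for $n<0$, so all sums are effectively finite.

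\textbf{Base case $s=0$.} Here $S_r(a,K,0,p,m) = \mathbf A_r\big(a+(K-m)p\big)\mathbf A_{r+1}(m) - \mathbf A_{r+1}(K-m)\mathbf A_r(a+mp)$, and I must show this lies in $p\,\mathbf g_{r+1}(m)\mathcal A$. Write $m = v + up$ with $v\in\{0,\dots,p-1\}$. The idea is to factor each product through the ratios appearing in hypothesis $(iii)(a)$: multiply and divide by $\mathbf A_r(a)\mathbf A_{r+1}(0)$ (legitimate after passing to $\mathcal S^{-1}\mathcal A$, since $\mathbf A_r(0),\mathbf A_{r+1}(0)$ are invertible and all $\mathbf A$-values are regular), and express $\mathbf A_r\big(a+(K-m)p\big)/\mathbf A_r(a)$, etc., so that the difference of products becomes a combination of the quantities $\frac{\mathbf A_r(v+up+\cdots p)}{\mathbf A_r(v+up)} - \frac{\mathbf A_{r+1}(u+\cdots)}{\mathbf A_{r+1}(u)}$ controlled by $(iii)(a)$ (and its refinements $(a_1),(a_2)$ according to whether the relevant index lies in $\Psi_{\mathcal N}(r-1,s+1)$). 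The factor $\mathbf g_{r+1}(m)$ then emerges from $(ii)$ applied to the ``residual'' $\mathbf A$-values. This is essentially a telescoping/rewriting argument; the regularity of $2$ is presumably used to split a symmetric sum into halves without losing a factor.

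\textbf{Inductive step.} Assuming \eqref{TheBut} for $s$, I want it for $s+1$. Split the range $j\in\{mp^{s+1},\dots,(m+1)p^{s+1}-1\}$ into $p$ blocks of length $p^s$, indexed by $\ell\in\{0,\dots,p-1\}$, namely $j = j' + \ell p^s$ with $j'$ ranging over a length-$p^s$ interval; simultaneously decompose the complementary index $K-j$. Each block, after reindexing, should be recognizable as $S_{r}(\cdot,\cdot,s,p,\cdot)$ or a closely related sum, to which the inductive hypothesis applies and yields membership in $p^{s+1}\mathbf g_{r+s+1}(\cdot)\mathcal A$; to upgrade the exponent of $p$ from $s+1$ to $s+2$ and the $\mathbf g$-index from $r+s+1$ to $r+s+2$ (i.e.\ to produce $p^{s+2}\mathbf g_{r+s+2}(m)\mathcal A$), I invoke hypothesis $(iii)(b)$: when $(n,t)\in\mathcal N_{r+s+1}$ one gains $\mathbf g_{r+s+1}(n+mp^t)\in p^t\mathbf g_{r+s+1+t}(m)\mathcal Z$, and for the blocks whose indices are \emph{excluded} by the $\Psi_{\mathcal N}$ condition one instead uses $(iii)(a_2)$ together with the definition of $\Psi_{\mathcal N}(r,s+1)$ to control the offending terms directly. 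The definition of $\Psi_{\mathcal N}$ is tailored so that exactly the ``bad'' residues $u = j + p^{s-t}n$ with $(n,t)\in\mathcal N_{r+s-t+1}$ are removed, which is what makes the two cases $(a_1)$/$(a_2)$ exhaustive.

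\textbf{Main obstacle.} The genuinely delicate point is the bookkeeping in the inductive step: organizing the length-$p^{s+1}$ sum into sub-sums that are \emph{exactly} of the form to which the hypothesis of level $s$ applies, while correctly tracking (i) which intermediate indices fall in $\Psi_{\mathcal N}(r,\cdot)$ versus $\Psi_{\mathcal N}(r-1,\cdot)$, so that one may invoke $(a_1)$ rather than merely $(a)$, and (ii) how the gain from $(iii)(b)$ along elements of $\mathcal N$ assembles with the per-block estimates to give the full factor $p^{s+2}\mathbf g_{r+s+2}(m)$ rather than a weaker one. Getting the indices in the recursion for $\Psi_{\mathcal N}$ to line up — in particular the shift $r\mapsto r+1$ accompanying $s\mapsto s-1$ in $\Psi_{\mathcal N}(r,s)$ — is where almost all the work lies; the algebraic manipulations with ratios in $\mathcal S^{-1}\mathcal A$ are routine once the combinatorial skeleton is fixed. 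I would also need to check at the end that all the auxiliary ratios one introduces in $\mathcal S^{-1}\mathcal A$ cancel, so that the final membership statement is genuinely in $\mathcal A$ (not merely in $\mathcal S^{-1}\mathcal A$), which follows from the invertibility hypothesis $(i)$ and the fact that the $\mathbf g_r$ take values in $\mathcal Z\setminus\{0\}\subset\mathcal O$.
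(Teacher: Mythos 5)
Your sketch of the base case ($s=0$) is essentially correct and matches the paper's Lemma proving $\alpha_1$: write $\mathbf{S}_r(a,K,0,p,m)$ as $\mathbf{A}_r(a)$ times a difference of ratios, apply $(iii)(a)$ with $u=0$, and use $(ii)$ on the remaining $\mathbf{A}$-factor. Your guess about where regularity of $2$ is used is off, though; see below.

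The inductive step as you propose it has a genuine gap. If you split $S_r(a,K,s+1,p,m)=\sum_{\ell=0}^{p-1}S_r(a,K,s,p,\ell+mp)$ and apply the inductive hypothesis, each block lands in $p^{s+1}\mathbf{g}_{r+s+1}(\ell+mp)\mathcal{A}$, and you propose to upgrade this to $p^{s+2}\mathbf{g}_{r+s+2}(m)\mathcal{A}$ via $(iii)(b)$. But $(b)$ is conditional on $(\ell,1)\in\mathcal{N}_{r+s+1}$, and the family $(\mathcal{N}_r)$ is arbitrary. In particular $\ell=0$ gives $\mathbf{g}_{r+s+1}(mp)$ with no available lever — and in the paper's application $(0,1)\notin\mathcal{N}_{r+s+1}$ by construction, since $\tau>0$. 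Your fallback to $(a_2)$ does not apply here either: $(a_2)$ bounds a single ratio, not a sum, and your phrasing "excluded by the $\Psi_{\mathcal{N}}$ condition" conflates membership in $\Psi_{\mathcal{N}}$ with membership in $\mathcal{N}$ (they are essentially complementary). The paper's actual argument is structurally different: it introduces assertions $\beta_{t,s}$ ($0\le t\le s$) expressing, modulo $p^{s+1}\mathbf{g}_{r+s+1}(m)\mathcal{A}$, the $m$-block $\mathbf{S}_r(a,K+mp^s,s,p,m)$ as $\sum_{j\in\Psi_{\mathcal{N}}(r+t,s-t)}\frac{\mathbf{A}_{r+t+1}(j+mp^{s-t})}{\mathbf{A}_{r+t+1}(j)}\mathbf{S}_r(a,K,t,p,j)$, and proves $\alpha_s\wedge\beta_{t,s}\Rightarrow\beta_{t+1,s}$. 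Driving $t$ to $s$ collapses $\Psi_{\mathcal{N}}(r+s,0)=\{0\}$, so $\beta_{s,s}$ reduces everything to the single term $\frac{\mathbf{A}_{r+s+1}(m)}{\mathbf{A}_{r+s+1}(0)}\mathbf{S}_r(a,K,s,p,0)$. It then remains to show $\mathbf{S}_r(a,N,s,p,0)\in p^{s+1}\mathcal{A}$ for every $N\in\mathbb{Z}$; this is done by a minimal-counterexample descent on $N$, using that $\sum_{j=0}^{N}\mathbf{U}_r(a,N,p,j)$ is \emph{antisymmetric} under $j\mapsto N-j$, so twice it vanishes and — this is where regularity of $2$ enters — the sum itself vanishes, expressing $\mathbf{S}_r(a,N,s,p,0)$ as $-\sum_{m\ge 1}\mathbf{S}_r(a,N,s,p,m)$, each of which is controlled by $\beta_{s,s}$ and minimality. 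Hypothesis $(b)$ is used inside this machinery (in showing $\mathbf{S}_r$ agrees modulo the target ideal with its restriction to $\Psi_{\mathcal{N}}(r,s)$), not as a blanket upgrade of $\mathbf{g}$-indices on block sums.
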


Theorem \ref{congruences formelles} is a generalisation of a result due to Dwork \cite[Theorem $1.1$]{Dwork}, first used (in weaker version \cite{cycles}) to obtain the analytic continuation of certain $p$-adic functions. Dwork then developped in  \cite{Dwork} a method to prove the $p$-adic integrality of the Taylor coefficients of $q$-coordinates. This method is the basis of the proofs of the  $N$-integrality of  $q_{\boldsymbol{\alpha},\boldsymbol{\beta}}(z)$. In the litterature, one finds many generalisations of Dwork's formal congruences used to prove  the integrality of Taylor coefficients of $q$-coordinates with increasing generality (see \cite{Tanguy3}, \cite{Delaygue1} and \cite{Delaygue3}).

If we consider only the univariate case, then Theorem \ref{congruences formelles} encompasses all the analogous results in \cite{Tanguy3} and \cite{Delaygue3}. Its interest is due to the two following improvements.

$\bullet$ Theorem \ref{congruences formelles} can be applied to  $\mathbb{Z}_p$-algebras more ``abstract'' than $\mathcal{O}$. We use this possibility in this paper, where we consider algebras of functions taking values in $\mathbb{Z}_p$. This improvement enables us to consider the integer  $\mathfrak{n}_{\boldsymbol{\alpha},\boldsymbol{\beta}}$ in Assertion $(3)$ of Theorem \ref{Criterion}.

$\bullet$ Beside this difference, Theorem \ref{congruences formelles} is a univariate version of Theorem~$4$ in \cite{Delaygue3} that allows to consider a set 
$\mathcal{N}$ that depends on $r$. This property is crucial when we deal with the case of non $R$-partitioned tuples $\boldsymbol{\alpha}$ and $\boldsymbol{\beta}$.

There also exist in the litterature other types of generalisations of Dwork's formal congruences, such as the truncated version of  Ota \cite{Ota} and the recent version of Mellit and Vlasenko \cite{Vlasenko} (applied to constant terms of powers of Laurent polynomials).  

\subsection{Proof of Theorem \ref{congruences formelles}}

For all $s\in\mathbb{N}$, $s\geq 1$, we denote by $\alpha_s$ the following assertion: ``For all $a\in\{0,\dots,p-1\}$, all $u\in\{0,\dots,s-1\}$, all $m,r\in\mathbb{N}$ and all $K\in\mathbb{Z}$, we have 
$$
{\mathbf S}_r(a,K,u,p,m)\in p^{u+1}{\mathbf g}_{r+u+1}(m)\mathcal{A}. ''
$$

For all $s\in\mathbb{N}$, $s\geq 1$, and all $t\in\{0,\dots,s\}$, we denote by $\beta_{t,s}$ the following assertion: ``For all $a\in\{0,\dots,p-1\}$, all $m,r\in\mathbb{N}$ and all $K\in\mathbb{Z}$, we have
\begin{multline*}
{\mathbf S}_r(a,K+mp^{s},s,p,m)\equiv\\ \sum_{j\in\Psi_{\mathcal{N}}(r+t,s-t)}\frac{{\mathbf A}_{r+t+1}(j+mp^{s-t})}{{\mathbf A}_{r+t+1}(j)}{\mathbf S}_r(a,K,t,p,j)\mod p^{s+1}{\mathbf g}_{r+s+1}(m)\mathcal{A}.''
\end{multline*}
For all $a\in\{0,\dots,p-1\}$, all $K\in\mathbb{Z}$ an all $r,j\in\mathbb{N}$, we define  
$$
{\mathbf U}_r(a,K,p,j):={\mathbf A}_r\big(a+(K-j)p\big){\mathbf A}_{r+1}(j)-{\mathbf A}_{r+1}(K-j){\mathbf A}_r(a+jp).
$$
Then, we have  
$$
{\mathbf S}_r(a,K,s,p,m)=\sum_{j=0}^{p^s-1}{\mathbf U}_r(a,K,p,j+mp^s).
$$ 
We now state four lemmas that will be needed to prove  \eqref{TheBut}.

\begin{Lemma}\label{Assertion 1}
Assertion $\alpha_1$ holds.
\end{Lemma}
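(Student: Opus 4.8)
\textbf{Plan of proof for Lemma \ref{Assertion 1}.}

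The plan is to prove $\alpha_1$, i.e.\ that for all $a\in\{0,\dots,p-1\}$, the only allowed value $u=0$, all $m,r\in\mathbb{N}$ and all $K\in\mathbb{Z}$, we have $\mathbf{S}_r(a,K,0,p,m)\in p\,\mathbf{g}_{r+1}(m)\mathcal{A}$. Since $u=0$ forces $p^u=1$, by definition $\mathbf{S}_r(a,K,0,p,m)=\mathbf{U}_r(a,K,p,m)=\mathbf{A}_r\big(a+(K-m)p\big)\mathbf{A}_{r+1}(m)-\mathbf{A}_{r+1}(K-m)\mathbf{A}_r(a+mp)$. The goal is therefore a divisibility statement about this single difference of products, which we attack by working in the localization $\mathcal{S}^{-1}\mathcal{A}$ so that we may divide by the regular elements $\mathbf{A}_r(v+up)$ that appear in hypothesis $(iii)(a)$, and then check at the end that the result actually lies in $\mathcal{A}$.

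First I would dispose of trivial cases: if $K-m<0$ then $\mathbf{A}_{r+1}(K-m)=0$, and also $a+(K-m)p$ could be negative; one checks that in the degenerate ranges both products vanish or the claim reduces to $(ii)$ directly. In the main case, I would factor the difference as
$$
\mathbf{U}_r(a,K,p,m)=\mathbf{A}_r(a)\,\mathbf{A}_{r+1}(0)\left(\frac{\mathbf{A}_r(a+(K-m)p)}{\mathbf{A}_r(a)}\cdot\frac{\mathbf{A}_{r+1}(m)}{\mathbf{A}_{r+1}(0)}-\frac{\mathbf{A}_{r+1}(K-m)}{\mathbf{A}_{r+1}(0)}\cdot\frac{\mathbf{A}_r(a+mp)}{\mathbf{A}_r(a)}\right),
$$
using that $\mathbf{A}_r(0)$ is invertible by $(i)$. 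Writing $K-m=v'+u'p$ in base $p$ with $v'\in\{0,\dots,p-1\}$, I would apply hypothesis $(iii)(a)$ with $s=0$ (note $\Psi_{\mathcal{N}}(r,0)=\{0\}$, so $u=0$ is always admissible there) to compare $\mathbf{A}_r(v+up+mp)/\mathbf{A}_r(v+up)$ with $\mathbf{A}_{r+1}(u+m)/\mathbf{A}_{r+1}(u)$ modulo $p\,\mathbf{g}_{r+1}(m)/\mathbf{A}_r(v+up)\,\mathcal{A}$, and similarly the refinements $(a_1)$/$(a_2)$ to control the factors $\mathbf{g}_r$ when needed. The arithmetic is a telescoping/substitution argument: replace each ratio by the "shifted down" ratio at level $r+1$, track the error terms, and observe they all lie in $p\,\mathbf{g}_{r+1}(m)\mathcal{A}$ after clearing the regular denominators, which is legitimate because those denominators are invertible factors pulled out front.

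The main obstacle I expect is bookkeeping rather than conceptual: keeping straight which of $(a)$, $(a_1)$, $(a_2)$ applies depending on whether the relevant index lies in $\Psi_{\mathcal{N}}(r-1,1)$, and making sure that after all substitutions the residual expression is manifestly in $p\,\mathbf{g}_{r+1}(m)\mathcal{A}$ and not merely in its localization. A secondary subtlety is the role of $2$ being regular (hypothesis on $\mathcal{A}$): this is presumably needed to handle a symmetrization in the difference $\mathbf{U}_r$, pairing the term $\mathbf{A}_r(a+(K-m)p)\mathbf{A}_{r+1}(m)$ with $\mathbf{A}_{r+1}(K-m)\mathbf{A}_r(a+mp)$ under the involution $j\leftrightarrow K-j$ on the (one-element) summation range, so I would set up the argument so that the relevant cancellation is divided by $2$ only at the very end. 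Once $\alpha_1$ is established, it serves as the base case for the inductive scheme on $s$ (via the assertions $\alpha_s$ and $\beta_{t,s}$) that yields \eqref{TheBut} in general.
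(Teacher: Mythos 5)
Your core plan matches the paper's proof of Lemma \ref{Assertion 1}: reduce to $\mathbf{U}_r(a,K,p,m)$, dispose of the degenerate range $K-m<0$, factor out the regular/invertible constants $\mathbf{A}_r(a)$ and $\mathbf{A}_{r+1}(0)$, perform an add-and-subtract (your ``telescoping/substitution'') so that each resulting piece is a product of a term controlled by $(ii)$ with a ratio-difference controlled by $(iii)(a)$ at $s=0$, $u=0$, $v=a$, and conclude that both pieces lie in $p\,\mathbf{g}_{r+1}(m)\mathcal{A}$ after clearing the localization. Concretely, one rewrites the bracketed expression as $\frac{\mathbf{A}_{r+1}(m)}{\mathbf{A}_{r+1}(0)}\bigl(\frac{\mathbf{A}_r(a+(K-m)p)}{\mathbf{A}_r(a)}-\frac{\mathbf{A}_{r+1}(K-m)}{\mathbf{A}_{r+1}(0)}\bigr)-\frac{\mathbf{A}_{r+1}(K-m)}{\mathbf{A}_{r+1}(0)}\bigl(\frac{\mathbf{A}_r(a+mp)}{\mathbf{A}_r(a)}-\frac{\mathbf{A}_{r+1}(m)}{\mathbf{A}_{r+1}(0)}\bigr)$, applies $(a)$ to each parenthesis, and uses $(ii)$ to absorb the multiplier $\mathbf{A}_{r+1}(m)$ into $\mathbf{g}_{r+1}(m)$.

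Two of your side-remarks are, however, red herrings for this particular lemma and worth flagging so they do not mislead you. First, you do not need $(a_1)$ or $(a_2)$, nor the base-$p$ decomposition $K-m=v'+u'p$: since $\Psi_{\mathcal{N}}(r,0)=\{0\}$ the only instance of $(a)$ invoked has $u=0$ and $v=a$, applied twice with $m$ replaced by $m$ and by $K-m$. Those refinements enter only later, in the passage from $\beta_{t,s}$ to $\beta_{t+1,s}$. Second, the hypothesis that $2$ is regular plays no role in the base case $\alpha_1$; it is used in the body of Theorem \ref{congruences formelles} when one observes that $\sum_{j=0}^{N}\bigl(\mathbf{A}_r(a+(N-j)p)\mathbf{A}_{r+1}(j)-\mathbf{A}_{r+1}(N-j)\mathbf{A}_r(a+jp)\bigr)$ is antisymmetric under $j\leftrightarrow N-j$ and hence vanishes. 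Carrying out your plan cleanly would reveal that only $(i)$, $(ii)$ and $(iii)(a)$ are required here, with no division by $2$.
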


\begin{Lemma}\label{Assertion 2}
For all $s,r,m\in\mathbb{N}$, all $a\in\{0,\dots,p-1\}$, all $j\in\Psi_{\mathcal{N}}(r,s)$ and all $K\in\mathbb{Z}$, we have 
$$
{\mathbf U}_r(a,K+mp^{s},p,j+mp^{s})\equiv\frac{{\mathbf A}_{r+1}(j+mp^{s})}{{\mathbf A}_{r+1}(j)}{\mathbf U}_r(a,K,p,j)\mod p^{s+1}{\mathbf g}_{r+s+1}(m)\mathcal{A}.
$$
\end{Lemma}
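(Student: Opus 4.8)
The plan is to prove Lemma~\ref{Assertion 2} by a direct manipulation that reduces the quantity $\mathbf{U}_r(a,K+mp^s,p,j+mp^s)$ to $\mathbf{U}_r(a,K,p,j)$ modulo the required ideal, using only hypotheses $(i)$, $(ii)$ and $(iii)(a)$ of Theorem~\ref{congruences formelles}. Recall that
$$
\mathbf{U}_r(a,K,p,j)=\mathbf{A}_r\big(a+(K-j)p\big)\mathbf{A}_{r+1}(j)-\mathbf{A}_{r+1}(K-j)\mathbf{A}_r(a+jp).
$$
Substituting $K\mapsto K+mp^s$ and $j\mapsto j+mp^s$ leaves the arguments $a+(K-j)p$ and $K-j$ unchanged in the first factor of each term, while $j+mp^s$ and $a+(j+mp^s)p = a+jp+mp^{s+1}$ appear in the second factors. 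So
$$
\mathbf{U}_r(a,K+mp^s,p,j+mp^s)=\mathbf{A}_r\big(a+(K-j)p\big)\mathbf{A}_{r+1}(j+mp^s)-\mathbf{A}_{r+1}(K-j)\mathbf{A}_r(a+jp+mp^{s+1}).
$$

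The key step is to replace $\mathbf{A}_{r+1}(j+mp^s)$ by $\dfrac{\mathbf{A}_{r+1}(j+mp^s)}{\mathbf{A}_{r+1}(j)}\mathbf{A}_{r+1}(j)$ and $\mathbf{A}_r(a+jp+mp^{s+1})$ by $\dfrac{\mathbf{A}_{r+1}(j+mp^s)}{\mathbf{A}_{r+1}(j)}\mathbf{A}_r(a+jp)$ plus an error term. Writing $j = v+up$ with $v\in\{0,\dots,p-1\}$ and $u = \lfloor j/p\rfloor$, and noting that $j\in\Psi_{\mathcal N}(r,s)$ forces $u\in\Psi_{\mathcal N}(r,s-1)$ (or handling $s=0$ separately, where $j=0$ and the statement is an identity since $\mathbf{A}_r(0)$ is invertible), hypothesis $(iii)(a)$ applied at level $r$ with exponent $s-1$ gives
$$
\frac{\mathbf{A}_r(v+up+mp^{s})}{\mathbf{A}_r(v+up)}-\frac{\mathbf{A}_{r+1}(u+mp^{s-1})}{\mathbf{A}_{r+1}(u)}\in p^{s}\frac{\mathbf{g}_{r+s}(m)}{\mathbf{A}_r(v+up)}\mathcal{A},
$$
but what I actually need is the relation with exponent $s$: $(iii)(a)$ at level $r$ with $u'=j$, $v'$ the relevant digit — more precisely I must re-bracket $a+jp+mp^{s+1}$ as $(a+jp) + mp^{s+1}$ and apply $(iii)(a)$ directly with the pair $(v,u)=(a, j)$ at exponent $s$, yielding
$$
\frac{\mathbf{A}_r(a+jp+mp^{s+1})}{\mathbf{A}_r(a+jp)}-\frac{\mathbf{A}_{r+1}(j+mp^{s})}{\mathbf{A}_{r+1}(j)}\in p^{s+1}\frac{\mathbf{g}_{r+s+1}(m)}{\mathbf{A}_r(a+jp)}\mathcal{A}.
$$
Multiplying through by $\mathbf{A}_r(a+jp)$ and then by $\mathbf{A}_{r+1}(K-j)\in\mathcal{A}$ shows that
$$
\mathbf{A}_{r+1}(K-j)\,\mathbf{A}_r(a+jp+mp^{s+1})\equiv \frac{\mathbf{A}_{r+1}(j+mp^{s})}{\mathbf{A}_{r+1}(j)}\,\mathbf{A}_{r+1}(K-j)\,\mathbf{A}_r(a+jp)\mod p^{s+1}\mathbf{g}_{r+s+1}(m)\mathcal{A}.
$$
Substituting this into the expression for $\mathbf{U}_r(a,K+mp^s,p,j+mp^s)$ and factoring out $\dfrac{\mathbf{A}_{r+1}(j+mp^s)}{\mathbf{A}_{r+1}(j)}$ from the two surviving terms produces exactly $\dfrac{\mathbf{A}_{r+1}(j+mp^s)}{\mathbf{A}_{r+1}(j)}\mathbf{U}_r(a,K,p,j)$, which is the claim.

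I expect the main obstacle to be bookkeeping: making sure the condition $j\in\Psi_{\mathcal N}(r,s)$ is exactly what licenses the application of $(iii)(a)$ (I should double-check whether $(iii)(a)$ needs $u$ to be in $\Psi_{\mathcal N}(r,s)$ or $\Psi_{\mathcal N}(r,s-1)$, and re-index accordingly — the display in $(iii)(a)$ is stated with $u\in\Psi_{\mathcal N}(r,s)$ and exponent $s+1$, so I want to apply it with $s$ replaced by $s-1$... but then I only get divisibility by $p^{s}$, not $p^{s+1}$; the resolution is that I should apply $(iii)(a)$ at the pair $(a,j)$ where the "$up$" of the hypothesis is $jp$, i.e. with $u_{\text{hyp}}=j$, $v_{\text{hyp}}=a$, and exponent $s$, which requires $j\in\Psi_{\mathcal N}(r,s)$ — precisely our hypothesis). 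Once the indexing is pinned down the algebra is routine: it is a single substitution followed by one application of $(iii)(a)$ and absorption of an element of $\mathcal{A}$ into the ideal, so no further lemmas are needed. The regularity of the $\mathbf{A}_r(m)$ (they lie in $\mathcal{S}$) and the invertibility of $\mathbf{A}_r(0)$ are what make the quotients $\mathbf{A}_{r+1}(j+mp^s)/\mathbf{A}_{r+1}(j)$ meaningful elements of $\mathcal{S}^{-1}\mathcal{A}$, and I will note this at the outset.
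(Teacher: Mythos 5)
Your corrected argument — subtract, recognize the difference as $-\mathbf{A}_{r+1}(K-j)\mathbf{A}_r(a+jp)\bigl(\mathbf{A}_r(a+jp+mp^{s+1})/\mathbf{A}_r(a+jp)-\mathbf{A}_{r+1}(j+mp^s)/\mathbf{A}_{r+1}(j)\bigr)$, then apply hypothesis $(iii)(a)$ with $v=a$, $u=j$ at exponent $s$ and absorb $\mathbf{A}_{r+1}(K-j)\in\mathcal A$ into the ideal — is exactly the paper's proof. The initial detour through $j=v+up$ and exponent $s-1$ is a red herring, but you identify and discard it yourself, so the final argument is sound and matches the paper.
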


\begin{Lemma}\label{iii 0}
For all $s\in\mathbb{N}$, $s\geq 1$, if $\alpha_s$ holds, then, for all $a\in\{0,\dots,p-1\}$, all $K\in\mathbb{Z}$ and all $r,m\in\mathbb{N}$, we have 
$$
{\mathbf S}_r(a,K,s,p,m)\equiv\sum_{j\in\Psi_{\mathcal{N}}(r,s)}{\mathbf U}_r(a,K,p,j+mp^s)\mod p^{s+1}{\mathbf g}_{r+s+1}(m)\mathcal{A};
$$
\end{Lemma}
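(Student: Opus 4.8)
The plan is to prove Lemma \ref{iii 0} by a straightforward inclusion–exclusion (sieve) argument on the index set $\{mp^s,\dots,(m+1)p^s-1\}$, using the combinatorial structure encoded in $\Psi_{\mathcal{N}}$ together with assertion $\alpha_s$ to discard the ``bad'' indices. First I would recall that
$$
{\mathbf S}_r(a,K,s,p,m)=\sum_{j=0}^{p^s-1}{\mathbf U}_r(a,K,p,j+mp^s),
$$
so the claim amounts to showing that the contribution of the indices $j\in\{0,\dots,p^s-1\}\setminus\Psi_{\mathcal{N}}(r,s)$ is $\equiv 0\bmod p^{s+1}{\mathbf g}_{r+s+1}(m)\mathcal{A}$.

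The key observation is that, by the very definition of $\Psi_{\mathcal{N}}(r,s)$, an index $j\in\{0,\dots,p^s-1\}$ fails to lie in $\Psi_{\mathcal{N}}(r,s)$ precisely when there exist $(n,t)\in\mathcal{N}_{r+s-t+1}$ with $t\le s$ and $i\in\{0,\dots,p^{s-t}-1\}$ such that $j=i+p^{s-t}n$. For a fixed such pair $(n,t)$, the sub-sum over $j$ of this form, namely
$$
\sum_{i=0}^{p^{s-t}-1}{\mathbf U}_r\big(a,K,p,\,i+p^{s-t}n+mp^s\big)
=\sum_{i=0}^{p^{s-t}-1}{\mathbf U}_r\big(a,K,p,\,i+p^{s-t}(n+mp^t)\big),
$$
is, after reindexing $i\mapsto i+p^{s-t}(n+mp^t)$ inside a block of length $p^{s-t}$, exactly of the shape ${\mathbf S}_r(a,K,s-t,p,\,n+mp^t)$ up to the shift of $K$; applying $\alpha_s$ (which is available since $s-t<s$ when $t\ge 1$, and the case $t=0$ is handled directly because then $n=0$ would be excluded or the term vanishes) gives that this block sum lies in $p^{s-t+1}{\mathbf g}_{r+s-t+1}(n+mp^t)\mathcal{A}$. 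Now property $(iii)(b)$ applied to $(n,t)\in\mathcal{N}_{r+s-t+1}$ yields ${\mathbf g}_{r+s-t+1}(n+mp^t)\in p^t{\mathbf g}_{r+s+1}(m)\mathcal{Z}$, so each such block sum lies in $p^{s-t+1}\cdot p^t{\mathbf g}_{r+s+1}(m)\mathcal{A}=p^{s+1}{\mathbf g}_{r+s+1}(m)\mathcal{A}$, which is exactly the modulus we want.

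Having controlled one block at a time, I would finish by an inclusion–exclusion over the finitely many pairs $(n,t)$ that can witness $j\notin\Psi_{\mathcal{N}}(r,s)$: the set of ``bad'' $j$ is a finite union of arithmetic-progression blocks, and each block (and each intersection of blocks, which is again a block of the same type with larger $t$) contributes a sum lying in $p^{s+1}{\mathbf g}_{r+s+1}(m)\mathcal{A}$ by the argument just given; summing with signs, the total bad contribution lies in that same ideal. Therefore ${\mathbf S}_r(a,K,s,p,m)$ differs from $\sum_{j\in\Psi_{\mathcal{N}}(r,s)}{\mathbf U}_r(a,K,p,j+mp^s)$ by an element of $p^{s+1}{\mathbf g}_{r+s+1}(m)\mathcal{A}$, as claimed. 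The main obstacle I anticipate is bookkeeping: making sure the reindexing of each bad block genuinely produces an ${\mathbf S}$-sum to which $\alpha_s$ applies (in particular that the inner length is a power of $p$ at most $p^{s-1}$, and that the shifted second argument is handled by the convention ${\mathbf A}_r(n)=0$ for $n<0$), and that the inclusion–exclusion over overlapping blocks is organized so that no term is double-counted — but each individual estimate is just an application of $\alpha_s$ together with $(iii)(b)$, so no new ideas are needed beyond careful indexing.
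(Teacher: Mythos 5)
Your proposal is correct and follows essentially the same strategy as the paper: write the bad set $\{0,\dots,p^s-1\}\setminus\Psi_{\mathcal{N}}(r,s)$ as a union of blocks $J(n,t)=\{j+p^{s-t}n:0\le j\le p^{s-t}-1\}$ with $(n,t)\in\mathcal{N}_{r+s-t+1}$ and $t\le s$, recognize each block-sum as ${\mathbf S}_r(a,K,s-t,p,n+mp^t)$, and then apply $\alpha_s$ (valid because $t\ge 1$ forces $s-t\le s-1$) followed by Condition $(iii)(b)$ with $r+s-t+1$ in place of $r$. The only difference from the paper is the final combinatorial bookkeeping: you propose inclusion--exclusion over the blocks, whereas the paper uses the $p$-adic nesting property (for two such blocks, either they are disjoint or one contains the other) to select a sub-family $(n_1,t_1),\dots,(n_k,t_k)$ whose blocks genuinely partition the bad set, so that no signed sums are needed. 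Your inclusion--exclusion does work, but note that it tacitly relies on that very nesting property — you need every nonempty intersection of blocks to again be one of the blocks $J(n',t')$ with $(n',t')$ in the allowed set, otherwise you could not invoke $(iii)(b)$ on the intersection terms — so it is worth making that explicit. One small point: your parenthetical about ``the case $t=0$'' is moot, since by construction $\mathcal{N}_r\subset\bigcup_{t\ge 1}\big(\{0,\dots,p^t-1\}\times\{t\}\big)$, so $t\ge 1$ automatically and the hypothesis $\alpha_s$ (which covers exponents $u\in\{0,\dots,s-1\}$) always applies with $u=s-t$.
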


\begin{Lemma}\label{Assertion 3}
For all $s\in\mathbb{N}$, $s\geq 1$, all $t\in\{0,\dots,s-1\}$, Assertions $\alpha_s$ and  $\beta_{t,s}$ imply Assertion $\beta_{t+1,s}$.
\end{Lemma}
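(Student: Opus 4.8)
The plan is to start from assertion $\beta_{t,s}$ and rewrite its right-hand side as the right-hand side of $\beta_{t+1,s}$, controlling every substitution modulo $p^{s+1}\mathbf{g}_{r+s+1}(m)\mathcal{A}$. First I would record two preliminary facts. The elementary identity
\[
\mathbf{S}_r(a,K,t+1,p,j')=\sum_{w=0}^{p-1}\mathbf{S}_r(a,K,t,p,w+pj')
\]
follows at once from $\mathbf{S}_r(a,K,u,p,m)=\sum_{i=0}^{p^u-1}\mathbf{U}_r(a,K,p,i+mp^u)$ via the change of index $i=i'+wp^t$, $i'\in\{0,\dots,p^t-1\}$, $w\in\{0,\dots,p-1\}$. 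Secondly, since $(r+t)+(s-t)-\tau+1=(r+t+1)+(s-t-1)-\tau+1$ for every $\tau$, inspection of the definition of $\Psi_{\mathcal{N}}$ shows that $j\in\Psi_{\mathcal{N}}(r+t,s-t)$ forces $\lfloor j/p\rfloor\in\Psi_{\mathcal{N}}(r+t+1,s-t-1)$: a forbidden pattern $\lfloor j/p\rfloor=\ell+p^{s-t-1-\tau}n$ with $\tau\le s-t-1$ would produce $j=(w+p\ell)+p^{s-t-\tau}n$ with $w+p\ell\in\{0,\dots,p^{s-t-\tau}-1\}$, contradicting $j\in\Psi_{\mathcal{N}}(r+t,s-t)$.

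Next I would write each index of the sum in $\beta_{t,s}$ as $j=w+pj'$ with $w\in\{0,\dots,p-1\}$, $j'=\lfloor j/p\rfloor$. For $j\in\Psi_{\mathcal{N}}(r+t,s-t)$ one then has $j'\in\Psi_{\mathcal{N}}(r+t+1,s-t-1)$ and $j=w+pj'\in\Psi_{\mathcal{N}}((r+t+1)-1,(s-t-1)+1)$, so hypothesis $(iii)(a_1)$ with $r$ replaced by $r+t+1$ and $s$ by $s-t-1$ applies and gives
\[
\mathbf{g}_{r+t+1}(j)\left(\frac{\mathbf{A}_{r+t+1}(j+mp^{s-t})}{\mathbf{A}_{r+t+1}(j)}-\frac{\mathbf{A}_{r+t+2}(j'+mp^{s-t-1})}{\mathbf{A}_{r+t+2}(j')}\right)\in p^{s-t}\mathbf{g}_{r+s+1}(m)\mathcal{A}.
\]
By $\alpha_s$ with $u=t\in\{0,\dots,s-1\}$ I may write $\mathbf{S}_r(a,K,t,p,j)=p^{t+1}\mathbf{g}_{r+t+1}(j)\sigma_j$ with $\sigma_j\in\mathcal{A}$; multiplying the displayed relation by $p^{t+1}\sigma_j$ shows that replacing, inside each summand of $\beta_{t,s}$, the coefficient $\mathbf{A}_{r+t+1}(j+mp^{s-t})/\mathbf{A}_{r+t+1}(j)$ by $\mathbf{A}_{r+t+2}(j'+mp^{s-t-1})/\mathbf{A}_{r+t+2}(j')$ costs only an element of $p^{s+1}\mathbf{g}_{r+s+1}(m)\mathcal{A}$. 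Hence the right-hand side of $\beta_{t,s}$ is congruent, modulo $p^{s+1}\mathbf{g}_{r+s+1}(m)\mathcal{A}$, to $\sum_{j\in\Psi_{\mathcal{N}}(r+t,s-t)}\frac{\mathbf{A}_{r+t+2}(j'+mp^{s-t-1})}{\mathbf{A}_{r+t+2}(j')}\mathbf{S}_r(a,K,t,p,j)$.

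It then remains to group this sum by $j'\in\Psi_{\mathcal{N}}(r+t+1,s-t-1)$ and to complete each inner sum over $w$ so that it ranges over all of $\{0,\dots,p-1\}$. The terms one adds are exactly those with $j=w+pj'\notin\Psi_{\mathcal{N}}(r+t,s-t)$; for these, hypothesis $(iii)(a_2)$ (again with $r\to r+t+1$, $s\to s-t-1$) gives $\mathbf{g}_{r+t+1}(j)\,\mathbf{A}_{r+t+2}(j'+mp^{s-t-1})/\mathbf{A}_{r+t+2}(j')\in p^{s-t}\mathbf{g}_{r+s+1}(m)\mathcal{A}$, and combined with $\mathbf{S}_r(a,K,t,p,j)=p^{t+1}\mathbf{g}_{r+t+1}(j)\sigma_j$ this puts each added term in $p^{s+1}\mathbf{g}_{r+s+1}(m)\mathcal{A}$. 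Thus, modulo $p^{s+1}\mathbf{g}_{r+s+1}(m)\mathcal{A}$, the sum equals
\[
\sum_{j'\in\Psi_{\mathcal{N}}(r+t+1,s-t-1)}\frac{\mathbf{A}_{r+t+2}(j'+mp^{s-t-1})}{\mathbf{A}_{r+t+2}(j')}\sum_{w=0}^{p-1}\mathbf{S}_r(a,K,t,p,w+pj'),
\]
and the combinatorial identity collapses the inner sum to $\mathbf{S}_r(a,K,t+1,p,j')$, which is precisely the right-hand side of $\beta_{t+1,s}$. The main obstacle is the bookkeeping: keeping the index shifts $r\mapsto r+t+1$, $s\mapsto s-t-1$ straight when invoking $(iii)$, and checking that the two $\Psi_{\mathcal{N}}$-membership statements place us in exactly the right sub-case ($(a_1)$ when $w+pj'\in\Psi_{\mathcal{N}}(r+t,s-t)$, $(a_2)$ otherwise); once these are pinned down, the $p$-adic estimates are routine.
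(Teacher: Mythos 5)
Your proposal is correct and follows essentially the same approach as the paper's proof: the same decomposition $j=w+pj'$ (written $j=i+up$ in the paper), the same $\Psi_{\mathcal N}$-membership implication, the same use of $\alpha_s$ to bound $\mathbf{S}_r(a,K,t,p,j)$, and the same invocation of hypotheses $(a_1)$ and $(a_2)$ with the index shifts $r\mapsto r+t+1$, $s\mapsto s-t-1$. The only difference is presentational bookkeeping.
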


Before we prove these lemmas, let us check that they imply  \eqref{TheBut}. We show  that  $\alpha_s$ holds for all $s\geq 1$ by induction on $s$, which gives the conclusion of Theorem \ref{congruences formelles}. By Lemma \ref{Assertion 1}, $\alpha_1$ holds. Let us assume that $\alpha_s$ holds for some $s\geq 1$. We observe that  $\beta_{0,s}$ is the assertion 
\begin{multline*}
\beta_{0,s}:{\mathbf S}_r(a,K+mp^{s},s,p,m)\equiv\\
\sum_{j\in\Psi_{\mathcal{N}}(r,s)}\frac{{\mathbf A}_{r+1}(j+mp^{s})}{{\mathbf A}_{r+1}(j)}{\mathbf S}_r(a,K,0,p,j)\mod p^{s+1}{\mathbf g}_{r+s+1}(m)\mathcal{A}.
\end{multline*}
Since ${\mathbf S}_r(a,K,0,p,j)={\mathbf U}_r(a,K,p,j)$, we have 
$$
\sum_{j\in\Psi_{\mathcal{N}}(r,s)}\frac{{\mathbf A}_{r+1}(j+mp^{s})}{{\mathbf A}_{r+1}(j)}{\mathbf S}_r(a,K,0,p,j)=\sum_{j\in\Psi_{\mathcal{N}}(r,s)}\frac{{\mathbf A}_{r+1}(j+mp^{s})}{{\mathbf A}_{r+1}(j)}{\mathbf U}_r(a,K,p,j)
$$
and, by Lemma \ref{Assertion 2}, we obtain, modulo $p^{s+1}{\mathbf g}_{r+s+1}(m)\mathcal{A}$, that 
\begin{align}
\sum_{j\in\Psi_{\mathcal{N}}(r,s)}\frac{{\mathbf A}_{r+1}(j+mp^{s})}{{\mathbf A}_{r+1}(j)}{\mathbf U}_r(a,K,p,j)
&\equiv\sum_{j\in\Psi_{\mathcal{N}}(r,s)}{\mathbf U}_r(a,K+mp^{s},p,j+mp^{s})\notag\\
&\equiv {\mathbf S}_r(a,K+mp^{s},s,p,m) ,\label{expli cond 0}
\end{align}
where \eqref{expli cond 0} is obtained \textit{via} Lemma \ref{iii 0}.
\medskip

Consequently, Assertion $\beta_{0,s}$ holds. We then obtain the validity of $\beta_{1,s}$ by means of Lemma~\ref{Assertion 3}. Iterating Lemma \ref{Assertion 3}, we finally obtain $\beta_{s,s}$ which, modulo $p^{s+1}\mathbf{g}_{r+s+1}(m)\mathcal{A}$, can be written
\begin{align}
{\mathbf S}_r(a,K+mp^{s},s,p,m)
&\equiv\sum_{j\in\Psi_{\mathcal{N}}(r+s,0)}\frac{{\mathbf A}_{r+s+1}(j+m)}{{\mathbf A}_{r+s+1}(j)}{\mathbf S}_r(a,K,s,p,j)\notag\\
&\equiv\frac{{\mathbf A}_{r+s+1}(m)}{{\mathbf A}_{r+s+1}(0)}{\mathbf S}_r(a,K,s,p,0)\label{assert beta s},
\end{align}
where we have used in \eqref{assert beta s} the fact that $\Psi_{\mathcal{N}}(r+s,0)=\{0\}$.
\medskip

Let us now prove that, for all  $a\in\{0,\dots,p-1\}$, all $r\in\mathbb{N}$ and all  $K\in\mathbb{Z}$, we have ${\mathbf S}_r(a,K,s,p,0)\in p^{s+1}\mathcal{A}$. For all $N\in\mathbb{Z}$, we denote by $\textsl{P}_{N}$ the assertion: ``For all $a\in\{0,\dots,p-1\}$ and all $r\in\mathbb{N}$, we have $\mathbf{S}_r(a,N,s,p,0)\in p^{s+1}\mathcal{A}$''. 

If $N<0$, then for all $j\in\{0,\dots,p^s-1\}$, we have ${\mathbf A}_r\big(a+(N-j)p\big)=0$ and ${\mathbf A}_{r+1}(N-j)=0$, so that $\mathbf{S}_r(a,N,s,p,0)=0\in p^{s+1}\mathcal{A}$. To find a contradiction, let us assume the existence of a  minimal element $N\in\mathbb{N}$ such that $\textsl{P}_{N}$ does not hold. Consider  $m\in\mathbb{N}$, $m\geq 1$, and set $N':=N-mp^s$. Using \eqref{assert beta s} with  $N'$ instead of $K$, we obtain 
$$
{\mathbf S}_r(a,N,s,p,m)\equiv\frac{{\mathbf A}_{r+s+1}(m)}{{\mathbf A}_{r+s+1}(0)}{\mathbf S}_r(a,N',s,p,0)\mod p^{s+1}{\mathbf g}_{r+s+1}(m)\mathcal{A}.
$$
Since $m\geq 1$, we have $N'<N$, which, by definition of $N$, yields that ${\mathbf S}_r(a,N',s,p,0)\in p^{s+1}\mathcal{A}$. By Condition $(i)$, ${\mathbf A}_{r+s+1}(0)$ is an invertible of $\mathcal{A}$ and thus 
$$
{\mathbf S}_r(a,N,s,p,m)\in p^{s+1}\mathcal{A}. 
$$
Hence, for all $m\in\mathbb{N}$, $m\geq 1$, we have  ${\mathbf S}_r(a,N,s,p,m)\in p^{s+1}\mathcal{A}$. Consider $T\in\mathbb{N}$ such that $(T+1)p^s>N$. Then, 
\begin{align}
\sum_{m=0}^{T}{\mathbf S}_r(a&,N,s,p,m)\notag\\
&=\sum_{m=0}^{T}\sum_{j=mp^s}^{(m+1)p^s-1}\Big({\mathbf A}_r\big(a+(N-j)p\big){\mathbf A}_{r+1}(j)-{\mathbf A}_{r+1}(N-j){\mathbf A}_r(a+jp)\Big)\notag\\
&=\sum_{j=0}^N\Big({\mathbf A}_r\big(a+(N-j)p\big){\mathbf A}_{r+1}(j)-{\mathbf A}_{r+1}(N-j){\mathbf A}_r(a+jp)\Big)\label{rappel conv}\\
&=0\label{expli somme 0},
\end{align}
where we have used in \eqref{rappel conv} the fact that ${\mathbf A}_r(n)=0$ if $n<0$. Equation \eqref{expli somme 0} holds because $2$ is a regular element of $\mathcal{A}$ and the term of the sum \eqref{rappel conv} is changed to its opposite when we change the indice $j$ to $N-j$. It follows that we have 
$$
{\mathbf S}_r(a,N,s,p,0)=-\sum_{m=1}^T{\mathbf S}_r(a,N,s,p,m)\in p^{s+1}\mathcal{A}.
$$
This contradicts the definition of $N$. Hence,  for all $N\in\mathbb{Z}$, $\textsl{P}_{N}$ holds.
\medskip

Moreover, Conditions $(i)$ and $(ii)$ respectively imply that ${\mathbf A}_{r+s+1}(0)$ is an  invertible element of $\mathcal{A}$ and that ${\mathbf A}_{r+s+1}(m)\in {\mathbf g}_{r+s+1}(m)\mathcal{A}$. By \eqref{assert beta s}, we deduce that  
$$
{\mathbf S}_r(a,K+mp^s,s,p,m)\in p^{s+1}{\mathbf g}_{r+s+1}(m)\mathcal{A}. 
$$
The latter congruence holds for all $a\in\{0,\dots,p-1\}$, all $K\in\mathbb{Z}$ and all $m,r\in\mathbb{N}$, which proves that Assertion $\alpha_{s+1}$ holds, and finishes the induction on $s$. It remains to prove Lemmas \ref{Assertion 1}, \ref{Assertion 2}, \ref{iii 0} and \ref{Assertion 3}.

\subsubsection{Proof of Lemma \ref{Assertion 1}}

Let $a\in\{0,\dots,p-1\}$, $K\in\mathbb{Z}$ and $m,r\in\mathbb{N}$. We have
\begin{equation}\label{refref}
{\mathbf S}_r(a,K,0,p,m)={\mathbf A}_r\big(a+(K-m)p\big){\mathbf A}_{r+1}(m)-{\mathbf A}_{r+1}(K-m){\mathbf A}_r(a+mp).
\end{equation}
If $K-m\notin\mathbb{N}$, then ${\mathbf A}_r\big(a+(K-m)p\big)=0$ and ${\mathbf A}_{r+1}(K-m)=0$ so that ${\mathbf S}_r(a,K,0,p,m)=0\in p{\mathbf g}_{r+1}(m)\mathcal{A}$, as stated. We can thus assume that $K-m\in\mathbb{N}$. We write \eqref{refref} as follows:  
\begin{multline}
{\mathbf S}_r(a,K,0,p,m)={\mathbf A}_{r}(a)\Bigg({\mathbf A}_{r+1}(m)\bigg(\frac{{\mathbf A}_{r}\big(a+(K-m)p\big)}{{\mathbf A}_{r}(a)}-\frac{{\mathbf A}_{r+1}(K-m)}{{\mathbf A}_{r+1}(0)}\bigg)\label{der0}\\-{\mathbf A}_{r+1}(K-m)\bigg(\frac{{\mathbf A}_{r}(a+mp)}{{\mathbf A}_{r}(a)}-\frac{{\mathbf A}_{r+1}(m)}{{\mathbf A}_{r+1}(0)}\bigg)\Bigg).
\end{multline}
Since $\Psi_{\mathcal{N}}(r,0)=\{0\}$, we can use Hypothesis $(a)$ of Theorem  \ref{congruences formelles} with $0$ instead of $u$, and $a$ instead of $v$. We get this way
$$
\frac{{\mathbf A}_{r}\big(a+(K-m)p\big)}{{\mathbf A}_{r}(a)}-\frac{{\mathbf A}_{r+1}(K-m)}{{\mathbf A}_{r+1}(0)}\in p\frac{{\mathbf g}_{r+1}(K-m)}{{\mathbf A}_r(a)}\mathcal{A}
$$
and
$$
\frac{{\mathbf A}_{r}(a+mp)}{{\mathbf A}_{r}(a)}-\frac{{\mathbf A}_{r+1}(m)}{{\mathbf A}_{r+1}(0)}\in p\frac{{\mathbf g}_{r+1}(m)}{{\mathbf A}_r(a)}\mathcal{A}.
$$
Therefore,
\begin{align}
{\mathbf A}_r(a){\mathbf A}_{r+1}(m)\left(\frac{{\mathbf A}_{r}\big(a+(K-m)p\big)}{{\mathbf A}_{r}(a)}-\frac{{\mathbf A}_{r+1}(K-m)}{{\mathbf A}_{r+1}(0)}\right)
&\in p{\mathbf g}_{r+1}(K-m){\mathbf A}_{r+1}(m)\mathcal{A}\notag\\
&\in p{\mathbf g}_{r+1}(m)\mathcal{A}\label{der1}
\end{align}
and 
\begin{align}\label{der2}
{\mathbf A}_r(a){\mathbf A}_{r+1}(K-m)\left(\frac{{\mathbf A}_{r}(a+mp)}{{\mathbf A}_{r}(a)}-\frac{{\mathbf A}_{r+1}(m)}{{\mathbf A}_{r+1}(0)}\right)
&\in p{\mathbf g}_{r+1}(m){\mathbf A}_{r+1}(K-m)\mathcal{A}\notag\\
&\in p{\mathbf g}_{r+1}(m)\mathcal{A},
\end{align}
where we have used, in \eqref{der1}, Condition $(ii)$ that yields ${\mathbf A}_{r+1}(m)\in{\mathbf g}_{r+1}(m)\mathcal{A}$. Using \eqref{der1} and  \eqref{der2} in \eqref{der0}, we obtain ${\mathbf S}_r(a,K,0,p,m)\in p{\mathbf g}_{r+1}(m)\mathcal{A}$, as expected.

\subsubsection{Proof of Lemma \ref{Assertion 2}}
We have 
\begin{multline}\label{wesh wesh yo}
{\mathbf U}_r(a,K+mp^{s},p,j+mp^{s})-\frac{{\mathbf A}_{r+1}(j+mp^{s})}{{\mathbf A}_{r+1}(j)}{\mathbf U}_r(a,K,p,j)\\
=-{\mathbf A}_{r+1}(K-j){\mathbf A}_r(a+jp)\left(\frac{{\mathbf A}_r(a+jp+mp^{s+1})}{{\mathbf A}_r(a+jp)}-\frac{{\mathbf A}_{r+1}(j+mp^{s})}{{\mathbf A}_{r+1}(j)}\right).
\end{multline}
Since $j\in\Psi_{\mathcal{N}}(r,s)$, Hypothesis $(a)$ implies that the right hand side of  \eqref{wesh wesh yo} is in 
$$
{\mathbf A}_{r+1}(K-j){\mathbf A}_r(a+jp)p^{s+1}\frac{{\mathbf g}_{r+s+1}(m)}{{\mathbf A}_r(a+jp)}\mathcal{A}. 
$$
These estimates show that the left hand side of \eqref{wesh wesh yo} is in $p^{s+1}{\mathbf g}_{r+s+1}(m)\mathcal{A}$, which concludes the proof of the lemma.

\subsubsection{Proof of Lemma \ref{iii 0}}

We consider $s\in\mathbb{N}$, $s\geq 1$, such that $\alpha_s$ holds. We  fix $r\in\mathbb{N}$. If $\Psi_{\mathcal{N}}(r,s)=\{0,\dots,p^s-1\}$, Lemma~\ref{iii 0} is trivial. In the sequel, we assume that $\Psi_{\mathcal{N}}(r,s)\neq\{0,\dots,p^s-1\}$. 
\medskip

We have $u\in\{0,\dots,p^s-1\}\setminus\Psi_{\mathcal{N}}(r,s)$ if and only if there  exist  $(n,t)\in\mathcal{N}_{r+s-t+1}$, $t\leq s$, and $j\in\{0,\dots,p^{s-t}-1\}$ such that $u=j+p^{s-t}n$. We denote by $\mathcal{M}$ the set of the  $(n,t)\in\mathcal{N}_{r+s-t+1}$ with $t\leq s$. We thus have 
$$
\{0,\dots,p^s-1\}\setminus\Psi_{\mathcal{N}}(r,s)=\bigcup_{(n,t)\in\mathcal{M}}\big\{j+p^{s-t}n\,:\,0\leq j\leq p^{s-t}-1\big\}.
$$
In particular, the set $\mathcal{M}$ is non-empty.

We will show that there exist  $k\in\mathbb{N}$, $k\geq 1$, and $(n_1,t_1),\dots,(n_k,t_k)\in\mathcal{M}$ such that the sets
$$
J(n_i,t_i):=\big\{j+p^{s-t_i}n_i\,:\,0\leq j\leq p^{s-t_i}-1\big\}
$$ 
form a partition of $\{0,\dots,p^s-1\}\setminus\Psi_{\mathcal{N}}(r,s)$. We observe that  
$$
\mathcal{M}\subset\bigcup_{t=1}^s\Big(\big\{0,\dots,p^t-1\big\}\times\big\{t\big\}\Big)
$$ 
and thus $\mathcal{M}$ is finite. Hence, it is enough to show that if $(n,t),(n',t')\in\mathcal{M}$, $j\in\{0,\dots,p^{s-t}-1\}$ and $j'\in\{0,\dots,p^{s-t'}-1\}$ satisfy  $j+p^{s-t}n=j'+p^{s-t'}n'$, then we have either $J(n,t)\subset J(n',t')$ or $J(n',t')\subset J(n,t)$. 

Let us assume, for instance, that $t\leq t'$. Then there exists $j_0\in\{0,\dots,p^{t'-t}-1\}$ such that $j=j'+p^{s-t'}j_0$, so that $p^{s-t'}n'=p^{s-t}n+p^{s-t'}j_0$ and thus  $J(n',t')\subset J(n,t)$. Similarly, if $t\geq t'$, then $J(n,t)\subset J(n',t')$. Hence, we obtain
\begin{multline}
{\mathbf S}_r(a,K,s,p,m)=\\
\sum_{j\in\Psi_{\mathcal{N}}(r,s)}{\mathbf U}_r(a,K,p,j+mp^s)+\sum_{j\in\{0,\dots,p^s-1\}\setminus\Psi_{\mathcal{N}}(r,s)}{\mathbf U}_r(a,K,p,j+mp^s),\label{joint2}
\end{multline}
where
\begin{equation}\label{joint1}
\sum_{j\in\{0,\dots,p^s-1\}\setminus\Psi_{\mathcal{N}}(r,s)}{\mathbf U}_r(a,K,p,j+mp^s)=\sum_{i=1}^k\sum_{j=0}^{p^{s-t_i}-1}{\mathbf U}_r(a,K,p,j+p^{s-t_i}n_i+mp^s).
\end{equation}

We will now prove that for all $i\in\{1,\dots,k\}$, we have 
\begin{equation}\label{\'etoile 287}
\sum_{j=0}^{p^{s-t_i}-1}{\mathbf U}_r(a,K,p,j+p^{s-t_i}n_i+mp^s)\in p^{s+1}{\mathbf g}_{r+s+1}(m)\mathcal{A}.
\end{equation}
Let $i\in\{1,\dots,k\}$. By definition of $\mathbf{U}_r$, we have 
$$
\sum_{j=0}^{p^{s-t_i}-1}{\mathbf U}_r(a,K,p,j+p^{s-t_i}n_i+mp^s)={\mathbf S}_r(a,K,s-t_i,p,n_i+mp^{t_i}).
$$
Since $t_i\geq 1$, we get \textit{via} $\alpha_s$ that 
$$
{\mathbf S}_r(a,K,s-t_i,p,n_i+mp^{t_i})\in p^{s-t_i+1}{\mathbf g}_{r+s-t_i+1}(n_i+mp^{t_i})\mathcal{A}.
$$
We have  $(n_i,t_i)\in\mathcal{N}_{r+s-t_i+1}$ and thus we can apply Hypothesis $(b)$ of Theorem \ref{congruences formelles} with $r+s-t_i+1$ instead of $r$: 
$$
p^{s-t_i+1}{\mathbf g}_{r+s-t_i+1}(n_i+mp^{t_i})\in p^{s-t_i+1}p^{t_i}{\mathbf g}_{r+s+1}(m)\mathcal{Z}=p^{s+1}{\mathbf g}_{r+s+1}(m)\mathcal{Z}.
$$
It follows that for all $i\in\{1,\dots,k\}$, we have \eqref{\'etoile 287}. 
\medskip

Congruence \eqref{\'etoile 287}, together with \eqref{joint1} and \eqref{joint2}, shows that 
$$
{\mathbf S}_r(a,K,s,p,m)\equiv\sum_{j\in\Psi_{\mathcal{N}}(r,s)}{\mathbf U}_r(a,K,p,j+mp^s)\mod p^{s+1}{\mathbf g}_{r+s+1}(m)\mathcal{A},
$$
which completes the proof of Lemma \ref{iii 0}.

\subsubsection{Proof of Lemma \ref{Assertion 3}}

In this proof, $i$ is an element of $\{0,\dots,p-1\}$ and $u$ is an element of $\{0,\dots,p^{s-t-1}-1\}$. For $t<s$, we write $\beta_{t,s}$ as 
\begin{multline}
{\mathbf S}_r(a,K+mp^{s},s,p,m)\equiv\\
\sum_{i+up\in\Psi_{\mathcal{N}}(r+t,s-t)}\frac{{\mathbf A}_{r+t+1}(i+u p+mp^{s-t})}{{\mathbf A}_{r+t+1}(i+u p)}{\mathbf S}_r(a,K,t,p,i+u p)\mod p^{s+1}{\mathbf g}_{r+s+1}(m)\mathcal{A}.\label{beta t s}
\end{multline}
We want to prove the congruence $\beta_{t+1,s}$, which can be written
\begin{multline*}
{\mathbf S}_r(a,K+mp^{s},s,p,m)\equiv\\
\sum_{u\in\Psi_{\mathcal{N}}(r+t+1,s-t-1)}\frac{{\mathbf A}_{r+t+2}(u+mp^{s-t-1})}{{\mathbf A}_{r+t+2}(u)}{\mathbf S}_r(a,K,t+1,p,u)\mod p^{s+1}{\mathbf g}_{r+s+1}(m)\mathcal{A}.
\end{multline*}
We see that ${\mathbf S}_r(a,K,t+1,p,u)=\sum_{i=0}^{p-1}{\mathbf S}_r(a,K,t,p,i+u p)$. Hence, with 
\begin{multline*}
X:={\mathbf S}_r(a,K+mp^{s},s,p,m)\\
-\sum_{i=0}^{p-1}\sum_{u\in\Psi_{\mathcal{N}}(r+t+1,s-t-1)}\frac{{\mathbf A}_{r+t+2}(u+mp^{s-t-1})}{{\mathbf A}_{r+t+2}(u)}{\mathbf S}_r(a,K,t,p,i+u p),
\end{multline*}
it remains to show that $X\in p^{s+1}{\mathbf g}_{r+s+1}(m)\mathcal{A}$. We have 
\begin{equation}\label{Psi+1}
i+up\in\Psi_{\mathcal{N}}(r+t,s-t)\Rightarrow u\in\Psi_{\mathcal{N}}(r+t+1,s-t-1).
\end{equation}
Indeed if $u\notin\Psi_{\mathcal{N}}(r+t+1,s-t-1)$, then there exist $(n,k)\in\mathcal{N}_{r+s-k+1}$, $k\leq s-t-1$, and $j\in\{0,\dots,p^{s-t-1-k}-1\}$ such that $u=j+p^{s-t-1-k}n$. Hence, $i+up=i+jp+p^{s-t-k}n$, so that  $i+up\notin\Psi_{\mathcal{N}}(r+t,s-t)$. By $\beta_{t,s}$ in the form  \eqref{beta t s} and  modulo $p^{s+1}{\mathbf g}_{r+s+1}(m)\mathcal{A}$, we obtain 
\begin{multline*}
X\equiv \sum_{i+up\in\Psi_{\mathcal{N}}(r+t,s-t)}{\mathbf S}_r(a,K,t,p,i+up)\left(\frac{{\mathbf A}_{r+t+1}(i+up+mp^{s-t})}{{\mathbf A}_{r+t+1}(i+up)}-\frac{{\mathbf A}_{r+t+2}(u+mp^{s-t-1})}{{\mathbf A}_{r+t+2}(u)}\right)\\
-\underset{i+up\notin\Psi_{\mathcal{N}}(r+t,s-t)}{\sum_{u\in\Psi_{\mathcal{N}}(r+t+1,s-t-1)}}\frac{{\mathbf A}_{r+t+2}(u+mp^{s-t-1})}{{\mathbf A}_{r+t+2}(u)}{\mathbf S}_r(a,K,t,p,i+up).
\end{multline*}
But, by Hypothesis $(a_1)$ of Theorem \ref{congruences formelles} applied with $s-t-1$ for  $s$ and $r+t+1$ for $r$, we have 
$$
{\mathbf g}_{r+t+1}(i+up)\left(\frac{{\mathbf A}_{r+t+1}(i+up+mp^{s-t})}{{\mathbf A}_{r+t+1}(i+u p)}-\frac{{\mathbf A}_{r+t+2}(u+mp^{s-t-1})}{{\mathbf A}_{r+t+2}(u)}\right)\in p^{s-t}{\mathbf g}_{r+s+1}(m)\mathcal{A}.
$$
Moreover, since  $t<s$ and $\alpha_s$ holds, we have  
\begin{equation}\label{varii1}
{\mathbf S}_r(a,K,t,p,i+up)\in p^{t+1}{\mathbf g}_{r+t+1}(i+u p)\mathcal{A}
\end{equation}
and, modulo $p^{s+1}{\mathbf g}_{r+s+1}(m)\mathcal{A}$, we obtain 
\begin{equation}\label{varii2}
X\equiv -\underset{i+up\notin\Psi_{\mathcal{N}}(r+t,s-t)}{\sum_{u\in\Psi_{\mathcal{N}}(r+t+1,s-t-1)}}\frac{{\mathbf A}_{r+t+2}(u+mp^{s-t-1})}{{\mathbf A}_{r+t+2}(u)}{\mathbf S}_r(a,K,t,p,i+up).
\end{equation}

Finally, when $i+up\notin\Psi_{\mathcal{N}}(r+t,s-t)$, we can apply Condition $(a_2)$ of  Theorem~\ref{congruences formelles} with $s-t-1$ for $s$, $i$ for $v$ and $r+t+1$ for $r$, so that 
\begin{equation}\label{varii3}
{\mathbf g}_{r+t+1}(i+up)\frac{{\mathbf A}_{r+t+2}(u+mp^{s-t-1})}{{\mathbf A}_{r+t+2}(u)}\in p^{s-t}{\mathbf g}_{r+s+1}(m)\mathcal{A}.
\end{equation}
Using \eqref{varii1} and \eqref{varii3} in \eqref{varii2}, we thus have  $X\in p^{s+1}{\mathbf g}_{r+s+1}(m)\mathcal{A}$. This completes the proof of Lemma \ref{Assertion 3} and consequently that of Theorem \ref{congruences formelles}. 
\hfill$\square$

\section{Proof of Theorem \ref{theo expand}}\label{section proof theo expand}

The aim of this section if to prove Theorem  \ref{theo expand}. 
We will first prove some elementary properties of the algebras of functions  $\mathcal{A}_b$ and $\mathcal{A}_b^\ast$.

\subsection{Algebras of functions taking values into  $\mathbb{Z}_p$}

We gather in the following lemma a few properties of the algebras $\mathfrak{A}_{p,n}$ and $\mathfrak{A}_{p,n}^\ast$.

\begin{Lemma}\label{Lemma alg\`ebres 1}
We fix a prime  $p$ and $n\in\mathbb{N}$, $n\geq 1$.
\begin{enumerate}
\item An element $f$ of $\mathfrak{A}_{p,n}$, respectively of  $\mathfrak{A}_{p,n}^\ast$, is invertible in $\mathfrak{A}_{p,n}$, respectively in  $\mathfrak{A}_{p,n}^\ast$, if and only if  $f\big((\mathbb{Z}_p^\times)^n\big)\subset\mathbb{Z}_p^\times$;
\item the algebra $\mathfrak{A}_{p,n}$ contains the rational functions  
$$
\begin{array}{cccc}
f:&(\mathbb{Z}_p^{\times})^n & \rightarrow & \mathbb{Z}_p\\
  & (x_1,\dots,x_n) & \mapsto & \frac{P(x_1,\dots,x_n)}{Q(x_1,\dots,x_n)},
\end{array}
$$
where $P,Q\in\mathbb{Z}_p[X_1,\dots,X_n]$ and, for all $x_1,\dots,x_n\in\mathbb{Z}_p^{\times}$, we have $Q(x_1,\dots,x_n)\in\mathbb{Z}_p^{\times}$;
\item if $f\in\mathfrak{A}_{p,n}^{\times}$ and if $\mathfrak{E}_s$, $s\geq 1$, is the  function \textit{Euler quotient} defined by
$$
\begin{array}{cccc}
\mathfrak{E}_s: & \mathbb{Z}_p^{\times} & \rightarrow & \mathbb{Z}_p\\
 & x & \mapsto & \big(x^{\varphi(p^{s})}-1\big)/p^s,
\end{array}
$$
then we have $\mathfrak{E}_s\circ f\in\mathfrak{A}_{p,n}^\ast$.
\end{enumerate}
\end{Lemma}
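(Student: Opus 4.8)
The plan is to treat the three assertions in turn: (1) and (2) are essentially formal, and (3) carries the only genuine estimate. For (1), if $f$ is invertible in $\mathfrak{A}_{p,n}$ with inverse $g$ then $f(\mathbf{x})g(\mathbf{x})=1$ for every $\mathbf{x}\in(\mathbb{Z}_p^\times)^n$, forcing $f(\mathbf{x})\in\mathbb{Z}_p^\times$. Conversely, if $f$ takes its values in $\mathbb{Z}_p^\times$, I would set $g(\mathbf{x}):=f(\mathbf{x})^{-1}$ and check directly that $g\in\mathfrak{A}_{p,n}$ (resp. $\mathfrak{A}_{p,n}^\ast$): for $\mathbf{a}\in\mathbb{Z}_p^n$ and $m\geq1$ one has $g(\mathbf{x}+\mathbf{a}p^m)-g(\mathbf{x})=\big(f(\mathbf{x})-f(\mathbf{x}+\mathbf{a}p^m)\big)f(\mathbf{x})^{-1}f(\mathbf{x}+\mathbf{a}p^m)^{-1}$, where the last two factors are units and the first factor lies in $p^m\mathbb{Z}_p$ (resp. $p^{m-1}\mathbb{Z}_p$) by the hypothesis on $f$; the same identity shows $g$ is a two-sided inverse.

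For (2), I would first observe that any $P\in\mathbb{Z}_p[X_1,\dots,X_n]$, restricted to $(\mathbb{Z}_p^\times)^n$, lies in $\mathfrak{A}_{p,n}$: the polynomial $P(\mathbf{X}+\mathbf{Y})-P(\mathbf{X})\in\mathbb{Z}_p[\mathbf{X},\mathbf{Y}]$ vanishes at $\mathbf{Y}=\mathbf{0}$ and therefore belongs to the ideal $(Y_1,\dots,Y_n)$, so specialising $\mathbf{Y}\mapsto\mathbf{a}p^m$ and $\mathbf{X}\mapsto\mathbf{x}$ (everything in $\mathbb{Z}_p$) yields an element of $p^m\mathbb{Z}_p$. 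Given $P,Q$ as in the statement, $Q$ then lies in $\mathfrak{A}_{p,n}$ and has image in $\mathbb{Z}_p^\times$, so by (1) it is a unit of $\mathfrak{A}_{p,n}$, whence $P/Q=P\cdot Q^{-1}\in\mathfrak{A}_{p,n}$ since the algebra is closed under multiplication.

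For (3): since $\bigl|(\mathbb{Z}/p^s\mathbb{Z})^\times\bigr|=\varphi(p^s)$, Lagrange's theorem gives $x^{\varphi(p^s)}\equiv1\pmod{p^s}$ for $x\in\mathbb{Z}_p^\times$, so $\mathfrak{E}_s\colon\mathbb{Z}_p^\times\to\mathbb{Z}_p$ is well defined, and since $f\in\mathfrak{A}_{p,n}^\times$ takes its values in $\mathbb{Z}_p^\times$ by (1), $\mathfrak{E}_s\circ f$ is a well-defined $\mathbb{Z}_p$-valued map. It remains to establish the Lipschitz bound with exponent $m-1$. Fixing $m\geq1$, $\mathbf{x}\in(\mathbb{Z}_p^\times)^n$, $\mathbf{a}\in\mathbb{Z}_p^n$ and writing $y:=f(\mathbf{x})$, $y':=f(\mathbf{x}+\mathbf{a}p^m)$, one has $y'-y\in p^m\mathbb{Z}_p$, and the claim reduces to $(y')^{\varphi(p^s)}-y^{\varphi(p^s)}\in p^{m+s-1}\mathbb{Z}_p$. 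I would prove by induction on $j\geq0$ that $y'\equiv y\pmod{p^m}$ implies $(y')^{p^j}\equiv y^{p^j}\pmod{p^{m+j}}$: the base case $j=0$ is the hypothesis, and the inductive step is the $p$-th power map together with a valuation bound on the binomial coefficients $\binom{p}{k}$, the inequality $m+j\geq1$ absorbing the exceptional terms $k=1$ and $k=p$. Then, writing $\varphi(p^s)=p^{s-1}(p-1)$ (valid for every prime $p$, including $p=2$), I take $j=s-1$ and raise both sides to the $(p-1)$-st power, obtaining $(y')^{\varphi(p^s)}\equiv y^{\varphi(p^s)}\pmod{p^{m+s-1}}$ and hence $\mathfrak{E}_s(y')-\mathfrak{E}_s(y)\in p^{m-1}\mathbb{Z}_p$, i.e. $\mathfrak{E}_s\circ f\in\mathfrak{A}_{p,n}^\ast$.

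The main obstacle, and essentially the only subtle point, is keeping $p=2$ on the same footing as the odd primes in this last estimate: using $\varphi(p^s)=p^{s-1}(p-1)$ uniformly sidesteps the non-cyclicity of $(\mathbb{Z}/2^s\mathbb{Z})^\times$ for $s\geq3$, and one checks that the binomial valuation bound in the induction holds verbatim when $p=2$ (there the only terms are $k=0,1,2$). Everything else is bookkeeping with $p$-adic valuations.
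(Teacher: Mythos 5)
Your arguments for parts (1) and (2) coincide with the paper's in all essentials: part (1) rests on the same identity, writing the difference of inverses as $\big(f(\mathbf{x})-f(\mathbf{x}+\mathbf{a}p^m)\big)f(\mathbf{x})^{-1}f(\mathbf{x}+\mathbf{a}p^m)^{-1}$ with the last two factors units, and part (2) reduces, exactly as in the paper, to the observation that polynomial functions lie in $\mathfrak{A}_{p,n}$ and then invokes part (1) for the denominator.

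For part (3) you take a genuinely different route to the same estimate. The paper expands $(x+ap^m)^{\varphi(p^s)}$ in one step via the binomial theorem, reduces modulo $p^{s+m}$, and then invokes Kummer's theorem: $v_p\binom{\varphi(p^s)}{k}$ equals the number of carries when adding $k$ and $\varphi(p^s)-k$ in base $p$, which it computes to be $s-1-v_p(k)$, after which the inequality $(k-1)m\geq v_p(k)$ for $k\geq 1$, $m\geq 1$ finishes the bound. You instead iterate the $p$-th power map: the induction $(y')^{p^j}\equiv y^{p^j}\pmod{p^{m+j}}$ only needs the trivial fact that $v_p\binom{p}{k}\geq 1$ for $1\leq k\leq p-1$ together with $m+j\geq 1$, and passing from $j=s-1$ to the full exponent $\varphi(p^s)=p^{s-1}(p-1)$ is harmless because $u\equiv v\pmod{p^N}$ implies $u^{p-1}\equiv v^{p-1}\pmod{p^N}$, the factor $u^{p-2}+\cdots+v^{p-2}$ being a $p$-adic integer. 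Your version is more elementary and self-contained, sidestepping Kummer's theorem entirely; the paper's is a single closed-form estimate but rests on a less trivial combinatorial fact. Both yield the exponent $m+s-1$ and hence the membership $\mathfrak{E}_s\circ f\in\mathfrak{A}_{p,n}^\ast$.
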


\begin{proof}
Let $f\in\mathfrak{A}_{p,n}$. For $f$ to be invertible in $\mathfrak{A}_{p,n}$, we clearly need that $f\big((\mathbb{Z}_p^\times)^n\big)\subset\mathbb{Z}_p^\times$ and in this case, for all $\mathbf{x}\in(\mathbb{Z}_p^\times)^n$, all  $\mathbf{a}\in\mathbb{Z}_p^n$ and all $m\in\mathbb{N}$, $m\geq 1$, we have 
$$
\frac{1}{f(\mathbf{x}+\mathbf{a}p^m)}=\frac{1}{f(\mathbf{x})+\eta p^m}=\frac{1}{f(\mathbf{x})}\frac{1}{1+\frac{\eta}{f(\mathbf{x})}p^m}\equiv\frac{1}{f(\mathbf{x})}\mod p^m\mathbb{Z}_p,
$$
because $f(\mathbf{x})\in\mathbb{Z}_p^\times$, $\eta\in\mathbb{Z}_p$, and  $(1+p^m\mathbb{Z}_p,\times)$ is a group. The case $f\in\mathfrak{A}_{p,n}^\ast$ being similar, Assertion $(1)$ is proved.
\medskip

To prove Assertion $(2)$, we apply Assertion $(1)$ because any polynomial function $f:\mathbf{x}\in(\mathbb{Z}_p^\times)^n\mapsto P(\mathbf{x})$, with  $P\in\mathbb{Z}_p[X_1,\dots,X_n]$ is in $\mathfrak{A}_{p,n}$.
\medskip

Let us now prove Assertion $(3)$. For all $s\in\mathbb{N}$, $s\geq 1$, the cardinal of  $(\mathbb{Z}_p/p^s\mathbb{Z}_p)^{\times}$ is $\varphi(p^s)$ because $\mathbb{Z}_p/p^s\mathbb{Z}_p$ is isomorphic to $\mathbb{Z}/p^s\mathbb{Z}$. Hence, for all $x\in\mathbb{Z}_p^{\times}$, we have $x^{\varphi(p^s)}\equiv 1\mod p^s\mathbb{Z}_p$ and the function  $\mathfrak{E}_s$ is well defined. 

We fix $s\in\mathbb{N}$, $s\geq 1$. To prove Assertion $(3)$, it is enough to prove that for all $x\in\mathbb{Z}_p^{\times}$, all $a\in\mathbb{Z}_p$ and all $m\in\mathbb{N}$, $m\geq 1$, we have $\mathfrak{E}_s(x+ap^m)\equiv\mathfrak{E}_s(x)\mod p^{m-1}\mathbb{Z}_p$. We have 
\begin{align*}
(x+ap^m)^{\varphi(p^s)}&=\sum_{k=0}^{\varphi(p^s)}\binom{\varphi(p^s)}{k}\frac{a^k}{x^k}p^{km}x^{\varphi(p^s)}\\
&\equiv x^{\varphi(p^s)}+\sum_{k=1}^{\varphi(p^s)}\binom{\varphi(p^s)}{k}\frac{a^k}{x^k}p^{km}\mod p^{s+m}\mathbb{Z}_p,
\end{align*}
because $x^{\varphi(p^s)}\equiv 1\mod p^s\mathbb{Z}_p$. By a result of Kummer, the $p$-adic valuation of $\binom{\varphi(p^s)}{k}$ is the number of carries in the addition  of $k$ and $\varphi(p^s)-k$ in base $p$. Let us show that this number is equal to $s-1-v_p(k)$. 

Indeed, if $v_p(k)=0$, then this number is $s-1$ because $\varphi(p^s)=(p-1)p^{s-1}$. If $v_p(k)=\alpha\geq 1$, then we write $k=k'p^{\alpha}$ and $\varphi(p^s)-k=p^{\alpha}\big((p-1)p^{s-1-\alpha}-k'\big)$ with $v_p(k')=0$, so that the number of carries of the addition of $k$ and $\varphi(p^s)-k$ in base $p$ is the number of carries in the addition of $k'$ and $\varphi(p^{s-\alpha})-k'$, \textit{i.e.} $s-1-\alpha=s-1-v_p(k)$.

In particular, we obtain that, for all $k\geq 1$,  
$$
v_p\left(\binom{\varphi(p^s)}{k}\frac{a^k}{x^k}p^{km}\right)\geq s+m+(k-1)m-v_p(k)-1\geq s+m-1,
$$
hence $(x+ap^m)^{\varphi(p^s)}\equiv x^{\varphi(p^s)}\mod p^{s+m-1}\mathbb{Z}_p$. Consequently, we have  $\mathfrak{E}_s(x+ap^m)\equiv\mathfrak{E}_s(x)\mod p^{m-1}\mathbb{Z}_p$, and the proof of Lemma \ref{Lemma alg\`ebres 1} is complete.
\end{proof}

\begin{Lemma}\label{Lemma alg\`ebres 2}

Let $\nu,D\in\mathbb{N}$, $D\geq 1$, and $b\in\{1,\dots,D\}$, $\gcd(b,D)=1$.
\begin{enumerate}
\item We have $\mathcal{A}_b(p^\nu,D)\subset\mathcal{A}_b(p^\nu,D)^\ast$ and  $p\mathcal{A}_b(p^\nu,D)^\ast\subset\mathcal{A}_b(p^\nu,D)$;
\item An element $f$ of $\mathcal{A}_b(p^\nu,D)$, respectively of  $\mathcal{A}_b(p^\nu,D)^\ast$, is invertible in $\mathcal{A}_b(p^\nu,D)$, 
respectively in  $\mathcal{A}_b(p^\nu,D)^\ast$, if and only if  $f\big(\Omega_b(p^\nu,D)\big)\subset\mathbb{Z}_p^\times$;
\item Any constant function from $\Omega_b(p^\nu,D)$ into $\mathbb{Z}_p$ is in  $\mathcal{A}_b(p^\nu,D)$ ;
\item If $r\in\mathbb{N}$ and $\alpha\in\mathbb{Q}$ satisfy  $d(\alpha)=p^\mu D'$, with $1\leq\mu\leq \nu$ and $D'\mid D$, then the map $t\in\Omega_b(p^\nu,D)\mapsto d(\alpha)\underline{t^{(r)}\alpha}$ is in  $\mathcal{A}_b(p^\nu,D)^\times$;
\item If $\alpha\in\mathbb{Q}\cap\mathbb{Z}_p$ and $k\in\mathbb{N}$, then the map $t\in\Omega_b(p^\nu,D)\mapsto\varpi_{p^k}(t\alpha)$ is in $\mathcal{A}_b(p^\nu,D)$;
\item If $n\in\mathbb{N}$, $n\geq 1$, $f_1,\dots,f_n\in\mathcal{A}_b(p^\nu,D)^\times$, $g\in\mathfrak{A}_{p,n}$ and  $h\in\mathfrak{A}_{p,n}^\ast$, then  $g':=g\circ(f_1,\dots,f_n)\in\mathcal{A}_b(p^\nu,D)$ and $h':=h\circ(f_1,\dots,f_n)\in\mathcal{A}_b(p^\nu,D)^\ast$. Furthermore if $g$ is invertible in $\mathfrak{A}_{p,n}$, respectively $h$ is invertible in $\mathfrak{A}_{p,n}^\ast$, then $g'$ is invertible in $\mathcal{A}_b(p^\nu,D)$, respectively $h'$ is invertible in  $\mathcal{A}_b(p^\nu,D)^\ast$;
\item If $f\in\mathcal{A}_b(p^\nu,D)$ and $g\in\mathcal{A}_b(p^\nu,D)^\ast$, then 
$$
\sum_{t\in\Omega_b(p^\nu,D)}f(t)\in p^{\nu-1}\mathbb{Z}_p\quad\textup{and}\quad\sum_{t\in\Omega_b(p^\nu,D)}g(t)\in p^{\nu-2}\mathbb{Z}_p.
$$
\end{enumerate}
\end{Lemma}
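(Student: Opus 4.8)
The plan is to handle the seven assertions mostly by unwinding definitions, concentrating the real effort on~(7). Assertion~(1) is immediate: $f(t_1)\equiv f(t_2)\bmod p^m$ implies $f(t_1)\equiv f(t_2)\bmod p^{m-1}$, giving the first inclusion, and for $g\in\mathcal{A}_b(p^\nu,D)^\ast$ one has $pg(t_1)\equiv pg(t_2)\bmod p^m$ whenever $t_1\equiv t_2\bmod p^m$, giving the second. Assertion~(2) is proved exactly like Assertion~(1) of Lemma~\ref{Lemma alg\`ebres 1}: if $f\big(\Omega_b(p^\nu,D)\big)\subset\mathbb{Z}_p^\times$, then $1/f$ is well defined with values in $\mathbb{Z}_p$, and writing $f(t_1)=f(t_2)\big(1+\eta p^m/f(t_2)\big)$ with $\eta\in\mathbb{Z}_p$ and using that $1+p^m\mathbb{Z}_p$ is a multiplicative group yields $1/f(t_1)\equiv 1/f(t_2)\bmod p^m\mathbb{Z}_p$ (and similarly $\bmod\, p^{m-1}$ in the starred case); the converse is clear. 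Assertion~(3) is trivial, and for Assertion~(6) invertibility of the $f_i$ gives $\big(f_1(t),\dots,f_n(t)\big)\in(\mathbb{Z}_p^\times)^n$, so $g'$ and $h'$ are well defined; if $t_1\equiv t_2\bmod p^m$ then $f_i(t_1)=f_i(t_2)+a_ip^m$ with $a_i\in\mathbb{Z}_p$, and the defining congruences of $\mathfrak{A}_{p,n}$, resp. $\mathfrak{A}_{p,n}^\ast$, give $g'(t_1)=g\big(\mathbf{f}(t_2)+\mathbf{a}p^m\big)\equiv g'(t_2)\bmod p^m$, resp. $h'(t_1)\equiv h'(t_2)\bmod p^{m-1}$; the invertibility statements then follow from Assertion~(2) together with Assertion~(1) of Lemma~\ref{Lemma alg\`ebres 1}.

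For Assertions~(4) and~(5) the idea is to recognise the functions as reductions to a standard representative. In~(5), since $\alpha\in\mathbb{Z}_p\cap\mathbb{Q}$ the rational $t\alpha$ has denominator prime to $p$, and $\varpi_{p^k}(t\alpha)$ is by definition the element of $\{0,\dots,p^k-1\}$ congruent to $t\alpha$ modulo $p^k$; if $t_1\equiv t_2\bmod p^m$ then $t_1\alpha\equiv t_2\alpha\bmod p^{\min(m,k)}$, so the two representatives agree modulo $p^{\min(m,k)}$, hence modulo $p^m$ (they even coincide when $m\geq k$). In~(4), write $\alpha=a/(p^\mu D')$ in lowest terms; then $d(\alpha)\underline{t^{(r)}\alpha}$ is the representative in $\{1,\dots,p^\mu D'\}$ of $t^{(r)}a$ modulo $p^\mu D'$, and it is a $p$-adic unit since $t^{(r)}\equiv t\bmod p^\nu$ is prime to $p$ and so is $a$. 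For the congruence, if $t_1\equiv t_2\bmod p^m$, the relations $t_i^{(r)}\equiv t_i\bmod p^\nu$ and $p^rt_i^{(r)}\equiv t_i\bmod D$ force $t_1^{(r)}\equiv t_2^{(r)}$ both modulo $p^{\min(m,\nu)}$ and modulo $D'$; when $m\geq\mu$, the Chinese remainder theorem gives $t_1^{(r)}a\equiv t_2^{(r)}a\bmod p^\mu D'$, so the two representatives coincide, and when $m<\mu$ one has $t_1^{(r)}a\equiv t_2^{(r)}a\bmod p^m$ with $p^m\mid p^\mu D'$, so they differ by a multiple of $p^m$. Invertibility is then Assertion~(2).

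The substantial part is Assertion~(7). The starred case reduces to the unstarred one via Assertion~(1): if $g\in\mathcal{A}_b(p^\nu,D)^\ast$ then $pg\in\mathcal{A}_b(p^\nu,D)$, so $p\sum_tg(t)\in p^{\nu-1}\mathbb{Z}_p$. For $f\in\mathcal{A}_b(p^\nu,D)$ I would induct on $\nu$, the cases $\nu\leq 1$ being trivial since $\sum_tf(t)\in\mathbb{Z}_p$. For $\nu\geq 2$, reduction modulo $p^{\nu-1}D$ partitions $\Omega_b(p^\nu,D)$ into the fibres $\{s+jp^{\nu-1}D:0\leq j\leq p-1\}$ indexed by $s\in\Omega_b(p^{\nu-1},D)$ — here one uses $p\mid p^{\nu-1}D$ to see that each lift is again prime to $p$ — and the restriction of $f$ to $\Omega_b(p^{\nu-1},D)\subset\Omega_b(p^\nu,D)$ lies in $\mathcal{A}_b(p^{\nu-1},D)$. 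Since $s+jp^{\nu-1}D\equiv s\bmod p^{\nu-1}$, each fibre sum is $\equiv pf(s)\bmod p^{\nu-1}$, whence
$$
\sum_{t\in\Omega_b(p^\nu,D)}f(t)\equiv p\sum_{s\in\Omega_b(p^{\nu-1},D)}f(s)\bmod p^{\nu-1}\mathbb{Z}_p,
$$
and the induction hypothesis $\sum_sf(s)\in p^{\nu-2}\mathbb{Z}_p$ finishes the step.

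The hard part is this last inductive argument — more precisely, choosing the right coarser index set: taking fibres of reduction modulo $p^{\nu-1}D$ (rather than modulo $p^{\nu-1}$) is what simultaneously makes all fibres have the uniform cardinality $p$ and keeps the restricted function inside the algebra $\mathcal{A}_b(p^{\nu-1},D)$, so that the induction closes. Everything else in the lemma is routine bookkeeping with the definitions of $\mathcal{A}_b(p^\nu,D)$, $\mathcal{A}_b(p^\nu,D)^\ast$, $t^{(r)}$ and $\varpi_{p^k}$.
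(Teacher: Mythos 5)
Your proof is correct and follows essentially the same route as the paper: assertions (1)--(6) are handled by direct unwinding of the definitions (including recognizing $d(\alpha)\underline{t^{(r)}\alpha}$ and $\varpi_{p^k}(t\alpha)$ as canonical representatives and splitting on $m\geq\mu$ versus $m<\mu$), and (7) is proved by induction on $\nu$ using exactly the same fibration: the paper writes $\Omega_b(p^{\nu+1},D)$ as a union over $u$ coprime to $p$ of $p$-element fibres $\{u+vp^\nu\}$, which is the same partition you describe as the fibres of reduction modulo $p^{\nu-1}D$. Your reduction of the starred case in (7) to the unstarred one via (1) is a minor tidying of the paper's ``the case $f\in\mathcal{A}_b(p^\nu,D)^\ast$ is similar,'' and your reference to Assertion (1) of Lemma~\ref{Lemma alg\`ebres 1} for part (2) is the correct one (the paper's cross-reference to ``(2)'' there is a typo).
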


\begin{proof}
Assertions $(1)$ and $(3)$ are obvious. The proof of Assertion $(2)$ is similar to that of Assertion $(2)$ of Lemma \ref{Lemma alg\`ebres 1}. 
\medskip

Let us prove Assertion $(4)$. For all $t\in\Omega_b(p^\nu,D)$, the number  $d(\alpha)\underline{t^{(r)}\alpha}$ is the numerator of $\langle t^{(r)}\alpha\rangle$ and thus it is in  $\mathbb{Z}_p^\times$ because $p$ divides $d(\alpha)$. 

Let $\alpha=\kappa/d(\alpha)$,  $t_1,t_2\in\Omega_b(p^\nu,D)$ and $m\in\mathbb{N}$, $m\geq 1$ be such that  $t_1\equiv t_2\mod p^m$. Since  $t_1\equiv t_2\equiv b\mod D$, we get $t_1^{(r)}\equiv t_2^{(r)}\mod D$. 

If $m\geq\mu$, then $t_1^{(r)}\equiv t_2^{(r)}\mod p^\mu$ and the chinese remainder theorem gives $t_1^{(r)}\equiv t_2^{(r)}\mod p^\mu D$. Since $D'\mid D$, we obtain  $t_1^{(r)}\kappa\equiv t_2^{(r)}\kappa\mod d(\alpha)$ and thus $d(\alpha)\underline{t_1^{(r)}\alpha}=d(\alpha)\underline{t_2^{(r)}\alpha}$, as expected.

On the other hand, if $m<\mu$, then $t_1^{(r)}\equiv t_2^{(r)}\mod p^m$. Since  $D'\mid D$ and $d(\alpha)\underline{t_i^{(r)}\alpha}\equiv t_i^{(r)}\kappa\mod d(\alpha)$ for $i\in\{1,2\}$, we obtain $d(\alpha)\underline{t_1\alpha}\equiv d(\alpha)\underline{t_2\alpha}\mod p^m$, which proves Assertion $(4)$.
\medskip

Assertion $(5)$ is obvious and Assertion $(6)$ is a direct consequence of the definitions and of Assertion $(2)$.
\medskip

Let us prove Assertion $(7)$ by induction on $\nu$ in the case $f\in\mathcal{A}_{b}(p^\nu,D)$. We denote by $A_\nu$ the assertion
$$
\sum_{t\in\Omega_b(p^\nu,D)}f(t)\in p^{\nu-1}\mathbb{Z}_p.
$$
Assertion $A_1$ trivially holds. Let $\nu\in\mathbb{N}$, $\nu\geq 1$ be such that $A_\nu$ holds. 

The set $\Omega_b(p^{\nu+1},D)$ is the set of the $t_{\ell,\nu+1}\in\{1,\dots,p^{\nu+1}D\}$ such that $t_{\ell,\nu+1}\equiv b\mod D$ and $t_{\ell,\nu+1}\equiv \ell\mod p^{\nu+1}$, with $\ell\in\{1,\dots,p^{\nu+1}\}$, $\gcd(\ell,p)=1$. Let $\ell:=u+vp^\nu$ with $u\in\{1,\dots,p^\nu\}$, $\gcd(u,p)=1$ and  $v\in\{0,\dots,p-1\}$. Then, we have $t_{\ell,\nu+1}\equiv u\mod p^\nu$ and  by the chinese remainder theorem, we obtain $t_{\ell,\nu+1}\equiv t_{u,\nu}\mod p^\nu D$, so that 
\begin{align*}
\sum_{t\in\Omega_b(p^{\nu+1},D)}f(t)&=\underset{\gcd(\ell,p)=1}{\sum_{\ell=1}^{p^{\nu+1}}}f(t_{\ell,\nu+1})=\underset{\gcd(u,p)=1}{\sum_{u=1}^{p^\nu}}\sum_{v=0}^{p-1}f(t_{u+vp^\nu,\nu+1})\\
&\equiv p\underset{\gcd(u,p)=1}{\sum_{u=1}^{p^\nu}}f(t_{u,\nu})\mod p^\nu\mathbb{Z}_p\\
&\equiv p\sum_{t\in\Omega_b(p^\nu,D)}f(t)\mod p^\nu\mathbb{Z}_p\\
&\equiv 0\mod p^\nu\mathbb{Z}_p,
\end{align*}
by Assertion $A_\nu$. Hence, Assertion $A_{\nu+1}$ holds, which completes the proof of  Asssertion $(7)$ when $f\in\mathcal{A}_b(p^\nu,D)$. The case  $f\in\mathcal{A}_b(p^\nu,D)^\ast$ is similar.
\end{proof}

\subsection{Proof of Theorem \ref{theo expand}}\label{subsection demo theo expand}

In this section, we fix two $r$-tuples $\boldsymbol{\alpha}$ and $\boldsymbol{\beta}$ with parameters in $\mathbb{Q}\setminus\mathbb{Z}_{\leq 0}$. We assume that  $\langle\boldsymbol{\alpha}\rangle$ and $\langle\boldsymbol{\beta}\rangle$ are disjoint and that $H_{\boldsymbol{\alpha},\boldsymbol{\beta}}$ holds.

We set $C=C_{\langle\boldsymbol{\alpha}\rangle,\langle\boldsymbol{\beta}\rangle}$, $C'=C_{\boldsymbol{\alpha},\boldsymbol{\beta}}'$, $\mathfrak{n}=\mathfrak{n}_{\boldsymbol{\alpha},\boldsymbol{\beta}}$, $\mathfrak{m}=\mathfrak{m}_{\boldsymbol{\alpha},\boldsymbol{\beta}}$ and  $\lambda_p=\lambda_p(\boldsymbol{\alpha},\boldsymbol{\beta})$. We write $d_{\boldsymbol{\alpha},\boldsymbol{\beta}}=p^\nu D$ with $\nu\geq 0$ and  $\gcd(D,p)=1$. For all $t\in\{1,\dots,d_{\boldsymbol{\alpha},\boldsymbol{\beta}}\}$ coprime to $d_{\boldsymbol{\alpha},\boldsymbol{\beta}}$ and all $r\in\mathbb{N}$, we recall that  $t^{(r)}$ is the unique element in $\{1,\dots,d_{\boldsymbol{\alpha},\boldsymbol{\beta}}\}$ coprime to $d_{\boldsymbol{\alpha},\boldsymbol{\beta}}$ such that $t^{(r)}\equiv t\mod p^\nu$ and $p^rt^{(r)}\equiv t\mod D$. 

We fix $b\in\{1,\dots,D\}$ coprime to $D$ and set $\Omega_{b}:=\Omega_b(p^\nu,D)$, $\mathcal{A}_{b}:=\mathcal{A}_b(p^\nu,D)$,  $\mathcal{A}_{b}^\ast:=\mathcal{A}_b(p^\nu,D)^\ast$. We recall that if $\nu=0$, then $\Omega_b=\{b\}$ and that $\mathcal{A}_b=\mathcal{A}_b^\ast$ is the algebra of  functions from $\{b\}$ into $\mathbb{Z}_p$.

For all $t\in\Omega_b$ and all $r,n\in\mathbb{N}$, we set 
$$
\mathcal{Q}_{r,t}(n):=(C')^n\frac{(\underline{t^{(r)}\boldsymbol{\alpha}})_n}{(\underline{t^{(r)}\boldsymbol{\beta}})_n}\quad\textup{and}\quad\mathcal{Q}_{r,\cdot}(n):=\big(t\in\Omega_b\mapsto\mathcal{Q}_{r,t}(n)\big). 
$$ 
For all $c\in\{1,\dots,p^{\nu}\}$ not divisible by $p$ and all $\ell\in\mathbb{N}$, $\ell\geq 1$, we fix a prime  $p_{c,\ell}$ such that $p_{c,\ell}\equiv p^{\ell}\mod D$ and $p_{c,\ell}\equiv c\mod p^{\nu}$. For all $t\in\Omega_b$ and all  $r\in\mathbb{N}$, we set  
$$
\Delta_{r,t}^{c,\ell}:=\Delta_{\underline{t^{(r)}\boldsymbol{\alpha}},\underline{t^{(r)}\boldsymbol{\beta}}}^{p_{c,\ell},1}.
$$ 
If $\widetilde{\boldsymbol{\alpha}}$, respectively $\widetilde{\boldsymbol{\beta}}$, is the sequence of elements of $\underline{t^{(r)}\boldsymbol{\alpha}}$, respectively of  $\underline{t^{(r)}\boldsymbol{\beta}}$, whose denominator is not divisible by $p$, then we set  $\widetilde{\Delta}_{r,t}^{p,\ell}:=\Delta_{\widetilde{\boldsymbol{\alpha}},\widetilde{\boldsymbol{\beta}}}^{p,\ell}$. We gather in the following lemma a few properties of the sequences $\mathcal{Q}_{r,\cdot}$. We set $\iota=1$ if  $\mathfrak{m}$ is odd and if  $\boldsymbol{\beta}\notin\mathbb{Z}^r$, and $\iota=0$ otherwise.

\begin{Lemma}\label{Lemma decomp Q}
For all $n,r\in\mathbb{N}$, there exists $\Lambda_{b,r}(n)\in\mathbb{Z}_p$ such that $\mathcal{Q}_{r,\cdot}(n)\in 2^{\iota n}\Lambda_{b,r}(n)\mathcal{A}_b^{\times}$, 
where
\begin{align*} v_p\big(\Lambda_{b,r}(n)\big)&=\sum_{\ell=1}^{\infty}\widetilde{\Delta}_{r,t}^{p,\ell}\left(\left\{\frac{n}{p^{\ell}}\right\}\right)-\lambda_p\frac{\mathfrak{s}_p(n)}{p-1}+n\left\{\frac{\lambda_p}{p-1}\right\}\\
&=\frac{1}{\varphi(p^{\nu})}\sum_{\ell=1}^{\infty}\underset{\gcd(c,p)=1}{\sum_{c=1}^{p^{\nu}}}\Delta_{r,t}^{c,\ell}\left(\left\{\frac{n}{p^{\ell}}\right\}\right)+n\left\{\frac{\lambda_p}{p-1}\right\}.
\end{align*}
If $p$ divides $d_{\boldsymbol{\alpha},\boldsymbol{\beta}}$, then for all $n,r\in\mathbb{N}$, $n\geq 1$, we have $v_p\big(\Lambda_{b,r}(n)\big)\geq 1$ and if  $\boldsymbol{\beta}\in\mathbb{Z}^r$ then 
$$
v_p\big(\Lambda_{b,r}(n)\big)\geq -\left\lfloor\frac{\lambda_p}{p-1}\right\rfloor.
$$
\end{Lemma}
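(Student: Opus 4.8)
The plan is to analyse the factor $\mathcal{Q}_{r,t}(n)=(C')^n(\underline{t^{(r)}\boldsymbol{\alpha}})_n/(\underline{t^{(r)}\boldsymbol{\beta}})_n$ prime by prime, separating the behaviour at $p$ (where all the subtlety lies, via Christol's valuation formula and Proposition \ref{propo magie}) from the behaviour at the other primes (where $\mathcal{Q}_{r,t}(n)$ will be a unit-valued function lying in $\mathcal{A}_b^\times$). First I would recall that $\underline{t^{(r)}\boldsymbol{\alpha}}$ and $\underline{t^{(r)}\boldsymbol{\beta}}$ have the same exact denominator $d_{\boldsymbol{\alpha},\boldsymbol{\beta}}=p^\nu D$ as $\boldsymbol{\alpha},\boldsymbol{\beta}$ (Proposition \ref{propo reduction}), that $\langle t^{(r)}\boldsymbol{\alpha}\rangle$ and $\langle t^{(r)}\boldsymbol{\beta}\rangle$ are disjoint and satisfy $H$ (Lemma \ref{H transfert}), so $F_{\underline{t^{(r)}\boldsymbol{\alpha}},\underline{t^{(r)}\boldsymbol{\beta}}}$ is $N$-integral and $C_{\langle t^{(r)}\boldsymbol{\alpha}\rangle,\langle t^{(r)}\boldsymbol{\beta}\rangle}=C$ (invariance of the constant under multiplication by a unit mod $d$, Theorem \ref{theo Const}), and $\lambda_p(\underline{t^{(r)}\boldsymbol{\alpha}},\underline{t^{(r)}\boldsymbol{\beta}})=\lambda_p$. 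The point is that $C'$ differs from $C$ only by the factor $2$ when $\mathfrak{m}$ is odd and $\boldsymbol{\beta}\notin\mathbb{Z}^r$; that accounts for the $2^{\iota n}$ factor, because $v_2\big((C')^n(\underline{t^{(r)}\boldsymbol{\alpha}})_n/(\underline{t^{(r)}\boldsymbol{\beta}})_n\big)=\iota n+v_2\big(C^n(\underline{t^{(r)}\boldsymbol{\alpha}})_n/(\underline{t^{(r)}\boldsymbol{\beta}})_n\big)$, and the last quantity is nonnegative by $N$-integrality with constant $C$.

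Next I would define $\Lambda_{b,r}(n)$ to be (any element of $\mathbb{Z}_p$ with) $p$-adic valuation equal to $v_p\big(2^{-\iota n}\mathcal{Q}_{r,t}(n)\big)$ — which I must first check is independent of $t\in\Omega_b$. To see this, split into the cases $p\nmid d_{\boldsymbol{\alpha},\boldsymbol{\beta}}$ and $p\mid d_{\boldsymbol{\alpha},\boldsymbol{\beta}}$. When $p\nmid d_{\boldsymbol{\alpha},\boldsymbol{\beta}}$ we have $\nu=0$, $\Omega_b=\{b\}$, $\iota$ forces nothing (since then $2\mid d$ would be needed), and $\mathfrak{D}_p^\ell$ applied to the parameters depends only on $t^{(r)}p^\ell\bmod D$; Christol's formula \eqref{Christol val} / Proposition \ref{valuation} gives $v_p(\mathcal{Q}_{r,t}(n))=\sum_{\ell\ge1}\widetilde{\Delta}_{r,t}^{p,\ell}(\{n/p^\ell\})+\lambda_p v_p(n!)$, and using $v_p(n!)=(n-\mathfrak{s}_p(n))/(p-1)$ together with $-\lfloor\lambda_p/(p-1)\rfloor+\lambda_p/(p-1)=\{\lambda_p/(p-1)\}$ this rearranges to the first displayed formula for $v_p(\Lambda_{b,r}(n))$. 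When $p\mid d_{\boldsymbol{\alpha},\boldsymbol{\beta}}$, I would apply Proposition \ref{propo magie} to the tuples $\underline{t^{(r)}\boldsymbol{\alpha}},\underline{t^{(r)}\boldsymbol{\beta}}$ (legitimate since these lie in $(0,1]$, have equal length $r$, and $F$ is $N$-integral), obtaining directly the second formula $v_p(\Lambda_{b,r}(n))=\frac{1}{\varphi(p^\nu)}\sum_{\ell\ge1}\sum_{\gcd(c,p)=1}\Delta_{r,t}^{c,\ell}(\{n/p^\ell\})+n\{\lambda_p/(p-1)\}$; here one checks the $p_{c,\ell}$-reductions of $\underline{t^{(r)}\boldsymbol{\alpha}}$ are exactly the $p_{c',\ell}$-reductions of $\boldsymbol{\alpha}$ for an appropriate relabelling of $c$, so the right-hand side does not depend on $t\in\Omega_b$. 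The equality of the two displayed formulas (in the overlapping situation, i.e. whenever both make sense) is just \eqref{ola 2}–\eqref{last} in the proof of Proposition \ref{propo magie}.

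Then I would prove $\mathcal{Q}_{r,\cdot}(n)\in 2^{\iota n}\Lambda_{b,r}(n)\mathcal{A}_b^\times$, i.e. that $g_{r,n}:=\big(t\mapsto 2^{-\iota n}\Lambda_{b,r}(n)^{-1}\mathcal{Q}_{r,t}(n)\big)$ is a unit of $\mathcal{A}_b$. That $g_{r,n}$ is unit-valued is exactly the valuation computation just done. That $g_{r,n}\in\mathcal{A}_b$ (i.e. $t_1\equiv t_2\bmod p^m\Rightarrow g_{r,n}(t_1)\equiv g_{r,n}(t_2)\bmod p^m$) I would get from Lemma \ref{Lemma alg\`ebres 2}: writing $\mathcal{Q}_{r,t}(n)$ as an explicit product over $i$ of $\prod_{k=0}^{n-1}(t^{(r)}\alpha_i+k)$ divided by the analogous $\beta$-product and by denominators, each factor with $p$-integral denominator contributes a function of $t$ in $\mathcal{A}_b$ of the form "polynomial in $t$ over unit", while each factor $\alpha_i=\kappa_i/(p^{\mu_i}D_i')$ with $p\mid d(\alpha_i)$ is handled after dividing by the appropriate power of $p$ to extract a function $t\mapsto d(\alpha_i)\,\underline{t^{(r)}\alpha_i}$, which is in $\mathcal{A}_b^\times$ by part $(4)$ of Lemma \ref{Lemma alg\`ebres 2}; composing and multiplying these via parts $(2),(5),(6)$ shows the unit part is in $\mathcal{A}_b^\times$. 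I expect the main obstacle to be exactly this bookkeeping: tracking the power of $p$ pulled out of each Pochhammer factor so that what remains is genuinely unit-valued and congruence-preserving in $t$ — in other words, identifying $\Lambda_{b,r}(n)$ with the correct product of powers of $p$ coming from $C'$ and from the denominators, uniformly in $t\in\Omega_b$. Finally, for the last two assertions: when $p\mid d_{\boldsymbol{\alpha},\boldsymbol{\beta}}$ and $n\ge1$, each $\Delta_{r,t}^{c,\ell}(\{n/p^\ell\})\ge0$ by Theorem A applied to the ($p_{c,\ell}$-reduced, hence $N$-integral) tuples, and the extra term $n\{\lambda_p/(p-1)\}$ is $\ge0$, but moreover for $n\ge1$ at least one summand is positive, giving $v_p(\Lambda_{b,r}(n))\ge1$ — this uses the positivity lower bound in Proposition \ref{propo magie} together with $n\ge 1\Rightarrow\mathfrak{s}_p(n)\ge1$; and when $\boldsymbol{\beta}\in\mathbb{Z}^r$ one has $\{n/p^\ell\}\ge\mathfrak{D}_{p_{c,\ell}}(\beta_j)=\mathfrak{D}_{p_{c,\ell}}(1)$ contributions dropping out, and $\lambda_p\le -1$ with $-\lfloor\lambda_p/(p-1)\rfloor\ge0$, from which $v_p(\Lambda_{b,r}(n))\ge -\lfloor\lambda_p/(p-1)\rfloor$ follows by estimating the $\widetilde\Delta$-sum from below by $\lambda_p v_p(n!)$ and combining with the first displayed formula (since $v_p(n!)\ge \mathfrak{s}_p(n)/(p-1)$ never helps here, one instead uses $\widetilde\Delta_{r,t}^{p,\ell}\ge \lambda_p$ pointwise when $\boldsymbol{\beta}\in\mathbb{Z}^r$ because the integer $\beta_j$'s contribute a $\delta$ of $0$).
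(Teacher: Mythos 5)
Your overall plan is the same as the paper's: decompose $\mathcal{Q}_{r,t}(n)=2^{\iota n}\Lambda_{b,r}(n)\mathfrak{R}_r(n,t)$ by separating the $p$-integral parameters (which give the $t$-independent scalar $\Lambda_{b,r}(n)$) from the others (which give the unit $\mathfrak{R}_r(n,\cdot)\in\mathcal{A}_b^{\times}$ via Assertions $(2)$ and $(4)$ of Lemma~\ref{Lemma alg\`ebres 2}), and then compute $v_p(\Lambda_{b,r}(n))$ via Proposition~\ref{propo magie} and the reformulation $\lambda_p v_p(n!)-n\lfloor\lambda_p/(p-1)\rfloor=-\lambda_p\mathfrak{s}_p(n)/(p-1)+n\{\lambda_p/(p-1)\}$. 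That part is sound.

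There is, however, a genuine gap in the argument for $v_p(\Lambda_{b,r}(n))\geq 1$ when $n\geq 1$ and $p\mid d_{\boldsymbol{\alpha},\boldsymbol{\beta}}$. You invoke ``the positivity lower bound in Proposition~\ref{propo magie}'', but Proposition~\ref{propo magie} is purely a valuation identity and contains no such lower bound. Theorem~A (which you do cite) only yields $\Delta_{r,t}^{c,\ell}\geq 0$ term by term, and this together with $n\{\lambda_p/(p-1)\}\geq 0$ gives $v_p(\Lambda_{b,r}(n))\geq 0$, not $\geq 1$. The strict positivity requires the standing hypothesis $H_{\boldsymbol{\alpha},\boldsymbol{\beta}}$, transferred to $H_{\langle t^{(r)}\boldsymbol{\alpha}\rangle,\langle t^{(r)}\boldsymbol{\beta}\rangle}$ via Lemma~\ref{H transfert}: one first uses $\mathfrak{s}_p(n)\geq 1$ and the identity $\sum_{\ell\geq 1}\omega_\ell(\alpha,n)=\varphi(p^\nu)\mathfrak{s}_p(n)/(p-1)$ from the proof of Proposition~\ref{propo magie} (this is where that proposition enters) to produce some $\ell\geq 1$, $c$ coprime to $p$, and a parameter $\alpha$ with $p\mid d(\alpha)$ such that $\{n/p^\ell\}\geq\mathfrak{D}_{p_{c,\ell}}(\underline{t^{(r)}\alpha})<1$; then Lemma~\ref{valeurs Delta} rewrites $\Delta_{r,t}^{c,\ell}(\{n/p^\ell\})$ as $\xi_{\underline{t^{(r)}\boldsymbol{\alpha}},\underline{t^{(r)}\boldsymbol{\beta}}}(a,a\alpha')$ with $m_{\underline{t^{(r)}\boldsymbol{\alpha}},\underline{t^{(r)}\boldsymbol{\beta}}}(a)\preceq a\alpha'\prec a$, and only then does $H_{\langle t^{(r)}\boldsymbol{\alpha}\rangle,\langle t^{(r)}\boldsymbol{\beta}\rangle}$ deliver $\geq 1$ rather than $\geq 0$. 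Without $H$ the conclusion is simply false (take any $\boldsymbol{\alpha},\boldsymbol{\beta}$ for which $\xi(a,\cdot)$ touches $0$ at some non-integer point). Separately, your treatment of the $\boldsymbol{\beta}\in\mathbb{Z}^r$ case is garbled (``$\widetilde\Delta\geq\lambda_p$ pointwise'' is trivial and not what is needed; and $v_p(n!)\geq\mathfrak{s}_p(n)/(p-1)$ is false in general); the clean argument is that $\langle\boldsymbol{\beta}\rangle=(1,\dots,1)$ makes $\widetilde{\Delta}_{r,t}^{p,\ell}\geq 0$ on $[0,1)$, so one drops that sum entirely and bounds $-\lambda_p\mathfrak{s}_p(n)/(p-1)+n\{\lambda_p/(p-1)\}\geq -\lambda_p/(p-1)+\{\lambda_p/(p-1)\}=-\lfloor\lambda_p/(p-1)\rfloor$ using $\mathfrak{s}_p(n)\geq 1$, $n\geq 1$, and $\lambda_p\leq -1$.
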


\begin{proof}
For all $t\in\Omega_b$, we have $\mathcal{Q}_{r,t}(n)=2^{\iota n}\Lambda_{b,r}(n)\mathfrak{R}_{r}(n,t)$ with
$$
\Lambda_{b,r}(n):=\left(C\frac{\prod_{\beta_i\notin\mathbb{Z}_p}d(\beta_i)}{\prod_{\alpha_i\notin\mathbb{Z}_p}d(\alpha_i)}\right)^n
\frac{\prod_{\alpha_i\in\mathbb{Z}_p}(\underline{t^{(r)}\alpha_i})_n}{\prod_{\beta_i\in\mathbb{Z}_p}(\underline{t^{(r)}\beta_i})_n}
$$
and 
$$
\mathfrak{R}_{r}(n,t):=\frac{\prod_{\alpha_i\notin\mathbb{Z}_p}
d(\alpha_i)^n(\underline{t^{(r)}\alpha_i})_n}{\prod_{\beta_i\notin\mathbb{Z}_p}d(\beta_i)^n
(\underline{t^{(r)}\beta_i})_n}=\frac{\prod_{\alpha_i\notin\mathbb{Z}_p}
\prod_{k=0}^{n-1}\big(d(\alpha_i)\underline{t^{(r)}\alpha_i}+kd(\alpha_i)\big)}{\prod_{\beta_i\notin\mathbb{Z}_p}\prod_{k=0}^{n-1}\big(d(\beta_i)
\underline{t^{(r)}\beta_i}+kd(\beta_i)\big)}.
$$

By Assertions $(2)$ and $(4)$ of Lemma \ref{Lemma alg\`ebres 2}, we have  $\mathfrak{R}_r(n,\cdot)\in\mathcal{A}_b^{\times}$. Moreover if  $\alpha$ is a term  of the sequences $\boldsymbol{\alpha}$ or $\boldsymbol{\beta}$ whose denominator is not divisible by $p$, then $\underline{t^{(r)}\alpha}$ depends only of the class of  $t^{(r)}$ in $\mathbb{Z}/D\mathbb{Z}$ which is that of $\varpi_D(p^{-r}b)$ when  $t\in\Omega_b$. Indeed, if $\langle\alpha\rangle=1$, then $\underline{t^{(r)}\alpha}=1$ and if $\langle\alpha\rangle=k/N\neq 1$, where $N$ is a divisor of $D$, then $N\underline{t^{(r)}\alpha}=N\{t^{(r)}\langle\alpha\rangle\}=\varpi_N(t^{(r)}k)$. For all $t\in\Omega_b$ and all $r\in\mathbb{N}$, we have $p^{r}t^{(r)}\equiv b\mod D$, so that $\varpi_N(t^{(r)}k)=\varpi_N(bp^{-r}k)$. It follows that $\Lambda_{b,r}(n)$ depends only on  $b$, $r$ and $n$. By Proposition \ref{propo magie}, we have
\begin{align*}
v_p\big(\Lambda_{b,r}(n)\big)=v_p\left(C^n\frac{(\underline{t^{(r)}\boldsymbol{\alpha}})_n}{(\underline{t^{(r)}\boldsymbol{\beta}})_n}\right)&=\sum_{\ell=1}^{\infty}\widetilde{\Delta}_{r,t}^{p,\ell}\left(\left\{\frac{n}{p^{\ell}}\right\}\right)-\lambda_p\frac{\mathfrak{s}_p(n)}{p-1}+n\left\{\frac{\lambda_p}{p-1}\right\}\\
&=\frac{1}{\varphi(p^{\nu})}\sum_{\ell=1}^\infty\underset{\gcd(c,p)=1}{\sum_{c=1}^{p^{\nu}}}\Delta_{r,t}^{c,\ell}\left(\left\{\frac{n}{p^{\ell}}\right\}\right)+n\left\{\frac{\lambda_p}{p-1}\right\}.
\end{align*}

In the sequel, we assume that $p$ divides $d_{\boldsymbol{\alpha},\boldsymbol{\beta}}$. Let us now show that if $n\geq 1$, then  $v_p\big(\Lambda_{b,r}(n)\big)\geq 1$. Let $\alpha$ be a term of the sequences   $\underline{t^{(r)}\boldsymbol{\alpha}}$ or $\underline{t^{(r)}\boldsymbol{\beta}}$ whose denominator is divisible by $p$.  By \eqref{omega fin}, the number of elements  $\mathfrak{D}_{p_{c,\ell}}(\alpha)$, $\ell\geq 1$, $c\in\{1,\dots,p^\nu\}$, $\gcd(c,p)=1$, that satisfy $\{n/p^{\ell}\}\geq\mathfrak{D}_{p_{c,\ell}}(\alpha)$ is equal to $\varphi(p^\nu)\mathfrak{s}_p(n)/(p-1)$. In particular, if $n\geq 1$, then there  exist at least one $\ell\geq 1$ and one $c\in\{1,\dots,p^\nu\}$, $\gcd(c,p)=1$, such that  $\{n/p^\ell\}\geq\mathfrak{D}_{p_{c,\ell}}(\alpha)$. 

Thus, there exists one term $\alpha'\in(0,1)$ of the sequence $\underline{t^{(r)}\boldsymbol{\alpha}}$ or $\underline{t^{(r)}\boldsymbol{\beta}}$ such that $\Delta_{r,t}^{c,\ell}\big(\{n/p^\ell\}\big)=\Delta_{r,t}^{c,\ell}\big(\mathfrak{D}_{p_{c,\ell}}(\alpha')\big)$. By Lemma \ref{valeurs Delta}, we obtain  $\Delta_{r,t}^{c,\ell}\big(\{n/p^\ell\}\big)=\xi_{\langle t^{(r)}\boldsymbol{\alpha}\rangle,\langle t^{(r)}\boldsymbol{\beta}\rangle}(a,a\alpha')$, where  $a\in\{1,\dots,d_{\boldsymbol{\alpha},\boldsymbol{\beta}}\}$ satisfies $p_{c,\ell}a\equiv 1\mod d_{\boldsymbol{\alpha},\boldsymbol{\beta}}$. Since $\alpha'\notin\mathbb{Z}$, we have $m_{\boldsymbol{\alpha},\boldsymbol{\beta}}(a)\preceq a\alpha'\prec a$ and by Lemma \ref{H transfert}, Assertion $H_{\langle t^{(r)}\boldsymbol{\alpha}\rangle,\langle t^{(r)}\boldsymbol{\beta}\rangle}$ holds, so that  $\Delta_{r,t}^{c,\ell}\big(\{n/p^\ell\}\big)\geq 1$. Hence,  $v_p\big(\Lambda_{b,r}(n)\big)\geq 1$. 

Moreover, if $\boldsymbol{\beta}\in\mathbb{Z}^r$, then $\lambda_p\leq -1$ and the functions $\widetilde{\Delta}_{r,t}^{p,\ell}$ are positive on $[0,1)$. It follows that  
$$
v_p\big(\Lambda_{b,r}(n)\big)\geq-\lambda_p\frac{\mathfrak{s}_p(n)}{p-1}+n\left\{\frac{\lambda_p}{p-1}\right\}\geq-\frac{\lambda_p}{p-1}+\left\{\frac{\lambda_p}{p-1}\right\}\geq-\left\lfloor\frac{\lambda_p}{p-1}\right\rfloor.
$$
This completes the proof of Lemma \ref{Lemma decomp Q}.
\end{proof}

In the sequel, we set $\mathcal{K}_b:=\mathcal{A}_b^\ast$ if  $p$ does not divide $d_{\boldsymbol{\alpha},\boldsymbol{\beta}}$. If $p$ divides $d_{\boldsymbol{\alpha},\boldsymbol{\beta}}$, we set 
$$
\mathcal{K}_b:=\begin{cases}
p^{-1-\lfloor\lambda_p/(p-1)\rfloor}\mathcal{A}_b\textup{ if $\boldsymbol{\beta}\in\mathbb{Z}^r$;}\\
\mathcal{A}_b\textup{ if $\boldsymbol{\beta}\notin\mathbb{Z}^r$, $\mathfrak{m}$ is odd and  $p=2$;}\\
\mathcal{A}_b\textup{ if $\boldsymbol{\beta}\notin\mathbb{Z}^r$ and  $p-1\nmid\lambda_p$;}\\ 
\mathcal{A}_b^\ast \textup{ otherwise.}
\end{cases}.
$$
By Lemma \ref{Lemma decomp Q}, for all $r\in\mathbb{N}$,  
$$
\big(t\in\Omega_b\mapsto F_{\underline{t^{(r)}\boldsymbol{\alpha}},\underline{t^{(r)}\boldsymbol{\beta}}}(C'z)\big)\in 1+z\mathcal{K}_b[[z]]
$$ 
is an invertible formal series in $\mathcal{K}_b[[z]]$. Hence, to prove Theorem~\ref{theo expand}, it is enough to prove that the function
\begin{equation}\label{eq form}
t\in\Omega_b\mapsto G_{\underline{t^{(1)}\boldsymbol{\alpha}},\underline{t^{(1)}\boldsymbol{\beta}}}(C'z^p)F_{\underline{t\boldsymbol{\alpha}},\underline{t\boldsymbol{\beta}}}(C'z)-pG_{\underline{t\boldsymbol{\alpha}},\underline{t\boldsymbol{\beta}}}(C'z)F_{\underline{t^{(1)}\boldsymbol{\alpha}},\underline{t^{(1)}\boldsymbol{\beta}}}(C'z^p)
\end{equation}
is in $p\mathcal{K}_b[[z]]$.
\medskip

For all $a\in\{0,\dots,p-1\}$ and all  $K\in\mathbb{N}$, the $a+Kp$-th coefficient of the formal series \eqref{eq form} is 
$$
t\in\Omega_b\mapsto\Phi_t(a+Kp):=\sum_{i=1}^r\big(\Phi_{\alpha_i,t}(a+Kp)-\Phi_{\beta_i,t}(a+Kp)\big),
$$
where 
$$
\Phi_{\alpha,t}(a+Kp):=\sum_{j=0}^K\mathcal{Q}_{0,t}(a+jp)\mathcal{Q}_{1,t}(K-j)(H_{\underline{t^{(1)}\alpha}}(K-j)-pH_{\underline{t\alpha}}(a+jp)\big).
$$
It is sufficient to show that, for all terms $\alpha$ of the sequences $\boldsymbol{\alpha}$ and $\boldsymbol{\beta}$, for all $a\in\{0,\dots,p-1\}$ and all $K\in\mathbb{N}$, we have  
\begin{equation}\label{eq form suite}
\Phi_{\alpha,\cdot}(a+Kp)\in p\mathcal{K}_b.
\end{equation}
If $a+Kp=0$, then  $\Phi_{\alpha,\cdot}(0)$ is obviously the null map. In the sequel, we assume that $a+Kp\neq 0$, so that for all $j\in\{0,\dots,K\}$, we have $a+jp\geq 1$ or $K-j\geq 1$.

\medskip

If $p$ divides $d_{\boldsymbol{\alpha},\boldsymbol{\beta}}$ and if $\alpha$ is a term of the sequences $\boldsymbol{\alpha}$ or $\boldsymbol{\beta}$ whose denominator is  divisible by $p$, then for all $n,r\in\mathbb{N}$ and all $t\in\Omega_b$, we have 
$$
H_{\underline{t^{(r)}\alpha}}(n)=\sum_{k=0}^{n-1}\frac{d(\alpha)}{d(\alpha)(\underline{t^{(r)}\alpha}+k)},
$$
yielding $\big(t\in\Omega_b\mapsto H_{\underline{t^{(r)}\alpha}}(n)\big)\in p\mathcal{A}_b$. By Lemma \ref{Lemma decomp Q}, for all $n,r\in\mathbb{N}$, $n\geq 1$, we have $\mathcal{Q}_{r,\cdot}(n)\in\Lambda_{b,r}(n)\mathcal{A}_b$ with 
$$
\Lambda_{b,r}(n)\in\begin{cases}
p^{-\lfloor\lambda_p/(p-1)\rfloor}\mathbb{Z}_p\textup{ if  $\boldsymbol{\beta}\in\mathbb{Z}^r$;}\\
p\mathbb{Z}_p\textup{ otherwise.}
\end{cases}. 
$$
Hence, we have $\big(t\in\Omega_b\mapsto\Phi_{\alpha,t}(a+Kp)\big)\in p^2\mathcal{K}_b\subset p\mathcal{K}_b$, as expected.
\medskip

It remains to deal with the case when the denominator of  $\alpha$ is not divisible by  $p$. We fix an element  $\alpha\in\mathbb{Z}_p$ of the sequences  $\boldsymbol{\alpha}$ or $\boldsymbol{\beta}$ in the proof of \eqref{eq form suite}. We recall that $\underline{t\alpha}$ is independent of  $t\in\Omega_b$ because  $\alpha\in\mathbb{Z}_p$. By \cite[Lemma $4.1$]{Dwork}, for all  $j\in\{0,\dots,K\}$, we have  
$$
pH_{\underline{t\alpha}}(a+jp)\equiv pH_{\underline{t\alpha}}(jp)+\frac{\rho(a,\underline{t\alpha})}{\mathfrak{D}_p(\underline{t\alpha})+j}\mod p\mathbb{Z}_p,
$$
where we recall that for all $x\in\mathbb{Q}\cap\mathbb{Z}_p$, we have 
$$
\rho(a,x)=\begin{cases}\textup{$0$ if $a\leq p\mathfrak{D}_p(x)-x$;}\\ \textup{$1$ if $a>p\mathfrak{D}_p(x)-x$.}\end{cases}.
$$
Moreover,   
$$
H_{\underline{t\alpha}}(jp)=\sum_{k=0}^{jp-1}\frac{1}{\underline{t\alpha}+k}=\frac{1}{p}\sum_{k=0}^{j-1}\frac{1}{\mathfrak{D}_p(\underline{t\alpha})+k}+\underset{i\neq p\mathfrak{D}_p(\underline{t\alpha})-\underline{t\alpha}}{\sum_{i=0}^{p-1}}\sum_{k=0}^{j-1}\frac{1}{\underline{t\alpha}+i+kp},
$$
so that $pH_{\underline{t\alpha}}(jp)\equiv H_{\mathfrak{D}_p(\underline{t\alpha})}(j)\mod p\mathbb{Z}_p$. Writing $\langle\alpha\rangle=k/N$ as an irreducible fraction, we obtain 
\begin{equation}\label{transition}
\mathfrak{D}_p(\underline{t\alpha})=\frac{\varpi_N(Np^{-1}\underline{t\alpha})}{N}=\frac{\varpi_N\big(p^{-1}\varpi_N(bk)\big)}{N}=\frac{\varpi_N(p^{-1}bk)}{N}=\underline{t^{(1)}\alpha}.
\end{equation}
Hence, 
\begin{equation}\label{result Harmo}
pH_{\underline{t\alpha}}(a+jp)\equiv H_{\underline{t^{(1)}\alpha}}(j)+\frac{\rho(a,\underline{t\alpha})}{\mathfrak{D}_p(\underline{t\alpha})+j}\mod p\mathbb{Z}_p.
\end{equation}
We now use the following fact \eqref{double eq}, to be proved in Section~\ref{demo double eq}. For all $j\in\{0,\dots,K\}$, we have 
\begin{equation}\label{double eq}
\left(t\in\Omega_b\mapsto\frac{\rho(a,\underline{t\alpha})}{\mathfrak{D}_p(\underline{t\alpha})+j}\mathcal{Q}_{0,t}(a+jp)\mathcal{Q}_{1,t}(K-j)\right)\in p\mathcal{K}_b.
\end{equation}

The notation $f(t)\equiv g(t)\mod I$ where $I$ is an ideal of  $\mathcal{A}_b$ means that there exists $h\in I$ such that $f-g=h|^{\mathbb{Q}_p}$ with $f:t\in\Omega_b\mapsto f(t)\in\mathbb{Q}_p$ and $g:t\in\Omega_b\mapsto g(t)\in\mathbb{Q}_p$. Using \eqref{result Harmo} and \eqref{double eq} in the definition of $\Phi_{\alpha,\cdot}(a+Kp)$, we obtain  
\begin{align*}
\Phi_{\alpha,t}(a+Kp)&\equiv\sum_{j=0}^K\mathcal{Q}_{0,t}(a+jp)\mathcal{Q}_{1,t}(K-j)\big(H_{\underline{t^{(1)}\alpha}}(K-j)-H_{\underline{t^{(1)}\alpha}}(j)\big)\\
&\equiv-\sum_{j=0}^KH_{\underline{t^{(1)}\alpha}}(j)\Big(\mathcal{Q}_{0,t}(a+jp)\mathcal{Q}_{1,t}(K-j)-\mathcal{Q}_{0,t}\big(a+(K-j)p\big)\mathcal{Q}_{1,t}(j)\Big),
\end{align*}
modulo $p\mathcal{K}_b$.

\subsubsection{Proof of Equation \eqref{double eq}}\label{demo double eq}

For this, we prove several results that will be used again in the proof of Theorem \ref{theo expand}.

\begin{Lemma}\label{H reduc}
Let $a\in\{0,\dots,p-1\}$, $m\in\mathbb{N}$ and $x \in \mathbb Z_p\cap \mathbb{Q}\cap(0,1]$. If $\rho(a,x)=1$, then for all $\ell\in\big\{1,\dots,1+v_p\big(\mathfrak{D}_p(x)+m\big)\big\}$, we have  $\big\{(a+mp)/p^{\ell}\big\}\geq\mathfrak{D}_p^{\ell}(x)$.
\end{Lemma}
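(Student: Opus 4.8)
\textbf{Plan of proof for Lemma \ref{H reduc}.}
The statement to establish is: given $a\in\{0,\dots,p-1\}$, $m\in\mathbb{N}$ and $x\in\mathbb{Z}_p\cap\mathbb{Q}\cap(0,1]$ with $\rho(a,x)=1$, we have $\bigl\{(a+mp)/p^\ell\bigr\}\ge\mathfrak{D}_p^\ell(x)$ for every $\ell\in\bigl\{1,\dots,1+v_p\bigl(\mathfrak{D}_p(x)+m\bigr)\bigr\}$. The plan is to translate everything into statements about $p$-adic digits, using the digit description of $\mathfrak{D}_p^\ell$ recalled in Section \ref{section Dwork map}: if $-x=\sum_{k\ge 0}a_k p^k$ is the $p$-adic expansion of $-x$, then $p^\ell\mathfrak{D}_p^\ell(x)-x=\sum_{k=0}^{\ell-1}a_k p^k$, and since $x\in(0,1]$ we have $p^\ell\mathfrak{D}_p^\ell(x)=x+\sum_{k=0}^{\ell-1}a_kp^k$, a quantity in $(0,p^\ell]$. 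Writing $a+mp=\sum_{k\ge 0}n_kp^k$ with $n_0=a$, the inequality $\bigl\{(a+mp)/p^\ell\bigr\}\ge\mathfrak{D}_p^\ell(x)$ is equivalent (after multiplying by $p^\ell$) to $\sum_{k=0}^{\ell-1}n_kp^k\ge p^\ell\mathfrak{D}_p^\ell(x)=x+\sum_{k=0}^{\ell-1}a_kp^k$; since both sides lie in $(0,p^\ell]$ and the left side is an integer while $x\in(0,1]$, this is in turn equivalent to $\sum_{k=0}^{\ell-1}n_kp^k\ge\sum_{k=0}^{\ell-1}a_kp^k$ with equality forced to fail unless... more precisely it reduces to the truncated-integer comparison $\sum_{k=0}^{\ell-1}n_kp^k > \sum_{k=0}^{\ell-1}a_kp^k - 1$, i.e. $\sum_{k=0}^{\ell-1}n_kp^k \ge \sum_{k=0}^{\ell-1}a_kp^k - (a_0 - n_0) \cdot[\text{correction}]$; I will set this up carefully so the hypothesis $\rho(a,x)=1$ enters cleanly.

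The role of $\rho(a,x)=1$: by definition $\rho(a,x)=1$ means $a > p\mathfrak{D}_p(x)-x$, and $p\mathfrak{D}_p(x)-x = a_0$ is precisely the zeroth digit of $-x$ (this is the observation ``$k=p\mathfrak{D}_p(\alpha)-\alpha$'' from Section \ref{section Dwork map}). So $\rho(a,x)=1 \iff n_0 = a > a_0$, i.e. the lowest digit of $a+mp$ strictly exceeds the lowest digit of $-x$. This gives the base case $\ell=1$ immediately: $\{(a+mp)/p\} = a/p$ and $p\mathfrak{D}_p(x) = x+a_0 \le a_0+1 \le a = n_0$, so $a/p \ge \mathfrak{D}_p(x)$ with the needed inequality, using $x\le 1$ and integrality. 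For the inductive step from $\ell$ to $\ell+1$ (valid as long as $\ell+1 \le 1+v_p(\mathfrak{D}_p(x)+m)$, i.e. $\ell\le v_p(\mathfrak{D}_p(x)+m)$): I will compare $\sum_{k=0}^{\ell}n_kp^k$ with $\sum_{k=0}^{\ell}a_kp^k$. The condition $\ell\le v_p(\mathfrak{D}_p(x)+m)$ should be used to control the digit $a_\ell$ — indeed $\mathfrak{D}_p(x)+m$ and the partial sums of $-x$ are related, and divisibility of $\mathfrak{D}_p(x)+m$ by $p^\ell$ pins down that a carry-type phenomenon in the comparison does not reverse; concretely I expect that $p^\ell \mid \mathfrak{D}_p(x)+m$ forces the relevant digits of $\mathfrak{D}_p^{\ell}(x)$ and of $(a+mp)$ to align so that the inequality at level $\ell$ propagates to level $\ell+1$ without loss.

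The key steps in order: (1) fix the $p$-adic expansions $-x=\sum a_kp^k$ and $a+mp=\sum n_kp^k$, recall $p^\ell\mathfrak{D}_p^\ell(x) = x + \sum_{k=0}^{\ell-1}a_kp^k \in (0,p^\ell]$ from Section \ref{section Dwork map}; (2) reduce the target inequality at level $\ell$ to the integer inequality $\sum_{k=0}^{\ell-1}n_kp^k \ge \sum_{k=0}^{\ell-1}a_kp^k$ (using $0<x\le 1$ to handle the fractional part, noting $\mathfrak{D}_p^\ell(x) = 1$ exactly when $\sum_{k=0}^{\ell-1}a_kp^k = p^\ell - x$, a boundary case to treat separately); (3) identify $\rho(a,x)=1$ with $n_0 > a_0$, giving the base case $\ell=1$; (4) do the induction, using $\ell \le v_p(\mathfrak{D}_p(x)+m)$ — here I'll need the identity $p\mathfrak{D}_p^{\ell+1}(x) - \mathfrak{D}_p^\ell(x) = a_\ell$ together with $p^{\ell-1}\bigl(\mathfrak{D}_p(x)+m\bigr)$ being an integer multiple of $p^{\ell}$... wait, more carefully, $v_p(\mathfrak{D}_p(x)+m)\ge\ell$ means $p^\ell\mid\mathfrak{D}_p(x)+m$, and since $\mathfrak{D}_p(x) = (x+a_0)/p$ this is a constraint tying $m$ to the digits $a_1,\dots$; I'll unpack it to show the comparison is preserved. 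The main obstacle I anticipate is precisely step (4): making the carry/digit bookkeeping in the induction airtight, in particular correctly exploiting the valuation hypothesis $\ell\le v_p(\mathfrak{D}_p(x)+m)$ to rule out the one digit-position where a carry could otherwise flip the inequality — this is the delicate point and likely where the authors' argument spends its effort. Everything else (steps 1–3) is routine digit manipulation.
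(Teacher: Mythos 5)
Your plan is correct, and once you carry it out it collapses to essentially the paper's argument, with a change of packaging. The ``delicate point'' you single out in step (4) is in fact the easiest part: writing $m=\sum_{j\ge 0}m_jp^j$ and $-x=\sum_{k\ge 0}a_kp^k$, the hypothesis $v_p\bigl(\mathfrak{D}_p(x)+m\bigr)\ge \ell-1$ is \emph{equivalent} to $m_{j}=a_{j+1}$ for $j=0,\dots,\ell-2$ (since $-\mathfrak{D}_p(x)=\sum_{k\ge 0}a_{k+1}p^k$), and the digits of $a+mp$ are $n_0=a$, $n_j=m_{j-1}$ for $j\ge 1$. So in the range $1\le j\le \ell-1$ you have $n_j=a_j$ \emph{identically}; there are no carries, no induction, and the target integer inequality $\sum_{k=0}^{\ell-1}n_kp^k>\sum_{k=0}^{\ell-1}a_kp^k$ reduces outright to $n_0>a_0$, i.e.\ to $\rho(a,x)=1$. (Your boundary worry about $\mathfrak{D}_p^\ell(x)=1$ is also vacuous, since $\rho(a,x)=1$ forces $a_0<a\le p-1$, hence $x<1$ and not all $a_k$ can equal $p-1$.) The paper reaches the same conclusion without introducing digits: it observes that $\frac{p\mathfrak{D}_p(x)+p\sum_{j=0}^{\ell-2}m_jp^j}{p^{\ell}}-\mathfrak{D}_p^{\ell}(x)$ is an integer (using the valuation hypothesis and $p\mathfrak{D}_p(x)-p^\ell\mathfrak{D}_p^\ell(x)\in\mathbb{Z}$), bounds both terms in $(0,1]$ to force equality, and then uses $a\ge p\mathfrak{D}_p(x)$ (a consequence of $\rho(a,x)=1$ and integrality) to conclude in one line. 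Both routes hinge on the same fact — the valuation hypothesis pins $\bigl\{(a+mp)/p^\ell\bigr\}$ and $\mathfrak{D}_p^\ell(x)$ to differ only in the ``$a$ versus $p\mathfrak{D}_p(x)$'' contribution — so I would encourage you to drop the induction and present the direct computation; it is both shorter and avoids the carry bookkeeping you (unnecessarily) anticipated.
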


\begin{proof}
We write $m=\sum_{j=0}^{\infty}m_jp^j$ with $m_j\in\{0,\dots,p-1\}$ and we fix some $\ell$ in $\big\{1,\dots,1+v_p\big(\mathfrak{D}_p(x)+m\big)\big\}$. Then, 
$$
\left\{\frac{a+mp}{p^{\ell}}\right\}=\frac{a+p\sum_{j=0}^{\ell-2}m_jp^j}{p^{\ell}}.
$$
We have $\mathfrak{D}_p(x)+m\in p^{\ell-1}\mathbb{Z}_p$ and thus 
$$
\mathfrak{D}_p(x)+m-\sum_{j=\ell-1}^{\infty}m_jp^j\in p^{\ell-1}\mathbb{Z}_p,
$$
so that
$$
p\mathfrak{D}_p(x)+p\sum_{j=0}^{\ell-2}m_jp^j-p^{\ell}\mathfrak{D}_p^{\ell}(x)\in p^{\ell}\mathbb{Z},
$$
because $p\mathfrak{D}_p(x)-p^{\ell}\mathfrak{D}_p^{\ell}(x)\in\mathbb{Z}$. We obtain 
$$
\frac{p\mathfrak{D}_p(x)+p\sum_{j=0}^{\ell-2}m_jp^j}{p^{\ell}}-\mathfrak{D}_p^{\ell}(x)\in\mathbb{Z}.
$$
Moreover $\mathfrak{D}_p^{\ell}(x)\in(0,1]$ and 
$$
0<\frac{p\mathfrak{D}_p(x)+p\sum_{j=0}^{\ell-2}m_jp^j}{p^{\ell}}\leq\frac{p+p(p^{\ell-1}-1)}{p^{\ell}}\leq 1,
$$
so that
$$
\frac{p\mathfrak{D}_p(x)+p\sum_{j=0}^{\ell-2}m_jp^j}{p^{\ell}}-\mathfrak{D}_p^{\ell}(x)=0.
$$
We have  $\rho(a,x)=1$ hence $a>p\mathfrak{D}_p(x)-x$ \textit{i.e.} $a\geq p\mathfrak{D}_p(x)-x+1$ and $a\geq p\mathfrak{D}_p(x)$. It follows that
$$
\frac{a+p\sum_{j=0}^{\ell-2}m_jp^j}{p^{\ell}}\geq \mathfrak{D}_p^{\ell}(x).
$$
\end{proof}

For all $c\in\{1,\dots,p^{\nu}\}$ not divisible by $p$ and all $\ell,r\in\mathbb{N}$, $\ell\geq 1$, we define $\tau(r,\ell)$ as the smallest of the numbers  $\mathfrak{D}_{p_{c,\ell}}\big(\underline{t^{(r)}\alpha}\big)$, where $\alpha$ runs through the elements of the sequences  $\boldsymbol{\alpha}$ and $\boldsymbol{\beta}$ whose denominator is not divisible by $p$. Since $\underline{t^{(r)}\alpha}\in\mathbb{Z}_p$, the number $\mathfrak{D}_{p_{c,\ell}}(\underline{t^{(r)}\alpha})$ does not depend on $c$ and thus $\tau(r,\ell)$ neither. Moreover, since $\alpha\in\mathbb{Z}_p$, the rational number $\underline{t^{(r)}\alpha}$ does not depend on $t\in\Omega_b$ and thus $\tau(r,\ell)$ neither. We define $\mathbf{1}_{r,\ell}$ as the characteristic function of the interval $\big[\tau(r,\ell),1\big)$. For all $m,r\in\mathbb{N}$, we set 
$$
\mu_r(m):=\sum_{\ell=1}^{\infty}\mathbf{1}_{r,\ell}\left(\left\{\frac{m}{p^{\ell}}\right\}\right)\in\mathbb{N}\quad\textup{and}\quad g_r(m):=p^{\mu_r(m)}.
$$
Similarly, the function $g_r$ does not depend on $t\in\Omega_b$.

\begin{Lemma}\label{Lambda vs mu}

Let $r,\ell,n\in\mathbb{N}$, $\ell\geq 1$, be such that $\{n/p^{\ell}\}\geq\tau(r,\ell)$. Then for all $t\in\Omega_b$ and all $c\in\{1,\dots,p^\nu\}$ not divisible by $p$, we have  $\Delta_{r,t}^{c,\ell}\big(\{n/p^{\ell}\}\big)\geq 1$. In particular for all $n\in\mathbb{N}$, we have  
$$
v_p\big(\Lambda_{b,r}(n)\big)\geq v_p\big(g_r(n)\big)+n\left\{\frac{\lambda_p}{p-1}\right\}.
$$
If $\boldsymbol{\beta}\in\mathbb{Z}^r$, then for all $n\in\mathbb{N}$, $n\geq 1$, we have 
$$
v_p\big(\Lambda_{b,r}(n)\big)\geq v_p\big(g_r(n)\big)-\left\lfloor\frac{\lambda_p}{p-1}\right\rfloor.
$$
\end{Lemma}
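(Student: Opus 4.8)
The goal is to prove Lemma \ref{Lambda vs mu}, which has three assertions: a pointwise statement about $\Delta_{r,t}^{c,\ell}$, and two lower bounds on $v_p(\Lambda_{b,r}(n))$ in terms of $v_p(g_r(n))$. I will proceed in the natural order, deriving the pointwise statement first and then summing.

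\textbf{Step 1: the pointwise claim $\Delta_{r,t}^{c,\ell}(\{n/p^\ell\})\geq 1$.} Suppose $\ell\geq 1$ and $\{n/p^\ell\}\geq\tau(r,\ell)$. By the very definition of $\tau(r,\ell)$ as the smallest of the numbers $\mathfrak{D}_{p_{c,\ell}}(\underline{t^{(r)}\alpha})$ over elements $\alpha$ of $\boldsymbol{\alpha}$, $\boldsymbol{\beta}$ with denominator prime to $p$, there is some such $\alpha'$ with $\tau(r,\ell)=\mathfrak{D}_{p_{c,\ell}}(\underline{t^{(r)}\alpha'})$, and moreover $\underline{t^{(r)}\alpha'}\in(0,1)$ since $\langle\boldsymbol{\alpha}\rangle$ and $\langle\boldsymbol{\beta}\rangle$ are disjoint (this rules out $\langle\alpha'\rangle=1$ being the only obstruction; in fact one must be a bit careful here — if the minimal element came from a parameter equal to $1$ one would have $\tau(r,\ell)=\mathfrak{D}_{p_{c,\ell}}(1)=1$, forcing $\{n/p^\ell\}=0$, a degenerate case). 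So assume $\alpha'\in(0,1)$. Then, exactly as in the proof of Lemma \ref{Lemma decomp Q}, the jump of $\Delta_{r,t}^{c,\ell}$ just above $\mathfrak{D}_{p_{c,\ell}}(\underline{t^{(r)}\alpha'})$ is governed by $\xi_{\langle t^{(r)}\boldsymbol{\alpha}\rangle,\langle t^{(r)}\boldsymbol{\beta}\rangle}(a,a\alpha')$ where $a p_{c,\ell}\equiv 1\bmod d_{\boldsymbol{\alpha},\boldsymbol{\beta}}$ (using Lemma \ref{valeurs Delta} and the fact $p_{c,\ell}$ is prime to $d_{\boldsymbol{\alpha},\boldsymbol{\beta}}$). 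Since $\alpha'\notin\mathbb{Z}$ we have $m_{\boldsymbol{\alpha},\boldsymbol{\beta}}(a)\preceq a\alpha'\prec a$, and since $H_{\boldsymbol{\alpha},\boldsymbol{\beta}}$ holds, Lemma \ref{H transfert} gives $H_{\langle t^{(r)}\boldsymbol{\alpha}\rangle,\langle t^{(r)}\boldsymbol{\beta}\rangle}$, so this $\xi$-value is $\geq 1$. As $\Delta_{r,t}^{c,\ell}$ is non-decreasing in a neighbourhood once past $\tau(r,\ell)$ and $\{n/p^\ell\}\geq\tau(r,\ell)$, we conclude $\Delta_{r,t}^{c,\ell}(\{n/p^\ell\})\geq 1$. (The same argument applies to $\widetilde\Delta_{r,t}^{p,\ell}$.)

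\textbf{Step 2: first lower bound.} By the formula for $v_p(\Lambda_{b,r}(n))$ in Lemma \ref{Lemma decomp Q},
$$
v_p\big(\Lambda_{b,r}(n)\big)=\frac{1}{\varphi(p^\nu)}\sum_{\ell=1}^\infty\underset{\gcd(c,p)=1}{\sum_{c=1}^{p^\nu}}\Delta_{r,t}^{c,\ell}\left(\left\{\frac{n}{p^\ell}\right\}\right)+n\left\{\frac{\lambda_p}{p-1}\right\}.
$$
Since every $\Delta_{r,t}^{c,\ell}\geq 0$ on $[0,1)$ (Theorem A, via Lemma \ref{valeurs Delta}), I may discard all the terms with $\{n/p^\ell\}<\tau(r,\ell)$; for the remaining terms, i.e. exactly those $\ell$ with $\mathbf{1}_{r,\ell}(\{n/p^\ell\})=1$, Step 1 gives each inner $\Delta_{r,t}^{c,\ell}\geq 1$, and there are $\varphi(p^\nu)$ values of $c$, contributing $\geq 1$ to the $\ell$-th summand after dividing. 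Hence the double sum is at least $\sum_\ell\mathbf{1}_{r,\ell}(\{n/p^\ell\})=\mu_r(n)=v_p(g_r(n))$, giving $v_p(\Lambda_{b,r}(n))\geq v_p(g_r(n))+n\{\lambda_p/(p-1)\}$ as claimed. (When $p\nmid d_{\boldsymbol{\alpha},\boldsymbol{\beta}}$, $\nu=0$, $g_r\equiv 1$ essentially via $\widetilde\Delta$, and the argument is the same with the single sum.)

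\textbf{Step 3: the case $\boldsymbol{\beta}\in\mathbb{Z}^r$.} Here $\lambda_p\leq -1$, and the functions $\widetilde\Delta_{r,t}^{p,\ell}$ are nonnegative on $[0,1)$ (Theorem A applied to $\widetilde{\boldsymbol{\alpha}},\widetilde{\boldsymbol{\beta}}$, noting $N$-integrality of $F_{\boldsymbol{\alpha},\boldsymbol{\beta}}$ passes to the sub-tuples). Using the first expression for $v_p(\Lambda_{b,r}(n))$ in Lemma \ref{Lemma decomp Q},
$$
v_p\big(\Lambda_{b,r}(n)\big)=\sum_{\ell=1}^\infty\widetilde\Delta_{r,t}^{p,\ell}\left(\left\{\frac{n}{p^\ell}\right\}\right)-\lambda_p\frac{\mathfrak{s}_p(n)}{p-1}+n\left\{\frac{\lambda_p}{p-1}\right\}.
$$
By Step 1 (for $\widetilde\Delta$) the sum over $\ell$ is $\geq\mu_r(n)=v_p(g_r(n))$. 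For the rest, since $\mathfrak{s}_p(n)\geq 1$ when $n\geq 1$ and $-\lambda_p\geq 1$, we have $-\lambda_p\,\mathfrak{s}_p(n)/(p-1)\geq -\lambda_p/(p-1)$, and then $-\lambda_p/(p-1)+n\{\lambda_p/(p-1)\}\geq -\lambda_p/(p-1)+\{\lambda_p/(p-1)\}=-\lfloor\lambda_p/(p-1)\rfloor$ (using $n\geq 1$ and $\{\cdot\}\geq 0$). Combining, $v_p(\Lambda_{b,r}(n))\geq v_p(g_r(n))-\lfloor\lambda_p/(p-1)\rfloor$.

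\textbf{Main obstacle.} The delicate point is Step 1: making sure that the minimal element $\tau(r,\ell)$ genuinely comes from a parameter in $(0,1)$ so that the $\xi$-jump argument of Lemma \ref{Lemma decomp Q} applies verbatim, and handling the boundary behaviour of the step function $\Delta_{r,t}^{c,\ell}$ at the point $\tau(r,\ell)$ itself (left- versus right-limits). This is exactly the kind of case-analysis already carried out in Lemmas \ref{Lemma decomp Q} and \ref{H transfert}, so I expect to invoke those rather than redo it; everything else is bookkeeping with nonnegativity and the floor/fractional-part identity.
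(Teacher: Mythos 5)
Your overall plan matches the paper's, but \textbf{Step 1 has a genuine gap}: the sentence ``As $\Delta_{r,t}^{c,\ell}$ is non-decreasing in a neighbourhood once past $\tau(r,\ell)$'' is false as a general statement about $\Delta_{r,t}^{c,\ell}$. This function is a step function that jumps \emph{up} at the points $\mathfrak{D}_{p_{c,\ell}}(\underline{t^{(r)}\alpha_i})$ and jumps \emph{down} at the points $\mathfrak{D}_{p_{c,\ell}}(\underline{t^{(r)}\beta_j})$; it is not monotone on $[\tau(r,\ell),1)$. You correctly show that the value \emph{just above} $\tau(r,\ell)$ is $\geq 1$ (via the $\xi$-evaluation at $a\alpha'$ where $\alpha'$ realizes the minimum), but between $\tau(r,\ell)$ and $\{n/p^\ell\}$ there may be downward jumps, and nothing in your argument prevents the function from dipping back to $0$. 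The whole point of the hypothesis $H$ is precisely to forbid this, but your argument does not invoke it for any point other than $\tau(r,\ell)$.

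The paper closes this gap by \emph{not} evaluating at $\tau(r,\ell)$ and propagating forward. Instead it picks the correct witness: there is an element $\alpha_c$ of $\boldsymbol{\alpha}$ or $\boldsymbol{\beta}$ (in general depending on $c$, and not the argmin realizing $\tau$) such that
$$
\Delta_{r,t}^{c,\ell}\big(\{n/p^\ell\}\big)=\Delta_{r,t}^{c,\ell}\big(\mathfrak{D}_{p_{c,\ell}}(\underline{t^{(r)}\alpha_c})\big),
\qquad \mathfrak{D}_{p_{c,\ell}}(\underline{t^{(r)}\alpha_c})\leq\{n/p^\ell\}<1,
$$
that is, $\alpha_c$ records the last jump of the step function at or below $\{n/p^\ell\}$. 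Since $\{n/p^\ell\}<1$, one has $\underline{t^{(r)}\alpha_c}<1$, so Lemma \ref{valeurs Delta} rewrites this value as $\xi_{\underline{t^{(r)}\boldsymbol{\alpha}},\underline{t^{(r)}\boldsymbol{\beta}}}(a,a\underline{t^{(r)}\alpha_c})$ with $m_{\underline{t^{(r)}\boldsymbol{\alpha}},\underline{t^{(r)}\boldsymbol{\beta}}}(a)\preceq a\underline{t^{(r)}\alpha_c}\prec a$, and Assertion $H_{\langle t^{(r)}\boldsymbol{\alpha}\rangle,\langle t^{(r)}\boldsymbol{\beta}\rangle}$ (valid by Lemma \ref{H transfert}) gives the lower bound $\geq 1$ at exactly the relevant point. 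Replacing your $\alpha'$ by this $\alpha_c$ and dropping the monotonicity claim repairs the step; it also subsumes your ``degenerate case'' discussion, since $\{n/p^\ell\}<1$ automatically forces $\underline{t^{(r)}\alpha_c}<1$.

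A smaller remark on Step 3: the reason the $\widetilde{\Delta}^{p,\ell}_{r,t}$ are nonnegative on $[0,1)$ when $\boldsymbol{\beta}\in\mathbb{Z}^r$ is more elementary than ``$N$-integrality passes to sub-tuples'' (which is not true in general when the number of elements changes). It is simply that $\underline{t^{(r)}\boldsymbol{\beta}}=(1,\dots,1)$, and each $\delta_{p,\ell}(1,\cdot)$ vanishes identically on $[0,1)$, so $\widetilde{\Delta}^{p,\ell}_{r,t}$ restricted to $[0,1)$ is a sum of the nonnegative (indeed nondecreasing) functions $\delta_{p,\ell}(\alpha_i,\cdot)$. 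Steps 2 and the rest of Step 3 are fine.
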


\begin{proof} Let $r,\ell,n\in\mathbb{N}$, $\ell\geq 1$, such that $\{n/p^{\ell}\}\geq\tau(r,\ell)$. Let $c\in\{1,\dots,p^\nu\}$ not divisible by $p$. There exists an element $\alpha_c$ of the sequences  $\boldsymbol{\alpha}$ or $\boldsymbol{\beta}$ such that  $\Delta_{r,t}^{c,\ell}\big(\{n/p^\ell\}\big)=\Delta_{r,t}^{c,\ell}\big(\mathfrak{D}_{p_{c,\ell}}(\underline{t^{(r)}\alpha_c})\big)$ with $\mathfrak{D}_{p_{c,\ell}}(\underline{t^{(r)}\alpha_c})\leq\{n/p^\ell\}<1$. Hence   $\underline{t^{(r)}\alpha_c}<1$. By Lemma \ref{valeurs Delta}, we obtain 
$$
\Delta_{r,t}^{c,\ell}\left(\left\{\frac{n}{p^\ell}\right\}\right)=\xi_{\underline{t^{(r)}\boldsymbol{\alpha}},\underline{t^{(r)}\boldsymbol{\beta}}}\big(a,a\underline{t^{(r)}\alpha_c}\big),
$$
where  $a\in\{1,\dots,d_{\boldsymbol{\alpha},\boldsymbol{\beta}}\}$ satisfies $p_{c,\ell}a\equiv 1\mod d_{\boldsymbol{\alpha},\boldsymbol{\beta}}$. We also have  $m_{\underline{t^{(r)}\boldsymbol{\alpha}},\underline{t^{(r)}\boldsymbol{\beta}}}(a)\preceq a\underline{t^{(r)}\alpha_c}\prec a$ and by Lemma \ref{H transfert}, Assertion $H_{\langle t^{(r)}\boldsymbol{\alpha}\rangle,\langle t^{(r)}\boldsymbol{\beta}\rangle}$ holds. Hence, $\Delta_{r,t}^{c,\ell}\big(\{n/p^\ell\}\big)\geq 1$. By Lemma \ref{Lemma decomp Q}, we have 
$$
v_p\big(\Lambda_{b,r}(n)\big)=\frac{1}{\varphi(p^\nu)}\sum_{\ell=1}^{\infty}\underset{\gcd(c,p)=1}{\sum_{c=1}^{p^\nu}}\Delta_{r,t}^{c,\ell}\left(\left\{\frac{n}{p^\ell}\right\}\right)+n\left\{\frac{\lambda_p}{p-1}\right\},
$$ 
so that 
$$
v_p\big(\Lambda_{b,r}(n)\big)\geq v_p\big(g_r(n)\big)+n\left\{\frac{\lambda_p}{p-1}\right\}.
$$

Let us now assume that $\boldsymbol{\beta}\in\mathbb{Z}^r$. If we have  $1>\{n/p^\ell\}\geq\tau(r,\ell)$, then there exists an element $\alpha$ of  $\boldsymbol{\alpha}$ whose denominator is not divisible by $p$ and such that $\{n/p^{\ell}\}\geq\mathfrak{D}_{p_{c,\ell}}(\underline{t^{(r)}\alpha})$ for some $c\in\{1,\dots,p^\nu\}$ not divisible by $p$. The denominator of  $\underline{t^{(r)}\alpha}$ divides $D$ and $p_{c,\ell}\equiv p^\ell\mod D$ hence we have  $\mathfrak{D}_{p_{c,\ell}}(\underline{t^{(r)}\alpha})=\mathfrak{D}_p^\ell(\underline{t^{(r)}\alpha})$, which yields $\widetilde{\Delta}_{r,t}^{p,\ell}\big(\{n/p^\ell\}\big)\geq 1$. By Lemma \ref{Lemma decomp Q}, for all $n\in\mathbb{N}$, $n\geq 1$, we have 
\begin{align*}
v_p\big(\Lambda_{b,r}(n)\big)&=\sum_{\ell=1}^{\infty}\widetilde{\Delta}_{r,t}^{p,\ell}\left(\left\{\frac{n}{p^\ell}\right\}\right)-\lambda_p\frac{\mathfrak{s}_p(n)}{p-1}+n\left\{\frac{\lambda_p}{p-1}\right\}\\
&\geq \mu_r(n)-\frac{\lambda_p}{p-1}+\left\{\frac{\lambda_p}{p-1}\right\}\geq v_p\big(g_r(n)\big)+\left\lfloor\frac{\lambda_p}{p-1}\right\rfloor,
\end{align*}
because  $\lambda_p\leq 0$. This proves Lemma \ref{Lambda vs mu}.
\end{proof}

We are now in position to prove \eqref{double eq}.

\begin{proof}[Proof of  \eqref{double eq}]

If $\rho(a,\underline{t\alpha})=0$ then \eqref{double eq} holds. We can thus assume that $\rho(a,\underline{t\alpha})=1$, \textit{i.e.} that $a>p\mathfrak{D}_p(\underline{t\alpha})-\underline{t\alpha}$. In particular, we have  $\underline{t\alpha}<1$ and $a\geq 1$. For all $j\in\{0,\dots,K\}$, we have $a+jp\geq 1$ hence by Lemma \ref{Lambda vs mu}, 
$$
\mathcal{Q}_{0,\cdot}(a+jp)\in g_0(a+jp)\mathcal{K}_b.
$$
It follows that it is sufficient to show that
\begin{equation}\label{double eq with g}
\frac{\rho(a,\underline{t\alpha})}{\mathfrak{D}_p(\underline{t\alpha})+j}g_0(a+jp)\in p\mathbb{Z}_p.
\end{equation}

By Lemma \ref{H reduc} with $\underline{t\alpha}$ instead of  $x$ and $j$ instead $m$, we obtain, for all $j\in\{0,\dots,K\}$ and all $\ell\in\big\{1,\dots,1+v_p\big(\mathfrak{D}_p(\underline{t\alpha})+j\big)\big\}$, that  $\big\{(a+jp)/p^\ell\big\}\geq\mathfrak{D}_p^{\ell}\big(\underline{t\alpha}\big)=\mathfrak{D}_{p_{c,\ell}}(\underline{t\alpha})$ because $\underline{t\alpha}\in\mathbb{Z}_p$. We obtain $\big\{(a+jp)/p^\ell\big\}\geq\tau(0,\ell)$, thus 
$$
v_p\big(g_0(a+jp)\big)=\sum_{\ell=1}^\infty\mathbf{1}_{r,\ell}\left(\left\{\frac{a+jp}{p^\ell}\right\}\right)\geq v_p\big(\mathfrak{D}_p(\langle t\alpha\rangle)+j\big)+1,
$$ 
and this completes the proof of \eqref{double eq with g} and also that of \eqref{double eq}.
\end{proof}

\subsubsection{A combinatorial lemma}

We  now use a combinatorial identity due to Dwork (see \cite[Lemma $4.2$, p. $308$]{Dwork}) that enables us to write
\begin{multline*}
\sum_{j=0}^KH_{\underline{t^{(1)}\alpha}}(j)\Big(\mathcal{Q}_{0,t}(a+jp)\mathcal{Q}_{1,t}(K-j)-\mathcal{Q}_{1,t}(j)\mathcal{Q}_{0,t}\big(a+(K-j)p\big)\Big)\\
=\sum_{s=0}^r\sum_{m=0}^{p^{r+1-s}-1}W_t(a,K,s,p,m),
\end{multline*}
where $r$ is such that $K<p^r$, 
$$
W_t(a,K,s,p,m):=\big(H_{\underline{t^{(1)}\alpha}}(mp^s)-H_{\underline{t^{(1)}\alpha}}(\lfloor m/p\rfloor p^{s+1})\big)S_t(a,K,s,p,m) 
$$
and
$$
S_t(a,K,s,p,m)=\sum_{j=mp^s}^{(m+1)p^s-1}\Big(\mathcal{Q}_{0,t}(a+jp)\mathcal{Q}_{1,t}(K-j)-\mathcal{Q}_{1,t}(j)\mathcal{Q}_{0,t}\big(a+(K-j)p\big)\Big),
$$
where, for all $r\in\mathbb{N}$, we set $\mathcal{Q}_{r,t}(n)=0$ if $n<0$. Thus, to complete the proof, it is enough to show that for all $s,m\in\mathbb{N}$, we have  $\big(t\in\Omega_b\mapsto W_t(a,K,s,p,m)\big)\in p\mathcal{K}_b$. If $m=0$, this is obvious. We now assumes that $m\geq 1$.
\medskip

We write $m=k+qp$ with $k\in\{0,\dots,p-1\}$ and $q\in\mathbb{N}$, so that  $mp^s=kp^s+qp^{s+1}$ and $\lfloor m/p\rfloor p^{s+1}=qp^{s+1}$. By \cite[Lemma $4.1$]{Dwork}, we obtain
$$
H_{\underline{t^{(1)}\alpha}}(mp^s)-H_{\underline{t^{(1)}\alpha}}(\lfloor m/p\rfloor p^{s+1})\equiv\frac{1}{p^{s+1}}\frac{\rho\big(k,\mathfrak{D}_p^s(\underline{t^{(1)}\alpha})\big)}{\mathfrak{D}_p^{s+1}(\underline{t^{(1)}\alpha})+q}\mod\frac{1}{p^s}\mathbb{Z}_p.
$$
Let us show that for all $s,m\in\mathbb{N}$, $m\geq 1$, we have 
\begin{equation}\label{H vs g}
g_{s+1}(m)\frac{\rho\big(k,\mathfrak{D}_p^s(\underline{t^{(1)}\alpha})\big)}{\mathfrak{D}_p^{s+1}(\underline{t^{(1)}\alpha})+q}\in p\mathbb{Z}_p.
\end{equation}
If $\rho\big(k,\mathfrak{D}_p^s(\underline{t^{(1)}\alpha})\big)=0$, this is clear. Let us assume that $\rho\big(k,\mathfrak{D}_p^s(\underline{t^{(1)}\alpha})\big)=1$. Since $\underline{t^{(1)}\alpha}\in\mathbb{Z}_p$, Eq. \eqref{transition} yields $\mathfrak{D}_p^s(\underline{t^{(1)}\alpha})=\underline{t^{(s+1)}\alpha}$ and  $\mathfrak{D}_p^{s+1}(\underline{t^{(1)}\alpha})=\mathfrak{D}_p(\underline{t^{(s+1)}\alpha})$.

Using Lemma \ref{H reduc} with $\underline{t^{(s+1)}\alpha}$ for $x$, $k$ for $a$ and $q$ for $m$, we get that, for all  $\ell\in\big\{1,\dots,1+v_p\big(\mathfrak{D}_p(\underline{t^{(s+1)}\alpha})+q\big)\big\}$, we have  $\{m/p^\ell\}\geq\mathfrak{D}_p^{\ell}(\underline{t^{(s+1)}\alpha})=\mathfrak{D}_{p_{c,\ell}}(\underline{t^{(s+1)}\alpha})$ because $\underline{t^{(s+1)}\alpha}\in\mathbb{Z}_p$. We obtain $\{m/p^\ell\}\geq\tau(s+1,\ell)$ and 
$$
v_p\big(g_{s+1}(m)\big)=\sum_{\ell=1}^\infty\mathbf{1}_{s+1,\ell}\left(\left\{\frac{m}{p^\ell}\right\}\right)\geq v_p\big(\mathfrak{D}_p(\langle t^{(s+1)}\alpha\rangle)+q)+1,
$$ 
which finishes the proof of \eqref{H vs g}.
\medskip

By \eqref{H vs g}, for all $s,m\in\mathbb{N}$, $m\geq 1$, we have 
$$
\big(H_{\underline{t^{(1)}\alpha}}(mp^s)-H_{\underline{t^{(1)}\alpha}}(\lfloor m/p\rfloor p^{s+1})\big)p^{s+1}g_{s+1}(m)\in p\mathbb{Z}_p.
$$
Hence, to complete the proof of Theorem \ref{theo expand}, it is enough to show that, for all $s,m\in\mathbb{N}$, $m\geq 1$, we have 
\begin{equation}\label{application pour S}
\big(t\in\Omega_b\mapsto S_t(a,K,s,p,m)\big)\in p^{s+1}g_{s+1}(m)\mathcal{K}_b.
\end{equation}
We do this in the next section.

\subsubsection{Application of Theorem \ref{congruences formelles}}

To prove \eqref{application pour S}, we will use Theorem \ref{congruences formelles} with the ring  $\mathbb{Z}_p$ for $\mathcal{Z}$ and the $\mathbb{Z}_p$-algebra $\mathcal{A}$ defined as follows:
\begin{itemize}
\item $\mathcal{A}=\mathcal{A}_b$ if ($\boldsymbol{\beta}\in\mathbb{Z}^r$ or $p-1\nmid\lambda_p$) or if ($p=2$ and $\mathfrak{m}$ is odd); 
\item $\mathcal{A}=\mathcal{A}_b^\ast$ otherwise.
\end{itemize}
A map $f\in\mathcal{A}_b^\ast$ is regular if and only if for all $t\in\Omega_b$, we have  $f(t)\neq 0$. Moreover, we have $\mathcal{A}_b\subset\mathcal{A}_b^\ast$.

In particular, by Lemma \ref{Lemma decomp Q} and Assertion $(2)$ of Lemma \ref{Lemma alg\`ebres 2}, for all $r,m\in\mathbb{N}$, the map $\mathcal{Q}_{r,\cdot}(m)$ is a regular element of $\mathcal{A}_b$. In the sequel, for all $r,m\in\mathbb{N}$, we set $\mathbf{A}_r(m):=\mathcal{Q}_{r,\cdot}(m)$ and we define a function $\mathbf{g}_r$ as follows:
\begin{itemize}
\item If $\boldsymbol{\beta}\in\mathbb{Z}^r$ and $p\mid d_{\boldsymbol{\alpha},\boldsymbol{\beta}}$, then $\mathbf{g}_r(0)=1$ and $\mathbf{g}_r(m)=\frac{1}{p}\Lambda_{b,r}(m)$ for $m\geq 1$;
\item If $\boldsymbol{\beta}\notin\mathbb{Z}^r$ or $p\nmid d_{\boldsymbol{\alpha},\boldsymbol{\beta}}$, then $\mathbf{g}_r=g_r$. 
\end{itemize}
We recall that if $m\geq 1$ and if $p\mid d_{\boldsymbol{\alpha},\boldsymbol{\beta}}$, then for all $r\in\mathbb{N}$, we have $\Lambda_{b,r}(m)\in p\mathbb{Z}_p$. Hence,   the maps $\mathbf{g}_r$ take their values in $\mathbb{Z}_p$.

We will show in the next sections that the sequences $\big(\mathbf{A}_r\big)_{r\geq 0}$ and $(\mathbf{g}_r)_{r\geq 0}$ satisfy Hypothesis $(i)$, $(ii)$ and $(iii)$ of Theorem \ref{congruences formelles}. Thus, for all $m,s\in\mathbb{N}$, $m\geq 1$, we will obtain that
$$
S_\cdot(a,K,s,p,m)\in
\begin{cases}p^s\Lambda_{b,s+1}(m)\mathcal{A}_b\textup{ if $\boldsymbol{\beta}\in\mathbb{Z}^r$ and $p\mid d_{\boldsymbol{\alpha},\boldsymbol{\beta}}$;}\\
p^{s+1}g_{s+1}(m)\mathcal{A}_b\textup{ if $\boldsymbol{\beta}\notin\mathbb{Z}^r$ and $p-1\nmid\lambda_p$;}\\
p^{s+1}g_{s+1}(m)\mathcal{A}_b\textup{ if $\boldsymbol{\beta}\notin\mathbb{Z}^r$, $p=2$ and $\mathfrak{m}$ is odd;}\\
p^{s+1}g_{s+1}(m)\mathcal{A}_b^\ast\textup{ otherwise.}
\end{cases}
$$
because if $p\nmid d_{\boldsymbol{\alpha},\boldsymbol{\beta}}$, then  $\mathcal{A}_b=\mathcal{A}_b^\ast$.

Proceeding this way, we will obtain \eqref{application pour S}. Indeed, the only non-obvious case is the one for which $\boldsymbol{\beta}\in\mathbb{Z}^r$ and $p\mid d_{\boldsymbol{\alpha},\boldsymbol{\beta}}$. But in this case, by Lemma \ref{Lambda vs mu}, we have
$$
p^s\Lambda_{b,s+1}(m)\mathcal{A}_b\in p^{s+1}p^{-1-\left\lfloor\frac{\lambda_p}{p-1}\right\rfloor}g_{s+1}(m)\mathcal{A}_b=p^{s+1}g_{s+1}(m)\mathcal{K}_b.
$$

In the next sections, we check the various hypothesis of Theorem \ref{congruences formelles}.

\subsubsection{Verification of Conditions $(i)$ and $(ii)$ of Theorem \ref{congruences formelles}}\label{verif 1 2}

For all $r\geq 0$, the map $\mathcal{Q}_{r,\cdot}(0)$ is constant on $\Omega_b$ with  value $1$, and thus it is invertible in $\mathcal{A}_b$. 
\medskip

By Lemmas \ref{Lemma decomp Q} and \ref{Lambda vs mu}, for all $m\in\mathbb{N}$, we have $\mathcal{Q}_{r,\cdot}(m)\in g_r(m)\mathcal{A}_b$ and $\mathcal{Q}_{r,\cdot}(m)\in \Lambda_{b,r}(m)\mathcal{A}_b$ so that in all these cases we have $\mathcal{Q}_{r,\cdot}(m)\in \mathbf{g}_r(m)\mathcal{A}_b$. This shows that Conditions $(i)$ and $(ii)$ of Theorem \ref{congruences formelles} hold.

\subsubsection{Verification of Condition $(iii)$ of Theorem \ref{congruences formelles}}

For all $r\in\mathbb{N}$, we set
$$
\mathcal{N}_r:=\bigcup_{t\geq 1}\bigg(\Big\{n\in\{0,\dots,p^t-1\}\,:\,\forall\ell\in\{1,\dots,t\},\,\{n/p^{\ell}\}\geq\tau(r,\ell)\Big\}\times\big\{t\big\}\bigg).
$$
We apply Theorem \ref{congruences formelles} with the sequence  $\mathcal{N}:=(\mathcal{N}_r)_{r\geq 0}$. We observe that for all $r,\ell\in\mathbb{N}$, $\ell\geq 1$, we have $\tau(r,\ell)>0$ and hence if  $(n,t)\in\mathcal{N}_r$, then $n\geq 1$. Moreover, in the sequel, we will often use the fact that for all $h\in\mathbb{N}$, all $c\in\{1,\dots,p^\nu\}$ not divisible by $p$ and all $t\in\Omega_b$, we have  
\begin{equation}\label{\'echange r l}
\tau(r,\ell+h)=\tau(r+h,\ell),\quad\widetilde{\Delta}_{r,t}^{p,\ell+h}=\widetilde{\Delta}_{r+h,t}^{p,\ell}\quad\textup{and}\quad\Delta_{r,t}^{c,\ell+h}=\Delta_{r+h,t}^{c,\ell}. 
\end{equation}
Indeed, let $\alpha$ be a term of the sequences $\boldsymbol{\alpha}$ or $\boldsymbol{\beta}$. Writing $\langle\alpha\rangle=k/N$ as an irreducible fraction, we obtain
$$
\mathfrak{D}_{p_{c,\ell+h}}(\langle t^{(r)}\alpha\rangle)=\frac{\varpi_N(p_{c,\ell+h}^{-1}t^{(r)}k)}{N}=\frac{\varpi_N(p_{c,\ell}^{-1}t^{(r+h)}k)}{N}=\mathfrak{D}_{p_{c,\ell}}(\langle t^{(r+h)}\alpha\rangle),
$$
so that $\tau(r,\ell+h)=\tau(r+h,\ell)$ and $\Delta_{r,t}^{c,\ell+h}=\Delta_{r+h,t}^{c,\ell}$. Furthermore, if $\alpha\in\mathbb{Z}_p$, then by  \eqref{transition}, we have 
$$
\mathfrak{D}_p^{\ell+h}(\underline{t^{(r)}\alpha})=\mathfrak{D}_p^{\ell}\big(\mathfrak{D}_p^h(\underline{t^{(r)}\alpha})\big)=\mathfrak{D}_p^{\ell}(\underline{t^{(r+h)}\alpha}),
$$
which yields $\widetilde{\Delta}_{r,t}^{p,\ell+h}=\widetilde{\Delta}_{r+h,t}^{p,\ell}$.

\subsubsection{Verification of Condition $(b)$ of Theorem \ref{congruences formelles}}

Let $r,m\in\mathbb{N}$ and $(n,u)\in\mathcal{N}_r$. We want to show that $\mathbf{g}_r(n+mp^{u})\in p^{u}\mathbf{g}_{r+u}(m)\mathbb{Z}_p$. 
We need to distinguish two cases.
\medskip

$\bullet$ If $\boldsymbol{\beta}\in\mathbb{Z}^r$ and $p\mid d_{\boldsymbol{\alpha},\boldsymbol{\beta}}$, then
\begin{multline*}
v_p\big(\Lambda_{b,r}(n+mp^u)\big)=\sum_{\ell=1}^{\infty}\widetilde{\Delta}^{p,\ell}_{r,t}\left(\left\{\frac{n+mp^{u}}{p^{\ell}}\right\}\right)-\lambda_p\frac{\mathfrak{s}_p(n+mp^u)}{p-1}+(n+mp^u)\left\{\frac{\lambda_p}{p-1}\right\}\\
> \sum_{\ell=1}^u\widetilde{\Delta}^{p,\ell}_{r,t}\left(\left\{\frac{n}{p^{\ell}}\right\}\right)+\sum_{\ell=u+1}^{\infty}\widetilde{\Delta}^{p,\ell}_{r,t}\left(\left\{\frac{n+mp^u}{p^{\ell}}\right\}\right)-\lambda_p\frac{\mathfrak{s}_p(m)}{p-1}+m\left\{\frac{\lambda_p}{p-1}\right\},
\end{multline*}
because $\lambda_p\leq -1$ and $n\geq 1$. Since $(n,u)\in\mathcal{N}_r$, for all $\ell\in\{1,\dots,u\}$, we have $\{n/p^\ell\}\geq\tau(r,\ell)$ and thus 
$$
v_p\big(\Lambda_{b,r}(n+mp^u)\big)> u+\sum_{\ell=u+1}^{\infty}\widetilde{\Delta}^{p,\ell}_{r,t}\left(\left\{\frac{n+mp^u}{p^{\ell}}\right\}\right)-\lambda_p\frac{\mathfrak{s}_p(m)}{p-1}+m\left\{\frac{\lambda_p}{p-1}\right\}.
$$
We set $m=\sum_{k=0}^{\infty}m_kp^k$, where $m_k\in\{0,\dots,p-1\}$ is $0$ for all but a finite number of $k$'s. For all $\ell\geq u+1$, 
$$
\left\{\frac{n+mp^u}{p^{\ell}}\right\}=\frac{n+p^u\left(\sum_{k=0}^{\ell-u-1}m_kp^k\right)}{p^{\ell}}\geq\frac{p^u\left(\sum_{k=0}^{\ell-u-1}m_kp^k\right)}{p^{\ell}}=\left\{\frac{m}{p^{\ell-u}}\right\}.
$$
Moreover, since $\langle\boldsymbol{\beta}\rangle=(1,\dots,1)$, the map $\widetilde{\Delta}^{p,\ell}_{r,t}$ is non-decreasing on $[0,1)$ and we obtain that
$$
v_p\big(\Lambda_{b,r}(n+mp^u)\big)> u+\sum_{\ell=u+1}^{\infty}\widetilde{\Delta}^{p,\ell}_{r,t}\left(\left\{\frac{m}{p^{\ell-u}}\right\}\right)-\lambda_p\frac{\mathfrak{s}_p(m)}{p-1}+m\left\{\frac{\lambda_p}{p-1}\right\}.
$$
But we have
$$
\sum_{\ell=u+1}^{\infty}\widetilde{\Delta}^{p,\ell}_{r,t}\left(\left\{\frac{m}{p^{\ell-u}}\right\}\right)=
\sum_{\ell=1}^{\infty}\widetilde{\Delta}^{p,\ell+u}_{r,t}\left(\left\{\frac{m}{p^{\ell}}\right\}\right)=
\sum_{\ell=1}^{\infty}\widetilde{\Delta}^{p,\ell}_{r+u,t}\left(\left\{\frac{m}{p^{\ell}}\right\}\right),
$$
which yields $v_p\big(\Lambda_{b,r}(n+mp^u)\big)> u+v_p\big(\Lambda_{b,r+u}(m)\big)$ and thus  
$$
v_p\big(\Lambda_{b,r}(n+mp^u)\big)\geq u+v_p\big(\Lambda_{b,r+u}(m)\big)+1.
$$
Since $n\geq 1$, we have $\mathbf{g}_r(n+mp^u)=\frac{1}{p}\Lambda_{b,r}(n+mp^u)$ and we obtain 
$$
v_p\big(\mathbf{g}_r(n+mp^u)\big)\geq u+v_p\big(\Lambda_{b,r+u}(m)\big)\geq u+v_p\big(\mathbf{g}_{r+u}(m)\big),
$$
as expected.
\medskip

$\bullet$ If $\boldsymbol{\beta}\notin\mathbb{Z}^r$ or $p\nmid d_{\boldsymbol{\alpha},\boldsymbol{\beta}}$, then we have to show that $g_r(n+mp^u)\in p^ug_{r+u}(m)\mathbb{Z}_p$. We have
\begin{align}
v_p\big(g_r(n+mp^{u})\big)&=\sum_{\ell= 1}^{\infty}{\mathbf 1}_{r,\ell}\left(\left\{\frac{n+mp^{u}}{p^{\ell}}\right\}\right)\notag\\
&\geq \sum_{\ell=1}^u{\mathbf 1}_{r,\ell}\left(\left\{\frac{n}{p^{\ell}}\right\}\right)+\sum_{\ell=u+1}^{\infty}{\mathbf 1}_{r,\ell}\left(\left\{\frac{n+mp^u}{p^{\ell}}\right\}\right)\notag\\
&\geq u+\sum_{\ell=u+1}^{\infty}{\mathbf 1}_{r,\ell}\left(\left\{\frac{n+mp^u}{p^{\ell}}\right\}\right)\label{rrr2},
\end{align}
because $(n,u)\in\mathcal{N}_r$. Hence, for all $\ell\in\{1,\dots,u\}$, we have $\{n/p^\ell\}\geq\tau(r,\ell)$. Furthermore, for all $h\in\mathbb{N}$, we have  $\tau(r,\ell+h)=\tau(r+h,\ell)$ and consequently
$$
\sum_{\ell=u+1}^{\infty}{\mathbf 1}_{r,\ell}\left(\left\{\frac{n+p^um}{p^{\ell}}\right\}\right)\geq\sum_{\ell=u+1}^{\infty}{\mathbf 1}_{r,\ell}\left(\left\{\frac{m}{p^{\ell-u}}\right\}\right)=\sum_{\ell=1}^{\infty}{\mathbf 1}_{r+u,\ell}\left(\left\{\frac{m}{p^{\ell}}\right\}\right)=v_p\big(g_{r+u}(m)\big).
$$
Together with \eqref{rrr2}, we obtain 
$g_r(n+mp^u)\in p^ug_{r+u}(m)\mathbb{Z}_p$.

\subsubsection{Verification of Condition $(a_2)$ of Theorem \ref{congruences formelles}}

Let $r,s,m\in\mathbb{N}$, $u\in\Psi_{\mathcal{N}}(r,s)$ and $v\in\{0,\dots,p-1\}$ be such that $v+up\notin\Psi_{\mathcal{N}}(r-1,s+1)$. It is enough to show that
\begin{equation}\label{montr a2}
\mathbf{g}_r(v+up)\frac{\mathcal{Q}_{r+1,\cdot}(u+mp^s)}{\mathcal{Q}_{r+1,\cdot}(u)}\in p^{s+1}\mathbf{g}_{r+s+1}(m)\mathcal{A}_b.
\end{equation}

We will first provide a few important properties concerning the set 
$\Psi_{\mathcal{N}}(r,s)$.

\begin{Lemma}\label{prop Psi}
Let $r\in\mathbb{Z}$, $r\geq -1$ and $s\in\mathbb{N}$. Then $\Psi_{\mathcal{N}}(r,s)$ is the set of the  $u\in\{0,\dots,p^s-1\}$ such that $\{u/p^s\}<\tau(r+1,s)$. Moreover, for all $u\in\Psi_{\mathcal{N}}(r,s)$ and all $\ell\geq s$, we have  $\{u/p^{\ell}\}<\tau(r+1,\ell)$ and, for all $m\in\mathbb{N}$, we have
$$
\frac{\mathcal{Q}_{r+1,\cdot}(u+mp^s)}{\mathcal{Q}_{r+1,\cdot}(u)}\in 2^{\iota mp^s}p^{\left\{\frac{\lambda_p}{p-1}\right\}m(p^s-1)}\Lambda_{b,r+s+1}(m)\mathcal{A}_b.
$$
\end{Lemma}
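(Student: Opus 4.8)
\textbf{Proof plan for Lemma \ref{prop Psi}.}

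The plan is to unwind the definition of $\Psi_{\mathcal{N}}(r,s)$ directly in terms of the sets $\mathcal{N}_r$, which were chosen precisely so that this set matches the $p$-adic ball condition $\{u/p^s\}<\tau(r+1,s)$. Recall that $u\in\{0,\dots,p^s-1\}$ fails to lie in $\Psi_{\mathcal{N}}(r,s)$ exactly when there is a pair $(n,t)\in\mathcal{N}_{r+s-t+1}$ with $t\le s$ and a $j\in\{0,\dots,p^{s-t}-1\}$ with $u=j+p^{s-t}n$. Writing the base-$p$ digits of $u$, the equality $u=j+p^{s-t}n$ says that $n$ is obtained from $u$ by deleting the lowest $s-t$ digits, i.e. $n=\lfloor u/p^{s-t}\rfloor$ and $\{n/p^\ell\}=\{u/p^{\ell+s-t}\}$ for $\ell\ge 1$. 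By definition of $\mathcal{N}_{r+s-t+1}$, such a pair $(n,t)$ lies in $\mathcal{N}_{r+s-t+1}$ iff for every $\ell\in\{1,\dots,t\}$ we have $\{n/p^\ell\}\ge\tau(r+s-t+1,\ell)$, which by the shift identity \eqref{\'echange r l} equals $\tau(r+1,\ell+s-t)$. So $u\notin\Psi_{\mathcal{N}}(r,s)$ iff there is some $t\le s$ with $\{u/p^{\ell+s-t}\}\ge\tau(r+1,\ell+s-t)$ for all $\ell\in\{1,\dots,t\}$; taking $\ell=t$ (the strongest relevant constraint, since $\{u/p^m\}\ge\tau(r+1,m)$ with larger $m$ is the binding one because the functions $\mathbf 1$ are supported on an upper interval) one checks this is equivalent to $\{u/p^s\}\ge\tau(r+1,s)$. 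The subtle direction — showing that $\{u/p^s\}\ge\tau(r+1,s)$ forces $u\notin\Psi_{\mathcal{N}}$ — is handled by taking $t=s$, $n=u$, $j=0$, and verifying $(u,s)\in\mathcal{N}_{r+s-s+1}=\mathcal{N}_{r+1}$, which requires $\{u/p^\ell\}\ge\tau(r+1,\ell)$ for all $\ell\in\{1,\dots,s\}$. The one point needing care is that membership only requires the top inequality $\{u/p^s\}\ge\tau(r+1,s)$; the lower ones $\{u/p^\ell\}\ge\tau(r+1,\ell)$ for $\ell<s$ need to be deduced, and this follows because $\tau$ is non-increasing in its second argument in the sense relevant here (more precisely, one can pass instead to the largest $t$ for which a witnessing pair exists). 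This is the main technical obstacle: being careful about which $(n,t)$ witness membership and reconciling the "for all $\ell\le t$" in the definition of $\mathcal{N}$ with the single inequality $\{u/p^s\}<\tau(r+1,s)$.

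Once the characterization $u\in\Psi_{\mathcal{N}}(r,s)\iff\{u/p^s\}<\tau(r+1,s)$ is established, the ``moreover'' about $\ell\ge s$ is immediate: if $\{u/p^s\}<\tau(r+1,s)$ and $\ell\ge s$, then $\{u/p^\ell\}=u/p^\ell$ (since $u<p^s\le p^\ell$), and I would combine $u/p^\ell\le u/p^s<\tau(r+1,s)$ with the inequality $\tau(r+1,s)\le\tau(r+1,\ell)$ — which holds because, for a parameter $\alpha$ with denominator prime to $p$, the sequence $\mathfrak{D}_p^\ell(\underline{t^{(r+1)}\alpha})$ is eventually $1/d(\cdot)$ and in general $p^\ell\mathfrak{D}_p^\ell(\cdot)-\cdot\in\{0,\dots,p^\ell-1\}$ gives $\mathfrak{D}_p^\ell\ge 1/p^\ell$, and one checks directly from the digit description that the minimum $\tau$ over the relevant $\alpha$'s is monotone — alternatively this is exactly the statement that $u\in\Psi_{\mathcal N}(r,\ell)$ follows from $u\in\Psi_{\mathcal N}(r,s)$ via the digit-deletion argument above with the roles reversed. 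Concretely I would just note $\{u/p^\ell\}=u/p^\ell$ and observe that if $\{u/p^\ell\}\ge\tau(r+1,\ell)$ held, the digit-truncation argument would exhibit $(n',t')\in\mathcal N$ contradicting $u\in\Psi_{\mathcal N}(r,s)$; hence $\{u/p^\ell\}<\tau(r+1,\ell)$.

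For the final displayed congruence on the ratio $\mathcal{Q}_{r+1,\cdot}(u+mp^s)/\mathcal{Q}_{r+1,\cdot}(u)$, the plan is to apply Lemma \ref{Lemma decomp Q}: since $\mathcal{Q}_{r+1,\cdot}(n)\in 2^{\iota n}\Lambda_{b,r+1}(n)\mathcal{A}_b^{\times}$ for every $n$, the ratio lies in $2^{\iota(u+mp^s-u)}\big(\Lambda_{b,r+1}(u+mp^s)/\Lambda_{b,r+1}(u)\big)\mathcal{A}_b^{\times}=2^{\iota mp^s}\big(\Lambda_{b,r+1}(u+mp^s)/\Lambda_{b,r+1}(u)\big)\mathcal{A}_b^\times$ (using that $\Lambda_{b,r+1}(u)\in\mathcal{A}_b^\times$, being a unit times a nonzero scalar, so division is legitimate in $\mathcal{A}_b^\times$ after multiplying through). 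It then remains to estimate $v_p$ of the scalar ratio. Using the valuation formula of Lemma \ref{Lemma decomp Q},
$$
v_p\big(\Lambda_{b,r+1}(u+mp^s)\big)-v_p\big(\Lambda_{b,r+1}(u)\big)=\sum_{\ell=1}^{\infty}\Big(\widetilde{\Delta}^{p,\ell}_{r+1,t}\big(\{\tfrac{u+mp^s}{p^\ell}\}\big)-\widetilde{\Delta}^{p,\ell}_{r+1,t}\big(\{\tfrac{u}{p^\ell}\}\big)\Big)-\lambda_p\tfrac{\mathfrak{s}_p(u+mp^s)-\mathfrak{s}_p(u)}{p-1}+\big((u+mp^s)-u\big)\big\{\tfrac{\lambda_p}{p-1}\big\}.
$$
Here $\mathfrak{s}_p(u+mp^s)=\mathfrak{s}_p(u)+\mathfrak{s}_p(m)$ since $u<p^s$, so the middle term is $-\lambda_p\mathfrak{s}_p(m)/(p-1)\ge 0$ (as $\lambda_p\le$ relevant sign) and in any case $\ge 0$; the last term is $\{\lambda_p/(p-1)\}m(p^s-1)+\{\lambda_p/(p-1)\}m$, and I would absorb the $+\{\lambda_p/(p-1)\}m$ plus the $-\lambda_p\mathfrak{s}_p(m)/(p-1)$ plus the tail of the $\widetilde\Delta$-sum over $\ell\ge s+1$ into $v_p(\Lambda_{b,r+s+1}(m))$ via the shift identity $\widetilde\Delta^{p,\ell+s}_{r+1,t}=\widetilde\Delta^{p,\ell}_{r+s+1,t}$ and the inequality $\{u+mp^s)/p^\ell\}\ge\{m/p^{\ell-s}\}$ for $\ell\ge s+1$ together with monotonicity — exactly as in the proof of Condition $(b)$ above — while the sum over $\ell\in\{1,\dots,s\}$ is $\ge 0$ because each increment $\widetilde\Delta^{p,\ell}_{r+1,t}(\{(u+mp^s)/p^\ell\})-\widetilde\Delta^{p,\ell}_{r+1,t}(\{u/p^\ell\})\ge 0$ using $\{u/p^\ell\}<\tau(r+1,\ell)$ (from the ``moreover'' just proved, since $\ell\le s$ here, wait — one needs it for $\ell\le s$, which is the genuine range $\{u/p^s\}<\tau(r+1,s)$ case and its consequences) so $\widetilde\Delta$ vanishes at $\{u/p^\ell\}$ whenever that argument is below threshold and is $\ge 0$ elsewhere. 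Collecting, $v_p$ of the scalar ratio is $\ge \{\lambda_p/(p-1)\}m(p^s-1)+v_p(\Lambda_{b,r+s+1}(m))$, which is exactly the assertion. The main obstacle in this last part is bookkeeping the three kinds of terms (the $\widetilde\Delta$ tail, the $\mathfrak{s}_p$ term, the fractional-part linear term) so that they reassemble into $v_p(\Lambda_{b,r+s+1}(m))$ plus the stated power of $p$; this is the same reindexing already performed for Conditions $(b)$ and $(a_2)$, so I would cite those computations rather than redo them.
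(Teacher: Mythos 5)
Your proposal correctly identifies the overall shape of the argument — characterize $\Psi_{\mathcal{N}}(r,s)$ via the threshold $\tau(r+1,s)$, then feed the valuation formula of Lemma~\ref{Lemma decomp Q} into a reindexing computation — but three of its key steps rest on claims that are either false or unsubstantiated, and the paper has to do genuine work at exactly those points.

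\textbf{First}, for the characterization of $\Psi_{\mathcal{N}}(r,s)$, you note yourself that the hard direction ($\{u/p^s\}\geq\tau(r+1,s)\Rightarrow u\notin\Psi_{\mathcal{N}}(r,s)$) requires knowing $\{u/p^\ell\}\geq\tau(r+1,\ell)$ for \emph{all} $\ell\in\{1,\dots,s\}$, not just $\ell=s$, and you propose to deduce this from ``$\tau$ is non-increasing in its second argument.'' That claim is false: for a parameter $\alpha$ with denominator $N$ coprime to $p$, $\mathfrak{D}_p^\ell(\underline{\alpha})=\varpi_N(p^{-\ell}\kappa)/N$ runs through the full orbit of residues as $\ell$ increases, so $\tau(r+1,\ell)$ is not monotone in $\ell$. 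What the paper does instead is a nontrivial induction on $s$ (the assertions $\mathcal{B}_s$ in the proof): when some intermediate $\ell<s$ has $\{u/p^\ell\}<\tau(r+1,\ell)$, one truncates the lowest $\ell$ digits of $u$, uses an integrality argument to deduce $\big\{\sum_{k=\ell}^{s-1}u_kp^{k-\ell}/p^{s-\ell}\big\}\geq\tau(r+\ell+1,s-\ell)$, and applies $\mathcal{B}_{s-\ell}$ inductively. The ``pass to the largest $t$'' hint you give gestures in the right direction but does not supply this argument.

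\textbf{Second}, for the ``moreover'' about $\ell\geq s$, your primary argument again invokes $\tau(r+1,s)\leq\tau(r+1,\ell)$, which as just noted is false. Your fallback (``the digit-truncation argument would exhibit $(n',t')\in\mathcal{N}$'') does not apply either, because $\Psi_{\mathcal{N}}(r,s)$ is defined by quantifying only over pairs $(n,t)$ with $t\leq s$, and the candidate pair coming from $\ell>s$ would have $t=\ell>s$. The paper's actual argument is a direct inequality chain at the level of the minimizing parameter: if $\{u/p^\ell\}\geq\tau(r+1,\ell)=\mathfrak{D}_p^\ell(\underline{t^{(r+1)}\alpha})$ for some $\alpha$, then $\{u/p^s\}=p^{\ell-s}\{u/p^\ell\}\geq p^{\ell-s}\mathfrak{D}_p^\ell(\underline{t^{(r+1)}\alpha})\geq\mathfrak{D}_p^s(\underline{t^{(r+1)}\alpha})\geq\tau(r+1,s)$, where the middle inequality comes from the monotonicity of $T_p(-\alpha,\ell)=p^\ell\mathfrak{D}_p^\ell(\alpha)-\alpha$ in $\ell$ (monotonicity for a fixed $\alpha$, not of $\tau$).

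\textbf{Third}, for the valuation estimate, your plan relies on monotonicity of $\widetilde{\Delta}^{p,\ell}_{r+1,t}$ to pass from $\{(u+mp^s)/p^\ell\}\geq\{m/p^{\ell-s}\}$ to an inequality of $\widetilde{\Delta}$-values. But $\widetilde{\Delta}^{p,\ell}_{r,t}$ is a signed sum and is monotone only when $\boldsymbol{\beta}\in\mathbb{Z}^r$ (this is precisely the caveat in the paper's verification of Condition $(b)$). Lemma~\ref{prop Psi} is needed in full generality. The paper circumvents this by proving the exact reindexing \emph{equality}
$$
\widetilde{\Delta}^{p,\ell}_{r+1,t}\!\left(\left\{\tfrac{u+mp^s}{p^\ell}\right\}\right)=\widetilde{\Delta}^{p,\ell-s}_{r+s+1,t}\!\left(\left\{\tfrac{m}{p^{\ell-s}}\right\}\right)\quad(\ell\geq s+1),
$$
which in turn follows from the equivalence of the two threshold conditions in \eqref{interVal 3}; establishing that equivalence uses the smallness of $u$ (namely $\{u/p^\ell\}<\tau(r+1,\ell)$ for $\ell\geq s$, the ``moreover'' just proved) and an integrality step, not monotonicity. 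You also slightly misplace the role of $\ell\leq s$: there the two arguments $\{u/p^\ell\}$ and $\{(u+mp^s)/p^\ell\}$ are literally equal, so those terms cancel identically; no positivity or threshold argument is needed.

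In short, the skeleton is right but the three load-bearing lemmas (the induction $\mathcal{B}_s$, the fixed-$\alpha$ inequality $p^{\ell-s}\mathfrak{D}_p^\ell\geq\mathfrak{D}_p^s$, and the reindexing equality for $\widetilde{\Delta}$) are each replaced by an appeal to a monotonicity that does not hold; filling these in is the actual content of the proof.
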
 

By Lemma \ref{prop Psi}, to show \eqref{montr a2} and thus to complete the verification of Condition $(a_2)$, it is enough to show that $v_p\big(\mathbf{g}_r(v+up)\big)\geq s+1$. 
\medskip

We have $v+up\notin\Psi_{\mathcal{N}}(r-1,s+1)$ hence there exist $(n,t)\in\mathcal{N}_{r+s-t+1}$, $t\leq s+1$ and $j\in\{0,\dots,p^{s+1-t}-1\}$ such that  $v+up=j+p^{s+1-t}n$. Since $u\in\Psi_{\mathcal{N}}(r,s)$, we necessarily have  $s+1-t=0$, so that $(v+up,s+1)\in\mathcal{N}_{r}$, \textit{i.e.}, for all $\ell\in\{1,\dots,s+1\}$, we have $\big\{(v+up)/p^\ell\big\}\geq\tau(r,\ell)$ and thus $g_{r}(v+up)\in p^{s+1}\mathbb{Z}_p$. Furthermore, if $\boldsymbol{\beta}\in\mathbb{Z}^r$ and $p\mid d_{\boldsymbol{\alpha},\boldsymbol{\beta}}$, then since $v+up\geq 1$, we have $\mathbf{g}_r(v+up)=\frac{1}{p}\Lambda_{b,r}(v+up)\mathbb{Z}_p$ and by Lemma \ref{Lambda vs mu}, we obtain
$$
v_p\big(\mathbf{g}_r(v+up)\big)\geq v_p\big(g_r(v+up)\big)-1-\left\lfloor\frac{\lambda_p}{p-1}\right\rfloor\geq s+1,
$$  
because $\lambda_p\leq -1$. This completes the verification 
modulo Lemma \ref{prop Psi}.

\begin{proof}[Proof of Lemma \ref{prop Psi}]

We first show that $\Psi_{\mathcal{N}}(r,s)$ is the set of the  $u\in\{0,\dots,p^s-1\}$ such that $\{u/p^s\}<\tau(r+1,s)$. If $s=0$, then $\Psi_{\mathcal{N}}(r,0)=\{0\}$ and  $\tau(r+1,0)>0$ thus this is obvious. We can then assume that $s\geq 1$. Let $u\in\{0,\dots,p^s-1\}$, $u=\sum_{k=0}^{s-1}u_kp^k$, with $u_k\in\{0,\dots,p-1\}$. It is sufficient to prove that the following assertions are equivalent.
\begin{enumerate}
\item We have  $\left\{u/p^s\right\}\geq\tau(r+1,s)$.
\item There exist $(n,t)\in\mathcal{N}_{r+s-t+1},t\leq s$ and $j\in\{0,\dots,p^{s-t}-1\}$ such that $u=j+p^{s-t}n$.
\end{enumerate}

Proof of $(2)\Rightarrow(1)$: we have 
$$
\left\{\frac{u}{p^s}\right\}=\frac{u}{p^s}=\frac{j+p^{s-t}n}{p^s}\geq\frac{n}{p^t}=\left\{\frac{n}{p^t}\right\}.
$$
Moreover, by definition of the sequence $\mathcal{N}$, we have  $\{n/p^t\}\geq\tau(r+s-t+1,t)=\tau(r+1,s)$ and hence  $\left\{u/p^s\right\}\geq\tau(r+1,s)$. 
\medskip

Proof of $(1)\Rightarrow(2)$: for all $s\geq 1$, we denote by $\mathcal{B}_s$ the assertion : ``For all $u\in\{0,\dots,p^s-1\}$ and all $r\in\mathbb{N}$ such that $\{u/p^s\}\geq\tau(r+1,s)$, there exists $i\in\{0,\dots,s-1\}$ such that $\left(\sum_{k=i}^{s-1}u_kp^{k-i},s-i\right)\in\mathcal{N}_{r+i+1}$.''
\medskip
It is enough to show by induction on $s$ that for all $s\geq 1$, $\mathcal{B}_s$ holds. 
\medskip

If $s=1$, then, for all $u\in\{0,\dots,p-1\}$ and all $r\in\mathbb{N}$ such that $\{u/p\}\geq\tau(r+1,1)$, we have $(u,1)\in\mathcal{N}_{r+1}$. Hence,  $\mathcal{B}_1$ holds. 
\medskip

Let $s\geq 2$ be such that $\mathcal{B}_1,\dots,\mathcal{B}_{s-1}$ hold, let $u\in\{0,\dots,p^s-1\}$ and $r\in\mathbb{N}$ be such that $\{u/p^s\}\geq\tau(r+1,s)$. We further assume that $(u,s)\notin\mathcal{N}_{r+1}$. Hence, there exists $\ell\in\{1,\dots,s\}$ such that 
$$
a_{\ell}:=\frac{\sum_{k=0}^{\ell-1}u_kp^k}{p^{\ell}}=\left\{\frac{u}{p^{\ell}}\right\}<\tau(r+1,\ell).
$$ 
We necessarily have $\ell\in\{1,\dots,s-1\}$. We write
$$
\left\{\frac{u}{p^s}\right\}=\frac{u}{p^s}=\frac{p^{\ell}a_{\ell}+p^{\ell}\sum_{k=\ell}^{s-1}u_kp^{k-\ell}}{p^{s}}=\frac{a_{\ell}}{p^{s-\ell}}+\frac{\sum_{k=\ell}^{s-1}u_kp^{k-\ell}}{p^{s-\ell}}.
$$
Since $\{u/p^s\}\geq\tau(r+1,s)$, we obtain that
$$
\sum_{k=\ell}^{s-1}u_kp^{k-\ell}\geq p^{s-\ell}\tau(r+1,s)-a_{\ell}>p^{s-\ell}\tau(r+1,s)-\tau(r+1,\ell),
$$
or 
$$
\sum_{k=\ell}^{s-1}u_kp^{k-\ell}>p^{s-\ell}\tau(r+\ell+1,s-\ell)-\tau(r+\ell+1,0).
$$
Let $\alpha$ be an element of the sequences $\widetilde{\boldsymbol{\alpha}}$ or  $\widetilde{\boldsymbol{\beta}}$ such that $\tau(r+\ell+1,s-\ell)=\mathfrak{D}_{p}^{s-\ell}(\underline{t^{(r+\ell+1)}\alpha})$. Then, we have $\tau(r+\ell+1,0)\leq \underline{t^{(r+\ell+1)}\alpha}$ and thus
\begin{equation}\label{ineg entier}
\sum_{k=\ell}^{s-1}u_kp^{k-\ell}>p^{s-\ell}\mathfrak{D}_{p}^{s-\ell}(\underline{t^{(r+\ell+1)}\alpha})-\underline{t^{(r+\ell+1)}\alpha}.
\end{equation}
Both sides of inequality \eqref{ineg entier} are integers, so that 
$$
\sum_{k=\ell}^{s-1}u_kp^{k-\ell}\geq p^{s-\ell}\mathfrak{D}_{p}^{s-\ell}(\underline{t^{(r+\ell+1)}\alpha})-\underline{t^{(r+\ell+1)}\alpha}+1\geq p^{s-\ell}\mathfrak{D}_{p}^{s-\ell}(\underline{t^{(r+\ell+1)}\alpha}).
$$
It follows that 
$$
\frac{\sum_{k=\ell}^{s-1}u_kp^{k-\ell}}{p^{s-\ell}}\geq\mathfrak{D}_{p}^{s-\ell}(\underline{t^{(r+\ell+1)}\alpha})=\tau(r+\ell+1,s-\ell).
$$
By $\mathcal{B}_{s-\ell}$, there exists  $i\in\{0,\dots,s-\ell-1\}$ such that $\left(\sum_{k=\ell+i}^{s-1}u_kp^{k-\ell-i},s-\ell-i\right)\in\mathcal{N}_{r+\ell+i+1}$. Hence there exists $j\in\{\ell,\dots,s-1\}$ such that $\left(\sum_{k=j}^{s-1}u_kp^{k-j},s-j\right)\in\mathcal{N}_{r+j+1}$, which proves Assertion $\mathcal{B}_s$ and finishes the induction on $s$.
\medskip

The equivalence of Assertions $(1)$ and $(2)$ is now proved and we have 
$$
\Psi_{\mathcal{N}}(r,s)=\big\{u\in\{0,\dots,p^s-1\}\,:\,\{u/p^s\}<\tau(r+1,s)\big\}.
$$
Let $u\in\Psi_{\mathcal{N}}(r,s)$. Let us prove that for all $\ell\geq s$, we have $\{u/p^\ell\}<\tau(r+1,\ell)$.
\medskip

To  get a contradiction,  let us assume that there exists  $\ell\geq s$ such that $\{u/p^\ell\}\geq\tau(r+1,\ell)$. Let $\alpha$ be an element of the sequences  $\widetilde{\boldsymbol{\alpha}}$ or $\widetilde{\boldsymbol{\beta}}$ such that $\tau(r+1,\ell)=\mathfrak{D}_p^{\ell}(\underline{t^{(r+1)}\alpha})$. We obtain that \begin{align*}
\left\{\frac{u}{p^s}\right\}=p^{\ell-s}\left\{\frac{u}{p^\ell}\right\}&\geq p^{\ell-s}\mathfrak{D}_p^{\ell}(\underline{t^{(r+1)}\alpha})\\
&\geq p^{\ell-s}\mathfrak{D}_p^{\ell}(\underline{t^{(r+1)}\alpha})-\mathfrak{D}_p^{s}(\underline{t^{(r+1)}\alpha})+\mathfrak{D}_p^{s}(\underline{t^{(r+1)}\alpha})\\
&\geq\mathfrak{D}_p^{s}(\underline{t^{(r+1)}\alpha})\\
&\geq\tau(r+1,s),
\end{align*}
which is a contradiction. Hence, for all $\ell\geq s$, we have $\{u/p^\ell\}<\tau(r+1,\ell)$.
\medskip

To complete the proof of Lemma \ref{prop Psi}, it remains to prove that, for all $u\in\Psi_{\mathcal{N}}(r,s)$ and all $m\in\mathbb{N}$, we have 
\begin{equation}\label{end Lemma prop Psi}
\frac{\mathcal{Q}_{r+1,\cdot}(u+mp^s)}{\mathcal{Q}_{r+1,\cdot}(u)}\in 2^{\iota mp^s}p^{\left\{\frac{\lambda_p}{p-1}\right\}m(p^s-1)}\Lambda_{b,r+s+1}(m)\mathcal{A}_b.
\end{equation}
By  Lemma \ref{Lemma decomp Q}, we have 
$$
\frac{\mathcal{Q}_{r+1,\cdot}(u+mp^s)}{\mathcal{Q}_{r+1,\cdot}(u)}\in 2^{\iota mp^s}\frac{\Lambda_{b,r+1}(u+mp^s)}{\Lambda_{b,r+1}(u)}\mathcal{A}_b^{\times},
$$
with 
\begin{multline}\label{interVal 1}
v_p\left(\frac{\Lambda_{b,r+1}(u+mp^s)}{\Lambda_{b,r+1}(u)}\right)\\
=\sum_{\ell=1}^{\infty}\left(\widetilde{\Delta}_{r+1,t}^{p,\ell}\left(\left\{\frac{u+mp^s}{p^{\ell}}\right\}\right)-\widetilde{\Delta}_{r+1,t}^{p,\ell}\left(\left\{\frac{u}{p^{\ell}}\right\}\right)\right)-\lambda_p\frac{\mathfrak{s}_p(m)}{p-1}+mp^s\left\{\frac{\lambda_p}{p-1}\right\}\\
=\sum_{\ell=s+1}^{\infty}\widetilde{\Delta}_{r+1,t}^{p,\ell}\left(\left\{\frac{u+mp^s}{p^{\ell}}\right\}\right)-\lambda_p\frac{\mathfrak{s}_p(m)}{p-1}+mp^s\left\{\frac{\lambda_p}{p-1}\right\},
\end{multline}
because,  for all $\ell\in\{1,\dots,s\}$, we have $\{u/p^{\ell}\}=\big\{(u+mp^s)/p^\ell\big\}$ and, for all $\ell\geq s+1$, we have $\{u/p^\ell\}<\tau(r+1,\ell)$, thus $\widetilde{\Delta}_{r+1,t}^{p,\ell}\big(\{u/p^\ell\}\big)=0$. Let us show that, for all $\ell\geq s+1$, we have 
\begin{equation}\label{interVal 2}
\widetilde{\Delta}_{r+1,t}^{p,\ell}\left(\left\{\frac{u+mp^s}{p^\ell}\right\}\right)=\widetilde{\Delta}_{r+s+1,t}^{p,\ell-s}\left(\left\{\frac{m}{p^{\ell-s}}\right\}\right).
\end{equation}
Let $\alpha$ be an element of the sequences  $\boldsymbol{\alpha}$ or  $\boldsymbol{\beta}$ whose denominator is not divisible by $p$. To prove \eqref{interVal 2}, it is enough to show that, for all $\ell\geq s+1$, we have 
\begin{equation}\label{interVal 3}
\left\{\frac{u+mp^s}{p^\ell}\right\}\geq\mathfrak{D}_p^\ell(\underline{t^{(r+1)}\alpha})\Longleftrightarrow\left\{\frac{m}{p^{\ell-s}}\right\}\geq\mathfrak{D}_p^{\ell-s}(\underline{t^{(r+s+1)}\alpha}). 
\end{equation}
We write $m=\sum_{k=0}^{\infty}m_kp^k$ with $m_k\in\{0,\dots,p-1\}$. Then, we have 
$$
\left\{\frac{u+mp^s}{p^\ell}\right\}=\frac{u+\sum_{k=0}^{\ell-s-1}m_kp^{k+s}}{p^{\ell}}=\frac{u}{p^\ell}+\left\{\frac{m}{p^{\ell-s}}\right\}.
$$
We observe that $\mathfrak{D}_p^{\ell-s}(\underline{t^{(r+s+1)}\alpha})=\mathfrak{D}_p^\ell(\underline{t^{(r+1)}\alpha})$, so that 
$$
\left\{\frac{m}{p^{\ell-s}}\right\}\geq\mathfrak{D}_p^{\ell-s}(\underline{t^{(r+s+1)}\alpha})\Longrightarrow\left\{\frac{u+mp^s}{p^\ell}\right\}\geq\mathfrak{D}_p^\ell(\underline{t^{(r+1)}\alpha}). 
$$ 
Moreover, we have
\begin{align*}
\left\{\frac{u+mp^s}{p^\ell}\right\}\geq\mathfrak{D}_p^\ell(\underline{t^{(r+1)}\alpha})&\Longrightarrow\frac{u}{p^\ell}+\left\{\frac{m}{p^{\ell-s}}\right\}\geq\mathfrak{D}_p^\ell(\underline{t^{(r+1)}\alpha})\\
&\Longrightarrow p^{\ell-s}\left\{\frac{m}{p^{\ell-s}}\right\}\geq p^{\ell-s}\mathfrak{D}_p^\ell(\underline{t^{(r+1)}\alpha})-\frac{u}{p^s}\\
&\Longrightarrow p^{\ell-s}\left\{\frac{m}{p^{\ell-s}}\right\}> p^{\ell-s}\mathfrak{D}_p^\ell(\underline{t^{(r+1)}\alpha})-\mathfrak{D}_p^s(\underline{t^{(r+1)}\alpha})\\
&\Longrightarrow p^{\ell-s}\left\{\frac{m}{p^{\ell-s}}\right\}\geq p^{\ell-s}\mathfrak{D}_p^\ell(\underline{t^{(r+1)}\alpha})-\mathfrak{D}_p^s(\underline{t^{(r+1)}\alpha})+1\\
&\Longrightarrow\left\{\frac{m}{p^{\ell-s}}\right\}\geq\mathfrak{D}_p^{\ell-s}(\underline{t^{(r+s+1)}\alpha}).
\end{align*}
Equivalence \eqref{interVal 3} is thus proved and we have \eqref{interVal 2}. Using  \eqref{interVal 2} in \eqref{interVal 1}, we obtain 
\begin{align*}
v_p\left(\frac{\Lambda_{b,r+1}(u+mp^s)}{\Lambda_{b,r+1}(u)}\right)&=\sum_{\ell=s+1}^{\infty}\widetilde{\Delta}_{r+s+1,t}^{p,\ell-s}\left(\left\{\frac{m}{p^{\ell-s}}\right\}\right)-\lambda_p\frac{\mathfrak{s}_p(m)}{p-1}+mp^s\left\{\frac{\lambda_p}{p-1}\right\}\\
&=\sum_{\ell=1}^{\infty}\widetilde{\Delta}_{r+s+1,t}^{p,\ell}\left(\left\{\frac{m}{p^{\ell}}\right\}\right)-\lambda_p\frac{\mathfrak{s}_p(m)}{p-1}+mp^s\left\{\frac{\lambda_p}{p-1}\right\}\\
&= v_p\big(\Lambda_{b,r+s+1}(m)\big)+m(p^s-1)\left\{\frac{\lambda_p}{p-1}\right\}.
\end{align*}
This completes the proof of Lemma \ref{prop Psi}.
\end{proof}

\subsubsection{Verification of Conditions $(a)$ and $(a_1)$ of Theorem \ref{congruences formelles}}

Let us fix  $r\in\mathbb{N}$. For all $s\in\mathbb{N}$, all $v\in\{0,\dots,p-1\}$ and all  $u\in\Psi_{\mathcal{N}}(r,s)$, we set $\theta_{r,s}(v+up):=\mathcal{Q}_{r,\cdot}(v+up)$ if $v+up\notin\Psi_{\mathcal{N}}(r-1,s+1)$, and $\theta_{r,s}(v+up):=\mathbf{g}_r(v+up)$ otherwise.
\medskip

The aim of this section is to prove the following fact: for all  $s,m\in\mathbb{N}$, all  $v\in\{0,\dots,p-1\}$ and all  $u\in\Psi_{\mathcal{N}}(r,s)$, we have 
\begin{equation}\label{Hypoth\`ese (iii)}
\theta_{r,s}(v+up)\left(\frac{\mathcal{Q}_{r,\cdot}(v+up+mp^{s+1})}{\mathcal{Q}_{r,\cdot}(v+up)}-\frac{\mathcal{Q}_{r+1,\cdot}(u+mp^{s})}{\mathcal{Q}_{r+1,\cdot}(u)}\right)\in p^{s+1}\mathbf{g}_{r+s+1}(m)\mathcal{A}.
\end{equation}
This will prove Conditions $(a)$ and $(a_1)$ of Theorem \ref{congruences formelles}. Indeed, by Lemmas \ref{Lemma decomp Q} and \ref{Lambda vs mu}, for all $v\in\{0,\dots,p-1\}$ and all  $u\in\Psi_{\mathcal{N}}(r,s)$, we have 
$$
\mathcal{Q}_{r,\cdot}(v+up)\in\Lambda_{b,r}(v+up)\mathcal{A}\subset\mathbf{g}_r(v+up)\mathcal{A}.
$$
Hence, Congruence \eqref{Hypoth\`ese (iii)} implies Condition $(a)$ of Theorem \ref{congruences formelles}. Moreover, by definition of $\theta_{r,s}$, when  $v+up\in\Psi_{\mathcal{N}}(r-1,s+1)$, Congruence \eqref{Hypoth\`ese (iii)} implies Condition $(a_1)$ of Theorem~\ref{congruences formelles}.
\medskip

If $m=0$, then we have \eqref{Hypoth\`ese (iii)}. In the sequel, we assume that $m\geq 1$ and we split the proof of \eqref{Hypoth\`ese (iii)} into four distinct cases.
\medskip

$\bullet$ Case $1$: we assume that $v+up\notin\Psi(r-1,s+1)$.
\medskip

We then have $\theta_{r,s}(v+up)=\mathcal{Q}_{r,\cdot}(v+up)\in\Lambda_{b,r}(v+up)\mathcal{A}_b$. Let us show that $\Lambda_{b,r}(v+up)\in p^{s+1}\mathbb{Z}_p$. We have 
$$
v_p\big(\Lambda_{b,r}(v+up)\big)=\frac{1}{\varphi(p^\nu)}\sum_{\ell=1}^\infty\underset{\gcd(c,p)=1}{\sum_{c=1}^{p^\nu}}\Delta_{r,t}^{c,\ell}\left(\left\{\frac{v+up}{p^\ell}\right\}\right)+(v+up)\left\{\frac{\lambda_p}{p-1}\right\}.
$$ 
Since $v+up\notin\Psi(r-1,s+1)$ and $u\in\Psi(r,s)$, we obtain that $(v+up,s+1)\in\mathcal{N}_r$ and, for all $\ell\in\{1,\dots,s+1\}$, we have $\{(v+up)/p^{\ell}\}\geq\tau(r,\ell)$. It follows that 
$$
\frac{1}{\varphi(p^{\nu})}\sum_{\ell=1}^{s+1}\underset{\gcd(c,p)=1}{\sum_{c=1}^{p^{\nu}}}\Delta_{r,t}^{c,\ell}\left(\left\{\frac{v+up}{p^{\ell}}\right\}\right)\geq s+1
$$
and $v_p\big(\Lambda_{b,r}(v+up)\big)\geq s+1$ because the functions $\Delta_{r,t}^{c,\ell}$ are positive on $[0,1)$.

Since $u\in\Psi(r,s)$, Lemma \ref{prop Psi} yields
$$
\mathcal{Q}_{r,\cdot}(v+up)\frac{\mathcal{Q}_{r+1,\cdot}(u+mp^s)}{\mathcal{Q}_{r+1,\cdot}(u)}\in p^{s+1}\Lambda_{b,r+s+1}(m)\mathcal{A}_b\subset p^{s+1}\mathbf{g}_{r+s+1}(m)\mathcal{A}_b.
$$
Thus, to show  \eqref{Hypoth\`ese (iii)}, it it enough to show 
\begin{equation}\label{Hypoth\`ese (iii) 2}
\mathcal{Q}_{r,\cdot}(v+up+mp^{s+1})\in p^{s+1}\mathbf{g}_{r+s+1}(m)\mathcal{A}_b.
\end{equation}
By  Lemma \ref{Lemma decomp Q}, we have 
\begin{multline*}
v_p\big(\Lambda_{b,r}(v+up+mp^{s+1})\big)=\frac{1}{\varphi(p^{\nu})}\sum_{\ell=1}^{\infty}\underset{\gcd(c,p)=1}{\sum_{c=1}^{p^{\nu}}}\Delta_{r,t}^{c,\ell}\left(\left\{\frac{v+up+mp^{s+1}}{p^{\ell}}\right\}\right)\\
+(v+up+mp^{s+1})\left\{\frac{\lambda_p}{p-1}\right\},
\end{multline*}
hence 
\begin{multline*}
v_p\big(\Lambda_{b,r}(v+up+mp^{s+1})\big)\geq s+1+\frac{1}{\varphi(p^{\nu})}\sum_{\ell=s+2}^{\infty}\underset{\gcd(c,p)=1}{\sum_{c=1}^{p^{\nu}}}\Delta_{r,t}^{c,\ell}\left(\left\{\frac{v+up+mp^{s+1}}{p^{\ell}}\right\}\right)\\
+m\left\{\frac{\lambda_p}{p-1}\right\}.
\end{multline*}
\medskip

If $\boldsymbol{\beta}\in\mathbb{Z}^r$, then the functions $\Delta_{r,t}^{c,\ell}$ are non-decreasing on $[0,1)$ and, by \eqref{\'echange r l}, for all $\ell\geq s+2$, we obtain 
\begin{align*}
\sum_{\ell=s+2}^{\infty}\underset{\gcd(c,p)=1}{\sum_{c=1}^{p^{\nu}}}\Delta_{r,t}^{c,\ell}\left(\left\{\frac{v+up+mp^{s+1}}{p^{\ell}}\right\}\right)&\geq\sum_{\ell=s+2}^{\infty}\underset{\gcd(c,p)=1}{\sum_{c=1}^{p^{\nu}}}\Delta_{r,t}^{c,\ell}\left(\left\{\frac{mp^{s+1}}{p^{\ell}}\right\}\right)\\
&\geq \sum_{\ell=1}^{\infty}\underset{\gcd(c,p)=1}{\sum_{c=1}^{p^{\nu}}}\Delta_{r,t}^{c,\ell+s+1}\left(\left\{\frac{m}{p^\ell}\right\}\right)\\
&\geq \sum_{\ell=1}^{\infty}\underset{\gcd(c,p)=1}{\sum_{c=1}^{p^{\nu}}}\Delta_{r+s+1,t}^{c,\ell}\left(\left\{\frac{m}{p^\ell}\right\}\right).
\end{align*}
Consequently if $\boldsymbol{\beta}\in\mathbb{Z}^r$, then 
$$
v_p\big(\Lambda_{b,r}(v+up+mp^{s+1})\big)\geq s+1+v_p\big(\Lambda_{b,r+s+1}(m)\big)\geq s+1+v_p\big(\mathbf{g}_{r+s+1}(m)\big),
$$
as expected.
\medskip

On the other hand, if $\boldsymbol{\beta}\notin\mathbb{Z}^r$, then we observe that, for all $\ell\in\mathbb{N}$, $\ell\geq 1$, we have 
\begin{align*}
\left\{\frac{m}{p^\ell}\right\}\geq\tau(r+s+1,\ell)&\Longrightarrow \left\{\frac{mp^{s+1}}{p^{\ell+s+1}}\right\}\geq\tau(r,\ell+s+1)\\
&\Longrightarrow \left\{\frac{v+up+mp^{s+1}}{p^{\ell+s+1}}\right\}\geq\tau(r,\ell+s+1)\\
&\Longrightarrow\frac{1}{\varphi(p^\nu)}\underset{\gcd(c,p)=1}{\sum_{c=1}^{p^\nu}}\Delta_{r,t}^{c,\ell+s+1}\left(\left\{\frac{v+up+mp^{s+1}}{p^{\ell+s+1}}\right\}\right)\geq 1,
\end{align*}
so that 
$$
\frac{1}{\varphi(p^\nu)}\sum_{\ell=s+2}^{\infty}\underset{\gcd(c,p)=1}{\sum_{c=1}^{p^\nu}}\Delta_{r,t}^{c,\ell}\left(\left\{\frac{v+up+mp^{s+1}}{p^\ell}\right\}\right)\geq v_p\big(g_{r+s+1}(m)\big)
$$
and thus $v_p\big(\Lambda_{b,r}(v+up+mp^{s+1})\big)\geq s+1+v_p\big(\mathbf{g}_{r+s+1}(m)\big)$, as expected. Hence  \eqref{Hypoth\`ese (iii) 2} is proved, which finishes the proof of \eqref{Hypoth\`ese (iii)} when  $v+up\notin\Psi(r-1,s+1)$.
\medskip

$\bullet$ Case $2$: we assume that $v+up\in\Psi(r-1,s+1)$ and that  $p-1\nmid\lambda_p$.
\medskip

We have $\theta_{r,s}(v+up)=\mathbf{g}_r(v+up)$, $\mathcal{A}=\mathcal{A}_b$ and we have to show that 
$$
\mathbf{g}_{r}(v+up)\left(\frac{\mathcal{Q}_{r,\cdot}(v+up+mp^{s+1})}{\mathcal{Q}_{r,\cdot}(v+up)}-\frac{\mathcal{Q}_{r+1,\cdot}(u+mp^s)}{\mathcal{Q}_{r+1,\cdot}(u)}\right)\in p^{s+1}\mathbf{g}_{r+s+1}(m)\mathcal{A}_b.
$$ 
By  Lemma \ref{prop Psi}, 
$$
\frac{\mathcal{Q}_{r+1,\cdot}(u+mp^s)}{\mathcal{Q}_{r+1,\cdot}(u)}\in p^{\left\{\frac{\lambda_p}{p-1}\right\}m(p^s-1)}\Lambda_{b,r+s+1}(m)\mathcal{A}_b
$$
and 
$$
\frac{\mathcal{Q}_{r,\cdot}(v+up+mp^{s+1})}{\mathcal{Q}_{r,\cdot}(v+up)}\in p^{\left\{\frac{\lambda_p}{p-1}\right\}m(p^{s+1}-1)}\Lambda_{b,r+s+1}(m)\mathcal{A}_b.
$$
Since $p-1\nmid\lambda_p$ and $m\geq 1$, we have 
$$
\left\{\frac{\lambda_p}{p-1}\right\}m(p^s-1)\geq m\frac{p^s-1}{p-1}\geq s\quad\textup{and}\quad \left\{\frac{\lambda_p}{p-1}\right\}m(p^{s+1}-1)\geq s+1.
$$
Thus, we obtain 
$$
\mathbf{g}_r(v+up)\frac{\mathcal{Q}_{r,\cdot}(v+up+mp^{s+1})}{\mathcal{Q}_{r,\cdot}(v+up)}\in p^{s+1}\Lambda_{b,r+s+1}(m)\mathcal{A}_b\subset p^{s+1}\mathbf{g}_{r+s+1}(m)\mathcal{A}_b,
$$
because $\mathbf{g}_r(v+up)\in\mathbb{Z}_p$. It remains to show that 
\begin{equation}\label{Reste 2}
\mathbf{g}_r(v+up)\frac{\mathcal{Q}_{r+1,\cdot}(u+mp^s)}{\mathcal{Q}_{r+1,\cdot}(u)}\in p^{s+1}\mathbf{g}_{r+s+1}(m)\mathcal{A}_b.
\end{equation}
By  Lemma \ref{Lambda vs mu}, 
$$
v_p\big(\Lambda_{b,r+s+1}(m)\big)\geq v_p\big(g_{r+s+1}(m)\big)+m\left\{\frac{\lambda_p}{p-1}\right\}
$$ 
and thus, since $p-1\nmid\lambda_p$ and $m\geq 1$, we obtain that $\Lambda_{b,r+s+1}(m)\in pg_{r+s+1}(m)\mathbb{Z}_p$. Hence, we have $\Lambda_{b,r+s+1}(m)\in p\mathbf{g}_{r+s+1}(m)\mathbb{Z}_p$, as well as   \eqref{Reste 2} because $\mathbf{g}_r(v+up)\in\mathbb{Z}_p$. 
\medskip

$\bullet$ Case $3$: we assume that $v+up\in\Psi(r-1,s+1)$, $\boldsymbol{\beta}\notin\mathbb{Z}^r$, $p=2$ and that $\mathfrak{m}$ is odd.
\medskip

We have $\theta_{r,s}(v+up)=\mathbf{g}_r(v+up)=g_r(v+up)$, $\mathcal{A}=\mathcal{A}_b$ and we have to show
\begin{equation}\label{Reste 3}
g_{r}(v+up)\left(\frac{\mathcal{Q}_{r,\cdot}(v+up+mp^{s+1})}{\mathcal{Q}_{r,\cdot}(v+up)}-\frac{\mathcal{Q}_{r+1,\cdot}(u+mp^s)}{\mathcal{Q}_{r+1,\cdot}(u)}\right)\in p^{s+1}g_{r+s+1}(m)\mathcal{A}_b.
\end{equation}
By  Lemma \ref{prop Psi}, we have 
$$
\frac{\mathcal{Q}_{r+1,\cdot}(u+mp^s)}{\mathcal{Q}_{r+1,\cdot}(u)}\in 2^{mp^s}\Lambda_{b,r+s+1}(m)\mathcal{A}_b
$$
and 
$$
\frac{\mathcal{Q}_{r,\cdot}(v+up+mp^{s+1})}{\mathcal{Q}_{r,\cdot}(v+up)}\in 2^{mp^{s+1}}\Lambda_{b,r+s+1}(m)\mathcal{A}_b.
$$
Moreover, we have $m2^s\geq s+1$ and $m2^{s+1}\geq s+1$ because $m\geq 1$. Since $\Lambda_{b,r+s+1}(m)\in g_{r+s+1}(m)\mathbb{Z}_p$ and  $g_r(v+up)\in\mathbb{Z}_p$, we get \eqref{Reste 3}.
\medskip

$\bullet$ Case $4$: we assume that $v+up\in\Psi(r-1,s+1)$, $p-1$ divides $\lambda_p$ and that, if $p=2$ and $\boldsymbol{\beta}\notin\mathbb{Z}^r$, then  $\mathfrak{m}$ is even.
\medskip 

We set
$$
X_{r,s}(v,u,m):=\frac{\mathcal{Q}_{r,\cdot}(v+up)}{\mathcal{Q}_{r+1,\cdot}(u)}\frac{\mathcal{Q}_{r+1,\cdot}(u+mp^{s})}{\mathcal{Q}_{r,\cdot}(v+up+mp^{s+1})}.
$$ 
Assertion \eqref{Hypoth\`ese (iii)} is satisfied if and only if  for all  $s,m\in\mathbb{N}$, all $v\in\{0,\dots,p-1\}$ and all  $u\in\Psi_{\mathcal{N}}(r,s)$, we have 
\begin{equation}\label{eq 3.13}
\mathbf{g}_{r}(v+up)\big(X_{r,s}(v,u,m)-1\big)\frac{\mathcal{Q}_{r,\cdot}(v+up+mp^{s+1})}{\mathcal{Q}_{r,\cdot}(v+up)}\in p^{s+1}\mathbf{g}_{r+s+1}(m)\mathcal{A}.
\end{equation}
The following lemma will give the conclusion.

\begin{Lemma}\label{d\'efinition de Y}
We assume that $p-1$ divides $\lambda_p$ and that, if $p=2$ and  $\boldsymbol{\beta}\notin\mathbb{Z}^r$, then  $\mathfrak{m}$ is even. Then,
\begin{enumerate}
\item For all $r,s\in\mathbb{N}$, all $v\in\{0,\dots,p-1\}$, all $u\in\Psi_{\mathcal{N}}(r,s)$ and all  $m\in\mathbb{N}$, there exists  $Y_{r,s}(v,u,m)\in\mathbb{Z}_p$ independent of  $t\in\Omega_b$ such that 
$$
X_{r,s}(v,u,m)\in\begin{cases} Y_{r,s}(v,u,m)(1+p^{s}\mathcal{A}_b)\textup{ if  $\boldsymbol{\beta}\in\mathbb{Z}^r$ and $p\mid d_{\boldsymbol{\alpha},\boldsymbol{\beta}}$ ;}\\ Y_{r,s}(v,u,m)(1+p^{s+1}\mathcal{A}_b^\ast)\textup{ otherwise;}\end{cases}.
$$ 
\item If there exists  $j\in\{1,\dots,s+1\}$ such that $\big\{(v+up)/p^j\big\}<\tau(r,j)$, then we have $Y_{r,s}(v,u,m)\in 1+p^{s-j+2}\mathbb{Z}_p$.
\end{enumerate}
\end{Lemma}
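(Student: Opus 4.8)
The plan is to make the factorisation of Lemma~\ref{Lemma decomp Q} fully explicit on the four Pochhammer ratios entering $X_{r,s}(v,u,m)$, to read off $Y_{r,s}(v,u,m)$ as the $t$-independent scalar that comes out, and to estimate the $t$-dependent remainder by a block analysis of products of arithmetic progressions. First I would rewrite
\[
X_{r,s}(v,u,m)=\frac{\mathcal{Q}_{r+1,\cdot}(u+mp^s)\big/\mathcal{Q}_{r+1,\cdot}(u)}{\mathcal{Q}_{r,\cdot}(v+up+mp^{s+1})\big/\mathcal{Q}_{r,\cdot}(v+up)}
\]
and apply Lemma~\ref{prop Psi} to both ratios: to the numerator directly since $u\in\Psi_{\mathcal{N}}(r,s)$, and to the denominator with $r-1,s+1$ in place of $r,s$ since $v+up\in\Psi_{\mathcal{N}}(r-1,s+1)$ by the case hypothesis (using also \eqref{\'echange r l} to match the relevant $\tau$'s). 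As $p-1\mid\lambda_p$ we have $\{\lambda_p/(p-1)\}=0$, so the numerator equals $2^{\iota mp^s}\Lambda_{b,r+s+1}(m)\,\rho_1$ and the denominator equals $2^{\iota mp^{s+1}}\Lambda_{b,r+s+1}(m)\,\rho_2$, with $\rho_1=\mathfrak{R}_{r+1}(u+mp^s,\cdot)/\mathfrak{R}_{r+1}(u,\cdot)$ and $\rho_2=\mathfrak{R}_{r}(v+up+mp^{s+1},\cdot)/\mathfrak{R}_{r}(v+up,\cdot)$ in $\mathcal{A}_b^\times$; the factor $\Lambda_{b,r+s+1}(m)$ cancels, leaving $X_{r,s}(v,u,m)=Y_{r,s}(v,u,m)\cdot(\rho_1/\rho_2)$ where one \emph{defines}
\[
Y_{r,s}(v,u,m):=2^{-\iota mp^s(p-1)}\,\frac{\Lambda_{b,r+1}(u+mp^s)\,\Lambda_{b,r}(v+up)}{\Lambda_{b,r+1}(u)\,\Lambda_{b,r}(v+up+mp^{s+1})}.
\]
This scalar is independent of $t$, and the $p$-adic valuation computation in the proof of Lemma~\ref{prop Psi} (Equation~\eqref{end Lemma prop Psi}, once more with $p-1\mid\lambda_p$) shows that each of the two $\Lambda$-ratios it is built from equals $\Lambda_{b,r+s+1}(m)$ times a $p$-adic unit, so $Y_{r,s}(v,u,m)\in\mathbb{Z}_p^\times$.

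For assertion~$(1)$ it then suffices to locate $\rho_1/\rho_2\in\mathcal{A}_b^\times$. Each $\mathfrak{R}_{\bullet}(\bullet,\cdot)$ is a product, over the parameters $\gamma$ of $\boldsymbol\alpha$ and $\boldsymbol\beta$ whose denominator is divisible by $p$, of products of arithmetic progressions of common difference $d(\gamma)$; hence the factor of $\rho_1/\rho_2$ attached to such a $\gamma$ is, up to sign,
\[
\prod_{k=u}^{u+mp^s-1}\bigl(c_{r+1}(t)+kd(\gamma)\bigr)\ \Big/\ \prod_{k=v+up}^{v+up+mp^{s+1}-1}\bigl(c_{r}(t)+kd(\gamma)\bigr),
\]
with $c_{r}(t)=d(\gamma)\,\langle t^{(r)}\gamma\rangle$, $c_{r+1}(t)=d(\gamma)\,\langle t^{(r+1)}\gamma\rangle$ in $\mathbb{Z}_p^\times$ satisfying $c_r(t)\equiv c_{r+1}(t)\bmod p^{v_p(d(\gamma))}$ (since $t^{(r)}\equiv t^{(r+1)}\bmod p^\nu$ and $v_p(d(\gamma))\le\nu$). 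Grouping these products into blocks of length $p^{\,s+1-v_p(d(\gamma))}$ and evaluating each block by the Wilson-type congruence for $\prod_j(a+jp^eD')$ — whose only delicate instance is the modulus-$4$ one governed by Proposition~\ref{propo Lang +}, the resulting sign being $(-1)$ raised to a total exponent of the parity of $\mathfrak{m}_{\boldsymbol\alpha,\boldsymbol\beta}$, hence trivial here — one obtains $\rho_1/\rho_2\in 1+p^{s+1}\mathcal{A}_b^\ast$, respectively $1+p^{s}\mathcal{A}_b$ when $\boldsymbol\beta\in\mathbb{Z}^r$ and $p\mid d_{\boldsymbol\alpha,\boldsymbol\beta}$. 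When $p\nmid d_{\boldsymbol\alpha,\boldsymbol\beta}$ no such $\gamma$ occurs, $\mathcal{A}_b=\mathcal{A}_b^\ast$ and $\rho_1/\rho_2=1$.

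For assertion~$(2)$: if $\{(v+up)/p^j\}<\tau(r,j)$ for some $j\in\{1,\dots,s+1\}$, then the induction on $s$ carried out inside the proof of Lemma~\ref{prop Psi} (the assertions $\mathcal{B}_s$ there), run with $v+up$ in place of $u$, yields $\{(v+up)/p^\ell\}<\tau(r,\ell)$ for every $\ell\ge j$, whence $\widetilde{\Delta}_{r,t}^{p,\ell}\bigl(\{(v+up)/p^\ell\}\bigr)=0$ for $\ell\ge j$. Feeding this into the Dwork-style expansion of the Pochhammer symbols making up $\Lambda_{b,r}(v+up)/\Lambda_{b,r}(v+up+mp^{s+1})$ — legitimate because the parameters concerned are $p$-adic units, so that $\mathfrak{D}_p$ is defined on them and $\mathfrak{D}_p^\ell(\langle t^{(r)}\gamma\rangle)=\langle t^{(r+\ell)}\gamma\rangle$ by \eqref{transition} — the level-$\ell$ contributions for $\ell\ge j$ pair off against the corresponding ones in $\Lambda_{b,r+1}(u+mp^s)/\Lambda_{b,r+1}(u)$ with their surviving $p$-adic Gamma arguments differing by multiples of $p^{\,s-j+2}$, and the $p$-power and sign parts likewise cancel; Proposition~\ref{propo Lang +} then gives $Y_{r,s}(v,u,m)\in 1+p^{\,s-j+2}\mathbb{Z}_p$.

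The hard part will be pinning down the exponents precisely — why the remainder in $(1)$ lies in $p^{s+1}\mathcal{A}_b^\ast$ in general but only in $p^{s}\mathcal{A}_b$ when $\boldsymbol\beta\in\mathbb{Z}^r$, and why the gain in $(2)$ is exactly $s-j+2$ — since this forces one to track very carefully the number and lengths of the blocks, the modulus-$4$ anomaly of Proposition~\ref{propo Lang +} (and why $\iota=0$, i.e.\ $\mathfrak{m}_{\boldsymbol\alpha,\boldsymbol\beta}$ even or $\boldsymbol\beta\in\mathbb{Z}^r$, kills it), and the extra valuation obtained when $\mathcal{A}_b^\times$-valued functions are raised to $p$-power exponents.
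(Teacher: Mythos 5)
Your proposal diverges from the paper's proof in a way that creates a genuine gap, centred on the sign coming from the ``$p$-divisible'' parameters.

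You define $Y_{r,s}(v,u,m)$ to be (up to the harmless power of $2$) the ratio $\dfrac{\Lambda_{b,r+1}(u+mp^s)\,\Lambda_{b,r}(v+up)}{\Lambda_{b,r+1}(u)\,\Lambda_{b,r}(v+up+mp^{s+1})}$, so that the $t$-dependent remainder is exactly the $\mathfrak{R}$-ratio $\rho_1/\rho_2$. The paper instead \emph{defines}
\[
Y_{r,s}(v,u,m)=\frac{\prod_{\beta_i\in\mathbb{Z}_p}\big(1+\tfrac{mp^s}{\underline{t^{(r+1)}\beta_i}+u}\big)^{\rho(v,\underline{t^{(r)}\beta_i})}}{\prod_{\alpha_i\in\mathbb{Z}_p}\big(1+\tfrac{mp^s}{\underline{t^{(r+1)}\alpha_i}+u}\big)^{\rho(v,\underline{t^{(r)}\alpha_i})}},
\]
a finite explicit product built from Dwork's congruence \eqref{part free p}. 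These two choices of $Y$ are \emph{not} the same modulo $1+p^{s+1}\mathbb{Z}_p$: your $\Lambda$-ratio equals the paper's $Y$ times $\varepsilon_{p^s}^{-m\lambda_p}$ times a $(1+p^{s+1}\mathbb{Z}_p)$-unit (this is precisely \eqref{joint 0.3}). The complementary sign $\varepsilon_{p^s}^{m\lambda_p}$ reappears in the estimate of the $\mathfrak{R}$-ratio: \eqref{joint 1.3} gives $\rho_1/\rho_2\in\varepsilon_{p^s}^{m\lambda_p}(1+p^{s+1}\mathcal{A}_b^\ast)$, and the paper's definition of $Y$ is exactly the one that makes these two occurrences cancel. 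Your claim that the residual sign is $(-1)$ to the parity of $\mathfrak{m}_{\boldsymbol\alpha,\boldsymbol\beta}$ and hence trivial is wrong: tracing the argument for $p^s=2$ with $\mathfrak{m}$ even gives $\varepsilon_2^{m\lambda_2}=(-1)^{m\lambda_2}$, not $(-1)^{\mathfrak{m}}$, and this can equal $-1$ (e.g.\ $\lambda_2$ and $m$ both odd). Since $-1\notin 1+4\mathbb{Z}_2=1+p^{s+1}\mathbb{Z}_2$ when $p=2,s=1$, the assertion ``$\rho_1/\rho_2\in 1+p^{s+1}\mathcal{A}_b^\ast$'' fails, and with it your Assertion~$(1)$ for that choice of $Y$.

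The same mismatch undermines your Assertion~$(2)$. The paper's argument is a short factor-by-factor estimate on the explicit product defining its $Y$: the hypothesis $\{(v+up)/p^j\}<\tau(r,j)$ forces, for every $\alpha\in\mathbb{Z}_p$, either $\rho(v,\underline{t^{(r)}\alpha})=0$ or $v_p(\underline{t^{(r+1)}\alpha}+u)\leq j-2$, hence each factor $\big(1+\tfrac{mp^s}{\underline{t^{(r+1)}\alpha}+u}\big)^{\rho(v,\underline{t^{(r)}\alpha})}$ lies in $1+p^{s-j+2}\mathbb{Z}_p$. This is a clean direct computation precisely \emph{because} $Y$ has that product structure. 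Your $\Lambda$-ratio does not, and the sketch you give (propagating the inequality $\{(v+up)/p^\ell\}<\tau(r,\ell)$ to all $\ell\ge j$ via the $\mathcal{B}_s$ induction, then ``pairing off level-$\ell$ contributions'') is both imprecise and dubious: the forward propagation from $j$ requires $v+up$, which may exceed $p^j$, to play the role of an element of $\{0,\dots,p^j-1\}$, and in any case the target congruence $Y\in 1+p^{s-j+2}\mathbb{Z}_p$ would again be sensitive to the sign $\varepsilon_{p^s}^{-m\lambda_p}$ separating your $Y$ from the paper's. The missing idea is to define $Y$ directly via the product formula arising from \eqref{part free p}, rather than as a ratio of $\Lambda$'s.
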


Since $v+up\in\Psi(r-1,s+1)$, Lemma \ref{prop Psi} implies that $\big\{(v+up)/p^{s+1}\big\}<\tau(r,s+1)$. Let $j_0$ be the smallest $j\in\{1,\dots,s+1\}$ such that $\big\{(v+up)/p^j\big\}<\tau(r,j)$. By  Lemma \ref{d\'efinition de Y} applied with $j_0$, we obtain that $Y_{r,s}(v,u,m)\in 1+p^{s-j_0+2}\mathbb{Z}_p$ and that  
$$
X_{r,s}(v,u,m)\in\begin{cases}1+p^{s-j_0+1}\mathcal{A}_b\textup{ if  $\boldsymbol{\beta}\in\mathbb{Z}^r$ et $p\mid d_{\boldsymbol{\alpha},\boldsymbol{\beta}}$;}\\ 1+p^{s-j_0+2}\mathcal{A}_b^\ast\textup{ otherwise.}\end{cases}.
$$
Hence, Lemma \ref{prop Psi} yields
$$
\big(X_{r,s}(v,u,m)-1\big)\frac{\mathcal{Q}_{r,\cdot}(v+up+mp^{s+1})}{\mathcal{Q}_{r,\cdot}(v+up)}
\in p^{s-j_0+2}\mathbf{g}_{r+s+1}(m)\times 
\begin{cases}\mathcal{A}_b\textup{ if $\boldsymbol{\beta}\in\mathbb{Z}^r$ and  $p\mid d_{\boldsymbol{\alpha},\boldsymbol{\beta}}$;}\\ \mathcal{A}_b^\ast\textup{ otherwise.}\end{cases}.
$$
Therfore to prove \eqref{eq 3.13}, it is enough to show that $\mathbf{g}_r(v+up)\in p^{j_0-1}\mathbb{Z}_p$. If $v+up=0$, then we have $j_0=1$ and the conclusion is clear. We can thus assume that $v+up\geq 1$. But for all $j\in\{1,\dots,j_0-1\}$, we have $\big\{(v+up)/p^j\big\}\geq\tau(r,j)$, hence $v_p\big(g_r(v+up)\big)\geq j_0-1$.  Furthermore, if  $\boldsymbol{\beta}\in\mathbb{Z}^r$ and if $p\mid d_{\boldsymbol{\alpha},\boldsymbol{\beta}}$, we have $\lambda_p\leq -1$ and, by  Lemma \ref{Lambda vs mu}, we have 
$$
\mathbf{g}_r(v+up)=\frac{\Lambda_{b,r}(v+up)}{p}\in g_r(v+up)\mathbb{Z}_p\subset p^{j_0-1}\mathbb{Z}_p,
$$
as expected.
\medskip

To complete the proof of \eqref{eq 3.13} and that of Theorem \ref{theo expand}, it remains to prove  Lemma \ref{d\'efinition de Y}.

\begin{proof}[Proof of Lemma \ref{d\'efinition de Y}]
We will show that Lemma \ref{d\'efinition de Y} holds with 
$$
Y_{r,s}(v,u,m):=\frac{\prod_{\beta_i\in\mathbb{Z}_p}\left(1+\frac{mp^s}{\underline{t^{(r+1)}\beta_i}+u}\right)^{\rho(v,\underline{t^{(r)}\beta_i})}}{\prod_{\alpha_i\in\mathbb{Z}_p}\left(1+\frac{mp^s}{\underline{t^{(r+1)}\alpha_i}+u}\right)^{\rho(v,\underline{t^{(r)}\alpha_i})}}.
$$

By  Lemma $1$ of  \cite[Dwork]{cycles}, if  $\alpha$ is an element of the sequences  $\boldsymbol{\alpha}$ or $\boldsymbol{\beta}$ whose denominator is not divisible by $p$, then  for all $v\in\{0,\dots,p-1\}$, all $s,m\in\mathbb{N}$ and all  $u\in\{0,\dots,p^s-1\}$, we have 
\begin{equation}\label{part free p}
\frac{(\alpha)_{v+up+mp^{s+1}}\big(\mathfrak{D}_p(\alpha)\big)_u}{\big(\mathfrak{D}_p(\alpha)\big)_{u+mp^s}(\alpha)_{v+up}}\in\big((-p)^{p^s}\varepsilon_{p^s}\big)^m\left(1+\frac{mp^s}{\mathfrak{D}_p(\alpha)+u}\right)^{\rho(v,\alpha)}(1+p^{s+1}\mathbb{Z}_p),
\end{equation}
where $\varepsilon_{k}=-1$ if $k=2$, and $\varepsilon_{k}=1$ otherwise. 

Similarly, using Dwork's method, we will show that if $\alpha$ is an element of the sequences  $\boldsymbol{\alpha}$ or $\boldsymbol{\beta}$ whose denominator is  divisible by $p$, then for all $v\in\{0,\dots,p-1\}$, all $r,s,m\in\mathbb{N}$ and all  $u\in\{0,\dots,p^s-1\}$, we have 
\begin{equation}\label{part p}
\left(t\in\Omega_b\mapsto d(\alpha)^{m\varphi(p^{s+1})}\frac{(\underline{t^{(r)}\alpha})_{v+up+mp^{s+1}}(\underline{t^{(r+1)}\alpha})_u}{(\underline{t^{(r+1)}\alpha})_{u+mp^s}(\underline{t^{(r)}\alpha})_{v+up}}\right)\in
\varepsilon_{p^s}'(\alpha)^m(1+p^{s+1}\mathcal{A}_b^\ast),
\end{equation}
where $\varepsilon_k'(\alpha)=\varepsilon_k$ if $v_p\big(d(\alpha)\big)=1$ and  $\varepsilon_k'(\alpha)=1$ otherwise.
\medskip 

We first show that \eqref{part free p} and \eqref{part p} imply the validity of Assertion  $(1)$ of  Lemma \ref{d\'efinition de Y}. Indeed, by \eqref{part free p}, we obtain 
\begin{multline}\label{joint 0.1}
\frac{\Lambda_{b,r}(v+up+mp^{s+1})\Lambda_{b,r+1}(u)}{\Lambda_{b,r+1}(u+mp^s)\Lambda_{b,r}(v+up)}\\
\in\left(C\frac{\prod_{\beta_i\notin\mathbb{Z}_p}d(\beta_i)}{\prod_{\alpha_i\notin\mathbb{Z}_p}d(\alpha_i)}\right)^{m\varphi(p^{s+1})}\frac{\big((-p)^{p^s}\varepsilon_{p^s}\big)^{m\lambda_p}}{Y_{r,s}(v,u,m)}(1+p^{s+1}\mathbb{Z}_p).
\end{multline} 
We write 
$$
C\frac{\prod_{\beta_i\notin\mathbb{Z}_p}d(\beta_i)}{\prod_{\alpha_i\notin\mathbb{Z}_p}d(\alpha_i)}=\sigma p^{-\left\lfloor\frac{\lambda_p}{p-1}\right\rfloor}=\sigma p^{-\frac{\lambda_p}{p-1}},
$$
with $\sigma\in\mathbb{Z}_p^{\times}$, so that 
$$
\left(C\frac{\prod_{\beta_i\notin\mathbb{Z}_p}d(\beta_i)}{\prod_{\alpha_i\notin\mathbb{Z}_p}d(\alpha_i)}\right)^{m\varphi(p^{s+1})}\in p^{-mp^s\lambda_p}(1+p^{s+1}\mathbb{Z}_p).
$$
We thus have  
\begin{equation}\label{joint 0.2}
\left(C\frac{\prod_{\beta_i\notin\mathbb{Z}_p}d(\beta_i)}{\prod_{\alpha_i\notin\mathbb{Z}_p}d(\alpha_i)}\right)^{m\varphi(p^{s+1})}\big((-p)^{p^s}\varepsilon_{p^s}\big)^{m\lambda_p}\in (-1)^{mp^s\lambda_p}\varepsilon_{p^s}^{m\lambda_p}(1+p^{s+1}\mathbb{Z}_p)\subset\varepsilon_{p^s}^{m\lambda_p}(1+p^{s+1}\mathbb{Z}_p),
\end{equation}
because  $-1\in\mathbb{Z}_p^{\times}$ and $\varphi(p^{s+1})=p^s(p-1)$ divides $mp^s\lambda_p$. Using \eqref{joint 0.2} in \eqref{joint 0.1}, we obtain that
\begin{equation}\label{joint 0.3}
\frac{\Lambda_{b,r+1}(u)\Lambda_{b,r}(v+up+mp^{s+1})}{\Lambda_{b,r}(v+up)\Lambda_{b,r+1}(u+mp^s)}\\
\in\frac{\varepsilon_{p^s}^{m\lambda_p}}{Y_{r,s}(v,u,m)}(1+p^{s+1}\mathbb{Z}_p).
\end{equation}

By \eqref{part p}, we also obtain 
$$
\frac{\mathfrak{R}_{r+1}(u+mp^s,\cdot)\mathfrak{R}_{r}(v+up,\cdot)}{\mathfrak{R}_{r}(v+up+mp^{s+1},\cdot)\mathfrak{R}_{r+1}(u,\cdot)}\in \left(\frac{\prod_{\beta_i\notin\mathbb{Z}_p}\varepsilon_{p^s}'(\beta_i)}{\prod_{\alpha_i\notin\mathbb{Z}_p}\varepsilon_{p^s}'(\alpha_i)}\right)^{m}(1+p^{s+1}\mathcal{A}_b^\ast).
$$
If $p^s\neq 2$, then, for any element $\alpha\notin\mathbb{Z}_p$ of  $\boldsymbol{\alpha}$ or $\boldsymbol{\beta}$, we have $\varepsilon_{p^s}'(\alpha)=\varepsilon_{p^s}=1$. If $p^s=2$ and if the number of elements $\alpha$ of $\boldsymbol{\alpha}$ and $\boldsymbol{\beta}$ that satisfy $v_2\big(d(\alpha)\big)\geq 2$ is even, then since $r=s$, we have 
$$
\frac{\prod_{\beta_i\notin\mathbb{Z}_p}
\varepsilon_{p^s}'(\beta_i)}{\prod_{\alpha_i\notin\mathbb{Z}_p}\varepsilon_{p^s}'(\alpha_i)}=(-1)^{\lambda_2}=\varepsilon_2^{\lambda_2}. 
$$
Moreover, we have $p\mathcal{A}_b^\ast\subset\mathcal{A}_b$ and $\varepsilon_{p^s},\varepsilon_{p^s}'(\alpha)\in 1+p^s\mathbb{Z}_p$. It follows that we obtain  
\begin{equation}\label{joint 1.3}
\frac{\mathfrak{R}_{r+1}(u+mp^s,\cdot)\mathfrak{R}_{r}(v+up,\cdot)}{\mathfrak{R}_{r}(v+up+mp^{s+1},\cdot)\mathfrak{R}_{r+1}(u,\cdot)}\in
\begin{cases}
\textup{$1+p^{s}\mathcal{A}_b$ if $\boldsymbol{\beta}\in\mathbb{Z}^r$ and $p\mid d_{\boldsymbol{\alpha},\boldsymbol{\beta}}$;}\\
\textup{$\varepsilon_{p^s}^{m\lambda_p}(1+p^{s+1}\mathcal{A}_b^\ast)$ otherwise.}
\end{cases}.
\end{equation}
By \eqref{joint 0.3} and \eqref{joint 1.3}, we obtain 
$$
X_{r,s}(v,u,m)\in Y_{r,s}(v,u,m)\times\begin{cases}
\textup{$(1+p^{s}\mathcal{A}_b)$ if $\boldsymbol{\beta}\in\mathbb{Z}^r$ and $p\mid d_{\boldsymbol{\alpha},\boldsymbol{\beta}}$;}\\
\textup{$(1+p^{s+1}\mathcal{A}_b^\ast)$ otherwise.}
\end{cases}. 
$$
To finish the proof of Assertion $(1)$ of Lemma \ref{d\'efinition de Y}, we have to prove  \eqref{part p}.
\medskip

Let $\alpha$ be an element of $\boldsymbol{\alpha}$ or $\boldsymbol{\beta}$ whose  denominator is divisible by $p$. For all $s,m\in\mathbb{N}$ and all $u\in\{0,\dots,p^s-1\}$, we set
$$
\mathfrak{q}_r(u,s,m):=t\in\Omega_b\mapsto d(\alpha)^{mp^{s}}\frac{(\underline{t^{(r)}\alpha})_{u+mp^{s}}}{(\underline{t^{(r)}\alpha})_{u}}=
\prod_{k=0}^{mp^{s}-1}\big(d(\alpha)\underline{t^{(r)}\alpha}
+d(\alpha)u+d(\alpha)k\big).
$$
Hence, proving \eqref{part p} amounts to proving that  
$$
\frac{\mathfrak{q}_r(v+up,s+1,m)}{\mathfrak{q}_{r+1}(u,s,m)}\in\varepsilon_{p^s}'(\alpha)^m(1+p^{s+1}\mathcal{A}_b^\ast).
$$
As functions of $t$, we have 
\begin{align}
\mathfrak{q}_r(u,s,m)(t)&=\prod_{i=0}^{p^{s}-1}\prod_{j=0}^{m-1}
\big(d(\alpha)\underline{t^{(r)}\alpha}+d(\alpha)u+d(\alpha)i
+d(\alpha)jp^{s}\big)\notag\\
&\equiv\prod_{i=0}^{p^{s}-1}\big(d(\alpha)\underline{t^{(r)}\alpha}
+d(\alpha)u+d(\alpha)i\big)^m\mod p^{s+1}\mathcal{A}_b\notag\\
&\equiv\prod_{i=0}^{p^{s}-1}\big(d(\alpha)\underline{t^{(r)}\alpha}
+d(\alpha)i\big)^m\mod p^{s+1}\mathcal{A}_b\notag.
\end{align}
Since $d(\alpha)$ is divisible by $p$, we obtain that, for all $i\in\{0,\dots,p^s-1\}$, the map  $t\in\Omega_b\mapsto d(\alpha)\underline{t^{(r)}\alpha}+d(\alpha)i$ is invertible in $\mathcal{A}_b$ and thus 
$$
\mathfrak{q}_r(u,s,m)\in\mathfrak{q}_r(0,s,1)^m(1+p^{s+1}
\mathcal{A}_b).
$$
Hence proving \eqref{part p} amounts to proving that, for all $s\in\mathbb{N}$, we have 
\begin{equation}\label{7.18 equiv 1}
\frac{\mathfrak{q}_r(0,s+1,1)}{\mathfrak{q}_{r+1}(0,s,1)}\in\varepsilon_{p^s}'(\alpha)(1+p^{s+1}\mathcal{A}_b^\ast).
\end{equation}
\medskip

$\bullet$ Case $1$: we assume that $s=0$.
\medskip

As functions of $t$, we have 
$$
\frac{\mathfrak{q}_r(0,1,1)(t)}{\mathfrak{q}_{r+1}(0,0,1)(t)}\in\frac{\big(d(\alpha)\underline{t^{(r)}\alpha}\big)^p}{d(\alpha)\underline{t^{(r+1)}\alpha}}(1+p\mathcal{A}_b)
$$
and 
$$
t^{(r)}\equiv\varpi_{p^\nu}\left(\frac{t}{D}\right)D+\varpi_D\left(\frac{b}{p^{\nu+r}}\right)p^\nu\mod p^\nu D.
$$
Hence with $\langle\alpha\rangle:=\kappa/d(\alpha)$, we obtain the existence of  $\eta(r,t)\in\mathbb{Z}$ such that
$$
d(\alpha)\underline{t^{(r)}\alpha}=\varpi_{p^\nu}\left(\frac{t\kappa}{D}\right)D+\varpi_D\left(\frac{b\kappa}{p^{\nu+r}}\right)p^\nu+d(\alpha)\eta(r,t).
$$

Moreover by Assertions $(2)$, $(4)$ and $(5)$ of Lemma \ref{Lemma alg\`ebres 2}, the maps  $t\in\Omega_b\mapsto d(\alpha)\underline{t^{(r)}\alpha}$ and  $f:t\in\Omega_b\mapsto\varpi_{p^\nu}(t\kappa/D)D$ are in $\mathcal{A}_b^\times$.  Thus $t\in\Omega_b\mapsto d(\alpha)\eta(r,t)$ is in  $\mathcal{A}_b$ and  $t\in\Omega_b\mapsto d(\alpha)\eta(r,t)/p$ is in $\mathcal{A}_b^{\ast}$ because $p$ divides $d(\alpha)$. It follows that  
\begin{equation}\label{transfo}
\big(t\in\Omega_b\mapsto d(\alpha)\underline{t^{(r)}\alpha}\big)\in f(1+p\mathcal{A}_b^\ast).
\end{equation}
We obtain 
$$
\frac{\mathfrak{q}_r(0,1,1)}{\mathfrak{q}_{r+1}(0,0,1)}\in f^{p-1}(1+p\mathcal{A}_b^\ast)\subset\big(1+p(\mathfrak{E}_1\circ f)\big)(1+p\mathcal{A}_b^\ast)\subset 1+p\mathcal{A}_b^\ast,
$$ 
as expected, where the final inclusion is obtained \textit{via} Assertion $(3)$ of  Lemma \ref{Lemma alg\`ebres 1}.
\medskip

$\bullet$ Case $2$: we assume that $s\geq 1$. 
\medskip

If $s\geq 1$, then 
\begin{align}
\prod_{i=0}^{p^s-1}\big(d(\alpha)\underline{t^{(r)}\alpha}
+d(\alpha)i\big)&=\prod_{j=0}^{p^{s-1}-1}\prod_{a=0}^{p-1}
\big(d(\alpha)\underline{t^{(r)}\alpha}+d(\alpha)j+d(\alpha)
ap^{s-1}\big)\label{pour s s+1}\\
&\equiv\prod_{j=0}^{p^{s-1}-1}\big(d(\alpha)\underline{t^{(r)}
\alpha}+d(\alpha)j\big)^p\mod p^s\mathcal{A}_b.\label{pour s+1 s+1}
\end{align}
Using \eqref{pour s+1 s+1} with $s+1$ for $s$, we obtain 
$$
\mathfrak{q}_r(0,s+1,1)\in\mathfrak{q}_r(0,s,1)^p(1+p^{s+1}\mathcal{A}_b)
$$
and thus 
\begin{equation}\label{rang s+1}
\mathfrak{q}_r(0,s+1,1)\in\big(d(\alpha)\underline{t^{(r)}\alpha}\big)^{p^{s+1}}(1+p^{s+1}\mathcal{A}_b).
\end{equation}
\medskip

We set $P(x):=x^p-x\in\mathbb{Z}_p[x]$. For all $a\in\{0,\dots,p-1\}$, we have $a^p-a\equiv 0\mod p\mathbb{Z}_p$. Since $P'(x)=px^{p-1}-1$, for all $a\in\{0,\dots,p-1\}$, we have $v_p\big(P'(a)\big)=0$ and, by Hensel's lemma (see \cite{Robert}), there exists  a root $w_a$ of $P$ in $\mathbb{Z}_p$ such that $w_a\equiv a\mod p\mathbb{Z}_p$. Consequently, for all $x\in\mathbb{Z}_p$ and all $s\in\mathbb{N}$, $s\geq 1$, we have
\begin{align}
\prod_{a=0}^{p-1}\big(x+d(\alpha)ap^{s-1}\big)&\equiv\prod_{i=0}^{p-1}\big(x-d(\alpha)w_ip^{s-1}\big)\mod p^{s+1}\mathbb{Z}_p\notag\\
&\equiv x^p-\big(d(\alpha)p^{s-1}\big)^{p-1}x\mod p^{s+1}\mathbb{Z}_p.\label{pol cong}
\end{align}
If $p\neq 2$, then $\big(d(\alpha)p^{s-1}\big)^{p-1}x\in p^{s+1}\mathbb{Z}_p$ thus, by \eqref{pour s s+1}, for all $s\in\mathbb{N}$, $s\geq 1$, we obtain 
$$
\mathfrak{q}_{r+1}(0,s,1)\in\prod_{j=0}^{p^{s-1}-1}\big(d(\alpha)\underline{t^{(r+1)}\alpha}+d(\alpha)j\big)^p(1+p^{s+1}\mathcal{A}_b),
$$
hence $\mathfrak{q}_{r+1}(0,s,1)\in\mathfrak{q}_{r+1}(0,s-1,1)^p(1+p^{s+1}\mathcal{A}_b)$ and 
$$
\mathfrak{q}_{r+1}(0,s,1)\in\big(d(\alpha)\underline{t^{(r+1)}\alpha}\big)^{p^s}(1+p^{s+1}\mathcal{A}_b).
$$
By \eqref{rang s+1} and \eqref{transfo}, we obtain the existence of  $f_1,f_2\in\mathcal{A}_b^\ast$ such that
\begin{multline*}
\frac{\mathfrak{q}_r(0,s+1,1)}{\mathfrak{q}_{r+1}(0,s,1)}\in f^{\varphi(p^{s+1})}\frac{(1+pf_1)^{p^{s+1}}}{(1+pf_2)^{p^s}}(1+p^{s+1}\mathcal{A}_b)\\
\subset\big(1+p^{s+1}(\mathfrak{E}_{s+1}\circ f)\big)(1+p^{s+1}\mathcal{A}_b^\ast)\subset 1+p^{s+1}\mathcal{A}_b^\ast,
\end{multline*}
which proves \eqref{7.18 equiv 1} when $p\neq 2$ because in this case we have $\varepsilon_{p^s}'(\alpha)=1$.
\medskip

Let us now assume $p=2$. Then by \eqref{pour s s+1} and \eqref{pol cong}, for all $s\in\mathbb{N}$, $s\geq 1$, we obtain 
$$
\mathfrak{q}_{r+1}(0,s,1)\in\prod_{j=0}^{2^{s-1}-1}\big(d(\alpha)\underline{t^{(r+1)}\alpha}+d(\alpha)j\big)^2\left(1-\frac{d(\alpha)2^{s-1}}{d(\alpha)\underline{t^{(r+1)}\alpha}+d(\alpha)j}\right)(1+2^{s+1}\mathcal{A}_b).
$$
Since $2$ divides $d(\alpha)$, we have 
\begin{align*}
\prod_{j=0}^{2^{s-1}-1}\left(1-\frac{d(\alpha)2^{s-1}}{d(\alpha)\underline{t^{(r+1)}\alpha}+d(\alpha)j}\right)&=
\prod_{j=0}^{2^{s-1}-1}\left(1-\frac{d(\alpha)2^{s-1}}{1+2\mathfrak{E}_1\big(d(\alpha)\underline{t^{(r+1)}\alpha}+d(\alpha)j\big)}\right)\\
&\equiv\prod_{j=0}^{2^{s-1}-1}\big(1-d(\alpha)2^{s-1}\big)\mod 2^{s+1}\mathcal{A}_b^\ast\\
&\equiv 1-d(\alpha)2^{2s-2}\mod 2^{s+1}\mathcal{A}_b^\ast,
\end{align*}
with $1-d(\alpha)2^{2s-2}\equiv 1\mod 2^{s+1}$ if $s\geq 2$ or  $v_2\big(d(\alpha)\big)\geq 2$, and $1-d(\alpha)2^{2s-2}\equiv -1\mod 4$ if  $s=v_2\big(d(\alpha)\big)=1$. It follows that 
$$
\mathfrak{q}_{r+1}(0,s,1)\in\varepsilon_{2^s}'(\alpha)\prod_{j=0}^{2^{s-1}-1}
\big(d(\alpha)\underline{t^{(r+1)}\alpha}+d(\alpha)j\big)^2(1+2^{s+1}
\mathcal{A}_b^\ast),
$$
\textit{i.e.} $\mathfrak{q}_{r+1}(0,s,1)\in\varepsilon_{2^s}'(\alpha)\mathfrak{q}_{r+1}(0,s-1,1)^2(1+2^{s+1}\mathcal{A}_b^\ast)$ and thus 
$$
\mathfrak{q}_{r+1}(0,s,1)\in\varepsilon_{2^s}'(\alpha)\big(d(\alpha)\underline{t^{(r+1)}\alpha}\big)^{2^s}(1+2^{s+1}\mathcal{A}_b^\ast).
$$
By \eqref{rang s+1} and \eqref{transfo}, we obtain the existence of  $f_1,f_2\in\mathcal{A}_b^\ast$ such that
\begin{multline*}
\frac{\mathfrak{q}_r(0,s+1,1)}{\mathfrak{q}_{r+1}(0,s,1)}\in\frac{1}{\varepsilon_{2^s}'(\alpha)} f^{\varphi(2^{s+1})}\frac{(1+2f_1)^{2^{s+1}}}{(1+2f_2)^{2^s}}(1+2^{s+1}\mathcal{A}_b^\ast)\\
\subset\varepsilon_{2^s}'(\alpha)\big(1+2^{s+1}(\mathfrak{E}_{s+1}\circ f)\big)(1+2^{s+1}\mathcal{A}_b^\ast)\subset\varepsilon_{2^s}'(\alpha)(1+2^{s+1}\mathcal{A}_b^\ast),
\end{multline*}
which proves \eqref{7.18 equiv 1} and completes the proof of  $(1)$ of  Lemma \ref{d\'efinition de Y}. 
\medskip

Let us now prove Assertion $(2)$ of Lemma \ref{d\'efinition de Y}. We have 
$$
Y_{r,s}(v,u,m)=\frac{\prod_{\beta_i\in\mathbb{Z}_p}\left(1+\frac{mp^s}{\underline{t^{(r+1)}\beta_i}+u}\right)^{\rho(v,\underline{t^{(r)}\beta_i})}}{\prod_{\alpha_i\in\mathbb{Z}_p}\left(1+\frac{mp^s}{\underline{t^{(r+1)}\alpha_i}+u}\right)^{\rho(v,\underline{t^{(r)}\alpha_i})}}.
$$
Let $j\in\{1,\dots,s+1\}$ be such that $\big\{(v+up)/p^j\big\}<\tau(r,j)$. We set  $u=\sum_{k=0}^\infty u_kp^k$. For all elements $\alpha\in\mathbb{Z}_p$ of the sequences $\boldsymbol{\alpha}$ or $\boldsymbol{\beta}$, we have 
\begin{align*}
\left\{\frac{v+up}{p^j}\right\}<\tau(r,j)&\Longrightarrow v+p\sum_{k=0}^{j-2}u_kp^k<p^j\mathfrak{D}_p^j(\underline{t^{(r)}\alpha})\\
&\Longrightarrow v+p\sum_{k=0}^{j-2}u_kp^k\leq p^j\mathfrak{D}_p^j(\underline{t^{(r)}\alpha})-\underline{t^{(r)}\alpha}\\
&\Longrightarrow v+p\sum_{k=0}^{j-2}u_kp^k\leq \sum_{k=0}^{j-1}p^k\big(p\mathfrak{D}_p^{k+1}(\underline{t^{(r)}\alpha})-\mathfrak{D}_p^k(\underline{t^{(r)}\alpha})\big)\\
&\Longrightarrow\left(\rho(v,\underline{t^{(r)}\alpha})=0\quad\textup{or}\quad \sum_{k=0}^{j-2}u_kp^k<p^{j-1}
\mathfrak{D}_p^j(\underline{t^{(r)}\alpha})
-\mathfrak{D}_p(\underline{t^{(r)}\alpha})\right)\\
&\Longrightarrow \left(\rho(v,\underline{t^{(r)}\alpha})=0\quad\textup{or}\quad \sum_{k=0}^{j-2}u_kp^k<p^{j-1}\mathfrak{D}_p^{j-1}(\underline{t^{(r+1)}\alpha})-\underline{t^{(r+1)}\alpha}\right)\\
&\Longrightarrow \left(\rho(v,\underline{t^{(r)}\alpha})=0\quad\textup{or}\quad v_p(u+\underline{t^{(r+1)}\alpha})\leq j-2\right)\\
&\Longrightarrow\left(1+\frac{mp^s}{\underline{t^{(r+1)}\alpha}+u}\right)^{\rho(v,\underline{t^{(r)}\alpha})}\in 1+p^{s-j+2}\mathbb{Z}_p,
\end{align*}
as expected. This completes the proof of Lemma \ref{d\'efinition de Y} and that of 
Theorem \ref{theo expand}.
\end{proof}

\section{Proof of Assertion $(1)$ of Theorem \ref{Criterion}}\label{section proof pos}

We shall prove the more precise following statement.

\begin{propo}\label{propo positive}
Let $\boldsymbol{\alpha}$ and $\boldsymbol{\beta}$ be tuples of parameters in $\mathbb{Q}\setminus\mathbb{Z}_{\leq 0}$ such that $\langle\boldsymbol{\alpha}\rangle$ and $\langle\boldsymbol{\beta}\rangle$ are disjoint. Let $a\in\{1,\dots,d_{\boldsymbol{\alpha},\boldsymbol{\beta}}\}$ coprime to $d_{\boldsymbol{\alpha},\boldsymbol{\beta}}$ be such that, for all $x\in\mathbb{R}$, we have $\xi_{\boldsymbol{\alpha},\boldsymbol{\beta}}(a,x)\geq 0$. Then, all the Taylor coefficients at the origin of $q_{\langle a\boldsymbol{\alpha}\rangle,\langle a\boldsymbol{\beta}\rangle}(z)$ are positive, but its constant term which is $0$.
\end{propo}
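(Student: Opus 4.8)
The plan is to reduce the statement to a claim about the coefficients of $G_{\langle a\boldsymbol{\alpha}\rangle,\langle a\boldsymbol{\beta}\rangle}/F_{\langle a\boldsymbol{\alpha}\rangle,\langle a\boldsymbol{\beta}\rangle}$ by passing to logarithmic derivatives. Write $\boldsymbol{A}=\langle a\boldsymbol{\alpha}\rangle$ and $\boldsymbol{B}=\langle a\boldsymbol{\beta}\rangle$; by Proposition \ref{propo reduction} and Theorem A the hypothesis on $\xi_{\boldsymbol{\alpha},\boldsymbol{\beta}}(a,\cdot)$ means that $\xi_{\boldsymbol{A},\boldsymbol{B}}(1,x)\geq 0$ for all $x$, hence $F_{\boldsymbol{A},\boldsymbol{B}}$ is $N$-integral and $\boldsymbol{A},\boldsymbol{B}$ are disjoint. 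Since $q_{\boldsymbol{A},\boldsymbol{B}}(z)=z\exp\!\big(G_{\boldsymbol{A},\boldsymbol{B}}(z)/F_{\boldsymbol{A},\boldsymbol{B}}(z)\big)$, and the exponential of a power series with positive coefficients (and zero constant term) times $z$ has all positive coefficients, it suffices to show that the power series $\Theta(z):=G_{\boldsymbol{A},\boldsymbol{B}}(z)/F_{\boldsymbol{A},\boldsymbol{B}}(z)$ has nonnegative Taylor coefficients. Equivalently, since $\log\big(q_{\boldsymbol{A},\boldsymbol{B}}(z)/z\big)=\Theta(z)$, one wants $z\Theta'(z)\succeq 0$ coefficientwise, i.e. $z\,G_{\boldsymbol{A},\boldsymbol{B}}'\,F_{\boldsymbol{A},\boldsymbol{B}}-z\,G_{\boldsymbol{A},\boldsymbol{B}}\,F_{\boldsymbol{A},\boldsymbol{B}}'$ has nonnegative coefficients, after dividing by $F_{\boldsymbol{A},\boldsymbol{B}}^2$ which already has nonnegative coefficients (because $\xi_{\boldsymbol{A},\boldsymbol{B}}(1,\cdot)\geq 0$ forces all coefficients of $F_{\boldsymbol{A},\boldsymbol{B}}$ to be nonnegative rationals — indeed positive, as the parameters lie in $(0,1]$ after the $\langle\cdot\rangle$ normalization up to the disjointness).

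The heart of the matter is therefore a ``Wronskian-type'' positivity. Writing $F_{\boldsymbol{A},\boldsymbol{B}}=\sum_n f_n z^n$ with $f_n=\prod_i (A_i)_n/\prod_j(B_j)_n$ and recalling $G_{\boldsymbol{A},\boldsymbol{B}}=\sum_n f_n H_n z^n$ where $H_n:=\sum_i H_{A_i}(n)-\sum_j H_{B_j}(n)$, one computes
\begin{align*}
\theta\Theta = \frac{\theta G_{\boldsymbol{A},\boldsymbol{B}}\cdot F_{\boldsymbol{A},\boldsymbol{B}} - G_{\boldsymbol{A},\boldsymbol{B}}\cdot\theta F_{\boldsymbol{A},\boldsymbol{B}}}{F_{\boldsymbol{A},\boldsymbol{B}}^2},
\end{align*}
and the coefficient of $z^N$ in the numerator equals $\sum_{m+n=N} f_m f_n n (H_n-H_m)=\tfrac12\sum_{m+n=N} f_m f_n (n-m)(H_n-H_m)$, which is manifestly $\geq 0$ since $H_n$ is nondecreasing in $n$ (each $H_{A_i}(n)$ is increasing and each $-H_{B_j}(n)$ — no, $H_{B_j}(n)$ is also increasing, so one needs $\sum_i H_{A_i}(n)-\sum_j H_{B_j}(n)$ to be nondecreasing, which is where the interlacing hypothesis $\xi_{\boldsymbol{A},\boldsymbol{B}}(1,\cdot)\ge 0$ enters: $H_{n+1}-H_n=\sum_i\frac{1}{A_i+n}-\sum_j\frac{1}{B_j+n}$ and one must show this is $\ge 0$ for all $n\ge 0$). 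So the plan is: (i) prove $f_n>0$ for all $n$ from the hypothesis; (ii) prove $H_{n+1}\ge H_n$ for all $n\ge 0$, i.e. $\sum_i\frac{1}{A_i+n}\ge\sum_j\frac{1}{B_j+n}$, by an Abel-summation / rearrangement argument exploiting $\xi_{\boldsymbol{A},\boldsymbol{B}}(1,x)=\#\{i:A_i\preceq x\}-\#\{j:B_j\preceq x\}\ge 0$ and the monotonicity of $t\mapsto 1/(t+n)$ on the relevant range; (iii) conclude that $\theta\Theta$ and hence $\Theta$ have nonnegative coefficients, and then that $q_{\boldsymbol{A},\boldsymbol{B}}/z=\exp(\Theta)$ has positive coefficients, with constant term $1$.

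The main obstacle I anticipate is step (ii): translating the combinatorial inequality $\xi_{\boldsymbol{A},\boldsymbol{B}}(1,x)\ge 0$ (a statement about the ordered multiset of parameters under $\preceq$) into the analytic inequality $\sum_i 1/(A_i+n)\ge\sum_j 1/(B_j+n)$ for every nonnegative integer $n$. The natural route is to sort the union of the $A_i$'s and $B_j$'s in increasing order of their representatives in $(0,1]$ (which coincides, after shifting by $n$, with increasing order of $A_i+n$, resp. $B_j+n$, since all parameters lie in $(0,1]$ and $n\ge 0$), note that the partial-count condition says that in this sorted list every prefix contains at least as many $A$'s as $B$'s, and then apply the standard fact that if one sequence majorizes another in the prefix-count sense and $\phi$ is a decreasing positive function then $\sum\phi(A_i)\ge\sum\phi(B_j)$ — a discrete Hardy–Littlewood–Pólya type rearrangement. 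Care is needed because $r=s$ is not assumed here, only that $\xi\ge 0$ everywhere, which for $x$ large (integer) gives $r-s\ge 0$; but then $\xi(1,x)\ge 0$ for all $x$ together with the tail value $r-s\ge 0$ is exactly what is needed, the extra $A$-parameters simply contribute extra positive terms. Once (ii) is in hand, steps (i) and (iii) are routine, (i) following from the nonnegativity/positivity of Taylor coefficients of $F$ under Theorem A (already essentially recorded in the earlier sections) and (iii) being the elementary fact that $\exp$ preserves coefficientwise positivity.
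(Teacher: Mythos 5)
You have identified two of the three ingredients the paper also uses: that $F_{\boldsymbol A,\boldsymbol B}$ has positive coefficients $f_n$, and that $H_n:=\sum_iH_{A_i}(n)-\sum_jH_{B_j}(n)$ is increasing, both of which the paper extracts from the hypothesis $\xi_{\boldsymbol\alpha,\boldsymbol\beta}(a,\cdot)\ge 0$ via Lemma \ref{Lemma sauts}. But your reduction breaks at the division step. You write $\theta\Theta=\bigl(\theta G\cdot F-G\cdot\theta F\bigr)/F^2$, verify correctly that the numerator has nonnegative coefficients (by the symmetrization $\tfrac12\sum_{m+n=N}f_mf_n(n-m)(H_n-H_m)\ge 0$), observe that $F^2$ has nonnegative coefficients, and infer that the quotient does too. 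That inference is false: a quotient of two power series with nonnegative coefficients need not have nonnegative coefficients, even when the denominator has constant term $1$. For instance $(z+z^2+z^3+\cdots)/(1+z)^2$ already has a negative coefficient of $z^2$. So the positivity of the Wronskian numerator alone is not enough; one cannot ``divide by $F^2$ which already has nonnegative coefficients'' and retain positivity.

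The paper circumvents this by not working with the logarithmic derivative at all. Instead it invokes Lemma \ref{Lemma pos 2} (a refined Kaluza theorem): the strict \emph{log-convexity} $f_{n+1}f_{n-1}>f_n^2$ of the coefficients of $F$ (proved in Lemma \ref{Lemma pos 3} using the same rearrangement argument you sketch for $H_n$, applied to the product form of Lemma \ref{Lemma sauts}) forces the auxiliary series $\mathfrak a(z)=1-1/F(z)$ to have positive coefficients. Lemma \ref{Lemma pos 1} then converts this, together with the monotonicity of $H_n$, into positivity of $G/F$ directly. Log-convexity of $F$'s coefficients is thus the essential extra hypothesis that tames the division, and it is precisely what your plan omits. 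Without it, or some replacement for the Kaluza mechanism, the step from the positivity of $\theta G\cdot F-G\cdot\theta F$ to the positivity of $\Theta$ cannot be made.
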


To prove Proposition \ref{propo positive}, we follow the method used by Delaygue in \cite[section $10.3$]{Delaygue0}, itself inspired by the work of Krattenthaler-Rivoal in \cite{TanguyPos}. We state three lemmas which enable us to prove Proposition \ref{propo positive}.

\begin{Lemma}[Lemma $2.1$ in \cite{TanguyPos}]\label{Lemma pos 1}
Let $a(z)=\sum_{n=0}^\infty a_nz^n\in\mathbb{R}[[z]]$, $a_0=1$, be such that all Taylor coefficients at the origin of $\mathfrak{a}(z)=1-1/a(z)$ are nonnegative. Let $b(z)=\sum_{n=0}^\infty a_nh_nz^n$ where $(h_n)_{n\geq 0}$ is a nondecreasing sequence of nonnegative real numbers. Then, all Taylor coefficients at the origin of $b(z)/a(z)$ are non-negative.

Furthermore, if all Taylor coefficients of $a(z)$ and $\mathfrak{a}(z)$ are positive (excepted the constant term of $\mathfrak{a}(z)$) and if $(h_n)_{n\geq 0}$ is an increasing sequence, then all Taylor coefficients at the origin of $b(z)/a(z)$ are positive, except its constant term if $h_0=0$.
\end{Lemma}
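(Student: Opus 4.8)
The plan is to prove Lemma \ref{Lemma pos 1} by reducing the statement about $b(z)/a(z)$ to a manipulation involving the auxiliary series $\mathfrak{a}(z)=1-1/a(z)$, whose coefficients are assumed non-negative, and then exploiting the monotonicity of $(h_n)_{n\geq 0}$ via an Abel-type summation. First I would write $1/a(z)=1-\mathfrak{a}(z)$, so that
\[
\frac{b(z)}{a(z)}=b(z)\bigl(1-\mathfrak{a}(z)\bigr)=b(z)-b(z)\mathfrak{a}(z).
\]
Writing $\mathfrak{a}(z)=\sum_{n\geq 1}c_nz^n$ with $c_n\geq 0$, the coefficient of $z^N$ in $b(z)/a(z)$ is $a_Nh_N-\sum_{k=1}^{N}a_{N-k}h_{N-k}c_k$. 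The idea is to re-express $a_m$ in terms of the $c_k$'s: from $a(z)\bigl(1-\mathfrak{a}(z)\bigr)=1$ one gets $a_m=\sum_{k=1}^{m}a_{m-k}c_k$ for $m\geq 1$ (and $a_0=1$). Substituting this identity for $a_N$ into the term $a_Nh_N$ and telescoping should collect everything into a single non-negative combination.

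Concretely, the key step is to show the coefficient of $z^N$ equals
\[
\sum_{k=1}^{N}c_k\bigl(a_{N-k}h_N-a_{N-k}h_{N-k}\bigr)=\sum_{k=1}^{N}c_k\,a_{N-k}\,(h_N-h_{N-k}),
\]
using $a_Nh_N=h_N\sum_{k=1}^{N}a_{N-k}c_k$. Since $c_k\geq 0$, $a_{N-k}\geq 0$ (note $a_n\geq 0$ follows inductively from $a_n=\sum_{k=1}^n a_{n-k}c_k$ and $a_0=1$), and $h_N-h_{N-k}\geq 0$ because $(h_n)$ is non-decreasing and $k\geq 1$, every term is non-negative, which gives the first assertion. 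For the strict version: assume $a_n>0$ for all $n$, $c_n>0$ for all $n\geq 1$, and $(h_n)$ strictly increasing. Then for $N\geq 1$ the term with $k=N$ is $c_N a_0 (h_N-h_0)=c_N(h_N-h_0)$; if $h_0=0$ this is $c_N h_N>0$ provided $h_N>0$, which holds since $h_N>h_0=0$; if $h_0>0$ then already $h_N-h_0>0$ for... wait, one must be slightly careful: $h_N-h_0>0$ holds for $N\geq 1$ by strict monotonicity, so the $k=N$ term is strictly positive, forcing the whole coefficient to be positive for every $N\geq 1$, while the constant term ($N=0$) is $a_0h_0=h_0$, which is $0$ iff $h_0=0$. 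This matches the claimed conclusion.

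I do not expect a serious obstacle here; the only thing to be careful about is the bookkeeping in the telescoping identity and making sure the inductive positivity $a_n>0$ genuinely follows from positivity of the $c_k$ (it does, since $a_n=\sum_{k=1}^{n}a_{n-k}c_k$ with $a_0=1>0$ and all summands non-negative, and the $k=n$ summand $c_n a_0=c_n>0$). One should also note at the outset that $a(z)$ is invertible as a formal power series because $a_0=1$, so $\mathfrak{a}(z)$ and $b(z)/a(z)$ are well-defined elements of $\mathbb{R}[[z]]$. If one prefers to avoid re-deriving the identity $a_m=\sum_{k=1}^m a_{m-k}c_k$, it is simply the degree-$m$ coefficient of the relation $a(z)=a(z)\mathfrak{a}(z)+1$, which is immediate from the definition of $\mathfrak{a}$. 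This is essentially the argument of \cite[Lemma~2.1]{TanguyPos}, reproduced for completeness.
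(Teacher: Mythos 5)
Your proof is correct. Note that the paper under review does not reprove this lemma but cites it from \cite[Lemma~2.1]{TanguyPos}; your argument (use $1/a=1-\mathfrak{a}$, the recursion $a_m=\sum_{k=1}^m a_{m-k}c_k$ coming from $a(1-\mathfrak{a})=1$, and the resulting identity $[z^N]\,b/a=\sum_{k=1}^N a_{N-k}c_k(h_N-h_{N-k})$) is the standard and natural one, and matches the proof in that reference.
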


The following lemma is a refined version of Kaluza's Theorem \cite[Satz $3$]{Kaluza}. Initially, Satz~$3$ did not cover the case $a_{n+1}a_{n-1}>a_n^2$.

\begin{Lemma}[Lemma $2.2$ in \cite{TanguyPos}]\label{Lemma pos 2}
Let $a(z)=\sum_{n=0}^\infty a_nz^n\in\mathbb{R}[[z]]$, $a_0=1$, be such that $a_1>0$ and $a_{n+1}a_{n-1}\geq a_n^2$ for all positive integers $n$. Then, all Taylor coefficients of $\mathfrak{a}(z)=1-1/a(z)$ are nonnegative.

Furthermore, if we have $a_{n+1}a_{n-1}>a_n^2$ for all positive integers $n$, then all Taylor coefficients of $\mathfrak{a}(z)$ are positive (except its constant term).
\end{Lemma}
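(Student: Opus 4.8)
The plan is to work with the reciprocal series. Write $1/a(z)=\sum_{n\ge0}c_n z^n$, so that $c_0=1$ and $\mathfrak{a}(z)=1-1/a(z)=\sum_{n\ge1}\mathfrak{a}_n z^n$ with $\mathfrak{a}_n=-c_n$. Multiplying the defining relation by $a(z)$ gives $a(z)\mathfrak{a}(z)=a(z)-1$, and comparing coefficients of $z^n$ for $n\ge1$ (using $a_0=1$ and $\mathfrak{a}_0=0$) yields the recursion $\mathfrak{a}_n=a_n-\sum_{j=1}^{n-1}\mathfrak{a}_j a_{n-j}$. Everything then reduces to proving, by induction on $n$, that $\mathfrak{a}_n\ge0$, respectively $\mathfrak{a}_n>0$ under the strict hypothesis.

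First I would dispose of positivity of the $a_n$: from $a_0=1>0$, $a_1>0$ and $a_{n-1}a_{n+1}\ge a_n^2>0$ one gets $a_n>0$ for all $n$ by induction, hence the ratios $a_{n+1}/a_n$ are defined, and the hypothesis $a_{n+1}a_{n-1}\ge a_n^2$ says precisely that this sequence of ratios is nondecreasing. Two consequences will be used repeatedly: for $1\le j\le n-1$ one has $a_{n+1}a_{n-j}\ge a_n a_{n+1-j}$ (the ratio at index $n-j$ is at most the ratio at index $n$), and, chaining the ratios down to $a_1/a_0$, one has $a_{n+1}\ge a_1 a_n$; under the strict hypothesis all these inequalities become strict.

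The heart of the argument is the identity obtained by substituting the recursion for $\mathfrak{a}_{n+1}$ and $\mathfrak{a}_n$ into $a_n\mathfrak{a}_{n+1}-a_{n+1}\mathfrak{a}_n$, separating off the $j=n$ term, and regrouping:
\[
a_n\mathfrak{a}_{n+1}=(a_{n+1}-a_1 a_n)\,\mathfrak{a}_n+\sum_{j=1}^{n-1}\mathfrak{a}_j\bigl(a_{n+1}a_{n-j}-a_n a_{n+1-j}\bigr).
\]
Granting the induction hypothesis $\mathfrak{a}_1,\dots,\mathfrak{a}_n\ge0$ (the base case being $\mathfrak{a}_1=a_1>0$; when $n=1$ the sum is empty and the identity reads $a_1\mathfrak{a}_2=(a_2-a_1^2)\mathfrak{a}_1$, which is $\ge0$ since $a_2\ge a_1^2$), every term on the right-hand side is nonnegative by the preliminary step, so $\mathfrak{a}_{n+1}\ge0$ after dividing by $a_n>0$. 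For the refined statement, once the preliminary inequalities are strict the single term $(a_{n+1}-a_1 a_n)\mathfrak{a}_n$ is already $>0$ as soon as $\mathfrak{a}_n>0$, which propagates $\mathfrak{a}_n>0$ from $n=1$ onward.

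I do not anticipate a genuine obstacle here: the only delicate points are the algebraic bookkeeping in the displayed identity and choosing the induction hypothesis tightly enough — the sum over $j$ needs nonnegativity of \emph{all} the earlier $\mathfrak{a}_j$ while the leading term needs nonnegativity of $\mathfrak{a}_n$ itself, so the inductive step genuinely consumes the full statement at level $n$ and a weaker hypothesis would not close the induction.
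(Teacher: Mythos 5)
Your proof is correct, and it is a faithful self-contained version of the classical Kaluza argument: positivity and log-convexity of $(a_n)$ turn the inequalities $a_{n+1}a_{n-j}\ge a_na_{n+1-j}$ and $a_{n+1}\ge a_1a_n$ into a sign statement, and the telescoping identity for $a_n\mathfrak{a}_{n+1}-a_{n+1}\mathfrak{a}_n$ closes a straightforward induction; the strict variant follows by observing that only the leading term needs to be strictly positive. The paper itself does not prove this lemma (it quotes Lemma~2.2 of Krattenthaler--Rivoal, which refines Kaluza's Satz~3), so there is no in-text proof to compare against, but your argument is precisely the standard route and there are no gaps.
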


By Lemmas \ref{Lemma pos 1} and \ref{Lemma pos 2}, to prove Proposition \ref{propo positive}, it suffices to prove the following result.

\begin{Lemma}\label{Lemma pos 3}
Let $\boldsymbol{\alpha}=(\alpha_1,\dots,\alpha_r)$ and $\boldsymbol{\beta}=(\beta_1,\dots,\beta_s)$ be tuples of parameters in $\mathbb{Q}\setminus\mathbb{Z}_{\leq 0}$ such that $\langle\boldsymbol{\alpha}\rangle$ and $\langle\boldsymbol{\beta}\rangle$ are disjoint. Let $a\in\{1,\dots,d_{\boldsymbol{\alpha},\boldsymbol{\beta}}\}$ coprime to $d_{\boldsymbol{\alpha},\boldsymbol{\beta}}$ be such that, for all $x\in\mathbb{R}$, we have $\xi_{\boldsymbol{\alpha},\boldsymbol{\beta}}(a,x)\geq 0$. Then, for all positive integers $n$, we have 
$$
\mathcal{Q}_{\langle a\boldsymbol{\alpha}\rangle,\langle a\boldsymbol{\beta}\rangle}(n+1)\mathcal{Q}_{\langle a\boldsymbol{\alpha}\rangle,\langle a\boldsymbol{\beta}\rangle}(n-1)>\mathcal{Q}_{\langle a\boldsymbol{\alpha}\rangle,\langle a\boldsymbol{\beta}\rangle}(n)^2.
$$
Furthermore, $\big(\sum_{i=1}^rH_{\langle a\alpha_i\rangle}(n)-\sum_{j=1}^sH_{\langle a\beta_j\rangle}(n)\big)_{n\geq 0}$ is an increasing sequence.
\end{Lemma}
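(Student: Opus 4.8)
The statement has two parts: a log-concavity-type inequality for the sequence $\mathcal{Q}_{\langle a\boldsymbol{\alpha}\rangle,\langle a\boldsymbol{\beta}\rangle}(n)$, and the monotonicity of the harmonic-type sums $h_n := \sum_{i=1}^r H_{\langle a\alpha_i\rangle}(n) - \sum_{j=1}^s H_{\langle a\beta_j\rangle}(n)$. I will handle them separately, the second being essentially immediate and the first requiring the hypothesis $\xi_{\boldsymbol{\alpha},\boldsymbol{\beta}}(a,x)\ge 0$ for all $x$. Throughout write $\boldsymbol{\alpha}':=\langle a\boldsymbol{\alpha}\rangle=(\alpha_1',\dots,\alpha_r')$ and $\boldsymbol{\beta}':=\langle a\boldsymbol{\beta}\rangle=(\beta_1',\dots,\beta_s')$, all lying in $(0,1]$, and recall that $\mathcal{Q}_{\boldsymbol{\alpha}',\boldsymbol{\beta}'}(n)=(\alpha_1')_n\cdots(\alpha_r')_n/\big((\beta_1')_n\cdots(\beta_s')_n\big)$ (up to the fixed positive rescaling constant, which cancels in both the inequality and the ratio $\mathfrak{a}$, so I may ignore it).

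\textbf{The monotonicity of $(h_n)$.} For each $x\in\mathbb{Q}\setminus\mathbb{Z}_{\le 0}$ one has $H_x(n+1)-H_x(n)=1/(x+n)$, so
\[
h_{n+1}-h_n=\sum_{i=1}^r\frac{1}{\alpha_i'+n}-\sum_{j=1}^s\frac{1}{\beta_j'+n}.
\]
I will show this is positive for every $n\ge 0$. Fix $n$ and order the multiset $\{\alpha_1',\dots,\alpha_r',\beta_1',\dots,\beta_s'\}\subset(0,1]$ increasingly; since $\boldsymbol{\alpha}'$ and $\boldsymbol{\beta}'$ are disjoint no value is shared. The condition $\xi_{\boldsymbol{\alpha},\boldsymbol{\beta}}(a,x)\ge 0$ for all $x$ says precisely that, reading the ordered list from the smallest element, at every prefix the number of $\alpha_i'$'s seen is at least the number of $\beta_j'$'s seen; in particular the smallest element is an $\alpha_i'$ and $r\ge s$. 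Then an Abel-summation / rearrangement argument applies: writing the ordered values and using that $t\mapsto 1/(t+n)$ is decreasing on $(0,\infty)$, the partial-sum dominance of the $\alpha$'s over the $\beta$'s forces $\sum 1/(\alpha_i'+n)\ge\sum 1/(\beta_j'+n)$, with strict inequality because $r>s$ or (if $r=s$) the matching is not the identity and at least one $\alpha_i'<\beta_{\sigma(i)}'$. Concretely, I will pair and telescope: list the $\beta_j'$ in increasing order $\beta_{(1)}'<\cdots<\beta_{(s)}'$ and, for each $k$, let $\alpha_{(k)}'$ be the $k$-th smallest element of $\boldsymbol{\alpha}'$; the dominance condition gives $\alpha_{(k)}'<\beta_{(k)}'$ for all $k\le s$, hence $1/(\alpha_{(k)}'+n)>1/(\beta_{(k)}'+n)$, and the remaining $r-s$ terms $1/(\alpha_{(k)}'+n)$ are positive. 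Summing yields $h_{n+1}-h_n>0$.

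\textbf{The log-concavity inequality.} Set $a_n:=\mathcal{Q}_{\boldsymbol{\alpha}',\boldsymbol{\beta}'}(n)$ (with the rescaling constant absorbed). Then
\[
\frac{a_{n+1}a_{n-1}}{a_n^2}=\prod_{i=1}^r\frac{(\alpha_i'+n)(\alpha_i'+n-1)}{(\alpha_i'+n-1)(\alpha_i'+n)}\cdot\big(\text{shift bookkeeping}\big),
\]
so more cleanly: $a_{n+1}/a_n=\prod_i(\alpha_i'+n)/\prod_j(\beta_j'+n)$ and therefore
\[
\frac{a_{n+1}a_{n-1}}{a_n^2}=\frac{a_{n+1}/a_n}{a_n/a_{n-1}}=\prod_{i=1}^r\frac{\alpha_i'+n}{\alpha_i'+n-1}\cdot\prod_{j=1}^s\frac{\beta_j'+n-1}{\beta_j'+n}=\prod_{i=1}^r\Big(1+\frac{1}{\alpha_i'+n-1}\Big)\prod_{j=1}^s\Big(1+\frac{1}{\beta_j'+n-1}\Big)^{-1}.
\]
I want this to exceed $1$, i.e. $\sum_i\log\!\big(1+\tfrac{1}{\alpha_i'+n-1}\big)>\sum_j\log\!\big(1+\tfrac{1}{\beta_j'+n-1}\big)$. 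The function $t\mapsto\log(1+1/(t+n-1))$ is decreasing in $t$ on $(0,\infty)$, so the very same prefix-dominance of the ordered $\alpha$'s over the $\beta$'s that I used for $(h_n)$ applies verbatim — again via the pairing $\alpha_{(k)}'<\beta_{(k)}'$ for $k\le s$ plus the $r-s\ge 0$ leftover positive terms. The leftover terms are strictly positive, and when $r=s$ strictness comes from $\alpha_{(k)}'<\beta_{(k)}'$ for at least one $k$; in both cases the inequality is strict, giving $a_{n+1}a_{n-1}>a_n^2$ for all $n\ge 1$.

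\textbf{Main obstacle and conclusion.} The crux in both parts is the same lemma in disguise: \emph{if a multiset of positive reals $A$ dominates a multiset $B$ of positive reals at every initial segment of the sorted common list (which is exactly the hypothesis $\xi_{\boldsymbol{\alpha},\boldsymbol{\beta}}(a,x)\ge 0$, using disjointness and the translation $\langle a\cdot\rangle$), then for any strictly decreasing positive $\phi$ on $(0,\infty)$ one has $\sum_{x\in A}\phi(x)>\sum_{y\in B}\phi(y)$} (strictly, since $|A|\ge|B|$ with equality forcing a nontrivial domination). I will isolate this as a short combinatorial observation — it is a standard "majorization in the sense of prefix sums of counting functions" argument, proved by induction on $|A\cup B|$ or by Abel summation against $\phi$'s negative increments — and then apply it twice, once with $\phi(t)=1/(t+n)$ and once with $\phi(t)=\log\big(1+1/(t+n-1)\big)$. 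Given this, feeding the log-concavity into Lemma \ref{Lemma pos 2} shows all Taylor coefficients of $\mathfrak{a}(z)=1-1/\big(\sum a_n z^n\big)$ are positive (bar the constant term), and then feeding that together with the increasing sequence $(h_n)$ — noting $q_{\boldsymbol{\alpha}',\boldsymbol{\beta}'}(z)/z=\exp(G_{\boldsymbol{\alpha}',\boldsymbol{\beta}'}/F_{\boldsymbol{\alpha}',\boldsymbol{\beta}'})$ and $G_{\boldsymbol{\alpha}',\boldsymbol{\beta}'}(z)=\sum_n a_n h_n z^n$ so that $z\,q_{\boldsymbol{\alpha}',\boldsymbol{\beta}'}'(z)/q_{\boldsymbol{\alpha}',\boldsymbol{\beta}'}(z)=1+\sum a_n h_n z^n/\sum a_n z^n$ has nonnegative coefficients, whence by Lemma \ref{Lemma pos 1} and a standard exp-integration step the coefficients of $q_{\boldsymbol{\alpha}',\boldsymbol{\beta}'}(z)$ itself are positive — into Lemma \ref{Lemma pos 1} finishes Proposition \ref{propo positive}, hence Assertion $(1)$ of Theorem \ref{Criterion}.
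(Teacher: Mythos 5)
Your proof is correct and takes a genuinely different route to the key inequality. Your computations of $\mathcal{Q}_{\langle a\boldsymbol{\alpha}\rangle,\langle a\boldsymbol{\beta}\rangle}(n+1)\mathcal{Q}_{\langle a\boldsymbol{\alpha}\rangle,\langle a\boldsymbol{\beta}\rangle}(n-1)/\mathcal{Q}_{\langle a\boldsymbol{\alpha}\rangle,\langle a\boldsymbol{\beta}\rangle}(n)^2$ and of the increment $h_{n+1}-h_n$ match the paper's, but you discharge the resulting inequality differently. The paper first states and proves Lemma \ref{Lemma sauts} in terms of the grouped data $(\langle a\gamma_k\rangle,m_k)$ — namely that for every prefix index $i$ and every $b\ge 0$ the sum $\sum_{k\le i}m_k/(\langle a\gamma_k\rangle+b)$ is $>0$ and the product $\prod_{k\le i}\big(1+1/(\langle a\gamma_k\rangle+b)\big)^{m_k}$ is $>1$ — by an induction on $i$ that carries a strictly stronger invariant, and Lemma \ref{Lemma pos 3} then invokes only the full case $i=t$ with $b=n$ (for the harmonic sums) and $b=n-1$ (for the ratio). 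You instead pass straight from the hypothesis $\xi_{\boldsymbol{\alpha},\boldsymbol{\beta}}(a,\cdot)\geq 0$, translated via Proposition \ref{propo reduction} into prefix dominance of $\langle a\boldsymbol{\alpha}\rangle$ over $\langle a\boldsymbol{\beta}\rangle$, to the sorted-pairing fact $r\geq s$ and $\alpha_{(k)}'<\beta_{(k)}'$ for all $k\leq s$ (if this failed, $x=\beta_{(k)}'$ would violate the prefix condition), and then compare term by term under any strictly decreasing positive $\phi$. Both are short, standard majorization arguments; your version is a little more transparent if one wants only Lemma \ref{Lemma pos 3}, whereas the paper's Lemma \ref{Lemma sauts} is stated separately because its prefix case ($i<t$, $b=0$) is reused in Section \ref{section proof H}. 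Your closing paragraph, about feeding Lemmas \ref{Lemma pos 1} and \ref{Lemma pos 2} to obtain Proposition \ref{propo positive}, goes beyond the statement asked for but is harmless.
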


To  prove Lemma \ref{Lemma pos 3}, we first prove the following lemma that we also use in the proof of Assertion $(2)$ of Theorem  \ref{Criterion}.

\begin{Lemma}\label{Lemma sauts}
Let $\boldsymbol{\alpha}=(\alpha_1,\dots,\alpha_r)$ and $\boldsymbol{\beta}=(\beta_1,\dots,\beta_s)$ be tuples of parameters in $\mathbb{Q}\setminus\mathbb{Z}_{\leq 0}$ such that $\langle\boldsymbol{\alpha}\rangle$ and $\langle\boldsymbol{\beta}\rangle$ are disjoint. Let $a\in\{1,\dots,d_{\boldsymbol{\alpha},\boldsymbol{\beta}}\}$ be coprime to $d_{\boldsymbol{\alpha},\boldsymbol{\beta}}$. Let $\gamma_1,\dots,\gamma_t$ be rational numbers such that $\langle a\gamma_1\rangle<\cdots<\langle a\gamma_t\rangle$ and such that $\big\{\langle a\gamma_1\rangle,\dots,\langle a\gamma_t\rangle\big\}$ is the set of the numbers $\langle a\gamma\rangle$ when $\gamma$ describes all the elements of $\boldsymbol{\alpha}$ and $\boldsymbol{\beta}$. For all $i\in\{1,\dots,t\}$, we define $m_i:=\#\big\{1\leq j\leq r\,:\,\langle a\alpha_j\rangle=\langle a\gamma_i\rangle\big\}-\#\big\{1\leq j\leq s\,:\,\langle a\beta_j\rangle=\langle a\gamma_i\rangle\big\}$.

Assume that, for all $x\in\mathbb{R}$, we have $\xi_{\boldsymbol{\alpha},\boldsymbol{\beta}}(a,x)\geq 0$. Then, for all $i\in\{1,\dots,t\}$ and all $b\in\mathbb{R}$, $b\geq 0$, we have 
$$
\sum_{k=1}^{i}\frac{m_k}{\langle a\gamma_k\rangle+b}>0\quad\textup{and}\quad\prod_{k=1}^{i}\left(1+\frac{1}{\langle a\gamma_k\rangle+b}\right)^{m_k}>1.
$$
\end{Lemma}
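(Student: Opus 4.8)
The plan is to prove both inequalities simultaneously by induction on $i$, extracting from the hypothesis $\xi_{\boldsymbol{\alpha},\boldsymbol{\beta}}(a,x)\geq 0$ the fact that the partial sums $\sum_{k=1}^i m_k$ are nonnegative, with strict positivity at the top. First I would observe that, for $x$ in the half-open interval $[\langle a\gamma_i\rangle,\langle a\gamma_{i+1}\rangle)$ (with the convention $\langle a\gamma_{t+1}\rangle$ meaning ``just above $1$''), Christol's function $\xi_{\boldsymbol{\alpha},\boldsymbol{\beta}}(a,\cdot)$ takes the constant value $\sum_{k=1}^i m_k$; this is immediate from the definition of $\xi_{\boldsymbol{\alpha},\boldsymbol{\beta}}(a,x)=\#\{i:a\alpha_i\preceq x\}-\#\{j:a\beta_j\preceq x\}$ once one unwinds the order $\preceq$ on the relevant values (all the $\langle a\gamma_k\rangle$ lie in $(0,1]$, so $\preceq$ among them is just the usual order $<$ on the representatives $\langle a\gamma_k\rangle$). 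Hence the hypothesis gives $\sigma_i:=\sum_{k=1}^i m_k\geq 0$ for every $i\in\{1,\dots,t\}$. Moreover, because $\langle\boldsymbol{\alpha}\rangle$ and $\langle\boldsymbol{\beta}\rangle$ are disjoint, $\langle a\boldsymbol{\alpha}\rangle$ and $\langle a\boldsymbol{\beta}\rangle$ are also disjoint (multiplication by the unit $a$ modulo $d_{\boldsymbol{\alpha},\boldsymbol{\beta}}$ is a bijection on residues), so the set of values $\langle a\gamma\rangle$ arising from $\boldsymbol{\alpha}$ is disjoint from the set arising from $\boldsymbol{\beta}$; in particular each $m_i$ is either (number of $\alpha$'s hitting $\langle a\gamma_i\rangle$) or $-$(number of $\beta$'s hitting it), never a difference of two positive counts. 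Combined with $\sum_{k=1}^t m_k = r-s$ and the fact that the last nonzero partial sum must be attained — here I need $r\geq s$ in the relevant cases, or rather I should argue directly that $\sigma_t = r-s \geq 0$ and that $\sigma_i>0$ for the largest $i$ with $\gamma_i$ among the $\alpha$'s — the cleanest route is: since the smallest value $\langle a\gamma_1\rangle$ must come from an $\alpha$ (if it came from a $\beta$ then $\xi_{\boldsymbol{\alpha},\boldsymbol{\beta}}(a,x)=-1<0$ just at that point, by disjointness), we have $m_1\geq 1$, so $\sigma_1\geq 1$, and then $\sigma_i\geq 1$ will follow for all $i$ once we know $\sigma_i\geq 0$ and track where strict inequality can fail.

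The first inequality then follows by Abel summation. I would write
\begin{align*}
\sum_{k=1}^{i}\frac{m_k}{\langle a\gamma_k\rangle+b}
&=\sum_{k=1}^{i-1}\sigma_k\left(\frac{1}{\langle a\gamma_k\rangle+b}-\frac{1}{\langle a\gamma_{k+1}\rangle+b}\right)+\frac{\sigma_i}{\langle a\gamma_i\rangle+b}.
\end{align*}
Each difference in parentheses is strictly positive since $\langle a\gamma_k\rangle<\langle a\gamma_{k+1}\rangle$ and $b\geq 0$, and every $\sigma_k\geq 0$ with $\sigma_i\geq 1$; hence the whole sum is $\geq \sigma_i/(\langle a\gamma_i\rangle+b)>0$. (Here I use $\sigma_1\geq 1$ from the previous paragraph, together with the monotonicity $\sigma_{k}\geq 0$ which forces $\sigma_i\geq 1$ whenever $\sigma_1\geq 1$ and the partial sums can only fail to be positive by passing through — but they cannot go below $0$; actually $\sigma_1\geq 1$ alone does not force $\sigma_i\geq 1$, so the correct statement I want is simply $\sigma_i\geq 1$ for \emph{all} $i$: I get this because $\sigma_i\geq 0$ always, and if $\sigma_i=0$ for some $i<t$ then splitting the interval shows $\xi$ drops to $0$ and the very next jump $m_{i+1}$ would have to be negative — impossible if $\gamma_{i+1}$ is an $\alpha$, and if $\gamma_{i+1}$ is a $\beta$ then $\xi_{\boldsymbol{\alpha},\boldsymbol{\beta}}(a,\langle a\gamma_{i+1}\rangle)=\sigma_i+m_{i+1}=m_{i+1}\le -1<0$, contradiction; and $\sigma_t=r-s$, which is $\geq 1$ in the cases where the lemma is applied in the proof of Assertion (2), or one notes the statement is only used when the running total stays positive. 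I will phrase this carefully in the final writeup.)

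For the product inequality, the cleanest argument is to \emph{not} reduce it to the sum but to run the same induction multiplicatively. Let $P_i(b):=\prod_{k=1}^i\bigl(1+\tfrac{1}{\langle a\gamma_k\rangle+b}\bigr)^{m_k}$. Grouping the factors by the sign of $m_k$ and using $\bigl(1+\tfrac1{x+b}\bigr)=\tfrac{x+b+1}{x+b}$, one sees that $P_i(b)$ telescopes into a ratio $\prod(\text{something})/\prod(\text{something})$, and I would instead argue by induction on $i$ using the identity
\begin{align*}
P_i(b) = P_{i-1}(b)\cdot\left(\frac{\langle a\gamma_i\rangle+b+1}{\langle a\gamma_i\rangle+b}\right)^{m_i},
\end{align*}
together with the key observation that $P_{i-1}(b+1)\cdot\bigl(\tfrac{\langle a\gamma_i\rangle + b}{\cdots}\bigr)$-type bookkeeping relates $P_i(b)$ to $P_{i-1}(b)$ and $P_{i-1}(b+1)$; more precisely, since $H_x(n)$-type harmonic sums are the logarithmic derivatives, the statement $P_i(b)>1$ for all $b\geq 0$ is equivalent to the statement that the ``discrete integral'' of $\sum_{k\le i} m_k/(\langle a\gamma_k\rangle+b)$ is positive, and the cleanest proof is: $\log P_i(b)=\sum_{k\le i}m_k\bigl(\log(\langle a\gamma_k\rangle+b+1)-\log(\langle a\gamma_k\rangle+b)\bigr)=\sum_{k\le i}m_k\int_0^1\frac{dt}{\langle a\gamma_k\rangle+b+t}=\int_0^1\Bigl(\sum_{k\le i}\frac{m_k}{\langle a\gamma_k\rangle+b+t}\Bigr)dt$, and the integrand is $>0$ for every $t\in[0,1]$ by the first inequality applied with $b+t$ in place of $b$. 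Hence $\log P_i(b)>0$, i.e. $P_i(b)>1$. The main obstacle, I expect, is not any of these computations but rather pinning down precisely why the partial sums $\sigma_i$ are not merely $\geq 0$ but $\geq 1$ (equivalently, why the first ``active'' value comes from $\boldsymbol{\alpha}$ and the running total never returns to $0$), which is exactly the place where the disjointness of $\langle\boldsymbol{\alpha}\rangle$ and $\langle\boldsymbol{\beta}\rangle$ is doing essential work; once that is nailed, both inequalities fall out of Abel summation and the integral representation of $\log P_i$.
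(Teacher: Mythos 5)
Your plan breaks at the step where you assert $\sigma_i:=\sum_{k=1}^i m_k\geq 1$ for \emph{all} $i$. This is false: take $\boldsymbol{\alpha}=(\alpha)$ and $\boldsymbol{\beta}=(\beta)$ with $\langle a\alpha\rangle<\langle a\beta\rangle$; then $\sigma_1=1$, $\sigma_2=0$, and the hypothesis $\xi_{\boldsymbol{\alpha},\boldsymbol{\beta}}(a,\cdot)\geq 0$ is satisfied, yet the conclusion of the lemma still holds. More generally, whenever $r=s$ (which is the case at every point where the paper invokes this lemma after Section \ref{section r=s}), one has $\sigma_t=r-s=0$. Your sub-argument in the hedge --- ``if $\sigma_i=0$ then $\gamma_{i+1}$ must be an $\alpha$'' --- is correct but proves $\sigma_{i+1}\geq 1$, not $\sigma_i\geq 1$, so it does not repair the claim. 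And the bound you extract from Abel summation, dropping everything except $\sigma_i/(\langle a\gamma_i\rangle+b)$, therefore only yields $\geq 0$, not $>0$.

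The fix is already sitting in the Abel formula you wrote; you just kept the wrong term. Since $\sigma_1=m_1\geq 1$ and all other $\sigma_k\geq 0$, for $i\geq 2$ the first summand $\sigma_1\bigl(\tfrac{1}{\langle a\gamma_1\rangle+b}-\tfrac{1}{\langle a\gamma_2\rangle+b}\bigr)$ is already strictly positive (and the remaining summands plus the boundary term are all $\geq 0$); for $i=1$ the sum is $m_1/(\langle a\gamma_1\rangle+b)>0$. That closes the gap for the first inequality with no need for $\sigma_i\geq 1$. Once that is fixed, your integral representation $\log P_i(b)=\int_0^1\bigl(\sum_{k\leq i}m_k/(\langle a\gamma_k\rangle+b+t)\bigr)\,dt>0$ is a clean and correct derivation of the product inequality. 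Your overall route (Abel summation for the sum, then integration for the product) differs from the paper's, which instead proves by induction on $i$ the \emph{strengthened} pair of inequalities $\sum_{k=1}^i m_k/(\langle a\gamma_k\rangle+b)>\sigma_i/(\langle a\gamma_i\rangle+b)$ and $\prod_{k=1}^i(1+\tfrac1{\langle a\gamma_k\rangle+b})^{m_k}>(1+\tfrac1{\langle a\gamma_i\rangle+b})^{\sigma_i}$, from which both conclusions follow using only $\sigma_i\geq 0$; the paper therefore never passes through the false claim $\sigma_i\geq 1$. Your integral trick for the product is arguably slicker than the paper's multiplicative induction, but you should adopt the paper's (or the repaired Abel) treatment of positivity, because the assertion $\sigma_i\geq 1$ is not merely hard to prove --- it is simply not true.
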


\begin{proof}[Proof of Lemma \ref{Lemma sauts}]
First, observe that by Proposition \ref{propo reduction}, for all $j\in\{1,\dots,t\}$, we have 
$$
\sum_{i=1}^jm_i=\xi_{\langle a\boldsymbol{\alpha}\rangle,\langle a\boldsymbol{\beta}\rangle}(1,\langle a\gamma_j\rangle)\geq 0. 
$$
Furthermore, since $\langle a\boldsymbol{\alpha}\rangle$ and $\langle a\boldsymbol{\beta}\rangle$ are disjoint, for all $i\in\{1,\dots,t\}$, we have $m_i\neq 0$. In particular, we obtain that $m_1\geq 1$. It follows that we have 
$$
\frac{m_1}{\langle a\gamma_1\rangle+b}>0\quad\textup{and}\quad \left(1+\frac{1}{\langle a\gamma_1\rangle+b}\right)^{m_1}>1.
$$ 
Now assume that $t\geq 2$. We shall prove by induction on $i$ that, for all $i\in\{2,\dots,t\}$, we have
\begin{equation}\label{induction sauts}
\sum_{k=1}^i\frac{m_k}{\langle a\gamma_k\rangle+b}>\frac{\sum_{k=1}^im_k}{\langle a\gamma_i\rangle+b}\quad\textup{and}\quad\prod_{k=1}^i\left(1+\frac{1}{\langle a\gamma_k\rangle+b}\right)^{m_k}>\left(1+\frac{1}{\langle a\gamma_i\rangle+b}\right)^{\sum_{k=1}^im_k}.
\end{equation}
We have $\langle a\gamma_1\rangle<\langle a\gamma_2\rangle$ and $m_1>0$ thus we get
$$
\frac{m_1}{\langle a\gamma_1\rangle+b}+\frac{m_2}{\langle a\gamma_2\rangle+b}>\frac{m_1+m_2}{\langle a\gamma_2\rangle+b}
$$
and
$$
\left(1+\frac{1}{\langle a\gamma_1\rangle+b}\right)^{m_1}\left(1+\frac{1}{\langle a\gamma_2\rangle+b}\right)^{m_2}>\left(1+\frac{1}{\langle a\gamma_2\rangle+b}\right)^{m_1+m_2},
$$
so that \eqref{induction sauts} holds for $i=2$. We now assume that $t\geq 3$ and let $i\in\{2,\dots,t-1\}$ be such that \eqref{induction sauts} holds. We obtain that
\begin{equation}\label{gasel1}
\sum_{k=1}^{i+1}\frac{m_k}{\langle a\gamma_k\rangle+b}>\frac{\sum_{k=1}^im_k}{\langle a\gamma_i\rangle+b}+\frac{m_{i+1}}{\langle a\gamma_{i+1}\rangle+b}
\end{equation}
and
\begin{equation}\label{gasel2}
\prod_{k=1}^{i+1}\left(1+\frac{1}{\langle a\gamma_k\rangle+b}\right)^{m_k}>\left(1+\frac{1}{\langle a\gamma_i\rangle+b}\right)^{\sum_{k=1}^im_k}\left(1+\frac{1}{\langle a\gamma_{i+1}\rangle+b}\right)^{m_{i+1}}.
\end{equation}
Since $\langle a\gamma_i\rangle<\langle a\gamma_{i+1}\rangle$ and $\sum_{k=1}^im_k\geq 0$, we obtain that
$$
\frac{\sum_{k=1}^im_k}{\langle a\gamma_i\rangle+b}\geq\frac{\sum_{k=1}^im_k}{\langle a\gamma_{i+1}\rangle+b}\quad\textup{and}\quad\left(1+\frac{1}{\langle a\gamma_i\rangle+b}\right)^{\sum_{k=1}^im_k}\geq\left(1+\frac{1}{\langle a\gamma_{i+1}\rangle+b}\right)^{\sum_{k=1}^im_k},
$$
which, together with \eqref{gasel1} and \eqref{gasel2}, finishes the induction on $i$. By  \eqref{induction sauts} together with $\sum_{k=1}^tm_k\geq 0$, this completes the proof of Lemma \ref{Lemma sauts}.
\end{proof}

We can now prove Lemma \ref{Lemma pos 3} and hence complete the proof of Proposition \ref{propo positive} and of Assertion $(1)$ of Theorem  \ref{Criterion}.

\begin{proof}[Proof of Lemma \ref{Lemma pos 3}]
Throughout this proof, we use the notations defined in Lemma \ref{Lemma sauts}. For all nonnegative integers $n$, we have
\begin{align*}
\frac{\mathcal{Q}_{\langle a\boldsymbol{\alpha}\rangle,\langle a\boldsymbol{\beta}\rangle}(n+1)}{\mathcal{Q}_{\langle a\boldsymbol{\alpha}\rangle,\langle a\boldsymbol{\beta}\rangle}(1)\mathcal{Q}_{\langle a\boldsymbol{\alpha}\rangle,\langle a\boldsymbol{\beta}\rangle}(n)}&=\frac{1}{\mathcal{Q}_{\langle a\boldsymbol{\alpha}\rangle,\langle a\boldsymbol{\beta}\rangle}(1)}\cdot\frac{\prod_{i=1}^r(\langle a\alpha_i\rangle+n)}{\prod_{j=1}^s(\langle a\beta_j\rangle+n)}\\
&=\frac{\prod_{i=1}^r(1+n/\langle a\alpha_i\rangle)}{\prod_{j=1}^s(1+n/\langle a\beta_j\rangle)}\\
&=\prod_{k=1}^t\left(1+\frac{n}{\langle a\gamma_k\rangle}\right)^{m_k}.
\end{align*}
We deduce that for all positive integers $n$, we obtain 
\begin{align*}
\frac{\mathcal{Q}_{\langle a\boldsymbol{\alpha}\rangle,\langle a\boldsymbol{\beta}\rangle}(n+1)\mathcal{Q}_{\langle a\boldsymbol{\alpha}\rangle,\langle a\boldsymbol{\beta}\rangle}(n-1)}{\mathcal{Q}_{\langle a\boldsymbol{\alpha}\rangle,\langle a\boldsymbol{\beta}\rangle}(n)^2}&=\prod_{k=1}^t\left(\frac{1+n/\langle a\gamma_k\rangle}{1+(n-1)/\langle a\gamma_k\rangle}\right)^{m_k}\\
&=\prod_{k=1}^t\left(1+\frac{1}{\langle a\gamma_k\rangle+n-1}\right)^{m_k}>1,
\end{align*}
where the last inequality is obtained by Lemma \ref{Lemma sauts} with $n-1$ instead of $b$.
\medskip

Furthermore, for all $n\in\mathbb{N}$, we have
\begin{align*}
\sum_{i=1}^rH_{\langle a\alpha_i\rangle}(n+1)-\sum_{j=1}^sH_{\langle a\beta_j\rangle}(n+1)&-\left(\sum_{i=1}^rH_{\langle a\alpha_i\rangle}(n)-\sum_{j=1}^sH_{\langle a\beta_j\rangle}(n)\right)\\
&=\sum_{i=1}^r\frac{1}{\langle a\alpha_i\rangle+n}-\sum_{j=1}^s\frac{1}{\langle a\beta_j\rangle+n}\\
&=\sum_{k=1}^t\frac{m_k}{\langle a\gamma_k\rangle+n}>0,
\end{align*}
where the last inequality is obtained by Lemma \ref{Lemma sauts} with $n$ instead of $b$. It follows that $\big(\sum_{i=1}^rH_{\alpha_i}(n)-\sum_{j=1}^sH_{\beta_j}(n)\big)_{n\geq 0}$ is an increasing sequence and Lemma \ref{Lemma pos 3} is proved.
\end{proof}

\section{Proof of Assertion $(3)$ of Theorem \ref{Criterion}}\label{section reformulation}

Throughout this section, we fix two tuples $\boldsymbol{\alpha}$ and $\boldsymbol{\beta}$ of parameters in $\mathbb{Q}\setminus\mathbb{Z}_{\leq 0}$ with same length such that $\langle\boldsymbol{\alpha}\rangle$ and $\langle\boldsymbol{\beta}\rangle$ are disjoint. Furthermore, we assume that $H_{\boldsymbol{\alpha},\boldsymbol{\beta}}$ holds, that is, for all $a\in\{1,\dots,d_{\boldsymbol{\alpha},\boldsymbol{\beta}}\}$ coprime to $d_{\boldsymbol{\alpha},\boldsymbol{\beta}}$ and all $x\in\mathbb{R}$ satisfying $m_{\boldsymbol{\alpha},\boldsymbol{\beta}}(a)\preceq x\prec a$, we have $\xi_{\boldsymbol{\alpha},\boldsymbol{\beta}}(a,x)\geq 1$. We will also use the notations defined at the beginning of Section \ref{subsection demo theo expand}.

\subsection{A $p$-adic reformulation of Assertion $(3)$ of Theorem \ref{Criterion}}

To  prove Assertion $(3)$ of Theorem \ref{Criterion}, we have to prove that
\begin{equation}\label{MAINaim}
\exp\left(\frac{S_{\boldsymbol{\alpha},\boldsymbol{\beta}}(C_{\boldsymbol{\alpha},\boldsymbol{\beta}}'z)}{\mathfrak{n}_{\boldsymbol{\alpha},\boldsymbol{\beta}}}\right)\in\mathbb{Z}[[z]].
\end{equation}
A classical method to prove the integrality of the Taylor coefficients of exponential of a power series is to reduce the problem to a $p$-adic one for all primes $p$ and to use Dieudonn\'e-Dwork's lemma as follows. Assertion \eqref{MAINaim} holds if and only if, for all primes $p$, we have
\begin{equation}\label{MAINaimp}
\exp\left(\frac{S_{\boldsymbol{\alpha},\boldsymbol{\beta}}(C_{\boldsymbol{\alpha},\boldsymbol{\beta}}'z)}{\mathfrak{n}_{\boldsymbol{\alpha},\boldsymbol{\beta}}}\right)\in\mathbb{Z}_p[[z]].
\end{equation}
Let us recall that we have
$$
S_{\boldsymbol{\alpha},\boldsymbol{\beta}}(z)=\underset{\gcd(a,d)=1}{\sum_{a=1}^d}\frac{G_{\langle a\boldsymbol{\alpha}\rangle,\langle a\boldsymbol{\beta}\rangle}(z)}{F_{\langle a\boldsymbol{\alpha}\rangle,\langle a\boldsymbol{\beta}\rangle}(z)}\in z\mathbb{Q}[[z]],
$$
with $d=d_{\boldsymbol{\alpha},\boldsymbol{\beta}}$. By Corollary \ref{cor exp} applied to \eqref{MAINaimp}, we obtain that \eqref{MAINaim} holds if and only if, for all primes $p$, we have
\begin{equation}\label{MAINaimp2}
S_{\boldsymbol{\alpha},\boldsymbol{\beta}}(C_{\boldsymbol{\alpha},\boldsymbol{\beta}}'z^p)-pS_{\boldsymbol{\alpha},\boldsymbol{\beta}}(C_{\boldsymbol{\alpha},\boldsymbol{\beta}}'z)\in p\mathfrak{n}_{\boldsymbol{\alpha},\boldsymbol{\beta}}\mathbb{Z}_p[[z]].
\end{equation} 

The map $t\mapsto t^{(1)}$ is a permutation of the elements of  $\{1,\dots,d_{\boldsymbol{\alpha},\boldsymbol{\beta}}\}$ coprime to $d_{\boldsymbol{\alpha},\boldsymbol{\beta}}$. Hence, we have 
$$
S_{\boldsymbol{\alpha},\boldsymbol{\beta}}(C'z^p)-pS_{\boldsymbol{\alpha},\boldsymbol{\beta}}(C'z)=\underset{\gcd(t,d)=1}{\sum_{t=1}^d}\left(\frac{G_{\underline{t^{(1)}\boldsymbol{\alpha}},\underline{t^{(1)}\boldsymbol{\beta}}}}{F_{\underline{t^{(1)}\boldsymbol{\alpha}},\underline{t^{(1)}\boldsymbol{\beta}}}}(C'z^p)-p\frac{G_{\underline{t\boldsymbol{\alpha}},\underline{t\boldsymbol{\beta}}}}{F_{\underline{t\boldsymbol{\alpha}},\underline{t\boldsymbol{\beta}}}}(C'z)\right),
$$
with $d=d_{\boldsymbol{\alpha},\boldsymbol{\beta}}$ and $C'=C_{\boldsymbol{\alpha},\boldsymbol{\beta}}'$. By Theorem \ref{theo expand}, we obtain 
\begin{align*} 
S_{\boldsymbol{\alpha},\boldsymbol{\beta}}(C_{\boldsymbol{\alpha},\boldsymbol{\beta}}'z^p)-pS_{\boldsymbol{\alpha},\boldsymbol{\beta}}(C_{\boldsymbol{\alpha},\boldsymbol{\beta}}'z)&= p\underset{\gcd(b,D)=1}{\sum_{b=1}^D}\sum_{t\in\Omega_b}\sum_{k=0}^{\infty}R_{k,b}(t)z^k\\
&=p\underset{\gcd(b,D)=1}{\sum_{b=1}^D}\sum_{k=0}^{\infty}\left(\sum_{t\in\Omega_b}R_{k,b}(t)\right)z^k,
\end{align*}
with $R_{k,b}\in\mathcal{A}_b^\ast$ and, moreover if  $p$ divides  $d_{\boldsymbol{\alpha},\boldsymbol{\beta}}$, then we have 
$$
R_{k,b}\in\begin{cases}
p^{-1-\lfloor\lambda_p/(p-1)\rfloor}\mathcal{A}_b\textup{ if $\boldsymbol{\beta}\in\mathbb{Z}^r$;}\\
\mathcal{A}_b\textup{ if $\boldsymbol{\beta}\notin\mathbb{Z}^r$ and  $p-1\nmid\lambda_p$;}\\
\mathcal{A}_b\textup{ if $\boldsymbol{\beta}\notin\mathbb{Z}^r$, $\mathfrak{m}_{\boldsymbol{\alpha},\boldsymbol{\beta}}$ is odd and $p=2$}.
\end{cases}.
$$
By point $(7)$ of Lemma \ref{Lemma alg\`ebres 2}, we have 
\begin{equation}\label{explik}
\sum_{t\in\Omega_b}R_{k,b}(t)\in\mathfrak{n}_{\boldsymbol{\alpha},\boldsymbol{\beta}}\mathbb{Z}_p.
\end{equation}

Indeed, if $p$ does not divide $d_{\boldsymbol{\alpha},\boldsymbol{\beta}}$, then  $p$ does not divide $\mathfrak{n}_{\boldsymbol{\alpha},\boldsymbol{\beta}}$ and $R_{k,b}(t)\in\mathbb{Z}_p$. Let us now assume that $p$ divides $d_{\boldsymbol{\alpha},\boldsymbol{\beta}}$ so that  $\nu\geq 1$. 

If $\boldsymbol{\beta}\in\mathbb{Z}^r$, then  we have $v_p(\mathfrak{n}_{\boldsymbol{\alpha},\boldsymbol{\beta}})=\nu-2-\lfloor\lambda_p/(p-1)\rfloor$. If  $\boldsymbol{\beta}\notin\mathbb{Z}^r$ and if $p-1\nmid \lambda_p$, then  we have $p\neq 2$ and  $v_p(\mathfrak{n}_{\boldsymbol{\alpha},\boldsymbol{\beta}})=\nu-1$. Let us now assume that $\boldsymbol{\beta}\notin\mathbb{Z}^r$ and that $p-1\mid\lambda_p$. If $p\neq 2$ then  $v_p(\mathfrak{n}_{\boldsymbol{\alpha},\boldsymbol{\beta}})=0$ or $\nu-2$. On the other hand, if $p=2$, then either $\mathfrak{m}_{\boldsymbol{\alpha},\boldsymbol{\beta}}$ is even and  $v_2(\mathfrak{n}_{\boldsymbol{\alpha},\boldsymbol{\beta}})=0$ or $\nu-2$, or  $\mathfrak{m}_{\boldsymbol{\alpha},\boldsymbol{\beta}}$ is odd and $v_2(\mathfrak{n}_{\boldsymbol{\alpha},\boldsymbol{\beta}})=\nu-1$. 

It follows that in all cases, we have \eqref{MAINaimp2} and Assertion $(3)$ of Theorem \ref{Criterion} is proved.

\section{Proof of Assertion $(2)$ of Theorem \ref{Criterion}}\label{section last}

Let $\boldsymbol{\alpha}$ and $\boldsymbol{\beta}$ be tuples of parameters in $\mathbb{Q}\setminus\mathbb{Z}_{\leq 0}$ such that $\langle\boldsymbol{\alpha}\rangle$ and $\langle\boldsymbol{\beta}\rangle$ are disjoint (this is equivalent to the irreducibility of $\mathcal{L}_{\boldsymbol{\alpha},\boldsymbol{\beta}}$) and such that $F_{\boldsymbol{\alpha},\boldsymbol{\beta}}$ is $N$-integral. Assertion $(3)$ of Theorem \ref{Criterion} implies Assertion $(iii)\Rightarrow(i)$ of Theorem \ref{Criterion}. Indeed, it suffices to prove the following result.

\begin{propo}
Let $f(z)\in 1+z\mathbb{Q}[[z]]$ be an $N$-integral power series and let $a$ be a positive integer. Then $f(z)^{1/a}$ is an $N$-integral power series.
\end{propo}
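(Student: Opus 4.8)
The claim is a purely elementary statement about power series: if $f(z)\in 1+z\mathbb{Q}[[z]]$ is $N$-integral and $a\in\mathbb{N}$, then $f(z)^{1/a}$ (the unique element of $1+z\mathbb{Q}[[z]]$ whose $a$-th power is $f$) is again $N$-integral. The plan is to reduce the statement to a prime-by-prime claim and then to invoke the Dieudonn\'e--Dwork lemma (Lemma \ref{Dieudonn\'e-Dwork}). First I would fix $C\in\mathbb{Q}\setminus\{0\}$ with $f(Cz)\in 1+z\mathbb{Z}[[z]]$, and observe that it suffices to produce a rational constant $C'$ with $f(C'z)^{1/a}\in 1+z\mathbb{Z}[[z]]$; since $(f(C'z))^{1/a}=f^{1/a}(C'z)$, this is exactly $N$-integrality of $f^{1/a}$. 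By the elementary principle quoted in the introduction ($x\in\mathbb{Z}\iff x\in\mathbb{Z}_p$ for all $p$), it is enough to find $C'$ such that, for every prime $p$, $f(C'z)^{1/a}\in 1+z\mathbb{Z}_p[[z]]$.

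The natural choice is $C'=C a^{b}$ for a suitable integer $b$, chosen large enough that the powers of $p$ dividing $a$ are ``absorbed''. For a prime $p\nmid a$, the element $a$ is a unit in $\mathbb{Z}_p$, so $g(z):=f(Cz)\in 1+z\mathbb{Z}_p[[z]]$ already has $g(z)^{1/a}\in 1+z\mathbb{Z}_p[[z]]$: indeed $g(z)^{1/a}=(1+(g(z)-1))^{1/a}=\sum_{n\ge 0}\binom{1/a}{n}(g(z)-1)^n$, and each binomial coefficient $\binom{1/a}{n}$ lies in $\mathbb{Z}_p$ because its denominator is a power of $a$, hence a $p$-adic unit; since $g(z)-1\in z\mathbb{Z}_p[[z]]$ the series converges in $\mathbb{Z}_p[[z]]$. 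For $p\mid a$, write $a=p^{e}a'$ with $p\nmid a'$. Taking first the $a'$-th root as above we are reduced to extracting a $p^{e}$-th root, and hence by induction on $e$ to the case $a=p$: it suffices to show that if $h(z)\in 1+z\mathbb{Z}_p[[z]]$ then $h(p^{c}z)^{1/p}\in 1+z\mathbb{Z}_p[[z]]$ for $c$ large enough (then iterate, rescaling at each of the $e$ steps). This last point is where Lemma \ref{Dieudonn\'e-Dwork} enters: writing $F(z):=h(p^{c}z)^{1/p}\in 1+z\mathbb{Q}_p[[z]]$, one has $F(z)\in 1+z\mathbb{Z}_p[[z]]$ iff $F(z^p)/F(z)^p\in 1+pz\mathbb{Z}_p[[z]]$, and $F(z^p)/F(z)^p=\big(h(p^{c}z^p)/h(p^{c}z)^p\big)^{1/p}$; one checks that $h(p^{c}z^p)/h(p^{c}z)^p\in 1+p^{c}z\mathbb{Z}_p[[z]]$ (a binomial-expansion estimate using $h\in 1+z\mathbb{Z}_p[[z]]$ and $v_p(\binom{p}{k})\ge 1$ for $1\le k\le p-1$), so its $p$-th root lies in $1+p^{c-1}z\mathbb{Z}_p[[z]]\subset 1+pz\mathbb{Z}_p[[z]]$ as soon as $c\ge 2$.

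Finally, the constant $C'$ must be chosen uniformly in $p$: since $f(Cz)\in 1+z\mathbb{Z}[[z]]$ and only the primes $p\mid a$ require a rescaling, setting $C'=Ca^{N}$ for a fixed integer $N$ (large enough that $N\ge 2e_p$ for every prime $p\mid a$, where $p^{e_p}\|a$) makes all the local arguments go through simultaneously, and $C'\in\mathbb{Q}$. Assembling the local conclusions via the integrality principle gives $f(C'z)^{1/a}\in 1+z\mathbb{Z}[[z]]$, i.e. $f^{1/a}$ is $N$-integral. The main obstacle is the case $p\mid a$: there the binomial coefficients $\binom{1/a}{n}$ have unbounded negative $p$-adic valuation, so one genuinely needs the Dieudonn\'e--Dwork mechanism together with the rescaling by a power of $p$ to win back integrality, and one must track the valuations carefully through the induction on the exponent $e=v_p(a)$.
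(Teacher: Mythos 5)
Your proposal is correct, but it follows a genuinely different route from the paper's. The paper's proof is a one-liner at heart: it writes $f=1+zg$ and expands $f^{1/a}=1+\sum_{n\ge 1}(-1)^n\frac{(-1/a)_n}{n!}z^ng(z)^n$, so that the only arithmetic input needed is a constant $K$ with $K^n\frac{(-1/a)_n}{n!}\in\mathbb{Z}$ for all $n$; this is supplied by Theorem A applied to $\boldsymbol{\alpha}=(-1/a)$, $\boldsymbol{\beta}=(1)$ (equivalently, by Eisenstein's theorem for the algebraic function $(1-z)^{1/a}$), and then $f(CKz)^{1/a}\in\mathbb{Z}[[z]]$ where $g(Cz)\in\mathbb{Z}[[z]]$. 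In effect, the paper separates the ``universal'' binomial denominators (which are the Eisenstein constant of $(1-z)^{1/a}$) from the arbitrary integral series $g$. You instead argue prime by prime: for $p\nmid a$ you use $p$-integrality of $\binom{1/a}{n}$, and for $p\mid a$ you reduce to iterated $p$-th roots via the Dieudonn\'e--Dwork lemma after rescaling by $p^2$. Your route is self-contained (it never invokes Theorem A or Eisenstein) and the local estimates all check out, but it is considerably longer and must track the rescaling uniformly over $p\mid a$; the paper's route buys brevity and makes the source of the extra constant ($K=C_{(-1/a),(1)}$) transparent.

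Two minor points. First, in the case $p\nmid a$, the justification ``$\binom{1/a}{n}\in\mathbb{Z}_p$ because its denominator is a power of $a$'' is circular as stated: that the denominator involves only primes dividing $a$ is the conclusion you want, not its proof. The clean argument is that $\binom{x}{n}$ is an integer-valued polynomial, hence maps $\mathbb{Z}_p$ into $\mathbb{Z}_p$, and $1/a\in\mathbb{Z}_p$ since $p\nmid a$. Second, your requirement $N\ge 2e_p$ for all $p\mid a$ is more than is needed: each step of the $p$-adic iteration rescales by $p^2$, so after $e_p$ steps you have rescaled by $p^{2e_p}$ at $p$; taking $C'=Ca^2$ (i.e.\ $N=2$) already gives $v_p(C'/C)=2e_p$ for every $p\mid a$. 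Neither point invalidates the proof; the second is just a non-optimal constant.
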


\begin{proof}
We write $f(z)=1+zg(z)$ with $g(z)\in\mathbb{Q}[[z]]$. Thus, we obtain that
$$
f(z)^{1/a}=1+\sum_{n=1}^\infty(-1)^n\frac{(-1/a)_n}{n!}z^ng(z)^n.
$$
Since $f(z)$ is $N$-integral, there exists $C\in\mathbb{N}$ such that $g(Cz)\in\mathbb{Z}[[z]]$. Furthermore, by Theorem A applied with $\boldsymbol{\alpha}=(-1/a)$ and $\boldsymbol{\beta}=(1)$, we obtain that there exists $K\in\mathbb{N}$ such that, for all $n\in\mathbb{N}$, we have
$$
K^n\frac{(-1/a)_n}{n!}\in\mathbb{Z}.
$$
It follows that $f(CKz)^{1/a}\in\mathbb{Z}[[z]]$, \textit{i.e.} $f(z)^{1/a}$ is $N$-integral.
\end{proof}

Furthermore, by definition, we have $(ii)\Rightarrow(i)$ of Theorem  \ref{Criterion}. Thus, we only have to prove that $(i)\Rightarrow(iii)$, $(i)\Rightarrow (ii)$ and that if $(i)$ holds, then we have either $\boldsymbol{\alpha}=(1/2)$ and $\boldsymbol{\beta}=(1)$ or there are at least two elements equal to $1$ in $\langle\boldsymbol{\beta}\rangle$. Throughout this section, we assume that $(i)$ holds, \textit{i.e.} that $q_{\boldsymbol{\alpha},\boldsymbol{\beta}}$ is $N$-integral. Furthermore, for all $n\in\mathbb{N}$, we set
$$
\mathcal{Q}_{\boldsymbol{\alpha},\boldsymbol{\beta}}(n):=\frac{(\alpha_1)_n\cdots(\alpha_r)_n}{(\beta_1)_n\cdots(\beta_s)_n}.
$$

\subsection{Proof of Assertion $(iii)$ of Theorem \ref{Criterion}}\label{section proof (iii)}

The aim of this section is to prove that $r=s$, that $H_{\boldsymbol{\alpha},\boldsymbol{\beta}}$ holds and that, for all $a\in\{1,\dots,d_{\boldsymbol{\alpha},\boldsymbol{\beta}}\}$ coprime to $d_{\boldsymbol{\alpha},\boldsymbol{\beta}}$, we have $q_{\boldsymbol{\alpha},\boldsymbol{\beta}}(z)=q_{\langle a\boldsymbol{\alpha}\rangle,\langle a\boldsymbol{\beta}\rangle}(z)$. Since $F_{\boldsymbol{\alpha},\boldsymbol{\beta}}$ and $q_{\boldsymbol{\alpha},\boldsymbol{\beta}}$ are $N$-integral, there exists $C\in\mathbb{Q}\setminus\{0\}$ such that
$$
F_{\boldsymbol{\alpha},\boldsymbol{\beta}}(Cz)\in\mathbb{Z}[[z]]\quad\textup{and}\quad q_{\boldsymbol{\alpha},\boldsymbol{\beta}}(Cz)=\exp\left(\frac{G_{\boldsymbol{\alpha},\boldsymbol{\beta}}(Cz)}{F_{\boldsymbol{\alpha},\boldsymbol{\beta}}(Cz)}\right)\in\mathbb{Z}[[z]].
$$
Thus, for almost all primes $p$, we have
\begin{equation}\label{HypoMirrorp}
F_{\boldsymbol{\alpha},\boldsymbol{\beta}}(z)\in\mathbb{Z}_p[[z]]\quad\textup{and}\quad\exp\left(\frac{G_{\boldsymbol{\alpha},\boldsymbol{\beta}}(z)}{F_{\boldsymbol{\alpha},\boldsymbol{\beta}}(z)}\right)\in\mathbb{Z}_p[[z]].
\end{equation}
We shall use Dieudonn\'e-Dwork's lemma in order to get rid of the exponential map in \eqref{HypoMirrorp}.

Let $p$ be a prime  such that \eqref{HypoMirrorp} holds. By Corollary \ref{cor exp} applied to \eqref{HypoMirrorp}, we obtain that
$$
\frac{G_{\boldsymbol{\alpha},\boldsymbol{\beta}}(z^p)}{F_{\boldsymbol{\alpha},\boldsymbol{\beta}}(z^p)}-p\frac{G_{\boldsymbol{\alpha},\boldsymbol{\beta}}(z)}{F_{\boldsymbol{\alpha},\boldsymbol{\beta}}(z)}\in p z\mathbb{Z}_p[[z]].
$$
Since $F_{\boldsymbol{\alpha},\boldsymbol{\beta}}(z)\in\mathbb{Z}_p[[z]]$, we get
\begin{equation}\label{ref series}
G_{\boldsymbol{\alpha},\boldsymbol{\beta}}(z^p)F_{\boldsymbol{\alpha},\boldsymbol{\beta}}(z)-pG_{\boldsymbol{\alpha},\boldsymbol{\beta}}(z)F_{\boldsymbol{\alpha},\boldsymbol{\beta}}(z^p)\in pz\mathbb{Z}_p[[z]].
\end{equation}
In the sequel of the proof of Assertion $(2)$ of Theorem \ref{Criterion}, we use several times that \eqref{ref series} holds for almost all primes $p$.

\subsubsection{Proof of $r=s$}\label{section r=s}

We give a proof by contradiction assuming that $r\neq s$. Since $F_{\boldsymbol{\alpha},\boldsymbol{\beta}}$ is $N$-integral, Christol's criterion ensures that, for all $a\in\{1,\dots,d_{\boldsymbol{\alpha},\boldsymbol{\beta}}\}$ coprime to $d_{\boldsymbol{\alpha},\boldsymbol{\beta}}$ and all $x\in\mathbb{R}$, we have $\xi_{\boldsymbol{\alpha},\boldsymbol{\beta}}(a,x)\geq 0$. In particular, since $r-s$ is the limit of $\xi_{\boldsymbol{\alpha},\boldsymbol{\beta}}(1,n)$ when $n\in\mathbb{Z}$ tends to $-\infty$, we obtain that $r-s\geq 1$. For all $n\in\mathbb{N}$, we write $A_n$ for the assertion
$$
\sum_{i=1}^rH_{\alpha_i}(n)-\sum_{j=1}^sH_{\beta_j}(n)=0.
$$
First, we prove by induction on $n$ that $A_n$ is true for all $n\in\mathbb{N}$. 
\medskip

Assertion $A_0$ holds. Let $n$ be a positive integer such that, for all integer $k$, $0\leq k<n$, $A_k$ holds.
The coefficient $\Phi_p(np)$ of $z^{np}$ in \eqref{ref series} belongs to $p\mathbb{Z}_p$ and is equal to
$$
\sum_{j=0}^n\mathcal{Q}_{\boldsymbol{\alpha},\boldsymbol{\beta}}(jp)\mathcal{Q}_{\boldsymbol{\alpha},\boldsymbol{\beta}}(n-j)\left(\sum_{i=1}^r\big(H_{\alpha_i}(n-j)-pH_{\alpha_i}(jp)\big)-\sum_{i=1}^s\big(H_{\beta_i}(n-j)-pH_{\beta_i}(jp)\big)\right).
$$
By induction, we obtain that
\begin{multline*}
\Phi_p(np)=\mathcal{Q}_{\boldsymbol{\alpha},\boldsymbol{\beta}}(n)\left(\sum_{i=1}^rH_{\alpha_i}(n)-\sum_{i=1}^sH_{\beta_i}(n)\right)\\
-p\sum_{j=1}^n\mathcal{Q}_{\boldsymbol{\alpha},\boldsymbol{\beta}}(jp)\mathcal{Q}_{\boldsymbol{\alpha},\boldsymbol{\beta}}(n-j)\left(\sum_{i=1}^rH_{\alpha_i}(jp)-\sum_{i=1}^sH_{\beta_i}(jp)\right).
\end{multline*}

Furthermore, according to Lemma \ref{Lemma structure}, there exists a constant $M_{\boldsymbol{\alpha},\boldsymbol{\beta}}>0$ such that, for all $x\in[0,1/M_{\boldsymbol{\alpha},\boldsymbol{\beta}}[$, all primes $p$ not dividing $d_{\boldsymbol{\alpha},\boldsymbol{\beta}}$ and all $\ell\in\mathbb{N}$, $\ell\geq 1$, we have $\Delta_{\boldsymbol{\alpha},\boldsymbol{\beta}}^{p,\ell}(x)=0$. Hence, for almost all primes $p$ and all $j\in\{1,\dots,n\}$, we have
\begin{equation}\label{combi0}
v_p\big(\mathcal{Q}_{\boldsymbol{\alpha},\boldsymbol{\beta}}(jp)\big)=\sum_{\ell=1}^{\infty}\Delta_{\boldsymbol{\alpha},\boldsymbol{\beta}}^{p,\ell}\left(\frac{jp}{p^\ell}\right)=\Delta_{\boldsymbol{\alpha},\boldsymbol{\beta}}^{p,1}(j)+\sum_{\ell=1}^\infty\Delta_{\boldsymbol{\alpha},\boldsymbol{\beta}}^{p,\ell+1}\left(\frac{j}{p^\ell}\right)=\Delta_{\boldsymbol{\alpha},\boldsymbol{\beta}}^{p,1}(j)=j(r-s).
\end{equation}

According to Lemma \ref{Lemma infty}, for almost all primes $p$ and all the elements $\alpha$ in $\boldsymbol{\alpha}$ or $\boldsymbol{\beta}$, we have $\mathfrak{D}_p(\alpha)=\mathfrak{D}_p(\langle\alpha\rangle)$, so that $\mathfrak{D}_p(\alpha)=\langle \omega\alpha\rangle$ where $\omega\in\{1,\dots,d_{\boldsymbol{\alpha},\boldsymbol{\beta}}\}$ satisfies $\omega p\equiv 1\mod d_{\boldsymbol{\alpha},\boldsymbol{\beta}}$. Thus we get
\begin{align*}
pH_\alpha(jp)&=p\sum_{k=0}^{p-1}\sum_{i=0}^{j-1}\frac{1}{\alpha+k+ip}\\
&=H_{\mathfrak{D}_p(\alpha)}(j)+p\underset{k\neq p\mathfrak{D}_p(\alpha)-\alpha}{\sum_{k=0}^{p-1}}\sum_{i=0}^{j-1}\frac{1}{\alpha+k+ip}\in H_{\langle \omega\alpha\rangle}(j)+p\mathbb{Z}_p,
\end{align*}
which leads to
\begin{equation}\label{combi1}
p\left(\sum_{i=1}^rH_{\alpha_i}(jp)-\sum_{i=1}^sH_{\beta_i}(jp)\right)\equiv\sum_{i=1}^rH_{\langle \omega\alpha_i\rangle}(j)-\sum_{i=1}^sH_{\langle \omega\beta_i\rangle}(j)\mod p\mathbb{Z}_p.
\end{equation}
Furthermore, for almost all primes $p$, we have
$$
\left\{\sum_{i=1}^rH_{\langle \omega\alpha_i\rangle}(j)-\sum_{i=1}^sH_{\langle \omega\beta_i\rangle}(j)\,:\,1\leq j\leq n,\,1\leq \omega\leq d_{\boldsymbol{\alpha},\boldsymbol{\beta}},\,\gcd(\omega,d_{\boldsymbol{\alpha},\boldsymbol{\beta}})=1\right\}\subset\mathbb{Z}_p,
$$
which, together with \eqref{combi0} and \eqref{combi1}, gives us that
$$
-p\mathcal{Q}_{\boldsymbol{\alpha},\boldsymbol{\beta}}(jp)\mathcal{Q}_{\boldsymbol{\alpha},\boldsymbol{\beta}}(n-j)\left(\sum_{i=1}^rH_{\alpha_i}(jp)-\sum_{i=1}^sH_{\beta_i}(jp)\right)\in p^{r-s}\mathbb{Z}_p,
$$
for almost all primes $p$ and all $j\in\{1,\dots,n\}$. In addition, 
for almost all primes $p$, we have
$$
\mathcal{Q}_{\boldsymbol{\alpha},\boldsymbol{\beta}}(n)\left(\sum_{i=1}^rH_{\alpha_i}(n)-\sum_{j=1}^sH_{\beta_j}(n)\right)\in\mathbb{Z}_p^\times\cup\{0\}\quad\textup{and}
\quad\mathcal{Q}_{\boldsymbol{\alpha},\boldsymbol{\beta}}(n)\neq 0.
$$
Since $\Phi_p(np)\in p\mathbb{Z}_p$ and $r-s\geq 1$, we obtain that $A_n$ holds, which finishes the induction on $n$.
\medskip

It follows that for all $n\in\mathbb{N}$, we obtain that
$$
\sum_{i=1}^r\frac{1}{\alpha_i+n}-\sum_{i=1}^s\frac{1}{\beta_i+n}=\sum_{i=1}^r\big(H_{\alpha_i}(n+1)-H_{\alpha_i}(n)\big)-\sum_{i=1}^s\big(H_{\beta_i}(n+1)-H_{\beta_i}(n)\big)=0,
$$
contradicting that $\boldsymbol{\alpha}$ and $\boldsymbol{\beta}$ are disjoint since
$$
\sum_{i=1}^r\frac{1}{\alpha_i+X}-\sum_{i=1}^s\frac{1}{\beta_i+X}\in\mathbb{Q}(X)
$$
must be a non-trivial rational fraction in this case. Thus we have $r=s$ as expected.
\hfill $\square$

\subsubsection{Proof of $H_{\boldsymbol{\alpha},\boldsymbol{\beta}}$}\label{section proof H}

Let us recall that, since $F_{\boldsymbol{\alpha},\boldsymbol{\beta}}$ is $N$-integral, for all $a\in\{1,\dots,d_{\boldsymbol{\alpha},\boldsymbol{\beta}}\}$ coprime to $d_{\boldsymbol{\alpha},\boldsymbol{\beta}}$ and all $x\in\mathbb{R}$, we have $\xi_{\boldsymbol{\alpha},\boldsymbol{\beta}}(a,x)\geq 0$. We give a proof by contradiction of $H_{\boldsymbol{\alpha},\boldsymbol{\beta}}$ assuming that there exist $a\in\{1,\dots,d_{\boldsymbol{\alpha},\boldsymbol{\beta}}\}$ coprime to $d_{\boldsymbol{\alpha},\boldsymbol{\beta}}$ and $x_0\in\mathbb{R}$ such that $m_{\boldsymbol{\alpha},\boldsymbol{\beta}}(a)\preceq x_0\prec a$ and $\xi_{\boldsymbol{\alpha},\boldsymbol{\beta}}(a,x_0)=0$. Let $\alpha$ and $\beta$ be such that
$$
a\beta=\max\big(\{a\gamma\,:\,a\gamma\preceq x_0,\,\gamma\textup{ is in $\boldsymbol{\alpha}$ or $\boldsymbol{\beta}$}\},\preceq\big)
$$
and
$$
 a\alpha=\min\big(\{a\gamma\,:\,x_0\prec a\gamma,\,\gamma\textup{ equals $1$ or is in $\boldsymbol{\alpha}$ or $\boldsymbol{\beta}$}\},\preceq\big).
$$ 
It follows that for all $x\in\mathbb{R}$ satisfying $a\beta\preceq x\prec a\alpha$, we have $\xi_{\boldsymbol{\alpha},\boldsymbol{\beta}}(a,x)=0$. Observe that, since $\langle\boldsymbol{\alpha}\rangle$ and $\langle\boldsymbol{\beta}\rangle$ are disjoint, $\langle a\boldsymbol{\alpha}\rangle$ and $\langle a\boldsymbol{\beta}\rangle$ are also disjoint, thus $\beta$ is a component of $\boldsymbol{\beta}$ and $\alpha$ equals $1$ or is an element of $\boldsymbol{\alpha}$ because $\xi_{\boldsymbol{\alpha},\boldsymbol{\beta}}(a,\cdot)$ is nonnegative on $\mathbb{R}$. 
\medskip 

Let us write $\mathfrak{P}_{\boldsymbol{\alpha},\boldsymbol{\beta}}(a)$ for the set of all primes $p$ such that $ap\equiv 1\mod d_{\boldsymbol{\alpha},\boldsymbol{\beta}}$. For all large enough $p\in\mathfrak{P}_{\boldsymbol{\alpha},\boldsymbol{\beta}}(a)$, Lemma \ref{Lemma infty} gives us that $\mathfrak{D}_p(\alpha)=\mathfrak{D}_p(\langle\alpha\rangle)=\langle a\alpha\rangle$ and $\mathfrak{D}_p(\beta)=\langle a\beta\rangle$. On the one hand, if $\langle a\beta\rangle<\langle a\alpha\rangle$, then, for almost all $p\in\mathfrak{P}_{\boldsymbol{\alpha},\boldsymbol{\beta}}(a)$, we obtain that
$$
\mathfrak{D}_p(\alpha)+\frac{\lfloor 1-\alpha\rfloor}{p}-\mathfrak{D}_p(\beta)-\frac{\lfloor 1-\beta\rfloor}{p}\geq\frac{1}{d_{\boldsymbol{\alpha},\boldsymbol{\beta}}}+\frac{\lfloor 1-\alpha\rfloor}{p}-\frac{\lfloor 1-\beta\rfloor}{p}\geq\frac{1}{p}.
$$
On the other hand, if $\langle a\beta\rangle=\langle a\alpha\rangle$ and $\beta>\alpha$, then we have $\langle\beta\rangle=\langle\alpha\rangle$ so $\beta\geq 1+\alpha$ and
$$
\mathfrak{D}_p(\alpha)+\frac{\lfloor 1-\alpha\rfloor}{p}-\mathfrak{D}_p(\beta)-\frac{\lfloor 1-\beta\rfloor}{p}=\frac{\lfloor 1-\alpha\rfloor}{p}-\frac{\lfloor 1-\beta\rfloor}{p}\geq\frac{1}{p}.
$$

In both cases, we obtain that, for almost all $p\in\mathfrak{P}_{\boldsymbol{\alpha},\boldsymbol{\beta}}(a)$, there exists $v_p\in\{0,\dots,p-1\}$ such that
$$
\mathfrak{D}_p(\beta)+\frac{\lfloor 1-\beta\rfloor}{p}\leq\frac{v_p}{p}<\mathfrak{D}_p(\alpha)+\frac{\lfloor 1-\alpha\rfloor}{p},
$$
which, together with Lemma \ref{valeurs Delta}, gives us that $\Delta_{\boldsymbol{\alpha},\boldsymbol{\beta}}^{p,1}(v_p/p)=0$ for all large enough $p\in\mathfrak{P}_{\boldsymbol{\alpha},\boldsymbol{\beta}}(a)$. Furthermore, by  Lemma \ref{Lemma structure}, for almost all $p\in\mathfrak{P}_{\boldsymbol{\alpha},\boldsymbol{\beta}}(a)$ and all $\ell\in\mathbb{N}$, $\ell\geq 1$, $\Delta_{\boldsymbol{\alpha},\boldsymbol{\beta}}^{p,\ell}$ vanishes on $[0,1/p]$ so that 
$$
v_p\big(\mathcal{Q}_{\boldsymbol{\alpha},\boldsymbol{\beta}}(v_p)\big)=\sum_{\ell=1}^\infty\Delta_{\boldsymbol{\alpha},\boldsymbol{\beta}}^{p,\ell}\left(\frac{v_p}{p^\ell}\right)=\Delta_{\boldsymbol{\alpha},\boldsymbol{\beta}}^{p,1}\left(\frac{v_p}{p}\right)=0,
$$
\textit{i.e.} $\mathcal{Q}_{\boldsymbol{\alpha},\boldsymbol{\beta}}(v_p)\in\mathbb{Z}_p^\times$. Now looking at the coefficient of $z^{v_p}$ in \eqref{ref series}, one obtains that
$$
-p\mathcal{Q}_{\boldsymbol{\alpha},\boldsymbol{\beta}}(v_p)\sum_{i=1}^r\big(H_{\alpha_i}(v_p)-H_{\beta_i}(v_p)\big)\in p\mathbb{Z}_p.
$$
To  get a contradiction, we shall prove that, for all large enough $p\in\mathfrak{P}_{\boldsymbol{\alpha},\boldsymbol{\beta}}(a)$, we have
\begin{equation}\label{Harmo0}
p\left(\sum_{i=1}^rH_{\alpha_i}(v_p)-\sum_{i=1}^rH_{\beta_i}(v_p)\right)\in\mathbb{Z}_p^\times.
\end{equation}
Indeed, for all elements $\gamma$ of $\boldsymbol{\alpha}$ or $\boldsymbol{\beta}$ and all large enough $p\in\mathfrak{P}_{\boldsymbol{\alpha},\boldsymbol{\beta}}(a)$, we have
\begin{align*}
pH_{\gamma}(v_p)=p\sum_{k=0}^{v_p-1}\frac{1}{\gamma+k}&\equiv \frac{\rho(v_p,\gamma)}{\mathfrak{D}_p(\gamma)}\mod p\mathbb{Z}_p\\
&\equiv\frac{\rho(v_p,\gamma)}{\langle a\gamma\rangle}\mod p\mathbb{Z}_p.
\end{align*}
Furthermore, we have 
\begin{align*}
\rho(v_p,\gamma)=1\Longleftrightarrow v_p\geq p\mathfrak{D}_p(\gamma)-\gamma+1&\Longleftrightarrow v_p\geq p\mathfrak{D}_p(\gamma)+\lfloor 1-\gamma\rfloor\\
&\Longleftrightarrow \frac{v_p}{p}\geq \mathfrak{D}_p(\gamma)+\frac{\lfloor 1-\gamma\rfloor}{p},
\end{align*}
because $p\mathfrak{D}_p(\gamma)-\gamma\in\mathbb{Z}$ which leads to $v_p\geq p\mathfrak{D}_p(\gamma)+\lfloor 1-\gamma\rfloor\Rightarrow v_p\geq p\mathfrak{D}_p+\lfloor 1-\gamma\rfloor+\{1-\gamma\}$. Thus, by Lemma \ref{valeurs Delta}, for all large enough $p\in\mathfrak{P}_{\boldsymbol{\alpha},\boldsymbol{\beta}}(a)$, we have $\rho(v_p,\gamma)=1$ if $a\gamma\preceq a\beta$ and $\rho(v_p,\gamma)=0$ otherwise. 

Now, let $\gamma_1,\cdots,\gamma_t$ be rational numbers such that $\langle a\gamma_1\rangle<\cdots<\langle a\gamma_t\rangle$ and such that $\{\langle a\gamma_1\rangle,\dots,\langle a\gamma_t\rangle\}$ is the set of the numbers $\langle a\gamma\rangle$ when $\gamma$ describes all the elements of $\boldsymbol{\alpha}$ and $\boldsymbol{\beta}$ satisfying $a\gamma\preceq a\beta$. For all $i\in\{1,\dots,t\}$, we define 
$$
m_i:=\#\big\{1\leq j\leq r\,:\,\langle a\alpha_j\rangle=\langle a\gamma_i\rangle\big\}-\#\big\{1\leq j\leq r\,:\,\langle a\beta_j\rangle=\langle a\gamma_i\rangle\big\}.
$$ 
Then, we obtain that
\begin{align*}
p\left(\sum_{i=1}^rH_{\alpha_i}(v_p)-\sum_{j=1}^rH_{\beta_j}(v_p)\right)&\equiv \sum_{i=1}^r\frac{\rho(v_p,\alpha_i)}{\langle a\alpha_i\rangle}-\sum_{j=1}^r\frac{\rho(v_p,\beta_j)}{\langle a\beta_j\rangle}\mod p\mathbb{Z}_p\\
&\equiv\sum_{i=1}^{t}\frac{m_i}{\langle a\gamma_i\rangle}\mod p\mathbb{Z}_p.
\end{align*}

For almost all primes $p$, we have $\sum_{i=0}^t(m_i/\langle a\gamma_i\rangle)\in\mathbb{Z}_p^\times\cup\{0\}$, thus to prove \eqref{Harmo0}, it suffices to prove that 
$$
\sum_{i=1}^t\frac{m_i}{\langle a\gamma_i\rangle}\neq 0,
$$
which follows by Lemma \ref{Lemma sauts} applied with $b=0$. This finishes the proof of $H_{\boldsymbol{\alpha},\boldsymbol{\beta}}$.
\hfill $\square$

\subsubsection{Last step in the proof of Assertion $(iii)$ of Theorem \ref{Criterion}}\label{section Lemma Dwork}

To finish the proof of Assertion~$(iii)$ of Theorem~\ref{Criterion}, it remains to prove that, for all $a\in\{1,\dots,d_{\boldsymbol{\alpha},\boldsymbol{\beta}}\}$ coprime to $d_{\boldsymbol{\alpha},\boldsymbol{\beta}}$, we have $q_{\boldsymbol{\alpha},\boldsymbol{\beta}}(z)=q_{\langle a\boldsymbol{\alpha}\rangle,\langle a\boldsymbol{\beta}\rangle}(z)$. For that purpose, we shall use Dwork's results presented in \cite{Dwork} on the integrality of Taylor coefficients at the origin of power series similar to $q_{\boldsymbol{\alpha},\boldsymbol{\beta}}$. We remind the reader that, by Sections \ref{section r=s} and \ref{section proof H}, we have $r=s$ and $H_{\boldsymbol{\alpha},\boldsymbol{\beta}}$ holds. 

More precisely, we prove the following lemma which shows that, under these assumptions, we can apply Dwork's result \cite[Theorem $4.1$]{Dwork} for almost all primes.

\begin{Lemma}\label{Lemma Dwork}
Let $\boldsymbol{\alpha}$ and $\boldsymbol{\beta}$ be two tuples of parameters in $\mathbb{Q}\setminus\mathbb{Z}_{\leq 0}$ with the same numbers of elements. If $\langle\boldsymbol{\alpha}\rangle$ and $\langle\boldsymbol{\beta}\rangle$ are disjoint (this is equivalent to the irreducibility of $\mathcal{L}_{\boldsymbol{\alpha},\boldsymbol{\beta}}$) and if $H_{\boldsymbol{\alpha},\boldsymbol{\beta}}$ holds, then for almost all primes $p$ not dividing $d_{\boldsymbol{\alpha},\boldsymbol{\beta}}$, we have 
$$
\frac{G_{\mathfrak{D}_p(\boldsymbol{\alpha}),\mathfrak{D}_p(\boldsymbol{\beta})}(z^p)}{F_{\mathfrak{D}_p(\boldsymbol{\alpha}),\mathfrak{D}_p(\boldsymbol{\beta})}(z^p)}-p\frac{G_{\boldsymbol{\alpha},\boldsymbol{\beta}}(z)}{F_{\boldsymbol{\alpha},\boldsymbol{\beta}}(z)}\in p\mathbb{Z}_p[[z]].
$$
\end{Lemma}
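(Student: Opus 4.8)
The plan is to apply Theorem B (Dwork's result) to the pair $(\boldsymbol{\alpha},\boldsymbol{\beta})$ for all sufficiently large primes $p$ not dividing $d_{\boldsymbol{\alpha},\boldsymbol{\beta}}$; to do this I must verify that the hypotheses $(v)_p$ and $(vi)_p$ hold for almost all such primes. First I would record that, for $p$ large and coprime to $d_{\boldsymbol{\alpha},\boldsymbol{\beta}}$, Lemma~\ref{Lemma infty} gives $\mathfrak{D}_p^k(\gamma)=\langle\omega^k\gamma\rangle$ for every element $\gamma$ of $\boldsymbol{\alpha}$ or $\boldsymbol{\beta}$ and every $k\geq 1$, where $\omega\in\{1,\dots,d_{\boldsymbol{\alpha},\boldsymbol{\beta}}\}$ satisfies $\omega p\equiv 1\bmod d_{\boldsymbol{\alpha},\boldsymbol{\beta}}$ (and $\mathfrak{D}_p^k(1)=1$). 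Since $\omega$ is coprime to $d_{\boldsymbol{\alpha},\boldsymbol{\beta}}$, for any $\beta_i\neq 1$ we have $\langle\omega^k\beta_i\rangle\in(0,1)$, hence $\mathfrak{D}_p^k(\beta_i)\in\mathbb{Z}_p^\times$; this is exactly $(v)_p$. Here I also use that $H_{\boldsymbol{\alpha},\boldsymbol{\beta}}$ forces $r=s$ and that $\langle\boldsymbol{\alpha}\rangle,\langle\boldsymbol{\beta}\rangle$ disjoint, so the tuples $\langle\omega^k\boldsymbol{\alpha}\rangle$, $\langle\omega^k\boldsymbol{\beta}\rangle$ remain disjoint for every $k$.

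The main work is establishing $(vi)_p$. Fix a large $p\in\mathfrak{P}_{\boldsymbol{\alpha},\boldsymbol{\beta}}$ and $k\in\mathbb{N}$, and write $a\in\{1,\dots,d_{\boldsymbol{\alpha},\boldsymbol{\beta}}\}$ for the reduction of $\omega^{k+1}$ modulo $d_{\boldsymbol{\alpha},\boldsymbol{\beta}}$, so that $\mathfrak{D}_p^k(\boldsymbol{\alpha})=\langle a p\boldsymbol{\alpha}\rangle$ etc. By the definition of $\rho_p$, for $\gamma\in\mathbb{Z}_p\cap\mathbb{Q}$ we have $\rho_p(a',\mathfrak{D}_p^k(\gamma))=1$ precisely when $a'> p\mathfrak{D}_p^{k+1}(\gamma)-\mathfrak{D}_p^k(\gamma)$, i.e. when $a'/p\geq \mathfrak{D}_p^{k+1}(\gamma)+\lfloor\text{correction}\rfloor/p$; combining this with Lemma~\ref{valeurs Delta} (in the form relating $\preceq$-comparisons to comparisons of $\mathfrak{D}_p^{\ell}$-values) one identifies $N^k_{p,\boldsymbol{\alpha}}(a')-N^k_{p,\boldsymbol{\beta}}(a'+)$ with a value $\xi_{\boldsymbol{\alpha},\boldsymbol{\beta}}(b,x)$ of a Christol function, where $b$ is the reduction of $\omega^{k+1}$ and $x$ is chosen in the interval cut out by $a'/p$. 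More precisely: the quantity $N^k_{p,\boldsymbol{\alpha}}(a')$ counts those $\alpha_i$ with $\langle \omega^{k+1}\alpha_i\rangle \preceq x$ for an appropriate $x$ depending on $a'$, and since $\langle\boldsymbol{\alpha}\rangle,\langle\boldsymbol{\beta}\rangle$ are disjoint, when $N^k_{p,\boldsymbol{\alpha}}(a')-N^k_{p,\boldsymbol{\beta}}(a'+)=0$ the relevant $x$ must satisfy $m_{\boldsymbol{\alpha},\boldsymbol{\beta}}(b)\preceq x\prec b$ fails or $\xi=0$ on a whole range, which by $H_{\boldsymbol{\alpha},\boldsymbol{\beta}}$ can only occur when additionally $N^k_{p,\boldsymbol{\alpha}}(a')=0$, i.e. $a'$ lies below the smallest jump $m_{\boldsymbol{\alpha},\boldsymbol{\beta}}(b)$. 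This is the dichotomy asserted in $(vi)_p$, and it should hold for \emph{all} $k$ uniformly once $p$ is large enough that Lemma~\ref{Lemma infty} applies, because there are only finitely many residues $\omega^{k+1}\bmod d_{\boldsymbol{\alpha},\boldsymbol{\beta}}$ to check.

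Once $(v)_p$ and $(vi)_p$ are verified for almost all $p$ coprime to $d_{\boldsymbol{\alpha},\boldsymbol{\beta}}$, Theorem~B yields
$$
\frac{G_{\mathfrak{D}_p(\boldsymbol{\alpha}),\mathfrak{D}_p(\boldsymbol{\beta})}}{F_{\mathfrak{D}_p(\boldsymbol{\alpha}),\mathfrak{D}_p(\boldsymbol{\beta})}}(z^p)-p\frac{G_{\boldsymbol{\alpha},\boldsymbol{\beta}}}{F_{\boldsymbol{\alpha},\boldsymbol{\beta}}}(z)\in pz\mathbb{Z}_p[[z]],
$$
which is the claimed congruence (the constant term of the left side is $0$, so $pz\mathbb{Z}_p[[z]]\subset p\mathbb{Z}_p[[z]]$ suffices). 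I expect the main obstacle to be the bookkeeping in the middle step: translating between Dwork's combinatorial quantities $N^k_{p,\boldsymbol{\alpha}}(a')$, $\rho_p$ and the geometric language of Christol's functions $\xi_{\boldsymbol{\alpha},\boldsymbol{\beta}}(a,\cdot)$ and the order $\preceq$, and doing so uniformly in $k$. The key subtlety is that the hypothesis $H_{\boldsymbol{\alpha},\boldsymbol{\beta}}$ only gives $\xi\geq 1$ on the half-open arc from $m_{\boldsymbol{\alpha},\boldsymbol{\beta}}(a)$ up to $a$, so one must carefully handle the integer boundary point $x=a$ (where $\xi$ drops to $r-s=0$) and the region below $m_{\boldsymbol{\alpha},\boldsymbol{\beta}}(a)$ (where $\xi=0$ trivially) to see that the "$N^k=0$ or $N^k-N^k\geq 1$" alternative is never violated. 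The remaining finiteness arguments (discarding the finitely many small primes and the finitely many $(p,\ell)$ where Lemma~\ref{Lemma infty} fails) are routine.
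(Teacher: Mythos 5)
Your overall strategy matches the paper's: verify Dwork's hypotheses $(v)_p$ and $(vi)_p$ for almost all $p\nmid d_{\boldsymbol{\alpha},\boldsymbol{\beta}}$ using Lemma~\ref{Lemma infty}, then apply Theorem~B. The handling of $(v)_p$ is correct and agrees with the paper. However, there is a genuine gap in the treatment of $(vi)_p$.

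The core technical content — turning the $\rho_p$/$N^k$ inequalities into statements about Christol's functions — is left as a sketch. The paper makes this precise by first proving the key equivalence $p\mathfrak{D}_p(\alpha)-\alpha\leq p\mathfrak{D}_p(\beta)-\beta\Longleftrightarrow \omega\alpha\preceq\omega\beta$ (where $\omega p\equiv 1\bmod d_{\boldsymbol{\alpha},\boldsymbol{\beta}}$), then defining $\theta_p^k(x)=p\mathfrak{D}_p^{k+1}(x)-\mathfrak{D}_p^k(x)$ and locating the element $\gamma$ with the largest $\theta_p^k$ value below the cutoff $a'$. Your proposal asserts ``one identifies $N^k_{p,\boldsymbol{\alpha}}(a')-N^k_{p,\boldsymbol{\beta}}(a'+)$ with a value $\xi_{\boldsymbol{\alpha},\boldsymbol{\beta}}(b,x)$'' without specifying the identification, and you yourself flag the boundary-point bookkeeping as ``the main obstacle.'' More seriously, you do not address the case where the extremal $\gamma$ satisfies $\langle\gamma\rangle=1$: there $N^k_{p,\boldsymbol{\alpha}}(a)-N^k_{p,\boldsymbol{\beta}}(a+)$ equals $r-r'$ (where $r'$ is the number of $\beta_i\neq 1$), and proving this is $\geq 1$ requires a \emph{separate} deduction — that $H_{\boldsymbol{\alpha},\boldsymbol{\beta}}$ forces at least one component of $\boldsymbol{\beta}$ to equal $1$. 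This is not a routine finiteness or boundary issue; it is a distinct use of the hypothesis $H_{\boldsymbol{\alpha},\boldsymbol{\beta}}$ that your ``either $\xi\geq 1$ or we're below the smallest jump'' dichotomy misses, since $a'$ can also land \emph{above} the largest jump. Also note a minor slip: $H_{\boldsymbol{\alpha},\boldsymbol{\beta}}$ does not by itself force $r=s$; that equality is a standing hypothesis of the lemma, not a consequence of $H$.
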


\begin{Remark}
Lemma \ref{Lemma Dwork} in combination with Lemma \ref{H transfert} gives us that $S_{\boldsymbol{\alpha},\boldsymbol{\beta}}(z)\in pz\mathbb{Z}_p[[z]]$ for almost all primes $p$. 
\end{Remark}

\begin{proof}
If $p$ is a prime not dividing $d_{\boldsymbol{\alpha},\boldsymbol{\beta}}$, then the elements of $\boldsymbol{\alpha}$ and $\boldsymbol{\beta}$ lie in $\mathbb{Z}_p$ and
$$
\frac{G_{\mathfrak{D}_p(\boldsymbol{\alpha}),\mathfrak{D}_p(\boldsymbol{\beta})}(z^p)}{F_{\mathfrak{D}_p(\boldsymbol{\alpha}),\mathfrak{D}_p(\boldsymbol{\beta})}(z^p)}-p\frac{G_{\boldsymbol{\alpha},\boldsymbol{\beta}}(z)}{F_{\boldsymbol{\alpha},\boldsymbol{\beta}}(z)}\in\mathbb{Q}_p[[z]].
$$
Furthermore, $\boldsymbol{\alpha}$ and $\boldsymbol{\beta}$ have the same number  of elements so that Lemma \ref{Lemma Dwork} follows from the conclusion of Dwork's Theorem \cite[Theorem $4.1$]{Dwork}. In the sequel of this proof, we check that $\boldsymbol{\alpha}$ and $\boldsymbol{\beta}$ satisfy hypothesis of \cite[Theorem $4.1$]{Dwork} for almost all primes $p$, and we use the notations defined in Section \ref{section comparison 1}. For a given fixed prime $p$ not dividing $d_{\boldsymbol{\alpha},\boldsymbol{\beta}}$, hypothesis of \cite[Theorem $4.1$]{Dwork} read
\begin{itemize}
\item[$(v)$] for all $i\in\{1,\dots,r'\}$ and all $k\in\mathbb{N}$, we have $\mathfrak{D}_p^k(\beta_i)\in\mathbb{Z}_p^\times$;
\item[$(vi)$] for all $a\in[0,p[$ and all $k\in\mathbb{N}$, we have either $N^k_{p,\boldsymbol{\alpha}}(a)=N^k_{p,\boldsymbol{\beta}}(a+)=0$ or $N^k_{p,\boldsymbol{\alpha}}(a)-N^k_{p,\boldsymbol{\beta}}(a+)\geq 1$.
\end{itemize}
\medskip

If $p$ is a large enough prime, then, by Lemma \ref{Lemma infty}, for all $i\in\{1,\dots,r'\}$, we have $\mathfrak{D}_p(\beta_i)=\mathfrak{D}_p(\langle\beta_i\rangle)$ so that 
\begin{equation}\label{lab further 1}
\mathfrak{D}_p(\beta_i)\in\left\{\frac{1}{d_{\boldsymbol{\alpha},\boldsymbol{\beta}}},\dots,\frac{d_{\boldsymbol{\alpha},\boldsymbol{\beta}}-1}{d_{\boldsymbol{\alpha},\boldsymbol{\beta}}},1\right\}\subset\mathbb{Z}_p^\times.
\end{equation}
Thus, for all large enough primes $p$, $\boldsymbol{\beta}$ satisfies Assertion $(v)$.
\medskip

Let $\alpha$ and $\beta$ be elements of $\boldsymbol{\alpha}$ and $\boldsymbol{\beta}$. First, we prove that, for all large enough primes $p$ we have
\begin{equation}\label{clef}
p\mathfrak{D}_p(\alpha)-\alpha\leq p\mathfrak{D}_p(\beta)-\beta\Longleftrightarrow \omega\alpha\preceq \omega\beta,
\end{equation}
where $\omega\in\{1,\dots,d_{\boldsymbol{\alpha},\boldsymbol{\beta}}\}$ satisfies $\omega p\equiv 1\mod d_{\boldsymbol{\alpha},\boldsymbol{\beta}}$. Assume that $p$ is large enough so that, by Lemma \ref{Lemma infty}, we get $\mathfrak{D}_p(\alpha)=\langle\omega\alpha\rangle$ and $\mathfrak{D}_p(\beta)=\langle\omega\beta\rangle$. In particular, we obtain that
$$
\mathfrak{D}_p(\alpha)=\mathfrak{D}_p(\beta)\quad\textup{or}\quad\big|\mathfrak{D}_p(\alpha)-\mathfrak{D}_p(\beta)\big|\geq\frac{1}{d_{\boldsymbol{\alpha},\boldsymbol{\beta}}}.
$$
Thus, for all large enough primes $p$, we have
\begin{align*}
p\mathfrak{D}_p(\alpha)-\alpha\leq p\mathfrak{D}_p(\beta)-\beta &\Longleftrightarrow\mathfrak{D}_p(\alpha)-\mathfrak{D}_p(\beta)\leq\frac{\alpha-\beta}{p}\\
&\Longleftrightarrow\Big(\mathfrak{D}_p(\alpha)<\mathfrak{D}_p(\beta)\quad\textup{or}\quad\big(\mathfrak{D}_p(\alpha)=\mathfrak{D}_p(\beta)\quad\textup{and}\quad\alpha\geq\beta\big)\Big)\\
&\Longleftrightarrow\omega\alpha\preceq\omega\beta,
\end{align*}
as expected. Now, we observe that if $N^k_{p,\boldsymbol{\beta}}(a+)=0$, then Assertion $(vi)$ is trivial, so we can assume that $N^k_{p,\boldsymbol{\beta}}(a+)\geq 1$. We set $\boldsymbol{\beta}':=(\beta_1,\dots,\beta_{r'})$. Let us write $\theta_p^k(x)$ for $p\mathfrak{D}_p^{k+1}(x)-\mathfrak{D}_p^k(x)$ and let $\gamma$ be the component of $\boldsymbol{\alpha}$ or $\boldsymbol{\beta}'$ such that $\theta_p^k(\gamma)$ is the largest element of
$$
\Big\{\theta_p^k(\alpha_i)\,:\,1\leq i\leq r,\,\theta_p^k(\alpha_i)<a\Big\}\bigcup\Big\{\theta_p^k(\beta_j)\,:\,1\leq j\leq r',\,\theta_p^k(\beta_j)\leq a\Big\}.
$$

Since $\langle\boldsymbol{\alpha}\rangle$ and $\langle\boldsymbol{\beta}\rangle$ are disjoint, $\mathfrak{D}_p^k(\boldsymbol{\alpha})$ and $\mathfrak{D}_p^k(\boldsymbol{\beta}')$ are also disjoint and, according to \eqref{clef}, $\theta_p^k(\boldsymbol{\alpha})$ and $\theta_p^k(\boldsymbol{\beta}')$ are disjoint. It follows that $N^k_{p,\boldsymbol{\alpha}}(a)-N^k_{p,\boldsymbol{\beta}}(a+)$ is equal to
\begin{multline*}
\#\big\{1\leq i\leq r\,:\,\theta_p^k(\alpha_i)\leq\theta_p^k(\gamma)\big\}-\#\big\{1\leq i\leq r'\,:\,\theta_p^k(\beta_j)\leq\theta_p^k(\gamma)\big\}\\
=\#\big\{1\leq i\leq r\,:\,\omega\mathfrak{D}_p^k(\alpha_i)\preceq\omega\mathfrak{D}_p^k(\gamma)\big\}-\#\big\{1\leq i\leq r'\,:\,\omega\mathfrak{D}_p^k(\beta_j)\preceq\omega\mathfrak{D}_p^k(\gamma)\big\}.
\end{multline*}

If $k=0$, then we obtain that $\omega\mathfrak{D}_p^k(\alpha)\preceq\omega\mathfrak{D}_p^k(\gamma)\Leftrightarrow\omega\alpha\preceq\omega\gamma$ with $m_{\boldsymbol{\alpha},\boldsymbol{\beta}}(\omega)\preceq\omega\gamma\prec\omega$ since $\gamma\neq 1$. Indeed, if $\gamma$ is an element of $\boldsymbol{\beta}'$ then $\gamma\neq 1$, else $\gamma$ is an element of $\boldsymbol{\alpha}$ and $\theta_p^k(\gamma)<a$ so that $\gamma\neq 1$. Thus we have $N^0_{p,\boldsymbol{\alpha}}(a)-N^0_{p,\boldsymbol{\beta}}(a+)=\xi_{\boldsymbol{\alpha},\boldsymbol{\beta}}(\omega,\omega\gamma)$ and, by $H_{\boldsymbol{\alpha},\boldsymbol{\beta}}$, we get $N^0_{p,\boldsymbol{\alpha}}(a)-N^0_{p,\boldsymbol{\beta}}(a+)\geq 1$ as expected.
\medskip

If $k\geq 1$, then, for all elements $\alpha$ of $\boldsymbol{\alpha}$ and $\boldsymbol{\beta}'$, we have $\mathfrak{D}_p^k(\alpha)=\langle\omega^k\alpha\rangle$ and $\big\langle\omega\mathfrak{D}_p^k(\alpha)\big\rangle=\big\langle\omega\langle\omega^k\alpha\rangle\big\rangle=\langle\omega^{k+1}\alpha\rangle$. We deduce that we have $\omega\mathfrak{D}_p^k(\alpha)\preceq\omega\mathfrak{D}_p^k(\gamma)\Leftrightarrow\langle\omega^{k+1}\alpha\rangle\leq\langle\omega^{k+1}\gamma\rangle$ because 
$$
\langle\omega^{k+1}\alpha\rangle=\langle\omega^{k+1}\gamma\rangle\Longleftrightarrow\langle\alpha\rangle=\langle\gamma\rangle\Longleftrightarrow\langle\omega^k\alpha\rangle=\langle\omega^k\gamma\rangle. 
$$
If $\langle\gamma\rangle<1$, then $\langle\omega^{k+1}\gamma\rangle<1$ and we obtain that
$$
N^k_{p,\boldsymbol{\alpha}}(a)-N^k_{p,\boldsymbol{\beta}}(a+)=\xi_{\boldsymbol{\alpha},\boldsymbol{\beta}}(\omega^{k+1},\langle\omega^{k+1}\gamma\rangle+)\geq 1.
$$
On the contrary, if $\langle\gamma\rangle=1$, then we get $N^k_{p,\boldsymbol{\alpha}}(a)-N^k_{p,\boldsymbol{\beta}}(a+)=r-r'$. Note that $r'<r$ since there is at least one element of $\boldsymbol{\beta}$ equal to $1$. Indeed, according to $H_{\boldsymbol{\alpha},\boldsymbol{\beta}}$, if $x\in\mathbb{R}$ satisfies $m_{\boldsymbol{\alpha},\boldsymbol{\beta}}(1)\preceq x\prec 1$, then we have $\xi_{\boldsymbol{\alpha},\boldsymbol{\beta}}(1,x)\geq 1$. Since $\langle\boldsymbol{\alpha}\rangle$ and $\langle\boldsymbol{\beta}\rangle$ are disjoint, we have $\langle m_{\boldsymbol{\alpha},\boldsymbol{\beta}}(1)\rangle<1$ so that $m_{\boldsymbol{\alpha},\boldsymbol{\beta}}(1)\preceq 2\prec 1$ and
$$
1\leq\xi_{\boldsymbol{\alpha},\boldsymbol{\beta}}(1,2)=\#\{1\leq i\leq r\,:\,\alpha_i\neq 1\}-\#\{1\leq j\leq r\,:\,\beta_j\neq 1\}.
$$
We deduce that there is at least one $j\in\{1,\dots,r\}$ such that $\beta_j=1$ and we obtain that
$$
N^k_{p,\boldsymbol{\alpha}}(a)-N^k_{p,\boldsymbol{\beta}}(a+)=r-r'\geq 1,
$$
as expected. Thus Assertion $(vi)$ holds and Lemma \ref{Lemma Dwork} is proved. 
\end{proof}

Now we fix $a\in\{1,\dots,d_{\boldsymbol{\alpha},\boldsymbol{\beta}}\}$ coprime to $d_{\boldsymbol{\alpha},\boldsymbol{\beta}}$. For all large enough primes $p\in\mathfrak{P}_{\boldsymbol{\alpha},\boldsymbol{\beta}}(a)$ and all the elements $\alpha$ of $\boldsymbol{\alpha}$ or $\boldsymbol{\beta}$, we have $\mathfrak{D}_p(\alpha)=\langle a\alpha\rangle$. By  Lemma~\ref{Lemma Dwork}, we obtain that, for almost all primes $p\in\mathfrak{P}_{\boldsymbol{\alpha},\boldsymbol{\beta}}(a)$, we have
$$
\frac{G_{\langle a\boldsymbol{\alpha}\rangle,\langle a\boldsymbol{\beta}\rangle}(z^p)}{F_{\langle a\boldsymbol{\alpha}\rangle,\langle a\boldsymbol{\beta}\rangle}(z^p)}-p\frac{G_{\boldsymbol{\alpha},\boldsymbol{\beta}}(z)}{F_{\boldsymbol{\alpha},\boldsymbol{\beta}}(z)}\in p\mathbb{Z}_p[[z]].
$$
Furthermore, since $q_{\boldsymbol{\alpha},\boldsymbol{\beta}}(z)$ is $N$-integral, for almost all primes $p$, we have
$$
\frac{G_{\boldsymbol{\alpha},\boldsymbol{\beta}}(z^p)}{F_{\boldsymbol{\alpha},\boldsymbol{\beta}}(z^p)}-p\frac{G_{\boldsymbol{\alpha},\boldsymbol{\beta}}(z)}{F_{\boldsymbol{\alpha},\boldsymbol{\beta}}(z)}\in p\mathbb{Z}_p[[z]].
$$
Thus, for almost all primes $p\in\mathfrak{P}_{\boldsymbol{\alpha},\boldsymbol{\beta}}(a)$, we obtain that
$$
\frac{G_{\langle a\boldsymbol{\alpha}\rangle,\langle a\boldsymbol{\beta}\rangle}(z^p)}{F_{\langle a\boldsymbol{\alpha}\rangle,\langle a\boldsymbol{\beta}\rangle}(z^p)}-\frac{G_{\boldsymbol{\alpha},\boldsymbol{\beta}}(z^p)}{F_{\boldsymbol{\alpha},\boldsymbol{\beta}}(z^p)}\in p\mathbb{Z}_p[[z]],
$$
which leads to
$$
\frac{G_{\langle a\boldsymbol{\alpha}\rangle,\langle a\boldsymbol{\beta}\rangle}(z)}{F_{\langle a\boldsymbol{\alpha}\rangle,\langle a\boldsymbol{\beta}\rangle}(z)}-\frac{G_{\boldsymbol{\alpha},\boldsymbol{\beta}}(z)}{F_{\boldsymbol{\alpha},\boldsymbol{\beta}}(z)}\in p\mathbb{Z}_p[[z]].
$$
By Dirichlet's theorem, there are infinitely many primes in $\mathfrak{P}_{\boldsymbol{\alpha},\boldsymbol{\beta}}(a)$ so that we have
$$
\frac{G_{\langle a\boldsymbol{\alpha}\rangle,\langle a\boldsymbol{\beta}\rangle}(z)}{F_{\langle a\boldsymbol{\alpha}\rangle,\langle a\boldsymbol{\beta}\rangle}(z)}=\frac{G_{\boldsymbol{\alpha},\boldsymbol{\beta}}(z)}{F_{\boldsymbol{\alpha},\boldsymbol{\beta}}(z)},
$$
which implies that $q_{\boldsymbol{\alpha},\boldsymbol{\beta}}(z)=q_{\langle a\boldsymbol{\alpha}\rangle,\langle a\boldsymbol{\beta}\rangle}(z)$ as expected. This finishes the proof of Assertion~$(iii)$ of Theorem \ref{Criterion}.

\subsection{Proof of Assertion $(ii)$ of Theorem \ref{Criterion}}

We have to prove that $q_{\boldsymbol{\alpha},\boldsymbol{\beta}}(C_{\boldsymbol{\alpha},\boldsymbol{\beta}}'z)\in\mathbb{Z}[[z]]$. By Section \ref{section proof (iii)}, Assertion $(iii)$ of Theorem \ref{Criterion} holds, \textit{i.e.} we have $r=s$, $H_{\boldsymbol{\alpha},\boldsymbol{\beta}}$ holds and, for all $a\in\{1,\dots,d_{\boldsymbol{\alpha},\boldsymbol{\beta}}\}$ coprime to $d_{\boldsymbol{\alpha},\boldsymbol{\beta}}$, we have $q_{\langle a\boldsymbol{\alpha}\rangle,\langle a\boldsymbol{\beta}\rangle}(z)=q_{\boldsymbol{\alpha},\boldsymbol{\beta}}(z)$ so that
\begin{equation}\label{TheSame}
\frac{G_{\langle a\boldsymbol{\alpha}\rangle,\langle a\boldsymbol{\beta}\rangle}(z)}{F_{\langle a\boldsymbol{\alpha}\rangle,\langle a\boldsymbol{\beta}\rangle}(z)}=\frac{G_{\boldsymbol{\alpha},\boldsymbol{\beta}}(z)}{F_{\boldsymbol{\alpha},\boldsymbol{\beta}}(z)}.
\end{equation}
By Theorem \ref{theo expand} in combination with \eqref{TheSame}, we obtain that
$$
\frac{G_{\boldsymbol{\alpha},\boldsymbol{\beta}}}{F_{\boldsymbol{\alpha},\boldsymbol{\beta}}}(C_{\boldsymbol{\alpha},\boldsymbol{\beta}}'z^p)-p\frac{G_{\boldsymbol{\alpha},\boldsymbol{\beta}}}{F_{\boldsymbol{\alpha},\boldsymbol{\beta}}}(C_{\boldsymbol{\alpha},\boldsymbol{\beta}}'z)\in p\mathbb{Z}_p[[z]],
$$
so that, according to Corollary \ref{cor exp}, we have $q_{\boldsymbol{\alpha},\boldsymbol{\beta}}(C_{\boldsymbol{\alpha},\boldsymbol{\beta}}'z)\in\mathbb{Z}_p[[z]]$. Since $p$ is an arbitrary prime;  we get $q_{\boldsymbol{\alpha},\boldsymbol{\beta}}(C_{\boldsymbol{\alpha},\boldsymbol{\beta}}'z)\in\mathbb{Z}[[z]]$, as expected.

\subsection{Last step in the proof of Assertion $(2)$ of Theorem \ref{Criterion}}

To complete the proof of Assertion $(2)$ of Theorem \ref{Criterion} and hence that of Theorem \ref{Criterion}, we have to prove that we have either $\boldsymbol{\alpha}=(1/2)$ and $\boldsymbol{\beta}=(1)$, or $r\geq 2$ and there are at least two $1$'s in $\boldsymbol{\beta}$. We shall distinguish two cases.
\medskip

$\bullet$ Case $1$: We assume that $r=1$.
\medskip

As already proved at the end of the proof of Lemma \ref{Lemma Dwork}, there is at least one element of $\boldsymbol{\beta}$ equal to $1$. Thus we obtain that $\boldsymbol{\beta}=(1)$. We write $\boldsymbol{\alpha}=(\alpha)$. Since Assertion $(iii)$ of Theorem \ref{Criterion} holds, for all $a\in\{1,\dots,d(\alpha)\}$ coprime to $d(\alpha)$, we have $G_{\langle a\boldsymbol{\alpha}\rangle,\langle a\boldsymbol{\beta}\rangle}(z)/F_{\langle a\boldsymbol{\alpha}\rangle,\langle a\boldsymbol{\beta}\rangle}(z)=G_{\boldsymbol{\alpha},\boldsymbol{\beta}}(z)/F_{\boldsymbol{\alpha},\boldsymbol{\beta}}(z)$, \textit{i.e.}
\begin{equation}\label{series=}
F_{\boldsymbol{\alpha},\boldsymbol{\beta}}(z)G_{\langle a\boldsymbol{\alpha}\rangle,\langle a\boldsymbol{\beta}\rangle}(z)=F_{\langle a\boldsymbol{\alpha}\rangle,\langle a\boldsymbol{\beta}\rangle}(z)G_{\boldsymbol{\alpha},\boldsymbol{\beta}}(z).
\end{equation}
Now looking at the coefficient of $z$ in power series involved in \eqref{series=}, one obtains that
$$
\langle a\alpha\rangle\left(\frac{1}{\langle a\alpha\rangle}-1\right)=\alpha\left(\frac{1}{\alpha}-1\right).
$$
We deduce that for all $a\in\{1,\dots,d(\alpha)\}$ coprime to $d(\alpha)$, we have $\langle a\alpha\rangle=\alpha$. Thus we get that
\begin{align*}
\left\{\frac{\kappa}{d(\alpha)}\,:\,1\leq\kappa\leq d(\alpha),\,\gcd\big(\kappa,d(\alpha)\big)=1\right\}&=\big\{\langle a\alpha\rangle\,:\,1\leq a\leq d(\alpha),\,\gcd\big(a,d(\alpha)\big)=1\big\}\\
&=\{\alpha\},
\end{align*}
which implies that $\alpha=1/2$ as expected.
\medskip

$\bullet$ Case $2$: We assume that $r\geq 2$.
\medskip

We already know that there is at least one element of $\boldsymbol{\beta}$ equal to $1$. Since $\langle\boldsymbol{\alpha}\rangle$ and $\langle\boldsymbol{\beta}\rangle$ are disjoint, for all the elements $\alpha$ of $\boldsymbol{\alpha}$, we have $\langle\alpha\rangle<1$. Furthermore, for all $a\in\{1,\dots,d_{\boldsymbol{\alpha},\boldsymbol{\beta}}\}$ coprime to $d_{\boldsymbol{\alpha},\boldsymbol{\beta}}$, we have
\begin{align*}
\xi_{\langle\boldsymbol{\alpha}\rangle,\langle\boldsymbol{\beta}\rangle}(a,1-)&=\#\big\{1\leq i\leq r\,:\,\langle\alpha_i\rangle\neq 1\}-\#\{1\leq i\leq r\,:\,\langle\beta_i\rangle\neq 1\big\}\\
&=r-\#\{1\leq i\leq r\,:\,\langle\beta_i\rangle\neq 1\big\}.
\end{align*}
It follows that we have to prove that $\xi_{\langle\boldsymbol{\alpha}\rangle,\langle\boldsymbol{\beta}\rangle}(a,1-)\geq 2$.

Let $\gamma$ be an element of $\boldsymbol{\alpha}$ or $\boldsymbol{\beta}$ with the largest exact denominator. Then, there exists $a\in\{1,\dots,d_{\boldsymbol{\alpha},\boldsymbol{\beta}}\}$ coprime to $d_{\boldsymbol{\alpha},\boldsymbol{\beta}}$ such that $\langle a\gamma\rangle=1/d(\gamma)$. By  $H_{\boldsymbol{\alpha},\boldsymbol{\beta}}$ in combination with Lemma \ref{H transfert}, we obtain that $H_{\langle\boldsymbol{\alpha}\rangle,\langle\boldsymbol{\beta}\rangle}$ holds. In addition, we have $\big\langle a\langle\gamma\rangle\big\rangle=\langle a\gamma\rangle=1/d(\gamma)$ so that $\xi_{\langle\boldsymbol{\alpha}\rangle,\langle\boldsymbol{\beta}\rangle}\big(a,1/d(\gamma)+\big)\geq 1$. Since $\langle a\boldsymbol{\alpha}\rangle$ and $\langle a\boldsymbol{\beta}\rangle$ are disjoint and have elements larger than or equal to $1/d(\gamma)$, we obtain that $\gamma$ is a component of $\boldsymbol{\alpha}$.

Furthermore, there exists $a\in\{1,\dots,d_{\boldsymbol{\alpha},\boldsymbol{\beta}}\}$ coprime to $d_{\boldsymbol{\alpha},\boldsymbol{\beta}}$ such that 
$$
\langle a\gamma\rangle=\frac{d(\gamma)-1}{d(\gamma)}=:\kappa.
$$
Thus $\kappa$ is the largest element distinct from $1$ in $\langle a\boldsymbol{\alpha}\rangle$ and $\langle a\boldsymbol{\beta}\rangle$, and we obtain that $\xi_{\langle\boldsymbol{\alpha}\rangle,\langle\boldsymbol{\beta}\rangle}(a,\kappa+)=\xi_{\langle\boldsymbol{\alpha}\rangle,\langle\boldsymbol{\beta}\rangle}(a,1-)$. If $\langle m_{\langle\boldsymbol{\alpha}\rangle,\langle\boldsymbol{\beta}\rangle}(a)\rangle=\kappa$, then all the elements of $\langle\boldsymbol{\beta}\rangle$ are equal to $1$ and the result is proved. Otherwise, we have $\langle m_{\langle\boldsymbol{\alpha}\rangle,\langle\boldsymbol{\beta}\rangle}(a)\rangle<\kappa$ so that $\xi_{\langle\boldsymbol{\alpha}\rangle,\langle\boldsymbol{\beta}\rangle}(a,\kappa-)\geq 1$. Since $\gamma$ is an element of $\boldsymbol{\alpha}$, we obtain that $\xi_{\langle\boldsymbol{\alpha}\rangle,\langle\boldsymbol{\beta}\rangle}(a,\kappa+)\geq 2$ as expected. This finishes the proof of Assertion $(2)$ of Theorem  \ref{Criterion} and thus the one of Theorem \ref{Criterion}.

\address{E. Delaygue, Institut Camille Jordan, Universit\'e Claude Bernard Lyon 1, 43 boulevard du 11 Novembre 1918, 69622 Villeurbanne cedex, France. Email: delaygue@math.univ-lyon1.fr}

\address{T. Rivoal, Institut Fourier, CNRS et Universit\'e Grenoble 1, CNRS UMR 5582, 100 rue des maths, BP 74, 38402 St Martin d'H\`eres cedex, France. Email: Tanguy.Rivoal@ujf-grenoble.fr}

\address{J. Roques, Institut Fourier, Universit\'e Grenoble 1, CNRS UMR 5582, 100 rue des maths, BP 74, 38402 St Martin d'H\`eres cedex, France. Email: Julien.Roques@ujf-grenoble.fr}

\end{document}